\newcommand*\circled[1]{\tikz[baseline=(char.base)]{
            \node[shape=circle,draw,inner sep=2pt] (char) {#1};}}
\newcommand{\R}{\mathbb{R}}
\newcommand{\E}{\mathbb{E}}
\newcommand{\norm}[1]{\left\|#1\right\|} 
\newcommand{\RFFAssumptionone}{A_{0}}
\newcommand{\RFFAssumptionfour}{A_{1}}
\newcommand{\SpectralAssumptionone}{A_{2}}
\newcommand{\SpectralAssumptiontwo}{A_{3}}
\newcommand{\SpectralAssumptionthree}{A_{4}}
\newcommand{\SpectralAssumptionfour}{A_{5}}
\newcommand{\Samplesizeassumption}{B}
\newcounter{remarkno}
\newtheorem{theorem}{Theorem}
\newtheorem{corollary}[theorem]{Corollary}
\newtheorem{remark}[remarkno]{Remark}
\newenvironment{theorem*}{{\bf Lemma:}}
\title{Minimax Optimal Kernel Two-Sample Tests with Random Features}
\author{Soumya Mukherjee}
\author{Bharath K.~Sriperumbudur}
\affil{Department of Statistics\\  
Pennsylvania State University, 
University Park, PA 16802, USA.\\
\texttt{\{szm6510,bks18\}@psu.edu}}
\begin{document}

\maketitle

\begin{abstract}

  Reproducing Kernel Hilbert Space (RKHS) embedding of probability distributions has proved to be an effective approach, via MMD (maximum mean discrepancy), for nonparametric hypothesis testing problems involving distributions defined over general (non-Euclidean) domains. While a substantial amount of work has been done on this topic, only recently have minimax optimal two-sample tests been constructed that incorporate, unlike MMD, both the mean element and a regularized version of the covariance operator. However, as with most kernel algorithms, the optimal test scales cubically in the sample size, limiting its applicability. In this paper, we propose a spectral-regularized two-sample test based on random Fourier feature (RFF) approximation and investigate the trade-offs between statistical optimality and computational efficiency. We show the proposed test to be minimax optimal if the approximation order of RFF (which depends on the smoothness of the likelihood ratio and the decay rate of the eigenvalues of the integral operator) is sufficiently large. We develop a practically implementable permutation-based version of the proposed test with a data-adaptive strategy for selecting the regularization parameter. Finally, through numerical experiments on simulated and benchmark datasets, we demonstrate that the proposed RFF-based test is computationally efficient and performs almost similarly (with a small drop in power) to the exact test.
  
  
\end{abstract}

\section{Introduction}


Two-sample, or homogeneity, testing is a fundamental problem in statistics, which seeks to determine whether two probability distributions are equal by analyzing random samples drawn from each of them. This problem has been extensively studied in both parametric (e.g., Student's $t$-test, Hotelling's $T^{2}$ test) and non-parametric (e.g., Mann-Whitney U-test, Kolmogorov-Smirnov test, Cramer-von Mises test) settings. However, classical tests are often restricted to low-dimensional Euclidean data domains and face significant limitations in scalability when applied to high-dimensional data or large sample sizes.

To address these challenges, an important line of research has extended two-sample testing to more general domains through the use of kernel embeddings of probability distributions into reproducing kernel Hilbert spaces (RKHS). This approach has led to the development of nonparametric tests such as the Maximum Mean Discrepancy (MMD) test \citep{gretton2006kernel, gretton2012kernel}. Despite its broad applicability, the vanilla MMD test lacks minimax optimality - a deficiency that has only recently been rigorously analyzed. A series of recent works \citep{li2019optimality, schrab2023mmd, SpectralTwoSampleTest} have addressed this limitation, proposing refined versions of the MMD test that achieve minimax optimality.


The analysis in \cite{li2019optimality} and \cite{schrab2023mmd} primarily uses translation-invariant kernels defined on $\R^d$. On the other hand, the vanilla MMD test, while effective in many non-Euclidean settings, does not account for the covariance operator of the distributions under comparison, thereby failing to achieve minimax optimality. These limitations have been comprehensively addressed in \cite{SpectralTwoSampleTest}, which extends the analysis to kernels on general domains and introduces spectral regularization of the covariance operator to achieve minimax optimality with respect to an appropriately defined class of alternatives. The spectral-regularized approach, instead of relying solely on the difference between the mean embeddings of the two distributions, incorporates the regularized covariance operator-weighted mean embeddings. This refinement effectively generalizes the classical Hotelling's $T^2$ test to the infinite-dimensional setting of reproducing kernel Hilbert spaces (RKHS), enabling more robust and theoretically optimal tests for complex data distributions. 
Despite the theoretical advantages and its ability to handle non-Euclidean data, the spectral regularized test 
\citep{SpectralTwoSampleTest} is computationally expensive compared to the vanilla MMD test since it scales cubically with the number of samples compared to the quadratic scaling of the MMD test, making it less practical for large-scale applications. 
Consequently, the spectral-regularized test statistic, while minimax optimal, is computationally demanding.

The current work explores a specific approximation technique, Random Fourier features (RFF), to mitigate the computational burden associated with the spectral-regularized two-sample test in \cite{SpectralTwoSampleTest}. Random Fourier features~\citep{Rahimi-08a}, widely studied in statistical learning, provide an efficient approximation for kernel functions and offer a trade-off between statistical performance and computational efficiency. For kernels 
of the form 
\begin{equation*}\label{Kernel in terms of spectral distribution}
K(x,y) = \int_{\Theta} \varphi(x, \theta) \varphi(y, \theta)\, d \Xi(\theta)\,,
\end{equation*}
where $\varphi$ is a feature function and $\Xi$ is a probability distribution on $\R^{d}$ (referred to as the spectral distribution or inverse Fourier transform of $K$), the kernel can be approximated via Monte Carlo sampling. Specifically, given $l$ random samples $\theta^{1:l} :=(\theta_{i})_{i=1}^{l}$ drawn from $\Xi$, an approximate kernel $K_{l}$ is constructed as:
\begin{equation}
\label{Random feature approximation of kernel}
K_{l}(x, y)=\frac{1}{l} \sum_{i=1}^l \varphi\left(x, \theta_i\right) \varphi\left(y, \theta_i\right)=\sum_{i=1}^l \varphi_{i}(x)\varphi_{i}(y)=\left\langle\Phi_l(x), \Phi_l(y)\right\rangle_2,
\end{equation}
 where
$\varphi_{i}(\cdot) := \frac{1}{\sqrt{l}} \varphi(\cdot,\theta_{i})$ for $i=1,2,\ldots,l$,
and the random feature map is given by:
\[\Phi_l(x)=\frac{1}{\sqrt{l}}\left(\varphi\left(x, \theta_1\right), \ldots, \varphi\left(x, \theta_l\right)\right)^{\top} =: \left(\varphi_{1}(x), \ldots, \varphi_{l}(x)\right)^{\top}.
\]


The primary objective of this paper is to understand the trade-off between the number of random features $l$, which governs computational complexity, and the statistical optimality of the resulting approximate hypothesis test based on the kernel approximation $K_l$. This analysis aims to bridge the gap between computational efficiency and statistical optimality in kernel-based two-sample testing for large-scale problems. In particular, we make the following key contributions.
\subsection{Contributions}
The main contributions are:
\begin{itemize}
    \item[(i)] \textit{Computationally efficient and statistically optimal test.} We propose a random Fourier feature (RFF)-based approximation to the spectral-regularized two-sample test statistic, significantly reducing the computational complexity while retaining statistical optimality (Section~\ref{subsec: Construction of the test statistic and the test (Approx)}). 
    We provide a comprehensive theoretical analysis of the tradeoff between computational efficiency and statistical power by deriving sufficient conditions on the number of random features required to ensure that the hypothesis test based on the approximate kernel retains minimax optimality, under the polynomial and exponential decay rates of the eigenvalues of the integral operator (Section~\ref{subection: Oracle test}). 
    \item[(ii)] \textit{Permutation test and adaptive regularization.} We develop a permutation-based implementation of the proposed test (Section~\ref{subsection: Permutation test}), incorporating a fully data-adaptive strategy for selecting the regularization parameter (Sections~\ref{subsection: Adaptation over regularization parameter} and \ref{subsection: Adaptation over kernel and regularization parameter}), thereby enhancing its practical applicability. We also investigate the tradeoff between computational efficiency and statistical optimality for the permutation-based adaptive test, showing that the separation rates are minimax optimal (up to logarithmic factors) while being computationally efficient (Section~\ref{Section: Computational complexity of test statistics}).
    \item[(iii)]  We validate the effectiveness of the proposed test through extensive experiments on synthetic and real-world datasets, demonstrating both its computational advantages and statistical performance (Section~\ref{sec:expts}).
\end{itemize}
\subsection{Related work}

RFF was first employed by \cite{zhao2015fastmmd} in MMD two-sample testing to improve its computational efficiency, but with no theoretical guarantees. Recently, \cite{choi2024computational} investigated the trade-offs between computational efficiency and the statistical power of the RFF-MMD test. While \cite{choi2024computational} focuses on accelerating the classical MMD test, our approach is centered on the more general and efficient spectral-regularized MMD test of \cite{SpectralTwoSampleTest}, which integrates the regularized covariance operator alongside the mean embeddings. This refinement ensures minimax optimality over a broader class of alternatives and enhances sensitivity to distributional differences.

A key distinction between the two approaches lies in the underlying assumptions on the kernel. The analysis in \cite{choi2024computational} relies on the translation invariance of the kernel on \( \mathbb{R}^d \) and further imposes a product structure, requiring each component to be translation invariant on \( \mathbb{R} \). These structural constraints are fundamental to their theoretical results. In contrast, our analysis imposes no such restrictions, allowing for a broader class of kernels, including those defined on more general domains. Consequently, our framework offers greater flexibility and applicability beyond Euclidean settings.

Another fundamental difference emerges in the characterization of alternatives. \cite{choi2024computational} considers a Sobolev smoothness assumption, where the difference in densities belongs to a Sobolev ball of a given order. Our work instead formulates the regularity condition in terms of the range of fractional power of an integral operator, which offers a more general perspective grounded in functional analysis. This formulation naturally aligns with the properties of kernel integral operators and accommodates a richer class of distributional differences. Finally, while their result on the minimax separation rate assumes that the two distributions have bounded support, our analysis does not require such an assumption, further extending its applicability. We refer the reader to Section~\ref{Sec:setup} for details. These distinctions highlight that our approach is not merely a computational improvement but also a theoretically grounded extension that provides a broader and more flexible framework for efficient two-sample testing. \vspace{1mm}\\

The paper is organized as follows. Definitions, notations, and technical preliminaries are captured in Section~\ref{sec:def}. A summary of minimax testing, MMD test, and spectral regularized MMD test is provided in Section~\ref{Sec:setup}. The proposed approximate spectral MMD test, along with its permuted adaptive version, is presented in Section~\ref{sec:approx}. Section~\ref{sec:approx} also discusses the statistical optimality of the proposed tests. Section~\ref{Section: Computational complexity of test statistics} discusses the theoretical tradeoff between computational complexity and statistical optimality of the proposed approximate test, while the empirical tradeoff is demonstrated in Section~\ref{sec:expts} through simulation studies. All the proofs of results are provided in Section \ref{Proofs of main theorems, corollaries and propositions}, while supplementary results are relegated to appendices.

\section{Definitions,  notations, and preliminaries}\label{sec:def}

For constants $a$ and $b$, $a \lesssim b$ (\emph{resp.} $a \gtrsim b$) denotes that there exists a positive constant $c$ (\emph{resp.} $c'$) such that $a\leq cb$ (\emph{resp.} $a \geq c' b)$. $a \asymp b$ denotes that there exists positive constants $c$ and $c'$ such that  $cb \leq a \leq c' b$. $[\ell]$ is used to denote $\{1,\ldots,\ell\}$.

Given a topological space $\mathcal{X}$, let $M^{b}_{+}(\mathcal{X})$ denote the space of all finite non-negative Borel measures on $\mathcal{X}$. We denote the space of bounded continuous functions defined on $\mathcal{X}$ by $C_{b}(\mathcal{X})$. For any $\mu \in M^{b}_{+}(\mathcal{X})$, let $L^r(\mathcal{X},\mu)$ denote the Banach space of $r$-power $(r\geq 1)$ $\mu$-integrable functions. For $f \in L^r(\mathcal{X},\mu)\eqcolon L^r(\mu)$, we denote $L^r$-norm of $f$ as $\norm{f}_{L^r(\mu)}\coloneq (\int_{\mathcal{X}}|f|^r\,d\mu)^{1/r}$. $\mu^n := \mu \times \stackrel{n}{...} \times \mu$ denotes the $n$-fold product measure. The equivalence class of the function $f$ is defined as $[f]_{\sim}$ and consists of functions $g \in L^r(\mathcal{X},\mu)$ such that $\norm{f-g}_{L^r(\mu)}=0$. 

For any Hilbert space $H$, we denote the corresponding inner product and norm using $\langle \cdot,\cdot\rangle_{H}$ and $\norm{\cdot}_{H}$, respectively. For any two abstract Hilbert spaces $H_1$ and $H_2$, let $\mathcal{L}(H_1, H_2)$ denote the space of bounded linear operators from $H_1$ to $H_2$. For $S \in \mathcal{L}(H_1,H_2)$, its adjoint is denoted by $S^*$. $S \in \mathcal{L}(H) := \mathcal{L}(H,H)$ is called self-adjoint if $S^*=S$. For $S \in \mathcal{L}(H)$, $\operatorname{Tr}(S)$, $\norm{S}_{\mathcal{L}^2(H)}$, and $\norm{S}_{\mathcal{L}^{\infty}(H)}$ denote the trace, Hilbert-Schmidt and operator norms of $S$, respectively. For $x,y \in H$, $x \otimes_{H} y$ is an element of the tensor product space of $H \otimes H$ which can also be seen as an operator from $H \to H$ as $(x \otimes_{H}y)z=x\langle y,z \rangle_{H} $ for any $z \in H$.

\subsection{Mean element, covariance operator, and integral operator}
Let us denote the reproducing kernel Hilbert spaces corresponding to reproducing kernels $K: \mathcal{X} \times \mathcal{X} \to \mathbb{R}$ and its random feature approximation $K_{l}$ (as defined in \eqref{Random feature approximation of kernel}) as $\mathcal{H}$ and $\mathcal{H}_{l}$, respectively. We will now define some relevant functions and operators for defining the MMD test, the spectral regularized MMD test, and the proposed computationally efficient random Fourier features-based modification of the spectral regularized MMD test, which we refer to as the RFF test. 

Given an RKHS $\mathcal{H}$ associated with the reproducing kernel $K$, the RKHS embedding of probability measure $P$ is given by
$$
\mu_P(\cdot)=\int_{\mathcal{X}} K(\cdot, x)\, d P(x),
$$
which is also referred to as the mean embedding/element of $P$. The defining characteristic of the mean element is that it satisfies the relation $\E_{X \sim P}\left[f(X)\right] = \int_{\mathcal{X}} f(x) d P(x) = \langle f, \mu_P \rangle_{\mathcal{H}}$ for any $f \in \mathcal{H}$. The covariance operator for the probability measure $P$ maps from $\mathcal{H}$ to $\mathcal{H}$ and is given by 
$$
\Sigma_{P} = \int_{\mathcal{X}} \left(K(\cdot,x) - \mu_{P}\right) \otimes_{\mathcal{H}} \left(K(\cdot,x) - \mu_{P}\right) d P(x),
$$
with its action on a function $f\in \mathcal{H}$ being defined by
$$
\Sigma_{P}f = \int_{\mathcal{X}} K(\cdot,x) f(x) d P(x) - \mu_{P} \int_{\mathcal{X}} f(x) dP(x).
$$
The defining property of the covariance operator is that it satisfies the relation $$
\begin{aligned}
\operatorname{Cov}_{X\sim P}\left[f(X),g(X)\right] 
&= \E_{X \sim P}\left[f(X)g(X)\right] - \E_{X \sim P}\left[f(X)\right] \E_{X \sim P}\left[g(X)\right]\\
&= \int_{\mathcal{X}} f(x) g(x) d P(x) - \int_{\mathcal{X}} f(x) d P(x) \int_{\mathcal{X}} g(x) d P(x)\\
&= \langle f, \Sigma_{P}g \rangle_{\mathcal{H}}
\end{aligned}$$ for any $f,g \in \mathcal{H}$. One can also express the covariance operator as
$$
\begin{aligned}
&\Sigma_{P} = \frac{1}{2} \int_{\mathcal{X} \times \mathcal{X}} \left(K(\cdot,x) - K(\cdot,y)\right) \otimes_{\mathcal{H}} \left(K(\cdot,x) - K(\cdot,y)\right) dP(x) dP(y).
\end{aligned}$$

The integral operator for the probability measure $P$ maps from $L^{2}(P)$ to $L^{2}(P)$ and is defined by its action on any $f \in L^{2}(P)$, given by 
$$
\mathcal{T}_{P}f = \int_{\mathcal{X}} K(\cdot,x) f(x) d P(x) - \mu_{P} \int_{\mathcal{X}} f(x) dP(x).
$$
We define the (centered) inclusion operator for the probability measure $P$ as $$\mathfrak{I}_{P}: \mathcal{H} \rightarrow L^{2}(P), f \mapsto\left[f-\int_{\mathcal{X}} f(x) dP(x)\right]_{\sim}.$$ The adjoint of the (centered) inclusion operator 
 is given by $$\mathfrak{I}_{P}^{*}: L^{2}(P) \rightarrow \mathcal{H}, f \mapsto \int K(\cdot, x) f(x) d P(x)-\mu_{P} \int_{\mathcal{X}} f(x) dP(x).$$ Moreover, $$\Sigma_{P}=\mathfrak{I}_{P}^{*}\mathfrak{I}_{P}= \frac{1}{2} \int_{\mathcal{X} \times \mathcal{X}}(K(\cdot, x)-K(\cdot, y)) \otimes_{\mathcal{H}}(K(\cdot, x)-K(\cdot, y)) d P(x) d P(y)$$ and $$\mathcal{T}_{P}=\mathfrak{I}_{P} \mathfrak{I}_{P}^{*}=\Upsilon_{P}-\left(1 \otimes_{L^{2}(P)} 1\right) \Upsilon_{P}-\Upsilon_{P}\left(1 \otimes_{L^{2}(P)}1\right)+\left(1 \otimes_{L^{2}(P)} 1\right) \Upsilon_{P}\left(1 \otimes_{L^{2}(P)} 1\right),$$ where $$\Upsilon_{P}: L^{2}(P) \rightarrow L^{2}(P), f \mapsto \int K(\cdot, x) f(x) d P(x).$$ We refer the reader to \citep[Proposition C.2]{ApproximateKernelPCARandomFeaturesStergeSriperumbudur} for details.

The mean embedding, covariance operator, integral operator, and inclusion operator of the distribution $R:=\frac{P+Q}{2}$ corresponding to the RKHS $\mathcal{H}$ are denoted by $\mu_{PQ}$, $\Sigma_{PQ}$, $\mathcal{T}_{PQ}$ and $\mathfrak{I}_{PQ}$, respectively. Similarly, we denote the mean embedding, covariance operator, integral operator, and inclusion operator of the distribution $R$ corresponding to the $l$-dimensional RKHS $\mathcal{H}_{l}$ as $\mu_{PQ,l}$, $\Sigma_{PQ,l}$, $\mathcal{T}_{PQ,l}$ and $\mathfrak{A}_{PQ,l}$, respectively. When it is clear from context, we drop the first subscript in $\mu_{PQ,l}$, $\Sigma_{PQ,l}$, $\mathcal{T}_{PQ,l}$ and $\mathfrak{A}_{PQ,l}$ and use the notations $\mu_{l}$, $\Sigma_{l}$, $\mathcal{T}_{l}$ and $\mathfrak{A}_{l}$ instead.
\subsection{Spectral regularization}
Consider any function $s: [0,\infty) \to [0,\infty)$. We will refer to such a function as a regularizer or a spectral function. If the domain of $s$ does not contain $0$, it is referred to as a positive regularizer/spectral function. Given any regularizer/spectral function $s$ and a compact, self-adjoint operator $\mathcal{M}$ defined on a separable Hilbert space $H$, we invoke functional calculus to define the operator $s(\mathcal{M})$ as 
$$
s(\mathcal{M}):=\sum_{i \geq 1} s\left(\tau_i\right)\left(\psi_i \otimes_H \psi_i\right)+s(0)\left(I-\sum_{i \geq 1} \psi_i \otimes_H \psi_i\right),
$$
where $\mathcal{M}$ has the spectral representation, $\mathcal{M}=\sum_i \tau_i \psi_i \otimes_H \psi_i$ with $\left(\tau_i, \psi_i\right)_i$ being the eigenvalues and eigenfunctions of $\mathcal{M}$. Often, $s$ is chosen to regularize/modify the spectrum of $\mathcal{M}$ in a certain way. For any $\lambda>0$, choosing $s(x) = g_{\lambda}(x) = (x + \lambda I)^{-1}$ and with $I$ representing the identity operator, we define the regularized covariance operator $\Sigma_{PQ,\lambda}$ as $\Sigma_{PQ,\lambda} \coloneq g_{\lambda}(\Sigma_{PQ}) = (\Sigma_{PQ} + \lambda I)^{-1}$. The operators $\mathcal{T}_{PQ,\lambda}$, $\Sigma_{PQ,\lambda,l}$ and $\mathcal{T}_{PQ,\lambda,l }$ are defined analogously, with $I_{l}$ playing the role of the identity operator while defining $\Sigma_{PQ,\lambda,l}$.

$\mathcal{N}_{1}(\lambda)$ and $\mathcal{N}_{2}(\lambda)$ characterize the intrinsic dimensionality of the RKHS $\mathcal{H}$, where
$$\mathcal{N}_{1}(\lambda)\coloneq\operatorname{Tr}\left(\Sigma_{P Q, \lambda}^{-1 / 2} \Sigma_{P Q} \Sigma_{P Q, \lambda}^{-1 / 2}\right),\,\,\text{and}\,\,
\mathcal{N}_{2}(\lambda)\coloneq\left\|\Sigma_{P Q, \lambda}^{-1 / 2} \Sigma_{P Q} \Sigma_{P Q, \lambda}^{-1 / 2}\right\| _{\mathcal{L}^{2}(\mathcal{H})},$$ $$\mathcal{N}_{1,l}(\lambda)\coloneq\operatorname{Tr}\left(\Sigma_{P Q, \lambda,l}^{-1 / 2} \Sigma_{P Q,l} \Sigma_{P Q, \lambda,l}^{-1 / 2}\right),\,\,\text{and}\,\, \mathcal{N}_{2,l}(\lambda)\coloneq\left\|\Sigma_{P Q, \lambda,l}^{-1 / 2} \Sigma_{P Q,l} \Sigma_{P Q, \lambda,l}^{-1 / 2}\right\| _{\mathcal{L}^{2}(\mathcal{H}_{l})}$$ play analogous roles with respect to the RKHS $\mathcal{H}_{l}$. For any operator $\mathcal{M}: S_{1} \to S_{2}$, we define $\operatorname{Ran}(\mathcal{M})$ as the range space of the operator $\mathcal{M}$, given by $\operatorname{Ran}(\mathcal{M}) \coloneq \left\{\mathcal{M}f: f \in S_{1}\right\}$.

\section{Problem setup}\label{Sec:setup}
In this section, we introduce the problem and formalism of minimax testing in Section~\ref{subsec:minimax}. We then recall the MMD and spectral regularized MMD tests, along with their statistical optimality results, in Sections~\ref{subsec:mmd} and \ref{subsec : Spectral Regularized MMD Test}, respectively.
\subsection{Minimax testing}\label{subsec:minimax}
The problem of interest in the current paper is the canonical problem of two-sample testing, which involves analyzing mutually independent random samples $\mathbb{X}^{1:N} \coloneq\left(X_i\right)_{i=1}^N \overset{i.i.d}{\sim}P$ and $\mathbb{Y}^{1:M} \coloneq\left(Y_j\right)_{j=1}^M  \overset{i.i.d}{\sim}Q$ drawn from two probability distributions $P$ and $Q$ defined on a topological space $\mathcal{X}$ to test $H_{0}: P=Q$ against $H_{1}:P \neq Q$. Let us denote a test function based on $\mathbb{X}^{1:N}$ and $\mathbb{Y}^{1:M}$ as $\phi(\mathbb{X}^{1:N},\mathbb{Y}^{1:M})\coloneq \phi_{N,M}$ that takes the value $\phi_{N,M}=1$ when $H_{0}$ is rejected, while taking the value $\phi_{N,M}=0$ when $H_{0}$ is not rejected. Further, let us denote the collection of exact level-$\alpha$ (i.e., Type-I error less than or equal to $\alpha$) tests for any given finite $N,\,M$ to be $\Phi_{N,M,\alpha}$. For some choice of probability metric $\rho$ defined over the space of probability distributions on $\mathcal{X}$, consider the class of alternatives $\mathcal{P}_{\Delta} = \left\{(P,Q): \rho^{2}(P,Q) \geq \Delta\right\}$, where $\Delta$ is the separation boundary (also referred to as contiguity radius). Then, the Type II error of a test $\phi_{N,M} \in \Phi_{N,M,\alpha}$ with respect to $\mathcal{P}_{\Delta}$ is given by $$R_{\Delta}(\phi_{N,M}) = \underset{(P,Q) \in \mathcal{P}_{\Delta}}{\sup} \E(1-\phi_{N,M}),$$ where the expectation is jointly over the distribution of $\mathbb{X}^{1:N}$ and $\mathbb{Y}^{1:M}$. In this paper, we consider the minimax framework of shrinking alternatives in the non-asymptotic setting, where 
for any given $0<\delta<1-\alpha$, the minimax separation $\Delta^{*}$ is the smallest possible separation boundary such that $\inf\{  R_{\Delta}(\phi_{N,M}):\phi_{N,M} \in \Phi_{N,M,\alpha}\} \leq \delta$ and a test $\phi_{N,M} \in \Phi_{N,M,\alpha}$ is said to achieve the minimax optimal rate if $R_{\Delta}(\phi_{N,M}) \leq \delta$ for some $\Delta \asymp \Delta^{*}$. 



\subsection{Maximum mean discrepancy (MMD) test}\label{subsec:mmd}
Given samples $\mathbb{X}^{1:N}$ and $\mathbb{Y}^{1:M}$, the MMD test \citep{gretton2006kernel,gretton2012kernel} involves constructing a test statistic based on 
\begin{equation*}\label{Squared MMD in terms of kernel}
\begin{aligned}
    \operatorname{MMD}^2(P,Q) &= \norm{\mu_{P} - \mu_{Q}}^2_{\mathcal{H}}\\
    &=\E_{X,X^{\prime}\sim P}K(X,X^{\prime}) + \E_{Y,Y^{\prime} \sim Q}K(Y,Y^{\prime}) - 2\E_{X\sim P,Y \sim Q} K(X,Y)
\end{aligned}
\end{equation*}
as
\begin{equation}\label{U-statistic estimator of Squared MMD}
\begin{aligned}
    &\widehat{\operatorname{MMD}}^{2}(P,Q)\\ &= \frac{1}{N(N-1)} \frac{1}{M(M-1)}\sum_{1\leq i \neq j \leq N} \sum_{1\leq i^{\prime} \neq j^{\prime} \leq M} \left\langle K(\cdot,X_{i}) - K(\cdot,Y_{i^{\prime}}), K(\cdot,X_{j}) - K(\cdot,Y_{j^{\prime}}) \right\rangle_{\mathcal{H}}\\
    &=\frac{1}{N(N-1)}\sum_{1 \leq i \neq j \leq N}K(X_{i},X_{j}) + \frac{1}{M(M-1)}\sum_{1 \leq i \neq j \leq M}K(Y_{i},Y_{j}) - \frac{2}{NM}\sum_{1\leq i \leq N,1\leq j\leq M} K(X_{i},Y_{j}),
\end{aligned}
\end{equation}
which is a U-statistic estimator of $\operatorname{MMD}^{2}(P,Q)$. The MMD test rejects the null hypothesis $H_{0}:P=Q$ if $\widehat{\operatorname{MMD}}^{2}(P,Q)$ is larger than a certain critical threshold that depends on the level $\alpha$, where the threshold is obtained as the $(1-\alpha)$-quantile of the asymptotic distribution of $\widehat{\operatorname{MMD}}^{2}(P,Q)$ under $H_0$ or as the empirical $(1-\alpha)$-quantile of the permuted version of $\widehat{\operatorname{MMD}}^{2}(P,Q)$.
The MMD test statistic given by \eqref{U-statistic estimator of Squared MMD} has a computational complexity of $O((N+M)^{2}d)$, assuming that a single kernel evaluation $K(\cdot,\cdot)$ requires $O(d)$ operations, which is typically the case. 

\cite{zhao2015fastmmd} were the first to propose using RFF to reduce the computational complexity of the classical MMD test statistic, which was recently investigated from a theoretical perspective by \cite{choi2024computational}. They employed translation invariant kernel $K$ on $\R^{d}$, i.e., \begin{equation}\label{Translation invariant Kernel in terms of spectral distribution}
   K(x,y) = \upsilon(x-y)= \int_{\Theta} \exp \left\{i\theta^{\top}(x-y)\right\}\, d \Xi(\theta)\,,\,x, y \in \R^d
\end{equation} for some continuous positive definite function $\upsilon$. The second equality follows from Bochner's theorem (\citet{wendland2004scattered}, Theorem 6.6),  
where $\Xi$ is a finite non-negative Borel measure on $\Theta=\R^d$, determined by the inverse Fourier transform of $\upsilon$. Since $K$ is real-valued and symmetric, \eqref{Translation invariant Kernel in terms of spectral distribution}  reduces to:
\begin{equation*}\label{Translation invariant real-valued symmetric Kernel in terms of spectral distribution}
\begin{aligned}
     K(x,y) =& \int_{\Theta} \cos\left(\theta^{\top}(x-y)\right)\, d \Xi(\theta)
     = \,\upsilon(0) \int_{\Theta} \cos\left(\theta^{\top}(x-y)\right)\, d \frac{\Xi}{\upsilon(0)}(\theta) \\
     =& \int_{\Theta} \varphi_{\theta}(x)^\top\varphi_{\theta}(y)\, d \frac{\Xi}{\upsilon(0)}(\theta)\,,
\end{aligned}
\end{equation*}
where $\varphi_{\theta}(\cdot) = [\sqrt{\upsilon(0)}\cos(\theta^\top \cdot),\sqrt{\upsilon(0)}\sin(\theta^\top \cdot)]^\top$. 
Since $\upsilon(0) = \int_{\Theta} d\Xi(\theta)$, we can assume, without loss of generality, that $\Xi$ is a probability measure on $\Theta=\R^d$.

Using $l$ random samples $\left(\theta_{i}\right)_{i=1}^{l}$ drawn from $\Xi$, one can construct an approximate Monte Carlo kernel estimator:
\begin{equation*}\label{Random feature approximation of kernel special case of translation onvariant symmetric real-valued kernel}
    K_{l}(x,y) = \frac{1}{l}\sum_{i=1}^{l} \left\langle \varphi_{\theta_{i}}(x),  \varphi_{\theta_{i}}(y)\right\rangle_2 = \left\langle \Phi_{l}(x), \Phi_{l}(y) \right\rangle_{2},
\end{equation*}
where $\varphi_{\theta_{i}}(x) = \left[\cos(\theta_{i}^{\top}x),\sin(\theta_{i}^{\top}x)\right]^{\top}$ and $\Phi_{l}(x) = \frac{1}{\sqrt{l}} \left[\varphi_{\theta_{1}}(x)^{\top} ,\cdots ,\varphi_{\theta_{l}}(x)^{\top}  \right]^{\top}$. Based on this approximation, an approximate RFF-based V-statistic estimator of $\operatorname{MMD}^{2}(P,Q)$ is obtained as 
\begin{equation}\label{V-statistic RFF-MMD squared estimate for special case of translation onvariant symmetric real-valued kernel}
    \widehat{\operatorname{MMD}}_{V,l}^{2}(P,Q) = \norm{\frac{1}{N}\sum_{i=1}^{N}\Phi_{l}(X_{i})-\frac{1}{M}\sum_{i=1}^{M}\Phi_{l}(Y_{j})}_{2}^{2},
\end{equation}
while an approximate RFF-based U-statistic estimator of $\operatorname{MMD}^{2}(P,Q)$ is given by
\begin{equation}\label{U-statistic RFF-MMD squared estimate for special case of translation onvariant symmetric real-valued kernel}
\begin{aligned}
    &\widehat{\operatorname{MMD}}_{U,l}^{2}(P,Q)\\
    =& \frac{1}{N(N-1)}\sum_{1 \leq i \neq j \leq N}\left\langle\Phi_{l}(X_{i}),  \Phi_{l}(X_{j})\right\rangle_{2} + \frac{1}{M(M-1)}\sum_{1 \leq i \neq j \leq M}\left\langle\Phi_{l}(Y_{i}),  \Phi_{l}(Y_{j})\right\rangle_{2}) \\
    &- \frac{2}{NM}\sum_{1\leq i \leq N,1\leq j\leq M} \left\langle\Phi_{l}(X_{i}),  \Phi_{l}(Y_{j})\right\rangle_{2}\,.
\end{aligned}
\end{equation}
Both the RFF-based V-statistic and U-statistic estimators, given by \eqref{V-statistic RFF-MMD squared estimate for special case of translation onvariant symmetric real-valued kernel} and \eqref{U-statistic RFF-MMD squared estimate for special case of translation onvariant symmetric real-valued kernel}, significantly reduce the computational cost of the classical MMD test from quadratic complexity $O((N+M)^{2}d)$ to linear complexity  $O((N+M)ld)$. This reduction is particularly useful for large-scale data applications, where the classical MMD test is computationally prohibitive.

\cite{choi2024computational} investigated the theoretical properties of permutation tests based on $\widehat{\operatorname{MMD}}_{V,l}^{2}(P,Q)$ and $\widehat{\operatorname{MMD}}_{U,l}^{2}(P,Q)$, providing both negative and positive results regarding the feasibility of achieving a favorable computational-statistical tradeoff within their problem setup. Specifically, they established that permutation tests based on $\widehat{\operatorname{MMD}}_{V,l}^{2}(P,Q)$ and $\widehat{\operatorname{MMD}}_{U,l}^{2}(P,Q)$ fail to achieve pointwise consistency when the number of random Fourier features $l$ remains fixed, even as the sample sizes $N$ and $M$ tend to $+\infty$, and achieves pointwise consistency if \( l \) is allowed to diverge to infinity, even at an arbitrarily slow rate, as $N$ and $M$ grow. Moreover, they showed that the permutation tests achieve the minimax separation boundary of $\min\{N,M\}^{-\frac{2s}{4s+d}}$ (as enjoyed by the MMD test \citep{schrab2023mmd}) for the class of alternatives consisting of densities with bounded support and separated in the $L^2$ metric, where the difference of densities belongs to the Sobolev ball of order $s$ and fixed radius in $\R^d$, as long as $l \geq \min\{N,M\}^{\frac{4d}{4s+d}}$, resulting in a computational complexity of $O((N+M)\min\{N,M\}^{\frac{4d}{4s+d}}d)$. This means, for $s>\frac{3d}{4}$, the complexity is sub-linear and tends to linear as $s\rightarrow\infty$, while for $s<\frac{3d}{4}$, it is computationally beneficial to use the MMD test without the RFF approximation, though both are statistically minimax optimal.


\subsection{Spectral regularized MMD test}\label{subsec : Spectral Regularized MMD Test}
Despite the widespread popularity and elegant theoretical properties of the classical version of the MMD test, it is not sensitive enough to capture all potential discrepancies between the distributions $P$ and $Q$ for finite sample sizes. This leads the classical MMD test to not be minimax optimal with respect to a natural class of alternatives $\mathcal{P}$, which we will define shortly. More specifically, \citep{SpectralTwoSampleTest} expressed the squared MMD in terms of the integral operator $\mathcal{T}_{PQ}$ and the \say{likelihood ratio deviation} $u \coloneq \frac{dP}{dR} - 1$ as 
\begin{equation}\label{Squared MMD in terms of integral operator and likelihood}
    \operatorname{MMD}^{2}(P,Q) = 4 \left\langle \mathcal{T}_{PQ}u,u\right\rangle_{L^{2}(R)}.
\end{equation}
Discrepancies between $P$ and $Q$ are captured by how far the function $u$ deviates from the $0$ function. Provided the kernel $K$ is bounded, the operator $\mathcal{T}_{PQ}: L^{2}(R) \mapsto L^{2}(R)$ is a positive self-adjoint trace-class operator, with its eigenvalue-eigenfunction pairs being denoted by $(\lambda_{i},\tilde{\phi}_{i})_i$. As a consequence of \eqref{Squared MMD in terms of integral operator and likelihood}, we can express the squared MMD as
\begin{equation*}\label{Squared MMD in terms of eigenvalue eigenfunctions of integral operator}
    \operatorname{MMD}^{2}(P,Q) = 4 \sum_{i \in I}\lambda_{i}\left\langle u,\tilde{\phi}_{i}\right\rangle_{\mathcal{H}}^{2},
\end{equation*}
where $I$ is the index set corresponding to the eigenvalues of $\mathcal{T}_{PQ}$. Since $\mathcal{T}_{PQ}$ is trace-class, $\lim_{i \to \infty} \lambda_{i} = 0$. Consequently, the Fourier coefficients $\langle u,\tilde{\phi}_{i}\rangle_{\mathcal{H}}^{2}$ of the likelihood ratio deviation $u$ corresponding to the larger $i$'s (i.e., higher frequencies) are given lesser weightage and therefore, $\operatorname{MMD}^{2}(P,Q)$ is less sensitive to deviations of $u$ from $0$ in the higher-frequency components. On the other hand, one can consider a uniform weighting of all the frequency components, as in 
\begin{equation*}\label{Norm of u squared}
    \norm{u}_{L^2(R)}^2=\sum_i\left\langle u, \widetilde{\phi}_i\right\rangle_{L^2(R)}^2=\chi^2\left(P \| R\right)=\frac{1}{2} \int_\mathcal{X} \frac{(d P-d Q)^2}{d(P+Q)}=: \underline{\rho}^2(P, Q),
\end{equation*}
where $\underline{\rho}^2(P,Q)\coloneq \chi^2\left(P\left\Vert\right.R\right) = \frac{1}{2}\int_\mathcal{X}\frac{(dP-dQ)^2}{d(P+Q)}=\norm{\frac{dP}{dR}-1}_{L^{2}(R)}^{2}$ is a metric over probability measures that induces the same topology as the Hellinger distance \citep[Lemma F.18]{SpectralTwoSampleTest}. Since such a uniform weighting mitigates the issue of reduced sensitivity to the high-frequency components of $u$, \cite{SpectralTwoSampleTest} proposed a regularization of the spectrum of the integral operator $\mathcal{T}_{PQ}$ to arrive at an analog $\eta_{\lambda}(P,Q)$ of $\operatorname{MMD}^{2}(P,Q)$, referred to as the spectral regularized discrepancy and defined as
\begin{equation*}
    \eta_\lambda(P, Q)=4\left\langle\mathcal{T} g_\lambda(\mathcal{T}) u, u\right\rangle_{L^2(R)},
\end{equation*}
where $g_{\lambda} : (0,\infty) \to (0,\infty)$ is a positive regularizer/spectral function satisfying $\lim_{\lambda \to 0} x g_{\lambda}(x) \asymp 1$. The salient feature of $\eta_\lambda(P, Q)$ is that it satisfies $\eta_\lambda(P, Q) \asymp \norm{u}_{L^2(R)}^2$ if $u \in \operatorname{Ran}(\mathcal{T}^{\theta})$, $\theta>0$ and $\lambda>0$ is chosen such that $\norm{u}_{L^2(R)}^2 \gtrsim \lambda^{2\theta}$, which shows that it is better equipped to detect discrepancies between $P$ and $Q$ under mild conditions. Therefore, following \cite{SpectralTwoSampleTest}, the natural class of alternatives to consider for studying minimax optimality in the current setting is 
\begin{equation}\label{Class of alternatives}
    \mathcal{P}:=\mathcal{P}_{\theta, \Delta}:=\left\{(P, Q): \frac{d P}{d R}-1 \in \operatorname{Ran}(\mathcal{T}^\theta), \,\underline{\rho}^2(P, Q) \geq \Delta\right\}.
\end{equation}

One should note that, for $\theta \in (0,\frac{1}{2}]$, $\operatorname{Ran}(\mathcal{T}^\theta)$ is an interpolation space between $\mathcal{H}$ and $L^{2}(R)$, containing functions which are less than smooth than those belonging to the RKHS $\mathcal{H}$, with the degree of smoothness decreasing as $\theta$ approaches 0. On the other hand, for $\theta>\frac{1}{2}$, $\operatorname{Ran}\left(\mathcal{T}^\theta\right)$ is a subspace of the RKHS $\mathcal{H}$ and contains progressively smoother functions as $\theta$ increases beyond $\frac{1}{2}$.

\citep{SpectralTwoSampleTest} provided an alternate expression for the spectral regularized discrepancy $\eta_\lambda(P, Q)$ as 
\begin{equation*}\label{eta as norm sqaured}
    \eta_\lambda(P, Q) = \norm{g_{\lambda}^{1/2}(\Sigma_{PQ})\left(\mu_{P} - \mu_{Q}\right)}_{\mathcal{H}}^{2},
\end{equation*}
which shows that spectral regularized discrepancy takes into account the covariance operator $\Sigma_{PQ}$ in addition to the discrepancy between the mean embeddings $\mu_{P}$ and $\mu_{Q}$. Another expression for $\eta_\lambda(P, Q)$, which will be useful for constructing a statistical estimator, is given by
\begin{equation*}\label{eta as double integral}
\begin{aligned}
    &\eta_\lambda(P, Q)\\
    &=\!\!\! \int_{\mathcal{X}^{4}} \left\langle g_{\lambda}^{1/2}\left(\Sigma_{PQ})(K(\cdot,x) - K(\cdot,y)\right), g_{\lambda}^{1/2}\left(\Sigma_{PQ})(K(\cdot,x^{\prime}) - K(\cdot,y^{\prime})\right)\right\rangle_{\mathcal{H}} dP(x) dP(x^{\prime}) dQ(y) dQ(y^{\prime}).
\end{aligned}
\end{equation*}
To estimate $\eta_\lambda(P,Q)$ based on samples $\mathbb{X}^{1:N}$ and $\mathbb{Y}^{1:M}$, \citep{SpectralTwoSampleTest} proposed to split the samples and use part of the samples to estimate the mean elements and the rest to estimate the covariance operator. Formally, 
we split the samples $\left(X_i\right)_{i=1}^N$ into $\left(X_i\right)_{i=1}^{N-s}$ and $\left(X_i^1\right)_{i=1}^s=\left(X_i\right)_{i=N-s+1}^N$, and $\left(Y_j\right)_{j=1}^M$ into $\left(Y_j\right)_{j=1}^{M-s}$ and $(Y_j^1)_{j=1}^s=\left(Y_j\right)_{j=M-s+1}^M$. Define $n=N-s$ and $m=M-s$. Define $Z_i=\alpha_i X_i^1+\left(1-\alpha_i\right) Y_i^1$, for $1 \leq i \leq s$, where $\left(\alpha_i\right)_{i=1}^s \stackrel{i . i . d}{\sim} \operatorname{Bernoulli}(1 / 2)$. It can be shown that $\left(Z_i\right)_{i=1}^s \overset{i . i . d}{\sim} R=\frac{P+Q}{2}$. A U-statistic estimator of $\Sigma_{PQ}$ is then constructed based on $\mathbb{Z}^{1:s} \coloneq \left(Z_i\right)_{i=1}^s$, given by
\[
\hat{\Sigma}_{P Q} \coloneq \frac{1}{2 s(s-1)} \sum_{i \neq j}^s\left(K\left(\cdot, Z_i\right)-K\left(\cdot, Z_j\right)\right) \otimes_{\mathcal{H}}\left(K\left(\cdot, Z_i\right)-K\left(\cdot, Z_j\right)\right).
\]
Using this estimate of $\Sigma_{PQ}$, the sample-based estimate of the spectral regularized discrepancy is constructed, referred to as the spectral regularized test statistic $\hat{\eta}_{\lambda}$, and is given by 
\begin{equation}\label{Spectral Regularized Kernel Test statistic}
\hat{\eta}_{\lambda} \coloneq \frac{1}{n(n-1)} \frac{1}{m(m-1)} \sum_{1 \leq i \neq j \leq n} \sum_{1 \leq i^{\prime} \neq j^{\prime} \leq m} u\left(X_i, X_j, Y_{i^{\prime}}, Y_{j^{\prime}}\right),
\end{equation}
where \[u\left(X_i, X_j, Y_{i^{\prime}}, Y_{j^{\prime}}\right) \coloneq \left\langle g_{\lambda}^{1/2}\left(\hat{\Sigma}_{P Q}\right) \left(K\left(\cdot, X_i\right)-K\left(\cdot, Y_{i^{\prime}}\right)\right), g_{\lambda}^{1/2}\left(\hat{\Sigma}_{P Q}\right) \left(K\left(\cdot, X_j\right)-K\left(\cdot, Y_{j^{\prime}}\right)\right)\right\rangle_{\mathcal{H}}.\] Conditional on $\left(Z_i\right)_{i=1}^s$, \eqref{Spectral Regularized Kernel Test statistic} is a two-sample U-statistic and is therefore a natural estimator of the spectral regularized discrepancy $\eta_{\lambda}$. \citep{SpectralTwoSampleTest} proposed a permutation based test involving $\hat{\eta}_\lambda$ and showed it to be minimax optimal w.r.t.~$\mathcal{P}$. Concretely, if $\lambda_i\asymp i^{-\beta},\,\beta>1$, i.e., polynomial decay of the eigenvalues of $\mathcal{T}$, then the permutation test enjoys the minimax separation radius of $(N+M)^{-\frac{4\theta\beta}{4\theta\beta+1}}$ w.r.t.~$\mathcal{P}$ if $\theta>\frac{1}{2}-\frac{1}{4\beta}$ and if $\lambda_i\asymp e^{-i}$, i.e., exponential decay of eigenvalues of $\mathcal{T}$, then the permutation test has a minimax separation rate of $\sqrt{\log(N+M)}(N+M)^{-1}$ w.r.t.~$\mathcal{P}$ if $\theta>\frac{1}{2}$. However, computationally, the test scales as $O(s^3+n^2+m^2+ms^2+ns^2)$, which means for $s=O(N+M)$, the test scales cubically in the number of samples, unlike the MMD test, which scales quadratically in the sample size. In the following, we propose a random feature approximation to the spectral regularized MMD test and demonstrate an improved computational behavior for $s=O(N+M)$ while retaining the minimax optimality.


\section{Approximate spectral regularized MMD test}\label{sec:approx}

It is shown in \cite{SpectralTwoSampleTest} that, unlike the vanilla MMD test, the spectral regularized MMD test is minimax optimal with respect to the class of alternatives $\mathcal{P}$ defined in \eqref{Class of alternatives}. However, as we later show in detail, the computational complexity of the spectral regularized MMD test statistic is cubic in the number of samples in the worst-case scenario, as compared to the quadratic complexity of the classical MMD test. In the present work, we develop a computationally efficient approximation to the U-statistic estimator $\hat{\eta}_{\lambda}$ of the spectral regularized discrepancy $\eta_{\lambda}$, which we will denote as $\hat{\eta}_{\lambda,l}$.

\subsection{Construction of the test statistic and the test}\label{subsec: Construction of the test statistic and the test (Approx)}

To construct the approximate spectral regularized test statistic, we first consider an approximation to the kernel $K$ based on random sampling of features from the spectral distribution $\Xi$ (inverse Fourier transform) corresponding to the kernel $K$. If the kernel $K$ associated with the RKHS $\mathcal{H}$ is of the form 
$$
K(x,y) = \int_{\Theta} \varphi(x, \theta) \varphi(y, \theta) d \Xi(\theta),
$$
where $\varphi$ is a feature function and $\Xi$ is a probability distribution on $\R^{d}$ (referred to as the spectral distribution or inverse Fourier transform of $K$), the kernel can be approximated via Monte Carlo sampling. Specifically, given $l$ random samples $\theta^{1:l} =(\theta_{i})_{i=1}^{l}$ drawn from $\Xi$, an approximate kernel $K_{l}$ is constructed as:
\begin{equation*}
\label{Random feature approximation of kernel repeated}
K_{l}(x, y)=\frac{1}{l} \sum_{i=1}^l \varphi\left(x, \theta_i\right) \varphi\left(y, \theta_i\right)=\sum_{i=1}^l \varphi_{i}(x)\varphi_{i}(y)=\left\langle\Phi_l(x), \Phi_l(y)\right\rangle_2,
\end{equation*}
 where
$\varphi_{i}(\cdot) = \frac{1}{\sqrt{l}} \varphi(\cdot,\theta_{i})$  for $i=1,2,\dots,l$, 
and the random feature map is given by:
\[\Phi_l(x)=\frac{1}{\sqrt{l}}\left(\varphi\left(x, \theta_1\right), \ldots, \varphi\left(x, \theta_l\right)\right)^{\top} = \left(\varphi_{1}(x), \ldots, \varphi_{l}(x)\right)^{\top}.
\]

Analogous to the spectral regularized discrepancy $\eta_{\lambda}$ defined with respect to the kernel $K$, one can define the approximate spectral regularized discrepancy $\eta_{\lambda,l}$ with respect to the approximate kernel $K_{l}$ as 
\begin{equation*}\label{Approximate spectral regularized discrepancy}
    \eta_{\lambda,l}=\left\|g_{\lambda}^{1/2}(\Sigma_{P Q,l})\left(\mu_{Q,l}-\mu_{P,l}\right)\right\|_{\mathcal{H}_{l}}^{2}
\end{equation*}
and our primary goal is to construct a test of equality of $P$ and $Q$ based on a statistical estimator of $\eta_{\lambda,l}$, which is $\hat{\eta}_{\lambda,l}$. Thus, $\hat{\eta}_{\lambda,l}$ can be viewed as a RFF-based approximation to $\hat{\eta}_{\lambda}$ as well as a statistical estimator of $\eta_{\lambda,l}$.

Let $\Sigma_{PQ,l}$ be the (centered) covariance operator corresponding to the approximate kernel $K_{l}$, given by
\[\Sigma_{PQ,l}= \frac{1}{2} \int_{\mathcal{X} \times \mathcal{X}}(K_{l}(\cdot, x)-K_{l}(\cdot, y)) \otimes_{\mathcal{H}}(K_{l}(\cdot, x)-K_{l}(\cdot, y)) d R(x) d R(y),\] 
where $R=\frac{P+Q}{2}$. Analogous to $\hat{\Sigma}_{PQ}$, we can construct a U-statistic estimate of $\Sigma_{PQ,l}$ based on $\mathbb{Z}^{1:s}$, given by
\[
\hat{\Sigma}_{P Q,l} \coloneq \frac{1}{2 s(s-1)} \sum_{i \neq j}^s\left(K_{l}\left(\cdot, Z_i\right)-K_{l}\left(\cdot, Z_j\right)\right) \otimes_{\mathcal{H}_{l}}\left(K_{l}\left(\cdot, Z_i\right)-K_{l}\left(\cdot, Z_j\right)\right).
\]

Finally, using the above estimate of $\Sigma_{PQ,l}$, we can construct an RFF-based approximation to the spectral regularized test statistic $\hat{\eta}_{\lambda}$. We denote this approximate spectral regularized MMD test statistic as $\hat{\eta}_{\lambda,l}$, which is defined as
\begin{equation}\label{Approximate Kernel Test statistic}
\hat{\eta}_{\lambda,l}:=\frac{1}{n(n-1)} \frac{1}{m(m-1)} \sum_{1 \leq i \neq j \leq n} \sum_{1 \leq i^{\prime} \neq j^{\prime} \leq m} t\left(X_i, X_j, Y_{i^{\prime}}, Y_{j^{\prime}}\right),
\end{equation}
where
\[\begin{aligned}&t\left(X_i, X_j, Y_{i^{\prime}}, Y_{j^{\prime}}\right)\\
\coloneqq&\left\langle g_{\lambda}^{1/2}(\hat{\Sigma}_{P Q,l}) \left(K_{l}\left(\cdot, X_i\right)-K_{l}\left(\cdot, Y_{i^{\prime}}\right)\right), g_{\lambda}^{1/2}(\hat{\Sigma}_{P Q,l}) \left(K_{l}\left(\cdot, X_j\right)-K_{l}\left(\cdot, Y_{j^{\prime}}\right)\right)\right\rangle_{\mathcal{H}_{l}}.
\end{aligned}\]

Conditioned on $\left(Z_i\right)_{i=1}^s$ and $\theta^{1:l}$, \eqref{Approximate Kernel Test statistic} is a two-sample U-statistic and is therefore a natural estimator of the approximate spectral regularized discrepancy $\eta_{\lambda,l}$. As with the MMD and spectral regularized MMD tests, we reject the null hypothesis of equality of $P$ and $Q$ if $\hat{\eta}_{\lambda,l}$ exceeds a certain critical threshold. In the following, we first propose a test based on $\eta_{\lambda,l}$ and demonstrate its minimax optimality in Section~\ref{subection: Oracle test}. Since this test's threshold depends on the unknown distributions and regularization parameter, in Sections~\ref{subsection: Permutation test}, \ref{subsection: Adaptation over regularization parameter}, and \ref{subsection: Adaptation over kernel and regularization parameter}, we present a practical version of the test whose threshold is completely data-dependent, and demonstrate its minimax optimality w.r.t.~$\mathcal{P}$. The computational considerations and computational-statistical trade-off discussion are provided in Section~\ref{Section: Computational complexity of test statistics}.

\subsection{Assumptions}

Before proceeding further, we explicitly state the assumptions regarding the underlying data domain $\mathcal{X}$, the reproducing kernel $K$, its associated RKHS $\mathcal{H}$, its associated functional operators, and the spectral function $g_{\lambda}$. Most of the assumptions are the same as in \cite{SpectralTwoSampleTest}, with some minor changes. These assumptions ensure the existence and well-definedness of functional representations of the distributions $P$, $Q$, and $R = \frac{P+Q}{2}$ together with their associated functional operators. The assumptions regarding the specific form of the kernel are, in fact, quite general (they are satisfied by popularly used kernels like Gaussian and Laplace kernels), and they allow the use of the RFF machinery to develop a computationally efficient statistical test.  Further, the assumptions regarding the spectral function $g_{\lambda}$ ensure that $\hat{\eta}_{\lambda,l} \asymp \norm{u}_{L^{2}(R)}^{2}$ under mild conditions on the likelihood ratio deviation $u:=\frac{dP}{dR}-1$, the regularization parameter $\lambda$ and the number of random (spectral) features $l$ (see Proposition~\ref{Proposition: Upper and lower bound of eta}). 

We make the following assumptions regarding the underlying data domain $\mathcal{X}$, the reproducing kernel $K$, and its associated RKHS $\mathcal{H}$.\vspace{1mm}

$\boldsymbol{(\RFFAssumptionone)}$  $(\mathcal{X},\mathcal{B})$ is a 
second countable (i.e., completely separable) space endowed with Borel $\sigma$-algebra $\mathcal{B}$. $(\mathcal{H},K)$ is an RKHS of real-valued functions on $\mathcal{X}$ with a continuous reproducing kernel $K$ such that $\sup_{x} K(x,x) \leq \kappa.$ \vspace{1mm}

$\boldsymbol{(\RFFAssumptionfour)}$ The reproducing kernel $K$ corresponding to the Hilbert space $\mathcal{H}$ is of the form $$K(x, y)=\int_{\Theta} \varphi(x, \theta) \varphi(y, \theta) d \Xi(\theta)=\langle\varphi(x, \cdot), \varphi(y, \cdot)\rangle_{L^2(\Xi)},$$
where $\varphi: \mathcal{X} \times \Theta \rightarrow \mathbb{R}$ is continuous, $\sup _{\theta \in \Theta, x \in \mathcal{X}}|\varphi(x, \theta)| \leq \sqrt{\kappa}$ and $\Xi$ is (without loss of generality) a probability measure on a second countable space $(\Theta, \mathcal{A})$ endowed with Borel $\sigma$-algebra $\mathcal{A}$.

\begin{remark}
(i) $\boldsymbol{(\RFFAssumptionone)}$ ensures the separability of $L^{r}(\mathcal{X},\mu)$ for any $\sigma$-finite measure defined on $\mathcal{B}$ and Bochner-measurability of $K(\cdot,x)$. This leads to the well-definedness of the mean embeddings $\mu_{P}$ and $\mu_{Q}$. Let $\Sigma_{PQ}$ be the (centered) covariance operator corresponding to kernel $K$ and distribution $R=\frac{P+Q}{2}$. Under $\boldsymbol{(\RFFAssumptionone)}$, $\Sigma_{PQ}$ is a self-adjoint positive trace-class operator and therefore, using Theorem VI.16 and VI.17 of \citet{reed1980methods}, $\Sigma_{PQ}$ has a spectral representation given by \begin{equation}\label{Spectral representation of SigmaPQ}
\Sigma_{PQ} = \sum_{i \in I} \lambda_{i} \phi_{i} \otimes_{\mathcal{H}} \phi_{i},
\end{equation}
where $(\lambda_{i})_{i \in I} \subset \mathbb{R}^{+}$ and $(\phi_{i})_{i \in I}$ are respectively the eigenvalues and orthonormal system of eigenfunctions of $\Sigma_{PQ}$ spanning $\overline{\operatorname{Ran}(\Sigma_{PQ})}$, with the eigenvalues and eigenfunctions being indexed in the decreasing order of magnitude of the eigenvalues. We assume in this paper that the index set $I$ is countable, which implies that $\lim_{i \to \infty}\lambda_{i} = 0$.\vspace{1mm}\\
(ii) $\boldsymbol{(\RFFAssumptionone)}$ and $\boldsymbol{(\RFFAssumptionfour)}$ are essential for the validity of the results in \cite{ApproximateKernelPCARandomFeaturesStergeSriperumbudur}, which we utilize for providing theoretical guarantees concerning the RFF approximation error. 
\end{remark}

The following are the assumptions on the regularizer $g_\lambda$ (corresponding to Assumptions $A_1$, $A_2$, and $A_4$ in \citet{SpectralTwoSampleTest}), which are common in the inverse problem literature.
\begin{itemize}
    \item[] $\boldsymbol{(\SpectralAssumptionone)}$ $\sup _{x \in \Gamma}\left|x g_\lambda(x)\right| \leq C_1$; \label{Assumption Spectral Regularizer A1}
    \item[] $\boldsymbol{(\SpectralAssumptiontwo)}$ $\sup _{x \in \Gamma}\left|\lambda g_\lambda(x)\right| \leq C_2$; \label{Assumption Spectral Regularizer A2}
    \item[] $\boldsymbol{(\SpectralAssumptionthree)}$ $ \inf _{x \in \Gamma} g_\lambda(x)(x+\lambda) \geq C_4$, \label{Assumption Spectral Regularizer A4}
\end{itemize}
where $\Gamma:=[0, \kappa]$ and $C_1$, $C_2$ and $C_4$ are finite positive constants (all independent of $\lambda$). We also assume for the convenience of reporting our results that the sample sizes $N$ and $M$ satisfy the following general condition:
\begin{itemize}
    \item[] $\boldsymbol{(\Samplesizeassumption)}$ $M\leq N \leq DM$ for some constant $D\geq1$.
\end{itemize}

\subsection{Oracle test}
\label{subection: Oracle test}
We now proceed to provide a level-$\alpha$ test for testing $H_{0}:P=Q$ against $H_{1}:P \neq Q$ for a fixed choice of the regularization parameter $\lambda>0$ satisfying certain mild conditions.

\begin{theorem}[RFF-based Oracle Test]\label{Type-I error bound of Oracle Test in terms of N2} Suppose $\boldsymbol{(\RFFAssumptionone)}$--$\boldsymbol{(\SpectralAssumptiontwo)}$ hold. Let $n,m \geq 2$ and $\hat{\eta}_{\lambda,l}$ be the random feature approximation of the test statistic as defined in \eqref{Approximate Kernel Test statistic}. Given any $\alpha>0$, 
suppose
$$ l\ge 
\max\left\{2\log\frac{2}{1-\sqrt{1-\frac{\alpha}{4}}},\frac{128\kappa^{2}\log\frac{2}{1-\sqrt{1-\frac{\alpha}{4}}}}{\left\|\Sigma_{P Q}\right\|_{\mathcal{L}^{\infty}(\mathcal{H})}^{2}}\right\}$$
and $$\operatorname{max}\left\{\frac{140 \kappa}{s} \log \frac{32 \kappa s}{1-\sqrt{1-\frac{\alpha}{4}}},\frac{86 \kappa}{l}\log \frac{64 \kappa l}{\alpha} \right\} \leq \lambda \leq \frac{1}{2}\left\|\Sigma_{P Q}\right\|_{\mathcal{L}^{\infty}(\mathcal{H})}.$$ Then the level-$\alpha$ critical region for testing $H_0 :P=Q$ vs. $H_1:P\neq Q$ is given by $\left\{\hat{\eta}_{\lambda,l} \geq \gamma\right\}$, i.e., 
\[P_{H_{0}}\left\{\hat{\eta}_{\lambda,l} \geq \gamma\right\} \leq \alpha,\]
where $\gamma:=\frac{4 \sqrt{3}(C_{1}+C_{2}) A(\lambda,\alpha,l)}{\sqrt{\alpha}}\left(\frac{1}{n}+\frac{1}{m}\right)$
and $A(\lambda,\alpha,l):=\frac{4\sqrt{2\kappa \mathcal{N}_{1}(\lambda) \log \frac{8}{\alpha}}}{\sqrt{\lambda l}} + \frac{16\kappa \log \frac{8}{\alpha}}{\lambda l}  + 2\sqrt{2}\mathcal{N}_{2}(\lambda)$. 
\end{theorem}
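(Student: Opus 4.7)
Under $H_0$ we have $P=Q=R$, and by independence of $(X_i)_{i=1}^n,(Y_j)_{j=1}^m$ from the regularisation sample $\mathbb{Z}^{1:s}$ and the random features $\theta^{1:l}$, conditional on $(\mathbb{Z}^{1:s},\theta^{1:l})$ the statistic $\hat{\eta}_{\lambda,l}$ is a two-sample U-statistic of rank $(2,2)$ with kernel
\[
t(X_i,X_j,Y_{i'},Y_{j'})=\bigl\langle a(X_i)-a(Y_{i'}),\,a(X_j)-a(Y_{j'})\bigr\rangle_{\mathcal{H}_l},\qquad a(x):=g_\lambda^{1/2}(\hat{\Sigma}_{PQ,l})K_l(\cdot,x).
\]
A direct computation shows $\mathbb{E}[t\mid X_i]=\mathbb{E}[t\mid Y_{i'}]=\mathbb{E}[t\mid X_i,Y_{i'}]=0$, so in the Hoeffding expansion only the components $\zeta_{2,0}$ and $\zeta_{0,2}$ survive at leading order. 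In particular $\mathbb{E}[\hat{\eta}_{\lambda,l}\mid\mathbb{Z},\theta]=0$, and I would apply Chebyshev conditionally,
\[
P_{H_0}\bigl(\hat{\eta}_{\lambda,l}\ge\gamma\bigm|\mathbb{Z},\theta\bigr)\;\le\;\frac{\mathrm{Var}(\hat{\eta}_{\lambda,l}\mid\mathbb{Z},\theta)}{\gamma^2},
\]
and control the outer expectation by splitting over a good event on which the operator concentrations below hold.

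\textbf{Conditional variance.} Under $H_0$ the covariance operator of $a(X)$ equals $g_\lambda^{1/2}(\hat{\Sigma}_{PQ,l})\Sigma_{PQ,l}g_\lambda^{1/2}(\hat{\Sigma}_{PQ,l})$, so $\zeta_{2,0}=\zeta_{0,2}=\bigl\|g_\lambda^{1/2}(\hat{\Sigma}_{PQ,l})\Sigma_{PQ,l}g_\lambda^{1/2}(\hat{\Sigma}_{PQ,l})\bigr\|_{\mathcal{L}^2(\mathcal{H}_l)}^{2}$. Plugging the standard combinatorial coefficients of a rank-$(2,2)$ two-sample U-statistic and using the vanishing of the lower-order Hoeffding components gives
\[
\mathrm{Var}\!\left(\hat{\eta}_{\lambda,l}\bigm|\mathbb{Z},\theta\right)\;\lesssim\;\Bigl(\tfrac{1}{n^2}+\tfrac{1}{m^2}\Bigr)\bigl\|g_\lambda^{1/2}(\hat{\Sigma}_{PQ,l})\,\Sigma_{PQ,l}\,g_\lambda^{1/2}(\hat{\Sigma}_{PQ,l})\bigr\|_{\mathcal{L}^2(\mathcal{H}_l)}^{2},
\]
with the remaining Hoeffding components contributing smaller-order terms of size $(nm)^{-1}$ and $(n^2m^2)^{-1}$, which can be absorbed into the leading term.

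\textbf{From empirical RFF operators to population true-kernel operators.} The key analytic step is to strip off the two layers of approximation in the Hilbert--Schmidt norm above. First, using the regulariser assumptions $(\SpectralAssumptionone)$--$(\SpectralAssumptionfour)$, on the event $\bigl\{\bigl\|\Sigma_{PQ,\lambda,l}^{-1/2}(\hat{\Sigma}_{PQ,l}-\Sigma_{PQ,l})\Sigma_{PQ,\lambda,l}^{-1/2}\bigr\|_{\mathcal{L}^{\infty}(\mathcal{H}_l)}\le\tfrac12\bigr\}$ the operators $g_\lambda^{1/2}(\hat{\Sigma}_{PQ,l})$ and $g_\lambda^{1/2}(\Sigma_{PQ,l})$ are comparable up to a factor $C_1+C_2$, yielding $\|g_\lambda^{1/2}(\hat{\Sigma}_{PQ,l})\Sigma_{PQ,l}g_\lambda^{1/2}(\hat{\Sigma}_{PQ,l})\|_{\mathcal{L}^2(\mathcal{H}_l)}\lesssim(C_1+C_2)\mathcal{N}_{2,l}(\lambda)$. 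This event holds with probability at least $1-\alpha/8$ by an operator Bernstein inequality for $\hat{\Sigma}_{PQ,l}$, provided $\lambda\gtrsim\tfrac{\kappa}{s}\log\tfrac{\kappa s}{\alpha}$, which is the first part of the hypothesis. Second, the RFF approximation results of \cite{ApproximateKernelPCARandomFeaturesStergeSriperumbudur}, whose applicability requires the spectral representation $(\RFFAssumptionfour)$, $l\ge L(\alpha/2,1/2)$, and $\lambda\gtrsim\tfrac{\kappa}{l}\log\tfrac{\kappa l}{\alpha}$, give with probability $1-\alpha/8$,
\[
\mathcal{N}_{2,l}(\lambda)\;\le\;2\sqrt{2}\,\mathcal{N}_{2}(\lambda)+\frac{4\sqrt{2\kappa\log(8/\alpha)}}{\sqrt{\lambda l}}+\frac{16\kappa\log(8/\alpha)}{\lambda l}\;=\;A(\lambda,\alpha,l).
\]

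\textbf{Assembly and main obstacle.} Combining the three displays on the intersection of the two good events and taking $\gamma^{2}=48(C_1+C_2)^{2}A(\lambda,\alpha,l)^{2}(n^{-1}+m^{-1})^{2}/\alpha$ makes the conditional Chebyshev bound at most $\alpha/2$, while a union bound over the failure events of the two concentration steps absorbs the remaining $\alpha/2$, giving $P_{H_0}(\hat{\eta}_{\lambda,l}\ge\gamma)\le\alpha$. The principal obstacle is the third step: the three approximations---empirical to population covariance $\hat{\Sigma}_{PQ,l}\to\Sigma_{PQ,l}$, approximate-kernel to true-kernel covariance $\Sigma_{PQ,l}\to\Sigma_{PQ}$, and $\mathcal{N}_{2,l}(\lambda)\to\mathcal{N}_{2}(\lambda)$---must be chained through the regulariser $g_\lambda^{1/2}(\cdot)$, which is only well-controlled when its argument is a small multiplicative perturbation of its target in the $\Sigma_{PQ,\lambda}$-geometry. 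Keeping every perturbation within that range is exactly what dictates the two lower bounds on $\lambda$ (one scaling as $\kappa/s$, the other as $\kappa/l$, both with logarithmic corrections) and the lower bound $l\ge L(\alpha/2,1/2)$ in the hypothesis; the constants $4\sqrt{3}$ and $2\sqrt{2}$ in the statement track the explicit concentration constants and the $\alpha/2+\alpha/2$ failure-probability accounting above.
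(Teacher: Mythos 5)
Your proposal follows essentially the same route as the paper: conditional on $(\mathbb{Z}^{1:s},\theta^{1:l})$ the statistic has zero mean under $H_0$, Chebyshev together with the degenerate two-sample U-statistic variance bound yields a random threshold proportional to $(C_1+C_2)\,\|\mathcal{M}_l\|_{\mathcal{L}^\infty(\mathcal{H}_l)}^{2}\,\mathcal{N}_{2,l}(\lambda)\left(\tfrac1n+\tfrac1m\right)$ with $\mathcal{M}_l=\hat{\Sigma}_{PQ,\lambda,l}^{-1/2}\Sigma_{PQ,\lambda,l}^{1/2}$, the event $\|\mathcal{M}_l\|_{\mathcal{L}^\infty(\mathcal{H}_l)}^{2}\le 2$ is handled by covariance concentration, and a Bernstein bound for $\mathcal{T}_{PQ,l}-\mathcal{T}_{PQ}$ converts $\mathcal{N}_{2,l}(\lambda)$ into $A(\lambda,\alpha,l)$ — exactly the paper's assembly of its Proposition on the random-threshold Type-I bound with its Lemma bounding $\mathcal{N}_{2,l}(\lambda)$. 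One attribution to fix: the hypothesis $l\ge L(\tfrac{\alpha}{2},\tfrac12)$ is needed in the first concentration step, to guarantee $\|\Sigma_{PQ,l}\|_{\mathcal{L}^{\infty}(\mathcal{H}_l)}\ge\lambda$ with high probability so that the empirical-versus-population comparison for $\hat{\Sigma}_{PQ,l}$ (hence the control of $\mathcal{M}_l$) is applicable, not in the step bounding $\mathcal{N}_{2,l}(\lambda)$ by $A(\lambda,\alpha,l)$, which only uses $\tfrac{86\kappa}{l}\log\tfrac{64\kappa l}{\alpha}\le\lambda\le\|\Sigma_{PQ}\|_{\mathcal{L}^{\infty}(\mathcal{H})}$.
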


Based on Theorem \ref{Type-I error bound of Oracle Test in terms of N2} (proved in Section~\ref{subsec:thm1}), we obtain a valid two-sample test of equality of $P$ and $Q$ that rejects the null hypothesis when $\hat{\eta}_{\lambda,l} \geq \gamma$ and $l$ is larger than $L(\frac{\alpha}{2},\frac{1}{2})$. However, the critical threshold $\gamma$ depends on the knowledge of $P$ and $Q$ through the quantities $\mathcal{N}_{1}(\lambda)=\operatorname{Tr}(\Sigma_{P Q, \lambda}^{-1 / 2} \Sigma_{P Q} \Sigma_{P Q, \lambda}^{-1 / 2})$ and $\mathcal{N}_{2}(\lambda)=\|\Sigma_{P Q, \lambda}^{-1 / 2} \Sigma_{P Q} \Sigma_{P Q, \lambda}^{-1 / 2}\| _{\mathcal{L}^{2}(\mathcal{H})}$, which characterize the degrees of freedom of $\mathcal{H}$. Further, the lower bound on the number of random Fourier features $l$ depends not only on the level $\alpha$ but also on the knowledge of $P$ and $Q$ through $\Sigma_{PQ}$. Since $P$ and $Q$ are unknown and we only have access to samples $\mathbb{X}^{1:N} =\left(X_i\right)_{i=1}^N \overset{i.i.d}{\sim}P$ and $\mathbb{Y}^{1:M} =\left(Y_j\right)_{j=1}^M  \overset{i.i.d}{\sim}Q$, this test cannot be implemented in practice. Hence, we refer to this test as the \emph{RFF-based Oracle Test}. We develop completely data-driven two-sample tests in the later sections of this paper based on a permutation testing approach that yields a critical region that utilizes only the sample information and therefore can be implemented in practice. Further, we will show that these latter tests match the statistical efficiency of the RFF-based Oracle Test.

The following result (proved in Section~\ref{subsec:thm2}) provides Type-II error analysis of the RFF-based Oracle Test by characterizing the behavior of the separation boundary $\Delta_{N,M}$ between $P$ and $Q$, the number of random Fourier features $l$, and the regularization parameter $\lambda>0$ that ensures that the test achieves a given Type-II error bound.

\begin{theorem}[Separation boundary of RFF-based Oracle Test]
\label{Power analysis of Oracle test}
Suppose 
$\boldsymbol{(\RFFAssumptionone)}$--$\boldsymbol{(\SpectralAssumptionthree)}$, and $\boldsymbol{(\Samplesizeassumption)}$ hold. Let the number of samples $s$ split from $\mathbb{X}^{1:N}$ and $\mathbb{Y}^{1:M}$ for estimating $\Sigma_{PQ,l}$ be chosen as $s=d_{1}N=d_{2}M$ for $0\leq d_{1} \leq d_{2} \leq 1$, while the number of samples $n=N-s$ and $m=M-s$ for estimating $\mu_{P,l}$ and $\mu_{Q,l}$ respectively satisfy $n,m\geq 2$. For any $0\leq \alpha \leq 1$, consider the level-$\alpha$ test proposed in Theorem \ref{Type-I error bound of Oracle Test in terms of N2} for testing $H_{0}:P = Q$ against $H_{1}: P \neq Q$.
Further, assume that  $\underset{\theta>0}{\sup}\underset{(P,Q) \in \mathcal{P}}{\sup} \Vert\mathcal{T}_{PQ}^{-\theta}u\Vert_{L^{2}(R)} < \infty$ and the regularization parameter $\lambda$ satisfies $\lambda = d_{\theta}\Delta_{N,M}^{\frac{1}{2\theta}} \leq \frac{1}{2}\|\Sigma_{PQ}\|_{\mathcal{L}^{\infty}(\mathcal{H})}$ for some constant $d_{\theta}>0$ that depends on $\theta$. Then, for any $0<\delta\leq 1$, provided $(N+M) \geq \frac{32\kappa d_{2}}{\delta}$, $\mathcal{N}_{2}(d_{\theta}\Delta_{N,M}^{\frac{1}{2\theta}}) \geq 1$, and
$\Delta_{N,M}$ and number of random features $l$ satisfy the following conditions:
\begin{enumerate}

\item $\Delta_{N,M}^{\frac{1}{2\theta}} \gtrsim d_{\theta}^{-1}\max\left\{\frac{\log(N+M)}{(N+M)},\frac{\log (\frac{2}{\delta})}{l},\frac{1}{l}\log \frac{32 \kappa l}{\delta}\right\}$ \label{Theorem Power Analysis Oracle Test Condition 1}


\item $\frac{\Delta_{N,M}^{\frac{1}{2\theta}} }{\mathcal{N}_{1}\left(d_{\theta}\Delta_{N,M}^{1/2\theta}\right)} \gtrsim d_{\theta}^{-1}\frac{\log (\frac{2}{\delta})}{l}$ \label{Theorem Power Analysis Oracle Test Condition 3}




\item $\Delta_{N,M}^{\frac{1+4\theta}{4\theta}} \gtrsim \max\left\{d_{\theta}^{-1/2},d_{\theta}^{-2}\right\}\left(\sqrt{\frac{\log(\frac{8}{\alpha})}{\alpha}} + \frac{\sqrt{\log(\frac{4}{\delta})}}{\delta^{2}}\right) \frac{1}{\sqrt{l}(N+M)}$ \label{Theorem Power Analysis Oracle Test Condition 5}


\item $\Delta_{N,M}^{\frac{1+2\theta}{2\theta}} \gtrsim d_{\theta}^{-1}\left(\frac{\log(\frac{8}{\alpha})}{\sqrt{\alpha}} + \frac{\log(\frac{4}{\delta})}{\delta^{2}}\right)\frac{1}{l(N+M)}$ \label{Theorem Power Analysis Oracle Test Condition 6}


\item $\frac{\Delta_{N,M}}{\mathcal{N}_{2}\left(d_{\theta}\Delta_{N,M}^{\frac{1}{2\theta}}\right)} \gtrsim \frac{\alpha^{-1/2} + \delta^{-2}}{(N+M)}$ \label{Theorem Power Analysis Oracle Test Condition 7}


\item $\Delta_{N,M}^{\frac{3+4\theta}{4\theta}}\gtrsim d_{\theta}^{-\frac{3}{2}}  \frac{\sqrt{\log(\frac{4}{\delta})}}{\delta}\frac{1}{\sqrt{l}(N+M)^{2}}$ \label{Theorem Power Analysis Oracle Test Condition 8}


\item $\Delta_{N,M}^{\frac{1+\theta}{\theta}} \gtrsim d_{\theta}^{-2} \frac{\log(\frac{4}{\delta})}{\delta}\frac{1}{l(N+M)^{2}}$ \label{Theorem Power Analysis Oracle Test Condition 9}


\item $\frac{\Delta_{N,M}^{\frac{1+2\theta}{2\theta}}}{\mathcal{N}_{2}(d_{\theta}\Delta_{N,M}^{\frac{1}{2\theta}})} \gtrsim d_{\theta}^{-1} \frac{1}{\delta(N+M)^2}$ \label{Theorem Power Analysis Oracle Test Condition 10}

\item $l\geq \max\left\{2\log\frac{2}{1-\sqrt{1-\delta}},\frac{128\kappa^{2}\log\frac{2}{1-\sqrt{1-\delta}}}{\left\|\Sigma_{P Q}\right\|_{\mathcal{L}^{\infty}(\mathcal{H})}^{2}}\right\}$\label{Theorem Power Analysis Oracle Test Condition 11},

\end{enumerate}
we have that the power of the test for the class of $\Delta_{NM}$-separated alternatives $\mathcal{P}_{\theta,\Delta_{NM}}$ as defined in \eqref{Class of alternatives} is at least $1-4\delta$, i.e.,
\[
\underset{(P,Q) \in \mathcal{P}_{\theta,\Delta_{NM}}}{\inf} P_{H_{1}}\left(\hat{\eta}_{\lambda,l} \geq \gamma\right) \geq 1-4\delta.
\]
\end{theorem}

It is natural to compare the RFF-based Oracle test to the \say{exact} Oracle Test based on $\hat{\eta}_{\lambda}$ as proposed in Theorem 4.2 of \cite{SpectralTwoSampleTest}, since the \say{exact} Oracle Test satisfies minimax optimality with respect to $\mathcal{P}$ (see Theorem 3.1, Theorem 3.2, Corollary 3.3 and Corollary 3.4 of \cite{SpectralTwoSampleTest}). However, Theorem \ref{Power analysis of Oracle test} is stated in a general form, which obscures the statistical performance of the RFF-based Oracle Test and the conditions under which it matches the statistical efficiency of the \say{exact} Oracle Test (achieving minimax optimality). To that end, we delineate, in particular, the performance of the RFF-based Oracle test by characterizing the behavior of $\Delta_{N,M}$, $l$ and $\lambda>0$ under polynomial and exponential decay of the eigenvalues of the covariance operator $\Sigma_{PQ}$ and develop Corollaries \ref{Power Analysis of Oracle test polynomial decay} and \ref{Power Analysis of Oracle test exponential decay}, which are proved in Sections~\ref{subsec:cor3} and \ref{subsec:cor4}, respectively.

\begin{corollary}[RFF Oracle Test under polynomial decay]
\label{Power Analysis of Oracle test polynomial decay} Suppose the eigenvalues $(\lambda_{i})_{i \in I}$ of $\Sigma_{PQ}$ decay at a polynomial rate, i.e., $\lambda_{i} \asymp i^{-\beta}$ for $\beta > 1$. Then, for any 
$0<\delta\leq 1$, 
there exists constants $c(\alpha,\delta,\theta,\beta)>0$ and $k(\alpha,\delta,\theta,\beta) \in \mathbb{N}$ such that, for any choice of $N+M \geq k(\alpha,\delta,\theta,\beta)$, 
\[
\underset{(P,Q) \in \mathcal{P}_{\theta,\Delta_{NM}}}{\inf} P_{H_{1}}\left(\hat{\eta}_{\lambda,l} \geq \gamma\right) \geq 1-4\delta,
\]
when 
\[
\Delta_{N,M} = 
\begin{cases}
c(\alpha,\delta,\theta,\beta)\left(N+M\right)^{\frac{-4\beta\theta}{1+4\beta\theta}},& \theta > \frac{1}{2} - \frac{1}{4\beta}\\
c(\alpha,\delta,\theta,\beta)\left[\frac{\log(N+M)}{N+M}\right]^{2\theta}, &  \theta \leq \frac{1}{2} - \frac{1}{4\beta}
\end{cases},
\]
provided the number of random features  
is large enough, i.e., 
$$l\gtrsim \begin{cases}
(N+M)^{\frac{2(\beta+1)}{1+4\theta\beta}},&  \theta > \frac{1}{2} - \frac{1}{4\beta}\\
\left[\frac{N+M}{\log (N+M)}\right]^{\frac{\beta+1}{\beta}},& \theta \leq \frac{1}{2} - \frac{1}{4\beta}
\end{cases}.
$$
\end{corollary}

\begin{corollary}[RFF Oracle Test under exponential decay]
\label{Power Analysis of Oracle test exponential decay} Suppose the eigenvalues $(\lambda_{i})_{i \in I}$ of $\Sigma_{PQ}$ decay at an exponential rate i.e. $\lambda_{i} \asymp e^{-\tau i}$ for $\tau >0$. 
Then, for any $0<\delta\leq 1$, there exists constants 
$c(\alpha,\delta,\theta)>0$ and $k(\alpha,\delta,\theta) \in \mathbb{N}$ such that, for any choice of $N+M \geq k(\alpha,\delta,\theta)$, 
\[
\underset{(P,Q) \in \mathcal{P}_{\theta,\Delta_{NM}}}{\inf} P_{H_{1}}\left(\hat{\eta}_{\lambda,l} \geq \gamma\right) \geq 1-4\delta,
\]
when 
\[
\Delta_{N,M} = 
\begin{cases}
c(\alpha,\delta,\theta)\frac{\sqrt{\log(N+M)}}{N+M},& \theta > \frac{1}{2}\\
c(\alpha,\delta,\theta)\left[\frac{\log(N+M)}{N+M}\right]^{2\theta},& \theta \leq \frac{1}{2}
\end{cases},
\]
provided the number of random features 
is large enough, i.e.,
$$l\gtrsim
\begin{cases}
\left(N+M\right)^{\frac{1}{2\theta}}\log(N+M)^{1-\frac{1}{4\theta}}, &  \theta > \frac{1}{2}\\
N+M,& \theta \leq \frac{1}{2}
\end{cases}.
$$


\end{corollary}

\subsection{Permutation test}
\label{subsection: Permutation test}

As discussed in Section \ref{subection: Oracle test}, the Oracle test cannot be practically implemented due to its dependence on the unknown distributions $P$ and $Q$. In this section, we propose a statistical test that matches the statistical performance of the RFF-based Oracle test (and therefore, matches the statistical performance of the \say{exact} Oracle test proposed in Theorem 4.2 of \cite{SpectralTwoSampleTest}) without requiring the knowledge of $P$ and $Q$ other than the information contained in the samples $\mathbb{X}^{1:N} =\left(X_i\right)_{i=1}^N \overset{i.i.d}{\sim}P$ and $\mathbb{Y}^{1:M} =\left(Y_j\right)_{j=1}^M  \overset{i.i.d}{\sim}Q$. This statistical test of hypothesis is based on permutation testing (\cite{lehmann,kim2022minimax}) using the same test statistic as the RFF-based Oracle test, i.e., $\hat{\eta}_{\lambda,l}$, but the critical region is now fully data-driven.

We begin by describing the concept underlying the permutation test. The RFF-based test statistic defined in \eqref{Approximate Kernel Test statistic}, just like its exact kernel-based counterpart in \cite{SpectralTwoSampleTest}, involves sample splitting resulting in three sets of independent samples,  $(X_i)_{i=1}^n \stackrel{i.i.d.}{\sim} P$, $(Y_j)_{j=1}^m \stackrel{i.i.d.}{\sim} Q$, $(Z_i)_{i=1}^s \stackrel{i.i.d.}{\sim} \frac{P+Q}{2}$. Define $(U_i)_{i=1}^n:=(X_i)_{i=1}^n$, and $(U_{n+j})_{j=1}^m:=(Y_j)_{j=1}^m$. Let $\Pi_{n+m}$ be the set of all possible permutations of $\{1,\ldots,n+m\}$ with $\pi \in \Pi_{n+m}$ be a randomly selected permutation from the $D$ possible permutations, where $D :=|\Pi_{n+m}|= (n+m)!$. Define $(X^{\pi}_i)_{i=1}^n := (U_{\pi(i)})_{i=1}^n$ and $(Y^{\pi}_j)_{j=1}^m := (U_{\pi(n+j)})_{j=1}^m$. Let $\hat{\eta}_{\lambda,l}^{\pi}:=\hat{\eta}_{\lambda,l}(X^{\pi},Y^{\pi},Z)$ be the statistic based on the permuted samples and random features and $\hat{\eta}_{\lambda}^{\pi}:=\hat{\eta}_{\lambda}(X^{\pi},Y^{\pi},Z)$ be the statistic based on the permuted samples using the exact kernel. Let $(\pi^i)_{i=1}^B$ be $B$ randomly selected permutations from $\Pi_{n+m}$. For simplicity, define $\hat{\eta}^i_{\lambda,l}:= \hat{\eta}_{\lambda,l}^{\pi^i}$ to represent the statistic based on permuted samples w.r.t.~the random permutation $\pi^i$. Similarly, define $\hat{\eta}^i_{\lambda}:= \hat{\eta}_{\lambda}^{\pi^i}$. Given the samples $(X_i)_{i=1}^n$, $(Y_j)_{j=1}^m$ and $(Z_i)_{i=1}^s$, define
\begin{equation*}
\label{All Permutation CDF for RFF based test}
F_{\lambda,l}(x)\coloneq \frac{1}{D}\sum_{\pi \in \Pi_{n+m}}\mathbf{1}(\hat{\eta}^{\pi}_{\lambda,l} \leq x)
\end{equation*}
to be the permutation distribution function for $\hat{\eta}_{\lambda,l}$. Similarly, define  \begin{equation*}
\label{All Permutation CDF for exact test}
F_{\lambda}(x)\coloneq \frac{1}{D}\sum_{\pi \in \Pi_{n+m}}\mathbf{1}(\hat{\eta}^{\pi}_{\lambda} \leq x)
\end{equation*}
to be the permutation distribution function for $\hat{\eta}_{\lambda}$. Define 
\begin{equation}
\label{All Permutation quantile for RFF based test}
q_{1-\alpha}^{\lambda,l}:= \inf\{q \in \mathbb{R}: F_{\lambda,l}(q) \geq 1-\alpha\}
\end{equation}
and 
\begin{equation*}
\label{All Permutation quantile for exact test}
q_{1-\alpha}^{\lambda}:= \inf\{q \in \mathbb{R}: F_{\lambda}(q) \geq 1-\alpha\}.
\end{equation*}
Furthermore, we define the empirical permutation distribution functions for $\hat{\eta}_{\lambda,l}$ and $\hat{\eta}_{\lambda}$ based on $B$ random permutations as
\begin{equation}
\label{B Permutation CDF for RFF based test}
\hat{F}^{B}_{\lambda,l}(x):= \frac{1}{B}\sum_{i=1}^{B}\mathbf{1}(\hat{\eta}^{i}_{\lambda,l} \leq x),
\end{equation}
and 
\begin{equation*}
\label{B Permutation CDF for exact test}
\hat{F}^{B}_{\lambda}(x):= \frac{1}{B}\sum_{i=1}^{B}\mathbf{1}(\hat{\eta}^{i}_{\lambda} \leq x).
\end{equation*}
Further, define 
\begin{equation}
\label{B Permutation quantile for RFF based test}
\hat{q}_{1-\alpha}^{B,\lambda,l}\coloneq\inf\{q \in \mathbb{R}: \hat{F}^B_{\lambda,l}(q) \geq 1-\alpha\}
\end{equation}
and 
\begin{equation*}
\label{B Permutation quantile for exact test}\hat{q}_{1-\alpha}^{B,\lambda}\coloneq \inf\{q \in \mathbb{R}: \hat{F}^B_{\lambda}(q) \geq 1-\alpha\}.\end{equation*}

We now proceed to provide a level-$\alpha$ permutation test for testing $H_{0}:P=Q$ against $H_{1}:P \neq Q$ for a fixed choice of the regularization parameter $\lambda>0$ satisfying certain mild conditions. We refer to this test as the RFF-based Permutation Test. The following result is proved in Section~\ref{subsec:thm5}.

\begin{theorem}[RFF-based Permutation Test]
\label{Type I-error of RFF-based permutation test} 
Suppose $\boldsymbol{(\RFFAssumptionone)}$--$\boldsymbol{(\SpectralAssumptiontwo)}$ hold. Let $B \geq \frac{1}{2\tilde{w}^{2}\alpha^{2}}\log\frac{2}{\alpha(1-w-\tilde{w})}$, where $0<\alpha\leq 1$, and $0 < \tilde{w} < w  < \frac{1}{2}$. Then,
\[P_{H_{0}}\left\{\hat{\eta}_{\lambda,l} \geq \hat{q}_{1-w\alpha}^{B,\lambda,l}\right\} \leq \alpha\,,\]
i.e., $\left\{\hat{\eta}_{\lambda,l} \geq \hat{q}_{1-w\alpha}^{B,\lambda,l}\right\}$ is the level-$\alpha$ critical region for testing $H_0 :P=Q$ vs. $H_1:P\neq Q$.
\end{theorem}
The following result (proved in Section~\ref{subsec:thm6}) characterizes the separation boundary $\Delta_{N,M}$ of the RFF-based Permutation test, for a given Type-II error, and obtains sufficient conditions on the number of random Fourier features $\l$.

\begin{theorem}[Separation boundary of RFF-based Permutation Test]
\label{Power Analysis for RFF based test based on permutations} Suppose 
$\boldsymbol{(\RFFAssumptionone)}$--$\boldsymbol{(\SpectralAssumptionthree)}$, and $\boldsymbol{(\Samplesizeassumption)}$ hold. Let the number of samples $s$ split from $\mathbb{X}^{1:N}$ and $\mathbb{Y}^{1:M}$ for estimating $\Sigma_{PQ,l}$ be chosen as $s=d_{1}N=d_{2}M$ for $0\leq d_{1} \leq d_{2} \leq 1$, while the number of samples $n=N-s$ and $m=M-s$ for estimating $\mu_{P,l}$ and $\mu_{Q,l}$ respectively satisfy $n,m\geq 2$. For any $ \alpha$,$w$ and $\tilde{w}$ such that $0\leq \alpha \leq 1$, $0 < \tilde{w} < w  < \frac{1}{2}$ and $0\leq (w-\tilde{w})\alpha < e^{-1}$, consider the level-$\alpha$ test of $H_{0}:P = Q$ against $H_{1}: P \neq Q$ proposed in Theorem \ref{Type I-error of RFF-based permutation test} with $\hat{\eta}_{\lambda,l}$ as the test statistic and $\hat{q}_{1-w\alpha}^{B,\lambda,l}$ as the critical threshold. 
Further, assume that  $\underset{\theta>0}{\sup}\underset{(P,Q) \in \mathcal{P}}{\sup} \Vert\mathcal{T}_{PQ}^{-\theta}u\Vert_{L^{2}(R)} < \infty$ and the regularization parameter $\lambda$ satisfies $\lambda = d_{\theta}\Delta_{N,M}^{\frac{1}{2\theta}} \leq \frac{1}{2}\|\Sigma_{PQ}\|_{\mathcal{L}^{\infty}(\mathcal{H})}$ for some constant $d_{\theta}>0$ that depends on $\theta$. 
Define $\tilde{\alpha}=(w-\tilde{w})\alpha$ and let $C^{*}$
be an absolute positive constant as defined in Lemma \ref{Lemma 16}. Then, for any $0<\delta\leq 1$, provided $(N+M) \geq \max\left\{\frac{32\kappa d_{2}}{\delta},\frac{2C^{*}\log\frac{2}{\tilde{\alpha}}}{(1-d_{2})\sqrt{\delta}}\right\}$, $\mathcal{N}_{2}(d_{\theta}\Delta_{N,M}^{\frac{1}{2\theta}}) \geq 1$, $B \geq \frac{\log\frac{2}{\min\left\{{\delta},\alpha(1-w-\tilde{w})\right\}}}{2\tilde{w}^{2}\alpha^{2}}$ and
$\Delta_{N,M}$ and number of random features $l$ satisfy the following conditions:
\begin{enumerate}
\item $\Delta_{N,M}^{\frac{1}{2\theta}} \gtrsim d_{\theta}^{-1}\max\left\{ \frac{\log(N+M)}{(N+M)},\frac{\log \frac{2}{\delta}}{l},\frac{1}{l}\log \frac{32 \kappa l}{\delta}\right\}$ \label{Theorem Power Analysis RFF based permutation test Condition 1}
\item $\frac{\Delta_{N,M}^{\frac{1}{2\theta}} }{\mathcal{N}_{1}\left(d_{\theta}\Delta_{N,M}^{1/2\theta}\right)} \gtrsim d_{\theta}^{-1}\frac{\log (\frac{2}{\delta})}{l}$ \label{Theorem Power Analysis RFF based permutation test Condition 3}
\item$\Delta_{N,M}^{\frac{1+4\theta}{4\theta}} \gtrsim d_{\theta}^{-1/2}\left[\log(\frac{1}{\tilde{\alpha}})\right]^{4}\frac{\sqrt{\log(\frac{4}{\delta})}}{\delta^{2}\sqrt{l}(N+M)}$\label{Theorem Power Analysis RFF based permutation test Condition 5}
\item$\Delta_{N,M}^{\frac{1+2\theta}{2\theta}} \gtrsim d_{\theta}^{-1}\left[\log(\frac{1}{\tilde{\alpha}})\right]^{4} \frac{\log(\frac{4}{\delta})}{\delta^{2}l(N+M)}$\label{Theorem Power Analysis RFF based permutation test Condition 6}
\item$\frac{\Delta_{N,M}}{\mathcal{N}_{2}\left(d_{\theta}\Delta_{N,M}^{\frac{1}{2\theta}}\right)} \gtrsim \frac{\left[\log(\frac{1}{\tilde{\alpha}})\right]^{4}}{\delta^{2}(N+M)}$\label{Theorem Power Analysis RFF based permutation test Condition 7}
\item$\Delta_{N,M}^{\frac{3+4\theta}{4\theta}}\gtrsim d_{\theta}^{-\frac{3}{2}} \frac{\sqrt{\log(\frac{4}{\delta})}}{\delta}\left(\log(\frac{1}{\tilde{\alpha}})\right)^{2} \frac{1}{\sqrt{l}(N+M)^{2}}$\label{Theorem Power Analysis RFF based permutation test Condition 8}
\item 
$\Delta_{N,M}^{\frac{1+\theta}{\theta}} \gtrsim d_{\theta}^{-2}\frac{\log(\frac{4}{\delta})}{\delta}\left(\log(\frac{1}{\tilde{\alpha}})\right)^{2}\frac{1}{l(N+M)^{2}}$\label{Theorem Power Analysis RFF based permutation test Condition 9}
\item 
$\frac{\Delta_{N,M}^{\frac{1+2\theta}{2\theta}}}{\mathcal{N}_{2}(d_{\theta}\Delta_{N,M}^{\frac{1}{2\theta}})} \gtrsim d_{\theta}^{-1}\frac{\left(\log(\frac{1}{\tilde{\alpha}})\right)^{2}}{\delta(N+M)^2}$ \label{Theorem Power Analysis RFF based permutation test Condition 10}
\item $l\geq \max\left\{2\log\frac{2}{1-\sqrt{1-\delta}},\frac{128\kappa^{2}\log\frac{2}{1-\sqrt{1-\delta}}}{\left\|\Sigma_{P Q}\right\|_{\mathcal{L}^{\infty}(\mathcal{H})}^{2}}\right\}$\label{Theorem Power Analysis RFF based permutation test Condition 11},
\end{enumerate}
we have that the power of the test for the class of $\Delta_{NM}$-separated alternatives $\mathcal{P}_{\theta,\Delta_{NM}}$ as defined in \eqref{Class of alternatives} is at least $1-7\delta$, i.e.,
\[
\underset{(P,Q) \in \mathcal{P}_{\theta,\Delta_{NM}}}{\inf} P_{H_{1}}\left(\hat{\eta}_{\lambda,l} \geq \hat{q}_{1-w\alpha}^{B,\lambda,l}\right) \geq 1-7\delta.
\]




\end{theorem}

Theorem~\ref{Power Analysis for RFF based test based on permutations} is stated under very general conditions, making it difficult to appreciate its significance. To better understand its significance, in the following, we derive Corollaries \ref{Power Analysis of RFF based permutation test polynomial decay} and \ref{Power Analysis of RFF based permutation test exponential decay} (proved in Sections~\ref{subsec:cor7} and \ref{subsec:cor8}, respectively) to characterize the behavior of $\Delta_{N,M}$, $l$ and $\lambda>0$ under polynomial and exponential decay of the eigenvalues of the covariance operator $\Sigma_{PQ}$.
\begin{corollary}[RFF Permutation Test under polynomial decay]
\label{Power Analysis of RFF based permutation test polynomial decay} Suppose the eigenvalues $(\lambda_{i})_{i \in I}$ of $\Sigma_{PQ}$ decay at a polynomial rate, i.e., $\lambda_{i} \asymp i^{-\beta}$ for $\beta > 1$. Let $0\leq \alpha \leq 1$, $0 < \tilde{w} < w  < \frac{1}{2}$, $0\leq (w-\tilde{w})\alpha < e^{-1}$, and $\tilde{\alpha}:=(w-\tilde{w})\alpha$. 
Then, for any $0<\delta\leq 1$, there exists constants $c(\tilde{\alpha},\delta,\theta,\beta)>0$ and $k(\tilde{\alpha},\delta,\theta,\beta) \in \mathbb{N}$ such that, for any choice of $N+M \geq k(\tilde{\alpha},\delta,\theta,\beta)$, 
\[
\underset{(P,Q) \in \mathcal{P}_{\theta,\Delta_{NM}}}{\inf} P_{H_{1}}\left(\hat{\eta}_{\lambda,l} \geq \hat{q}_{1-w\alpha}^{B,\lambda,l}\right) \geq 1-7\delta,
\]
when 
\[
\Delta_{N,M} = 
\begin{cases}
c(\tilde{\alpha},\delta,\theta,\beta)\left(N+M\right)^{\frac{-4\beta\theta}{1+4\beta\theta}},& \theta > \frac{1}{2} - \frac{1}{4\beta}\\
c(\tilde{\alpha},\delta,\theta,\beta)\left[\frac{\log(N+M)}{N+M}\right]^{2\theta},&\theta \leq \frac{1}{2} - \frac{1}{4\beta}
\end{cases},
\]
provided $B \geq \frac{1}{2\tilde{w}^{2}\alpha^{2}}\log\frac{2}{\min\left\{{\delta},\alpha(1-w-\tilde{w})\right\}}$ and
$$l\gtrsim \begin{cases}
(N+M)^{\frac{2(\beta+1)}{1+4\theta\beta}},&  \theta > \frac{1}{2} - \frac{1}{4\beta}\\
\left[\frac{N+M}{\log (N+M)}\right]^{\frac{\beta+1}{\beta}},& \theta \leq \frac{1}{2} - \frac{1}{4\beta}
\end{cases}.
$$
\end{corollary}

\begin{corollary}[RFF Permutation Test under exponential decay]
\label{Power Analysis of RFF based permutation test exponential decay} Suppose the eigenvalues $(\lambda_{i})_{i \in I}$ of $\Sigma_{PQ}$ decay at an exponential rate, i.e., $\lambda_{i} \asymp e^{-\tau i}$ for $\tau >0$. Let $0\leq \alpha \leq 1$, $0 < \tilde{w} < w  < \frac{1}{2}$, $0\leq (w-\tilde{w})\alpha < e^{-1}$, and $\tilde{\alpha}:=(w-\tilde{w})\alpha$. Then, for any $0<\delta\leq 1$, there exists constants $c(\tilde{\alpha},\delta,\theta)>0$ and $k(\tilde{\alpha},\delta,\theta) \in \mathbb{N}$ such that, for any choice of $N+M \geq k(\tilde{\alpha},\delta,\theta)$, 
\[
\underset{(P,Q) \in \mathcal{P}_{\theta,\Delta_{NM}}}{\inf} P_{H_{1}}\left(\hat{\eta}_{\lambda,l} \geq \hat{q}_{1-w\alpha}^{B,\lambda,l}\right) \geq 1-7\delta,
\]
when
\[
\Delta_{N,M} = 
\begin{cases}
c(\tilde{\alpha},\delta,\theta)\frac{\sqrt{\log(N+M)}}{N+M},&\theta > \frac{1}{2}\\
c(\tilde{\alpha},\delta,\theta)\left[\frac{\log(N+M)}{N+M}\right]^{2\theta},&\theta \leq \frac{1}{2}
\end{cases},
\]
provided 
$B \geq \frac{1}{2\tilde{w}^{2}\alpha^{2}}\log\frac{2}{\min\left\{{\delta},\alpha(1-w-\tilde{w})\right\}}$ and $$l \gtrsim \begin{cases}\left(N+M\right)^{\frac{1}{2\theta}}\log(N+M)^{1-\frac{1}{4\theta}},& \theta > \frac{1}{2}\\ N+M,& \theta \leq \frac{1}{2}
\end{cases}.$$
\end{corollary}

The above results demonstrate that even though the critical region of the RFF-based Permutation test is fully data-driven and does not require the knowledge of $P$ and $Q$, it matches the statistical performance of the RFF-based Oracle test when the number of random Fourier features $l$ is sufficiently large, and moreover the test is minimax optimal with respect to $\mathcal{P}$. However, it still suffers from a drawback. The efficacy of the test is dependent on choosing the regularization parameter $\lambda$ correctly. The \say{optimal} choice of $\lambda$ that yields a minimax separation boundary (as given in Theorem \ref{Power Analysis for RFF based test based on permutations}, and Corollaries \ref{Power Analysis of RFF based permutation test polynomial decay}, \ref{Power Analysis of RFF based permutation test exponential decay}) depends on the unknown smoothness index $\theta$ of the likelihood ratio deviation $u=\frac{dP}{dR}-1$ (and on the eigenvalue decay rate $\beta$ when the eigenvalues of $\Sigma_{PQ}$ decay at a polynomial rate). We would like to point out that the computation of the test statistic $\hat{\eta}_{\lambda,l}^{i}$ for each of the $B$ permutations can be essentially parallelized, so the computational complexity is not adversely affected. We explicitly calculate the computational complexity of the RFF-based Permutation Test and compare it with that of the \say{exact} Permutation Test in Section \ref{Section: Computational complexity of test statistics}.

\subsection{Adaptation for the choice of regularization parameter} 
\label{subsection: Adaptation over regularization parameter} To mitigate the issue of the dependence of the statistical performance of the RFF-based Permutation Test on the optimal choice of the regularization parameter $\lambda$, we develop a union test, i.e., an aggregation of multiple tests for a range of values of $\lambda$ belonging to a completely data-driven finite set $\Lambda$. In the current section, we prove that the aggregation over multiple choices of the regularization parameter preserves the minimax optimality of the test (upto log log factors) for a wide range of values of the unknown smoothness index $\theta$ of the likelihood ratio deviation $u=\frac{dP}{dR}-1$ (and the eigenvalue decay rate $\beta$ when the eigenvalues of $\Sigma_{PQ}$ decay at a polynomial rate).

Denote the optimal choice of $\lambda$ that leads to a minimax optimal RFF-based Oracle Test or Permutation Test (as defined in Theorems~\ref{Type-I error bound of Oracle Test in terms of N2} and \ref{Type I-error of RFF-based permutation test}, respectively) as $\lambda^{*}$. Assume that there exists a positive constant $\lambda_{L}$  and $b\in \mathbb{N}$ such that $\lambda_{L} \leq \lambda^{*} \leq \lambda_{U}$, where $\lambda_{U} = 2^{b}\lambda_{L}$. Let us define $\Lambda \coloneq \left\{\lambda \in \mathbb{R} : \lambda = 2^{i}\lambda_{L}, i = 0,1,\dots,b\right\} = \left\{\lambda_{L},2\lambda_{L},\dots,\lambda_{U}\right\}$ and let $|\Lambda|$ be its cardinality, given by $|\Lambda| = 1 + \log_{2}\frac{\lambda_{U}}{\lambda_{L}} = 1 + b$. Further, define $s^{*} = \sup\left\{\lambda \in \Lambda : \lambda \leq \lambda^{*}\right\}$. Then, clearly, we have that $\frac{\lambda^{*}}{2} \leq s^{*} \leq \lambda^{*}$ and consequently, $s^{*} \asymp \lambda^{*}$.

We now proceed to provide a level-$\alpha$ permutation-based union test for testing $H_{0}:P=Q$ against $H_{1}:P \neq Q$, where the null hypothesis is rejected if and only if at least one of the permutation tests is rejected for some $\lambda \in \Lambda$. We refer to this test as the RFF-based Adaptive Test, and the following result is proved in Section~\ref{subsec:thm9}.

\begin{theorem}[RFF-based Adaptive Test]
\label{Type I-error of RFF-based permutation test with adaptation over lambda} Suppose $\boldsymbol{(\RFFAssumptionone)}$--$\boldsymbol{(\SpectralAssumptiontwo)}$ hold. Let $B \geq \frac{|\Lambda|^{2}}{2\tilde{w}^{2}\alpha^{2}}\log\frac{2|\Lambda|}{\alpha(1-w-\tilde{w})}$, where $0<\alpha\leq 1$, and $0 < \tilde{w} < w  < \frac{1}{2}$. Then, 
the level-$\alpha$ critical region for testing $H_0 :P=Q$ vs. $H_1:P\neq Q$ is given by $\underset{\lambda \in \Lambda}{\bigcup}\left\{\hat{\eta}_{\lambda,l} \geq \hat{q}_{1-\frac{w\alpha}{|\Lambda|}}^{B,\lambda,l}\right\}$, i.e., 
\[P_{H_{0}}\left(\underset{\lambda \in \Lambda}{\bigcup}\left\{\hat{\eta}_{\lambda,l} \geq \hat{q}_{1-\frac{w\alpha}{|\Lambda|}}^{B,\lambda,l}\right\}\right) \leq \alpha\,.\]
\end{theorem} 
The following result (proved in Section~\ref{subsec:thm10}) performs the power analysis of the RFF-based Adaptive Test by characterizing the behaviour of the separation boundary $\Delta_{N,M}$ between $P$ and $Q$ and the number of random Fourier features $l$ under polynomial and exponential decay of the eigenvalues of the covariance operator $\Sigma_{PQ}$.
\begin{theorem}[Separation boundary of RFF-based Adaptive Test]
\label{Power Analysis for RFF based test based on permutations and adaptation} Suppose 
$\boldsymbol{(\RFFAssumptionone)}$--$\boldsymbol{(\SpectralAssumptionthree)}$, and $\boldsymbol{(\Samplesizeassumption)}$ hold. Let the number of samples $s$ split from $\mathbb{X}^{1:N}$ and $\mathbb{Y}^{1:M}$ for estimating $\Sigma_{PQ,l}$ be chosen as $s=d_{1}N=d_{2}M$ for $0\leq d_{1} \leq d_{2} \leq 1$, while the number of samples $n=N-s$ and $m=M-s$ for estimating $\mu_{P,l}$ and $\mu_{Q,l}$ respectively satisfy $n,m\geq 2$. Let $ \alpha$,$w$ and $\tilde{w}$ be such that $0\leq \alpha \leq 1$, $0 < \tilde{w} < w  < \frac{1}{2}$ and $0\leq (w-\tilde{w})\alpha < e^{-1}$.  
Further, assume that  $c_1:=\underset{\theta>0}{\sup}\underset{(P,Q) \in \mathcal{P}}{\sup} \Vert\mathcal{T}_{PQ}^{-\theta}u\Vert_{L^{2}(R)} < \infty$. 
Then, for any $0<\delta\le 1$, and any choice of $\theta^{*} \in (0,\frac{1}{4}]$, if $\theta\geq\theta^{*}$ and $B \geq \frac{|\Lambda|^{2}}{2\tilde{w}^{2}\alpha^{2}}\max\left\{\log\frac{2|\Lambda|}{\alpha(1-w-\tilde{w})},\log\frac{2}{\delta}\right\}$,
there exists $\tilde{k}\in\mathbb{N}$ such that for all $N+M \geq \tilde{k}$, such that
the power of the level-$\alpha$ test of $H_{0}:P = Q$ against $H_{1}: P \neq Q$ proposed in Theorem \ref{Type I-error of RFF-based permutation test with adaptation over lambda} is at least $1-7\delta$ over the class of $\Delta_{NM}$-separated alternatives $\mathcal{P}_{\theta,\Delta_{NM}}$ as defined in \eqref{Class of alternatives}, i.e., 
\[
\underset{\theta\geq\theta^{*}}{\inf}
\underset{(P,Q) \in \mathcal{P}_{\theta,\Delta_{NM}}}{\inf} P_{H_{1}}\left(\underset{\lambda \in \Lambda}{\bigcup}\left\{\hat{\eta}_{\lambda,l} \geq \hat{q}_{1-\frac{w\alpha}{|\Lambda|}}^{B,\lambda,l}\right\}\right) \geq 1-7\delta,
\]
provided one of the following scenarios is true:
\begin{itemize}
    \item[(i)](Polynomial decay of eigenvalues) The eigenvalues $(\lambda_{i})_{i \in I}$ of $\Sigma_{PQ}$ decay at a polynomial rate, i.e., $\lambda_{i} \asymp i^{-\beta}$ for $\beta > 1$, $\lambda_{L} = r_{1}\frac{\log(N+M)}{N+M}$, $\lambda_{U}=\min\left\{r_{2},\frac{1}{2}\norm{\Sigma_{PQ}}_{\mathcal{L}^{\infty}(\mathcal{H})})\right\}$ for some constants $r_{1},r_{2}>0$, the separation boundary has decay rate of
    \[
    \Delta_{N,M} = 
    c(\tilde{\alpha},\delta)\max\left\{\left[\frac{\log(N+M)}{N+M}\right]^{2\theta},\left[\frac{\log\log (N+M)}{N+M}\right]^{\frac{4\beta\theta}{1+4\beta\theta}} \right\},
    \]
    and the number of random features satisfies $l \gtrsim \max\left\{(N+M)^{\frac{1}{2\theta^{*}}},(N+M)^{2}\right\}$, where $c(\tilde{\alpha},\delta)>0$ is a constant that depends only on $\tilde{\alpha}$ and $\delta$.
    
    \item[(ii)](Exponential decay of eigenvalues) The eigenvalues $(\lambda_{i})_{i \in I}$ of $\Sigma_{PQ}$ decay at an exponential rate, i.e., $\lambda_{i} \asymp e^{-\tau i}$ for $\tau >0$, $\lambda_{L} = r_{3}\frac{\log(N+M)}{N+M}$, $\lambda_{U}=\min\left\{r_{4},e^{-1},\frac{1}{2}\norm{\Sigma_{PQ}}_{\mathcal{L}^{\infty}(\mathcal{H})}\right\}$ for some constants $r_{3},r_{4}>0$, the separation boundary has decay rate of
    \[
    \Delta_{N,M} = 
    c(\tilde{\alpha},\delta,\theta)\max\left\{\left[\frac{\log(N+M)}{N+M}\right]^{2\theta},\frac{\sqrt{\log(N+M)}\log\log(N+M)}{N+M} \right\},
    \]
    with the number of random features satisfying $l \gtrsim \max\left\{(N+M)^{\frac{1}{2\theta^{*}}}\left[\log(N+M)\right],N+M\right\}$, where $c(\tilde{\alpha},\delta,\theta)>0$ is a constant that depends only on $\tilde{\alpha}$, $\delta$ and $\theta$.
\end{itemize}
    
\end{theorem}

We therefore demonstrate that the full data-driven RFF-based Adaptive Test matches the statistical performance of the RFF-based Oracle Test and the \say{exact} Oracle Test when the number of random Fourier features $l$ is sufficiently large. Further, even without the knowledge of the optimal regularization parameter $\lambda^{*}$, the RFF-based Adaptive Test achieves minimax optimality with respect to $\mathcal{P}$ up to a log log factor over a wide range of $\theta$ ($\theta \geq \frac{1}{2}$).

\subsection{Adaptation for choice of kernel and regularization parameter} 
\label{subsection: Adaptation over kernel and regularization parameter}

Despite adapting over the choice of the regularization parameter $\lambda$, the effectiveness of the RFF-based Adaptive Test proposed in Theorem \ref{Type I-error of RFF-based permutation test with adaptation over lambda} is still determined by the choice of the kernel $K$. Further, the class of alternatives with respect to which the RFF-based Adaptive Test is guaranteed to have high power implicitly depends on the choice of the kernel $K$ through the integral operator $\mathcal{T}$ corresponding to $K$. In order to extend the class of alternatives and further improve the efficacy of the RFF-based Adaptive test, we can perform an additional level of adaptation over the choice of the kernel $K$.

More specifically, we consider a collection of kernels $\mathcal{K}$ and define the class of alternatives to be \begin{equation}\label{Class of alternatives aggregated over kernels}
\widetilde{\mathcal{P}}_{K}\eqqcolon\widetilde{\mathcal{P}}_{\theta, \Delta,K}:=\left\{(P, Q): \frac{d P}{d R}-1 \in \operatorname{Ran}\left(\mathcal{T}_{K}^\theta\right), \underline{\rho}^2(P, Q) \geq \Delta\right\}\,.
\end{equation}
$\mathcal{T}_{K}$'s are defined analogous to $\mathcal{T}$ as the integral operator corresponding to every kernel $K \in \mathcal{K}$. Further, $\hat{q}_{1-\alpha,K}^{B,\lambda,l}$'s are defined as the analogue of the quantiles $\hat{q}_{1-\alpha}^{B,\lambda,l}$ defined in \eqref{B Permutation quantile for RFF based test} corresponding to every kernel $K \in \mathcal{K}$.

The most common scenario involves considering $\mathcal{K}$ as a parametrized family of kernels, all of which have the same functional form. For instance, one can consider $\mathcal{K}$ to be the collection of Gaussian kernels indexed by the bandwidth parameter $h$ which belongs to a collection $W$, i.e., $\mathcal{K} = \left\{K_{h}: K_{h}(x,y) = \exp\left(-\frac{\norm{x-y}_{2}^{2}}{2h}\right), x,y \in \R^d\textrm{ and }h \in W\subset(0,\infty)\right\}$. In such scenarios, we denote the dependence of any quantity on the kernel $K \in \mathcal{K}$ using the corresponding $h \in W$, i.e., we replace $K$ by $h$ and $\mathcal{K}$ by $W$ for notational convenience.

In the following theorem, we propose a level-$\alpha$ permutation-based union test for testing $H_{0}:P=Q$ against $H_{1}: (P, Q) \in \cup_{k \in \mathcal{K} }\cup_{\theta>0}\widetilde{\mathcal{P}}_{K}$ in the case when $|\mathcal{K}|<\infty$, where the null hypothesis is rejected if and only if atleast one of the permutation tests is rejected for some $(\lambda,K) \in \Lambda \times \mathcal{K}$. We refer to this test as the RFF-based Kernel Adaptive Test. The following result is proved in Section~\ref{subsec:thm11}.






\begin{theorem}[RFF-based Kernel Adaptive Test]
\label{Type I-error of RFF-based permutation test with adaptation over kernel and lambda} Suppose $\boldsymbol{(\RFFAssumptionone)}$--$\boldsymbol{(\SpectralAssumptiontwo)}$ hold. Let $|\mathcal{K}|<\infty$ and $B \geq \frac{|\Lambda|^{2}|\mathcal{K}|^2}{2\tilde{w}^{2}\alpha^{2}}\log\frac{2|\Lambda||\mathcal{K}|}{\alpha(1-w-\tilde{w})}$, where $0<\alpha\leq 1$, and $0 < \tilde{w} < w  < \frac{1}{2}$. Then, 
the level-$\alpha$ critical region for testing $H_0 :P=Q$ vs. $H_1:P\neq Q$ is given by $\underset{(\lambda,K) \in \Lambda \times \mathcal{K}}{\bigcup}\left\{\hat{\eta}_{\lambda,l}^{(K)} \geq q_{1-\frac{w\alpha}{|\Lambda||\mathcal{K}|},K}^{B,\lambda,l}\right\}$, i.e., 
\[P_{H_{0}}\left(\underset{(\lambda,K) \in \Lambda \times \mathcal{K}}{\bigcup}\left\{\hat{\eta}_{\lambda,l}^{(K)} \geq q_{1-\frac{w\alpha}{|\Lambda||\mathcal{K}|},K}^{B,\lambda,l}\right\}\right) \leq \alpha\,,\]
where $\hat{\eta}_{\lambda,l}^{(K)}$ is the permuted test statistic in Theorem~\ref{Type I-error of RFF-based permutation test with adaptation over lambda} but for a given kernel $K$.
\end{theorem}

The following result provides separation rates for the RFF-based Kernel Adaptive Test, which match the minimax rates as long as $l$ is large enough. 
The proof is almost similar to that of Theorem \ref{Power Analysis for RFF based test based on permutations and adaptation} upon replacing $|\Lambda|$ by $|\Lambda||\mathcal{K}|$, and is therefore omitted.

\begin{theorem}[Separation boundary of RFF-based Kernel Adaptive Test]
\label{Power Analysis for RFF based test based on permutations and adaptation over kernel and lambda} Suppose 
$\boldsymbol{(\RFFAssumptionone)}$--$\boldsymbol{(\SpectralAssumptionthree)}$, and $\boldsymbol{(\Samplesizeassumption)}$ hold. Let the number of samples $s$ split from $\mathbb{X}^{1:N}$ and $\mathbb{Y}^{1:M}$ for estimating $\Sigma_{PQ,l}$ be chosen as $s=d_{1}N=d_{2}M$ for $0\leq d_{1} \leq d_{2} \leq 1$, while the number of samples $n=N-s$ and $m=M-s$ for estimating $\mu_{P,l}$ and $\mu_{Q,l}$ respectively satisfy $n,m\geq 2$. Let $ \alpha$,$w$ and $\tilde{w}$ be such that $0\leq \alpha \leq 1$, $0 < \tilde{w} < w  < \frac{1}{2}$ and $0\leq (w-\tilde{w})\alpha < e^{-1}$.  
Assume $c_{1} = \underset{K \in \mathcal{K}}{\sup}\underset{\theta>0}{\sup}\underset{(P,Q) \in \widetilde{\mathcal{P}}_{K}}{\sup} \Vert\mathcal{T}_{K}^{-\theta}u\Vert_{L^{2}(R)} < \infty$. Then, for any $0<\delta\le 1$, and any choice of $\theta^{*} \in (0,\frac{1}{4}]$, if $\theta\geq\theta^{*}$, and $B \geq \frac{|\Lambda|^{2}|\mathcal{K}|^{2}}{2\tilde{w}^{2}\alpha^{2}}\max\left\{\log\frac{2|\Lambda||\mathcal{K}|}{\alpha(1-w-\tilde{w})},\log\frac{2}{\delta}\right\}$,
there exists $k\in\mathbb{N}$ such that for all $N+M \geq k$, 
the power of the level-$\alpha$ test of $H_{0}:P = Q$ against $H_{1}: P \neq Q$ proposed in Theorem \ref{Type I-error of RFF-based permutation test with adaptation over kernel and lambda} is at least $1-7\delta$ over the class of $\Delta_{NM}$-separated alternatives $\mathcal{P}_{\theta,\Delta_{NM},K}$ as defined in \eqref{Class of alternatives aggregated over kernels}, i.e., 
\[
\underset{K \in \mathcal{K}}{\inf}\underset{\theta\geq\theta^{*}}{\inf}
\underset{(P,Q) \in \widetilde{\mathcal{P}}_{K}}{\inf} P_{H_{1}}\left(\underset{(\lambda,K) \in \Lambda \times \mathcal{K}}{\bigcup}\left\{\hat{\eta}^{(K)}_{\lambda,l} \geq q_{1-\frac{w\alpha}{|\Lambda||\mathcal{K}|},K}^{B,\lambda,l}\right\}\right) \geq 1-7\delta,
\]
provided one of the following scenarios is true: For any $K \in \mathcal{K}$ and $(P,Q) \in \tilde{\mathcal{P}}_{K}$,
\begin{itemize}
    \item[(i)](Polynomial decay of eigenvalues) The eigenvalues $(\lambda_{i})_{i \in I}$ of $\Sigma_{PQ}$ decay at a polynomial rate, i.e., $\lambda_{i} \asymp i^{-\beta}$ for $\beta > 1$, $\lambda_{L} = r_{1}\frac{\log(N+M)}{N+M}$, $\lambda_{U}=\min\left\{r_{2},\frac{1}{2}\norm{\Sigma_{PQ}}_{\mathcal{L}^{\infty}(\mathcal{H})}\right\}$ for some constants $r_{1},r_{2}>0$, the separation boundary satisfies
    \[
    \Delta_{N,M} = 
    c(\tilde{\alpha},\delta)\max\left\{\left[\frac{\log(N+M)}{N+M}\right]^{2\theta},\left[\frac{\log\left\{|\mathcal{K}|\log (N+M)\right\}}{N+M}\right]^{\frac{4\beta\theta}{1+4\beta\theta}} \right\},
    \]
    and the number of random features satisfies $l \gtrsim \max\left\{(N+M)^{\frac{1}{2\theta^{*}}},(N+M)^{2}\right\}$, where $c(\tilde{\alpha},\delta)>0$ is a constant that depends only on $\tilde{\alpha}$ and $\delta$.
    
    \item[(ii)](Exponential decay of eigenvalues) Let the eigenvalues $(\lambda_{i})_{i \in I}$ of $\Sigma_{PQ}$ decay at an exponential rate, i.e., $\lambda_{i} \asymp e^{-\tau i}$ for $\tau >0$, $\lambda_{L} = r_{3}\frac{\log(N+M)}{N+M}$, $\lambda_{U}=\min\left\{r_{4},e^{-1},\frac{1}{2}\norm{\Sigma_{PQ}}_{\mathcal{L}^{\infty}(\mathcal{H})})\right\}$ for some constants $r_{3},r_{4}>0$, the separation boundary achieves the following rate of decay
    \[
    \Delta_{N,M} = 
    c(\tilde{\alpha},\delta,\theta)\max\left\{\left[\frac{\log(N+M)}{N+M}\right]^{2\theta},\frac{\sqrt{\log(N+M)}\log\left\{|\mathcal{K}|\log (N+M)\right\}}{N+M} \right\}
    \]
    and the number of random features satisfies $l \gtrsim \max\left\{(N+M)^{\frac{1}{2\theta^{*}}}\left[\log(N+M)\right],N+M\right\}$, where $c(\tilde{\alpha},\delta,\theta)>0$ is a constant that depends only on $\tilde{\alpha}$, $\delta$ and $\theta$.
\end{itemize}
    
\end{theorem}

We thus show that the fully data-driven RFF-based Kernel Adaptive Test attains the same statistical performance as both the RFF-based Oracle Test and the \say{exact} Oracle Test when the number of random Fourier features $l$ is sufficiently large. Moreover, even without prior knowledge of the true kernel $K$ or the optimal regularization parameter $\lambda^{*}$, the RFF-based Kernel Adaptive Test remains minimax optimal with respect to $\tilde{\mathcal{P}}_{K}$ up to a log log factor across a broad range of  $\theta$ ($\theta \geq \frac{1}{2}$). 

\section{Computational complexity of test statistics}
\label{Section: Computational complexity of test statistics}

The primary focus of the current paper is to show that the use of RFF sampling reduces the computational complexity of the spectral regularized MMD test without compromising the statistical efficiency of the test, provided the number of random features $l$ is sufficiently large. The permutation-based tests proposed in Theorems \ref{Type I-error of RFF-based permutation test} and \ref{Type I-error of RFF-based permutation test with adaptation over lambda} can be implemented in practice, so we focus on the computational complexity of these tests only. Further, the fully data-adaptive tests (the approximate RFF-based test proposed in Theorem \ref{Type I-error of RFF-based permutation test with adaptation over lambda} and the \say{exact} test) primarily involves the computation of the approximate RFF-based test statistic for each of the $B$ randomly chosen permutations of the samples $(\mathbb{X}^{1:N},\mathbb{Y}^{1:M})$ and each $\lambda$ in the chosen range between $\lambda_{L}$ and $\lambda_{U}$. However, since each permutation test is essentially implemented in a parallel manner, the computational efficiency of the entire adaptive test is captured by the computational efficiency analysis for a single $\lambda$.


To give a complete picture, for a fixed regularization parameter $\lambda>0$ and the number of random Fourier features sampled $l$, we explicitly calculate and compare the number of mathematical operations (addition, subtraction, multiplication, division) required to compute the \say{exact} spectral regularized MMD test statistic $\hat{\eta}_{\lambda}$ and the RFF-based approximate spectral regularized MMD test statistic $\hat{\eta}_{\lambda,l}$. We consider the data domain $\mathcal{X}$ to be embedded in $\R^d$, i.e., the data dimension is $d$ and $s\asymp N+M$. We also consider norm-based translational invariant kernels (such as the Gaussian kernel and the Laplace kernel) for obtaining concrete estimates of the computational complexity of the test-statistics.

\subsection{Computational complexity of ``exact'' spectral regularized MMD test statistic $\hat{\eta}_{\lambda}$}

For the sake of completeness, we provide the computational algorithm stated in Theorem 4.1 of \cite{SpectralTwoSampleTest} used for computing the \say{exact} spectral regularized MMD test statistic $\hat{\eta}_{\lambda}$.

\begin{theorem}[Theorem 4.1 of \cite{SpectralTwoSampleTest}]
Let $\left(\hat{\lambda}_i, \hat{\alpha}_i\right)_i$ be the eigensystem of $\frac{1}{s} \tilde{H}_s^{1 / 2} K_s \tilde{H}_s^{1 / 2}$ where $K_s:=\left[K\left(Z_i, Z_j\right)\right]_{i, j \in[s]}$, $H_s=I_s-\frac{1}{s} \mathbf{1}_s \mathbf{1}_s^{\top}$, and $\tilde{H}_s=\frac{s}{s-1} H_s$. Define \\$G:=\sum_i\left(\frac{g_\lambda\left(\hat{\lambda}_i\right)-g_\lambda(0)}{\hat{\lambda}_i}\right) \hat{\alpha}_i \hat{\alpha}_i^{\top}$. Then
$$
\hat{\eta}_\lambda=\frac{1}{n(n-1)}\left(\circled{\emph{\small{1}}} - \circled{\emph{\small{2}}}\right)+\frac{1}{m(m-1)}\left(\circled{\emph{\small{3}}} - \circled{\emph{\small{4}}}\right)-\frac{2}{n m}\circled{\emph{\small{5}}}
$$
where $\circled{\emph{\small{1}}}=\mathbf{1}_n^{\top} A_1 \mathbf{1}_n$, $\circled{\emph{\small{2}}}=\operatorname{Tr}\left(A_1\right)$, $\circled{\emph{\small{3}}}=\mathbf{1}_n^{\top} A_2 \mathbf{1}_n$, $\circled{\emph{\small{4}}}=\operatorname{Tr}\left(A_2\right)$, and
$$
\circled{\emph{\small{5}}}=\mathbf{1}_m^{\top}\left(g_\lambda(0) K_{m n}+\frac{1}{s} K_{m s} \tilde{H}_s^{1 / 2} G \tilde{H}_s^{1 / 2} K_{n s}^{\top}\right) \mathbf{1}_n
$$
with $A_1:=g_\lambda(0) K_n+\frac{1}{s} K_{n s} \tilde{H}_s^{1 / 2} G \tilde{H}_s^{1 / 2} K_{n s}^{\top}$ and $A_2:=g_\lambda(0) K_m+\frac{1}{s} K_{m s} \tilde{H}_s^{1 / 2} G \tilde{H}_s^{1 / 2} K_{m s}^{\top}$. Here $K_n:=\left[K\left(X_i, X_j\right)\right]_{i, j \in[n]}, K_m:=\left[K\left(Y_i, Y_j\right)\right]_{i, j \in[m]},\left[K\left(X_i, Z_j\right)\right]_{i \in[n], j \in[s]}=: K_{n s}$,\\
$K_{m s}:=\left[K\left(Y_i, Z_j\right)\right]_{i \in[m], j \in[s]}$, and $K_{m n}:=\left[K\left(Y_i, X_j\right)\right]_{i \in[m], j \in[n]}$.
\end{theorem}

We calculate in detail the computational complexity of each step involved in computing $\hat{\eta}_\lambda$ based on Theorem 4.1 of \cite{SpectralTwoSampleTest}in Section \ref{Appendix: Computational complexity details of exact test statistic} of the Appendix. Based on this calculation, the total computational complexity of the \say{exact} spectral regularized MMD test statistic $\hat{\eta}_{\lambda}$ in terms of number of mathematical operations is \[O(s^3+ns^2+ms^2+s^2d+n^2d+m^2d+nsd+msd+mnd).\]

\subsection{Computational complexity of RFF-based approximate spectral regularized MMD test statistic}

The following theorem, whose proof is shown as Algorithm~\ref{alg:approx-test-statistic} provides a computational form for the RFF-based approximate spectral regularized MMD test statistic, when the kernel $K$ is symmetric, real-valued, and translation invariant on $\R^d$.

\begin{theorem}[Computational complexity of RFF-based approximate spectral regularized MMD test statistic]\label{Computational complexity of RFF-based approximate spectral regularized MMD test statistic} Let \(\{X_i\}_{i=1}^{n}\), \(\{Y_j\}_{j=1}^{m}\), \(\{Z_i\}_{i=1}^{s}\), and \(\{\theta_i\}_{i=1}^{l}\) be as described in 
Sections~\ref{subsec : Spectral Regularized MMD Test} and~\ref{subsec: Construction of the test statistic and the test (Approx)}.
Form the matrices $X = \left[X_{1}  \dots X_{n}\right]$, $Y = \left[Y_{1}  \dots Y_{m}\right]$, $Z = \left[Z_{1}  \dots Z_{s}\right]$ and $\Theta = \begin{bmatrix}
    \theta_{1}^{T} \\ \vdots \\\theta_{l}^{T}
\end{bmatrix}$. Define  $M_X = X^{T} \Theta ^{T}$, $M_Y = Y^{T} \Theta ^{T}$ and $M_{Z} = Z^{T} \Theta ^{T}$.
Let
$\Phi(X) = \frac{K(0,0)}{\sqrt{l}}P_{l}^{T}\begin{bmatrix} \cos(M_X) \;\big|\;\sin(M_X) \end{bmatrix}^{T}$, $\Phi(Y) = \frac{K(0,0)}{\sqrt{l}}P_{l}^{T}\begin{bmatrix} \cos(M_Y) \;\big|\; \sin(M_Y) \end{bmatrix}^{T}$ and $\Phi(Z) = \frac{K(0,0)}{\sqrt{l}}P_{l}^{T}\begin{bmatrix} \cos(M_Z) \;\big|\; \sin(M_Z) \end{bmatrix}^{T}$
where $P_{l}$ is the $2l\times 2l$ column-interleaving permutation matrix
$P_{l} = 
\begin{bmatrix}
e_{1,2l} \quad e_{l+1,2l} \quad e_{2,2l} \quad e_{l+2,2l}\;\cdots \;e_{l,2l}\quad e_{2l,2l}
\end{bmatrix}$,
and $e_{i,2l}$ is the standard basis vector in $\mathbb{R}^{2l}$ with $1$ in the $i$-th coordinate and $0$ elsewhere. 

Let $(\hat{\lambda}_{i},\hat{\alpha}_{i})_{i=1}^{2l}$ be the eigenvalue-eigenvector pairs of $\frac{1}{s(s-1)}\Phi(Z)\left[sI_{s} - \mathbf{1}_{s}\mathbf{1}_{s}^{T}\right]\Phi(Z)^{T}$. Define $G = \sum_{i=1}^{2l} \sqrt{g_{\lambda}(\hat{\lambda}_{i})}\hat{\alpha}_{i} \hat{\alpha}_{i}^{T}$. Then the RFF-based approximate spectral regularized MMD test statistic, as in 
\eqref{Approximate Kernel Test statistic}, is given by
\[
\hat{\eta}_{\lambda,l}
\;=\;
\frac{A}{n(n-1)}
\;+\;
\frac{B}{m(m-1)}
\;-\;
\frac{2\,C}{n\,m},
\]
where
$A = \mathbf{1}_{n}^{T} \Phi(X)^{T}G^{T}G\Phi(X) \mathbf{1}_{n} - \sum_{i=1}^{n} e_{i,n}^{T} \Phi(X)^{T}G^{T}G\Phi(X) e_{i,n}$, $B = \mathbf{1}_{m}^{T} \Phi(Y)^{T}G^{T}G\Phi(Y) \mathbf{1}_{m} - \sum_{j=1}^{m} e_{j,m}^{T} \Phi(Y)^{T}G^{T}G\Phi(Y) e_{j,m}$, and $C= \mathbf{1}_{n}^{T} \Phi(X)^{T}G^{T}G\Phi(Y) \mathbf{1}_{m}$. 

\end{theorem}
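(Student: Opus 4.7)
The plan is to reduce $\hat{\eta}_{\lambda,l}$ to a purely finite-dimensional linear algebra computation by identifying the RKHS $\mathcal{H}_l$ with $\mathbb{R}^{2l}$ via the RFF map $\Phi_l:\mathcal{X}\to\mathbb{R}^{2l}$. Under this identification, $K_l(\cdot,x)\in\mathcal{H}_l$ corresponds to $\Phi_l(x)\in\mathbb{R}^{2l}$, the inner product $\langle\cdot,\cdot\rangle_{\mathcal{H}_l}$ becomes the Euclidean inner product, and self-adjoint operators on $\mathcal{H}_l$ become $2l\times 2l$ real symmetric matrices on which functional calculus reduces to eigendecomposition-based matrix functions. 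First I would verify that the columns of $\Phi(X),\Phi(Y),\Phi(Z)$ defined in the theorem equal $\Phi_l(X_i),\Phi_l(Y_j),\Phi_l(Z_k)$ respectively. Concretely, the column-interleaving matrix $P_l^\top$ converts the ``block'' ordering of $[\cos(M_X)\mid\sin(M_X)]^\top$ (all cosine entries followed by all sine entries) into the ``interleaved'' ordering $(\cos(\theta_1^\top x),\sin(\theta_1^\top x),\cos(\theta_2^\top x),\sin(\theta_2^\top x),\ldots)$ that matches the construction $\Phi_l(x)=\tfrac{1}{\sqrt{l}}[\varphi_{\theta_1}(x)^\top,\ldots,\varphi_{\theta_l}(x)^\top]^\top$; this is purely a permutation-matching check.

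Next, I would rewrite the U-statistic estimate $\hat{\Sigma}_{PQ,l}=\frac{1}{2s(s-1)}\sum_{i\neq j}(\Phi_l(Z_i)-\Phi_l(Z_j))(\Phi_l(Z_i)-\Phi_l(Z_j))^\top$ in matrix form. Expanding the outer product, noting that the $i=j$ contribution vanishes so the constraint $i\neq j$ may be dropped, and collecting terms yields $\hat{\Sigma}_{PQ,l}=\frac{1}{s(s-1)}\Phi(Z)[sI_s-\mathbf{1}_s\mathbf{1}_s^\top]\Phi(Z)^\top$, which is exactly the matrix whose eigendecomposition defines $(\hat{\lambda}_i,\hat{\alpha}_i)_{i=1}^{2l}$ in the theorem. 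Applying functional calculus to this $2l\times 2l$ symmetric positive semidefinite matrix, $g_\lambda^{1/2}(\hat{\Sigma}_{PQ,l})=\sum_{i=1}^{2l}\sqrt{g_\lambda(\hat{\lambda}_i)}\hat{\alpha}_i\hat{\alpha}_i^\top=G$. Here one has to check that the null-space convention from the paper's definition of $s(\mathcal{M})$, which splits off a separate $s(0)(I-\sum_i\psi_i\otimes\psi_i)$ term on the kernel of $\mathcal{M}$, coincides with the sum over all $2l$ eigenvectors: this holds because the zero-eigenvalue eigenvectors span the null space and $\sqrt{g_\lambda(0)}=1/\sqrt{\lambda}$ supplies the correct coefficient.

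The remaining step is the algebraic expansion of the U-statistic in \eqref{Approximate Kernel Test statistic}. Under the identification above, $t(X_i,X_j,Y_{i'},Y_{j'})$ becomes $(\Phi_l(X_i)-\Phi_l(Y_{i'}))^\top G^\top G(\Phi_l(X_j)-\Phi_l(Y_{j'}))$. Expanding into four bilinear pieces and summing, each summand depends only on the pair of indices actually appearing in it, so the constraints $i\neq j$ and $i'\neq j'$ act nontrivially only on the constraint's ``own'' pair and contribute simple multiplicative factors elsewhere. The pure-$X$ piece gives $m(m-1)\sum_{i\neq j}\Phi_l(X_i)^\top G^\top G\Phi_l(X_j)=m(m-1)A$, where $A=\mathbf{1}_n^\top\Phi(X)^\top G^\top G\Phi(X)\mathbf{1}_n-\sum_i e_{i,n}^\top\Phi(X)^\top G^\top G\Phi(X)e_{i,n}$ is the full double sum minus the diagonal; symmetrically the pure-$Y$ piece yields $n(n-1)B$. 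The two cross terms $-\Phi_l(X_i)^\top G^\top G\Phi_l(Y_{j'})$ and $-\Phi_l(Y_{i'})^\top G^\top G\Phi_l(X_j)$ each decouple completely under the index constraints and, using symmetry of $G^\top G$, combine to $-2(n-1)(m-1)C$ with $C=\mathbf{1}_n^\top\Phi(X)^\top G^\top G\Phi(Y)\mathbf{1}_m$. Dividing the total by $n(n-1)m(m-1)$ produces the stated formula.

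The main obstacle is the null-space bookkeeping in the second step and the careful constraint-decoupling in the third; neither is conceptually deep, but both are easy to mishandle in the index arithmetic, so I would write out the quadruple sum explicitly and verify each combinatorial factor. All other steps are routine linear algebra and require no additional machinery beyond standard functional calculus on finite-dimensional symmetric matrices.
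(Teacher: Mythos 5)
Your derivation is correct and is exactly the argument the paper relies on (the paper itself only states the matrix form and the accompanying algorithm/complexity count, mirroring Theorem 4.1 of \cite{SpectralTwoSampleTest}, without spelling out the verification): the matrix identity $\hat{\Sigma}_{PQ,l}=\frac{1}{s(s-1)}\Phi(Z)[sI_{s}-\mathbf{1}_{s}\mathbf{1}_{s}^{T}]\Phi(Z)^{T}$, the reduction of $g_{\lambda}^{1/2}(\hat{\Sigma}_{PQ,l})$ to $G$ via the eigendecomposition, and the combinatorial factors $m(m-1)A$, $n(n-1)B$, $-2(n-1)(m-1)C$ all check out. One inessential remark: your claim $\sqrt{g_{\lambda}(0)}=1/\sqrt{\lambda}$ is specific to regularizers like Tikhonov/Showalter, but it is not needed — since $\sum_{i=1}^{2l}\hat{\alpha}_{i}\hat{\alpha}_{i}^{T}=I_{2l}$, summing over all $2l$ eigenpairs automatically reproduces the paper's functional-calculus convention with whatever value $g_{\lambda}(0)$ takes.
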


\renewcommand{\algorithmicensure}{\textbf{Output:}}
\begin{algorithm}[!htbp]
\caption{Computation of RFF-based approximate spectral regularized MMD test statistic $\hat{\eta}_{\lambda,l}$}
\label{alg:approx-test-statistic}
\begin{algorithmic}[1]
\REQUIRE $\{X_i\}_{i=1}^N  \overset{i.i.d}{\sim}P$ ; $\{Y_j\}_{j=1}^M  \overset{i.i.d}{\sim}Q$; Number of random features $l$, Number of sample points $s \in \mathbb{N}$ for covariance operator estimation; Kernel $K$ with spectral (Fourier) distribution $\Xi$; spectral regularizer $g_{\lambda}$
\ENSURE RFF-based approximate spectral regularized MMD test statistic $\hat{\eta}_{\lambda,l}$
\STATE Split $\{X_i\}_{i=1}^N$ into $\left(X_i\right)_{i=1}^{n}\coloneqq\left(X_i\right)_{i=1}^{N-s}$ and $\left(X_i^1\right)_{i=1}^s\coloneqq \left(X_i\right)_{i=N-s+1}^N$. Similarly, split $\{Y_j\}_{j=1}^M$ into $\left(Y_j\right)_{j=1}^{m} \coloneqq \left(Y_j\right)_{j=1}^{M-s}$ and $\left(Y_j^1\right)_{j=1}^s\coloneqq \left(Y_j\right)_{j=M-s+1}^M$.
\STATE Construct the matrices $X = \left[X_{1}  \dots X_{n}\right]$ and $Y = \left[Y_{1}  \dots Y_{m}\right]$.
\STATE Sample $\left\{\theta_{i}\right\}_{i=1}^{l} \overset{i.i.d}{\sim} \Xi$ and form the matrix $\Theta = \begin{bmatrix}
    \theta_{1}^{T} \\ \vdots \\\theta_{l}^{T}
\end{bmatrix}$.
\STATE Sample $\left(\alpha_i\right)_{i=1}^s \stackrel{i . i . d}{\sim} \operatorname{Bernoulli}(1 / 2)$ and compute $Z_i=\alpha_i X_i^1+\left(1-\alpha_i\right) Y_i^1$, for $1 \leq i \leq s$. Then, form the matrix $Z = \left[Z_{1}  \dots Z_{s}\right]$.
\STATE Compute the matrices $M_X = X^{T} \Theta ^{T} = (\Theta X)^T$, $M_Y = Y^{T} \Theta ^{T} = (\Theta Y)^T$ and $M_Z = Z^{T} \Theta ^{T} = (\Theta Z)^T$.
\STATE Compute matrix of random features corresponding to $X_{i}$'s ($i=1,\dots,n$) as $\Phi(X) = \frac{1}{\sqrt{l}}P_{l}^{T}\begin{bmatrix} \cos(M_X) \,|\,\sin(M_X) \end{bmatrix}^{T}$, the  matrix of random features corresponding to $Y_{j}$'s ($j=1,\dots,m$) as $\Phi(Y) = \frac{1}{\sqrt{l}}P_{l}^{T}\begin{bmatrix} \cos(M_Y) \,|\, \sin(M_Y) \end{bmatrix}^{T}$ and the matrix of random features corresponding to $Z_{i}$'s as $\Phi(Z) = \frac{1}{\sqrt{l}}P_{l}^{T}\begin{bmatrix} \cos(M_Z) \,|\, \sin(M_Z) \end{bmatrix}^{T}$, where $P_{l} = 
\begin{bmatrix}
e_{1,2l} \quad e_{l+1,2l} \quad e_{2,2l} \quad e_{l+2,2l}\;\cdots \;e_{l,2l}\quad e_{2l,2l}
\end{bmatrix}$.
\STATE Compute the matrix $K_s = \Phi(Z)\Phi(Z)^{T} $ and the vector $v_Z = \Phi(Z) \mathbf{1}_{s}$.
\STATE Compute the matrix $\hat{\Sigma}_{PQ,l} = \frac{1}{s(s-1)} (s K_{s} - v_{Z}v_{Z}^{T})$.
\STATE Compute the eigenvalue-eigenvector pairs $(\hat{\lambda}_{i},\hat{\alpha}_{i})$ corresponding to $\hat{\Sigma}_{PQ,l}$. Construct the diagonal matrix $D = \begin{bmatrix}
\hat{\lambda}_{1} & & \\
& \ddots & \\
& & \hat{\lambda}_{2l}
\end{bmatrix}$ and the matrix $V = \left[ \hat{\alpha}_{1} \dots \hat{\alpha}_{2l} \right]$.
\STATE Construct the matrix $G = V L^{1/2} V^{T}$, where $L^{1/2} = \begin{bmatrix}
\sqrt{g_{\lambda}(\hat{\lambda}_{1})} & & \\
& \ddots & \\
& & \sqrt{g_{\lambda}(\hat{\lambda}_{2l})}
\end{bmatrix}$.
\STATE Compute the matrices $\Psi(X) = G \Phi(X)$ and $\Psi(Y) = G \Phi(Y)$.
\STATE Compute the vectors $v_{X,i} = \Psi(X) e_{i,n}$ for $i=1,\dots,n$ and $v_{Y,j} = \Psi(Y) e_{j.m}$ for $j=1,\dots,m$, where $\left\{e_{i,n}\right\}_{i=1}^{n}$ and $\left\{e_{j,m}\right\}_{j=1}^{m}$ are standard basis vectors $\R^n$ and $\R^m$, respectively.
\STATE Compute $v_{X} = \sum_{i=1}^{n} v_{X,i}$ and $v_{Y} = \sum_{j=1}^{m} v_{Y,j}$.
\STATE Compute $A = v_{X}^{T} v_{X} - \sum_{i=1}^{n}v_{X,i}^{T} v_{X,i}$.
\STATE Compute $B = v_{Y}^{T} v_{Y} - \sum_{j=1}^{m}v_{X,j}^{T} v_{X,j}$.
\STATE Compute $C = v_{X}^{T} v_{Y}$.
\STATE Compute the test statistic $\hat{\eta}_{\lambda,l} = \frac{A}{n(n-1)} + \frac{B}{m(m-1)} - \frac{2C}{nm}$.
\RETURN $\hat{\eta}_{\lambda,l}$
\end{algorithmic}
\end{algorithm}


The algorithm for computing the RFF-based test statistic is given in Algorithm \ref{alg:approx-test-statistic}. We calculate in detail the computational complexity of each step involved in Algorithm \ref{alg:approx-test-statistic} in Section \ref{Appendix: Computational complexity details of approximate test statistic} of the Appendix. Based on this calculation, the total computational complexity of the RFF-based approximate spectral regularized MMD test statistic $\hat{\eta}_{\lambda,l}$ is $
O(l^{3} + (s+m+n) l^{2} + (s+m+n)ld).$

For positive definite kernels on general domains other than Euclidean spaces, the corresponding feature maps differ from simple sine and cosine transformations of the inner product of spectral frequencies and data samples. This leads to different computational steps for calculating the appropriate feature matrices $\Phi(X)$, $\Phi(Y)$, and $\Phi(Z)$, in general. However, the worst-case computational complexity of the RFF-based approximate spectral regularized MMD test statistic $\hat{\eta}_{\lambda,l}$ is less than or equal to $O(l^{3} + (s+m+n) l^{2} + (s+m+n)ld)$. For instance, for kernels defined on the $d$-dimensional sphere $\mathbb{S}^{d-1}$, such as the Gaussian kernel, the Laplace kernel, the heat kernel, and the Poisson kernel, the feature maps are determined by the spherical harmonics. Consequently, the worst-case computational complexity of the feature matrices $\Phi(X)$, $\Phi(Y)$ and $\Phi(Z)$ is $O(l\log l + (s+m+n)l\sqrt{d})$. Therefore, for these kernels defined on the sphere, the total computational complexity of the RFF-based approximate spectral regularized MMD test statistic $\hat{\eta}_{\lambda,l}$ is
$O(l^{3} + (s+m+n) l^{2} + (s+m+n)l\sqrt{d}).$

\subsection{Comparison of computational complexity of exact and approximate spectral regularized MMD test statistics}

Since the total computational complexity of the exact test statistic is $O(s^3+ns^2+ms^2+s^2d+n^2d+m^2d+nsd+msd+mnd)$, under the setting where $s\asymp (N+M)$, the approximate RFF-based test statistic is computationally as efficient as the exact test statistic when the number of random features $l=O(N+M)$. If $l=c(N+M)^{a}$ for some $0\leq a <1$ and some constant $c>0$, then the computational complexity of the approximate RFF-based test statistic is $O((N+M)^{1+2a} + (N+M)^{1+a}d)$ which is of smaller order than the computational complexity of the exact test statistic, i.e., the approximate RFF-based test statistic is strictly more computationally efficient than the exact test statistic. 


Under polynomial decay of the eigenvalues of $\Sigma_{PQ}$ at rate $\beta>1$, the computational complexity of the RFF-based test statistic is
$$
\begin{cases}
O\left((N+M)^{1+\frac{4(\beta+1)}{1+4\theta\beta}}\right),&  \theta > \frac{1}{2} + \frac{1}{4\beta}\\
O\left((N+M)^{\frac{6(\beta+1)}{1+4\theta\beta}}\right),&  \frac{1}{2} - \frac{1}{4\beta}<\theta\le\frac{1}{2} + \frac{1}{4\beta}\\
O\left((N+M)^{\frac{3(\beta+1)}{\beta}}\right),& \theta \leq \frac{1}{2} - \frac{1}{4\beta}
\end{cases},
$$
while the computational complexity of the exact test statistic is $O(\left(N+M\right)^{3})$. 
Comparing these complexities, it can be noted that the RFF test is computationally efficient and statistically optimal in the regime $\theta>\frac{1}{2}+\frac{1}{4\beta}$. In contrast, the RFF test is statistically optimal and not computationally efficient (w.r.t.~the exact test) in the regime $\frac{1}{2}-\frac{1}{4\beta}<\theta\le\frac{1}{2}+\frac{1}{4\beta}$. Of course, the RFF test is neither computationally efficient and possibly not statistically optimal (as the statistically optimality of the Oracle Test in the regime $\theta\le \frac{1}{2}-\frac{1}{4\beta}$ is not known) if $\theta\le \frac{1}{2}-\frac{1}{4\beta}$. 
Moreover, the approximate test scales sub-quadratic in $N+M$ if $\theta>1+\frac{3}{4\beta}$ and tends to scale linearly in $N+M$ as $\theta\rightarrow\infty$.  

Under exponential decay of the eigenvalues of $\Sigma_{PQ}$, the computational complexity of the RFF-based test statistic is
\[
\begin{cases}
    O\left( \left(N+M\right)^{1+\frac{1}{\theta}}\log(N+M)^{2} \right)
    ,& \theta > \frac{1}{2}\\
    O\left(\left(N+M\right)^{3} 
    \right),& \theta \leq \frac{1}{2}
\end{cases},
\]
while the computational complexity of the exact test statistic is $O(\left(N+M\right)^{3})$.  
So, the RFF test is both computationally efficient compared to the exact test  and statistically minimax if 
$\theta > \frac{1}{2}$. 
Moreover, the RFF test scales sub-quadratic in $N+M$ if $\theta>1$ and tends to scale as linear in $N+M$ as $\theta\rightarrow\infty$.

\section{Numerical experiments}
\label{sec:expts}
In this section, we evaluate the empirical performance of the RFF-based Kernel Adaptive Test by comparing it to its most natural competitor, the \say{exact} Adaptive Test proposed in Theorem 4.10 of \cite{SpectralTwoSampleTest}. We consider both simulated and real-life benchmark datasets in our numerical experiments.

We demonstrate the empirical performance of the tests under consideration by choosing the regularizer/spectral function $g_{\lambda}$ to be the popularly used Showalter regularizer, i.e., $g_{\lambda}(x) = \frac{1-e^{-x / \lambda}}{x} \mathbf{1}_{\{x \neq 0\}}+\frac{1}{\lambda} \mathbf{1}_{\{x=0\}}$. Using the Tikhonov regularizer, i.e., $g_{\lambda}(x) = \frac{1}{x+\lambda}$ yields qualitatively similar results. Type-I errors are controlled at $\alpha=0.05$. For both the \say{exact} Adaptive Test and RFF-based Kernel Adaptive test, we choose $s\asymp N+M$ in our numerical experiments, and each permutation test is essentially implemented in a parallel manner. To ensure Type-I errors are controlled, the number of permutations $B$ is chosen to be large enough for both the \say{exact} Adaptive Test and the RFF-based Kernel Adaptive Test. However, it is empirically observed that the number of permutations $B$ required for achieving the specified Type-I error control is a bit higher for the RFF-based Kernel Adaptive Test compared to the \say{exact} Adaptive Test. Despite this fact, the computational efficiency gain achieved by the RFF-based Kernel Adaptive Test over the \say{exact} Adaptive Test is substantial in most scenarios. We average all reported results over 3 replications over the sampling of random Fourier features for any given choice of the number of Fourier features $l$. In addition, for simulated datasets, we average all reported results over 100 random simulations. Any tests with \say{0} random Fourier features are basically the \say{exact} Adaptive test.

We consider the Gaussian kernel $K_{RBF}(x,y) = \exp(-\norm{x-y}_{2}^{2}/2h)$ and the Laplace kernel $K_{Lap}(x,y) = \exp(-\norm{x-y}_{1}/h)$ in our experiments, with the bandwidth parameter $h$ being chosen from a set of bandwidth choices, denoted by $W$. For any given experiment, let us denote the set of choices for the number of random Fourier feature samples as $F$. Then, to perform the RFF-based Kernel Adaptive Test, we first choose some $l \in F$. For a given combination of $l \in F$, $\lambda \in \Lambda$ and $h\in W$, let us denote the RFF-based Kernel Adaptive Test statistic as $\hat{\eta}_{\lambda,l}^{(h)}$. Similarly, for a given combination of number of permutations $B$, $l \in F$, $\lambda \in \Lambda$ and $h\in W$, let us denote the critical threshold for the RFF-based Kernel Adaptive Test (as defined in \eqref{B Permutation quantile for RFF based test}) to be $\hat{q}_{1-\alpha,h}^{B,\lambda,l}$. For the corresponding \say{exact} Adaptive Test with a potentially different number of permutations $B^{\prime}$, we just drop the superscripts corresponding to $l$ and analogously define the \say{exact} Adaptive test statistic $\hat{\eta}_{\lambda}^{(h)}$ together with its corresponding critical threshold $\hat{q}_{1-\alpha,h}^{B^\prime,\lambda}$. When performing the RFF-based Kernel Adaptive Test, we reject the null hypothesis $H_{0}:P=Q$ if and only if $\hat{\eta}_{\lambda,l}^{(h)} \geq \hat{q}_{1-\frac{\alpha}{|\Lambda||W|},h}^{B,\lambda,l}$ for some $(l,\lambda,h) \in F\times \Lambda \times W$. Similarly, when performing \say{exact} Adaptive Test, we reject the null hypothesis $H_{0}:P=Q$ if and only if $\hat{\eta}_{\lambda}^{(h)} \geq \hat{q}_{1-\frac{\alpha}{|\Lambda||W|},h}^{B^{\prime},\lambda}$ for some $(\lambda,h) \in \Lambda \times W$.

\subsection{Gaussian mean shift }
In the first set of simulation-based experiments, we consider $P=N(0,I_{d})$ and $Q=N(\mu,I_{d})$, where $N(\mu,C)$ denotes the Gaussian distribution in $R^{d}$ with mean $\mu$ and covariance matrix $C$. That is, we consider the class of mean-shifted alternatives, and we use the choices $\mu\in\{0,0.05,0.1,0.3,0.5,0.7,1\}$ as the value of the mean shift for our experiments.

We consider the sample size to be $N=M=200$ and data dimensions to be $d=1,10,20,50,100$. We choose $s=20$. All experiments are performed using the Gaussian kernel. Collection of bandwidths that we adapt over is $W = \left\{10^{-2 + 0.5 i}: i=0,1,\dots,8\right\}$, while the set of values of the regularization parameter that we adapt over is given by $\Lambda = \left\{10^{- 6 + 0.75 i}:  \textrm{ for }i=0,1,\dots,9\right\}$. For the RFF-based Kernel Adaptive Test, we consider $F=\left\{1,3,5,7,9\right\}$. The number of permutations for the RFF-based Kernel Adaptive Test and the \say{exact} Adaptive Test are chosen to be $B=600$ and $B^{\prime}=250$, respectively. From Figure \ref{fig: Empirical power for Gaussian mean shift experiments}, we can observe that a relatively small number of random Fourier features (around 7 or 9) is sufficient to ensure that the power of the RFF-based Kernel Adaptive Test is nearly as high as the \say{exact} Adaptive Test. Most importantly, based on Figure \ref{fig: Time comparison for Gaussian mean shift experiments} and Table \ref{tab: Table for comparison of computation times for Gaussian mean shift experiments}, the RFF-based Kernel Adaptive Test compensates more than adequately for the slight loss in power by taking around 33-44$\%$ of the computation time required by the \say{exact} Adaptive Test. Therefore, a very favorable trade-off between test power and computational efficiency is achieved by the RFF-based Kernel Adaptive Test, as demonstrated through these experiments.
\begin{figure}[t]
	\centering
\includegraphics[width=0.7\linewidth]{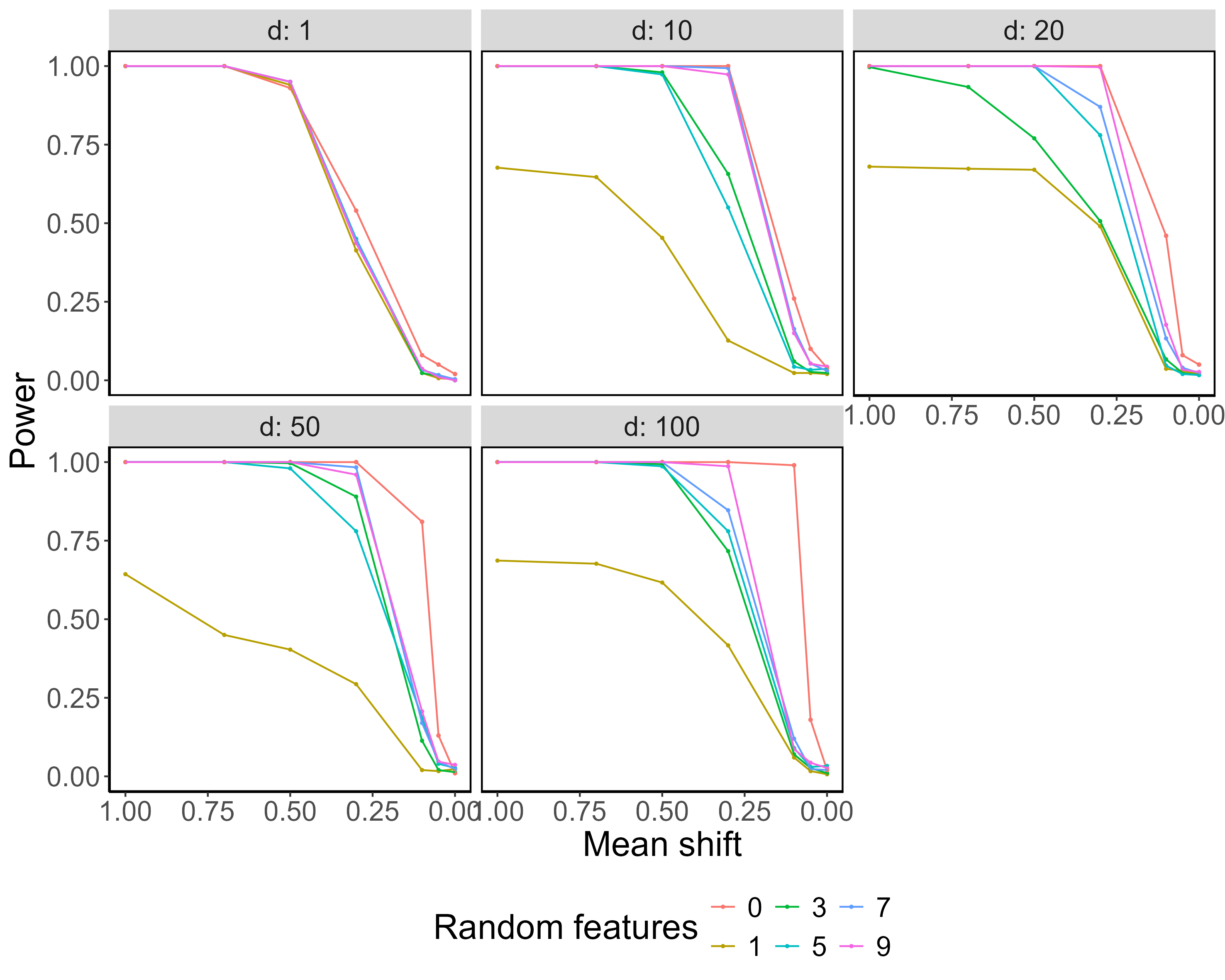}
	\caption{Empirical power for Gaussian mean shift experiments.}
	\label{fig: Empirical power for Gaussian mean shift experiments}
\end{figure}
\begin{figure}[!htbp]
	\centering
	\includegraphics[width=0.7\linewidth]{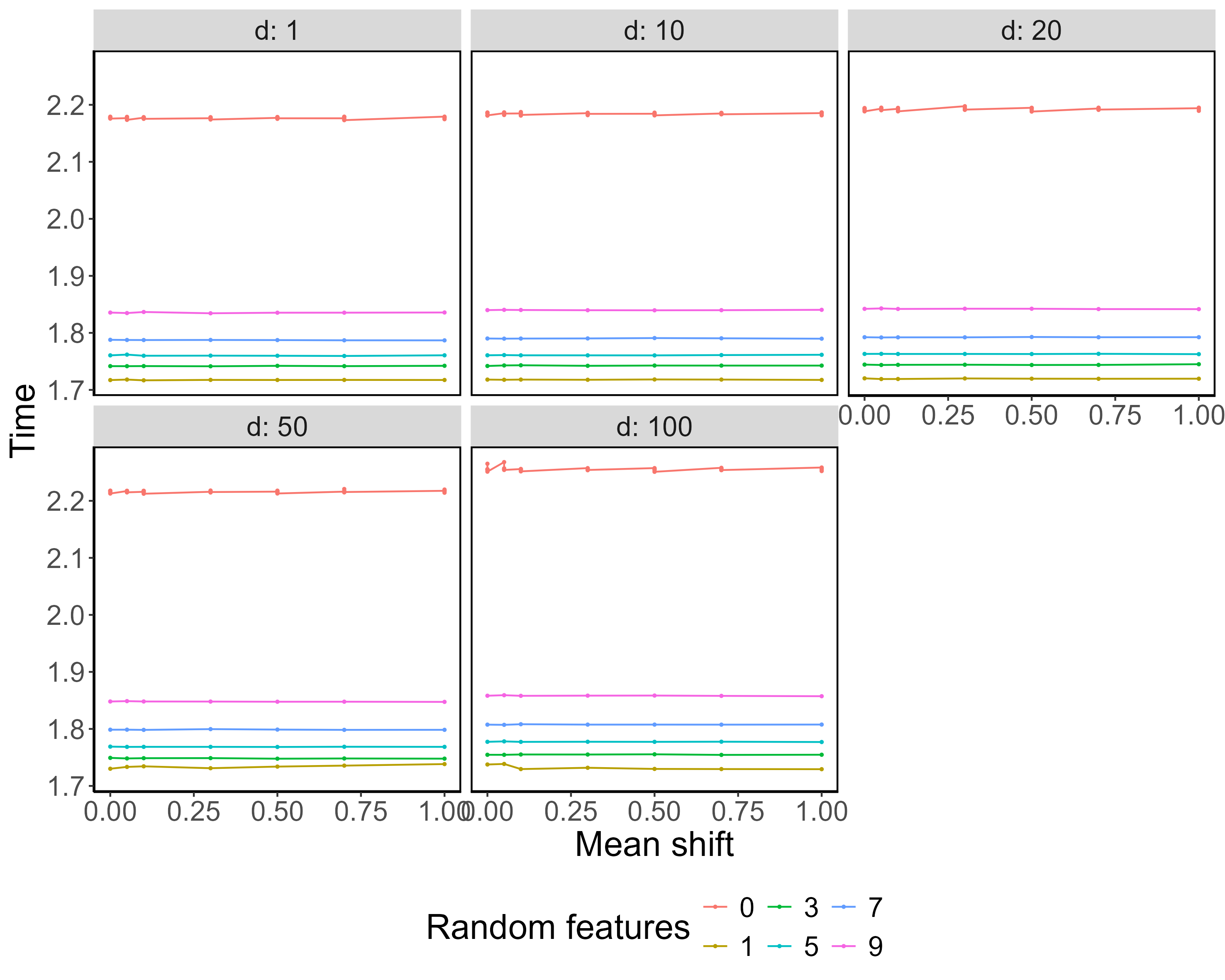}\vspace{-2mm}
	\caption{Comparison of computation time (in log seconds) for Gaussian mean shift experiments.}
	\label{fig: Time comparison for Gaussian mean shift experiments}
    \vspace{1mm}
\end{figure}

\begin{table}[t]
\centering
\begin{tabular}{|c|c|}
\hline
No. of random features ($l$) & Ratio of time taken by RFF-based test \\
\hline
\csvreader[head to column names, 
           late after line=\\\hline]
{"Results/Gaussmeanshift/time_comparisons.CSV"}{1=\one,2=\two}
{
           \one & \num[round-precision=2,round-mode=places]\two
}
\hline
\end{tabular}
\caption{Table for comparison of computation times for Gaussian mean shift experiments.}
\label{tab: Table for comparison of computation times for Gaussian mean shift experiments}
\end{table}

\FloatBarrier

\subsection{Gaussian scale shift}
In the second set of simulation-based experiments, we consider $P=N(0,I_{d})$ and $Q=N(0,\sigma^{2}I_{d})$ where $N(\mu,C)$ denotes the Gaussian distribution in $R^{d}$ with mean $\mu$ and covariance matrix $C$. Here, we consider the class of scale-shifted alternatives and we use the choices $$\sigma^{2} \in \left\{10^{i}: i=0, 0.05, 0.10, 0.20, 0.30, 0.40, 0.50\right\}$$ as the value of the scale shift for our experiments.

We consider the sample size to be $N=M=200$ and data dimensions to be $d=1,10,20,50,100$. We choose $s=20$. All experiments are performed using the Gaussian kernel. Collection of bandwidths that we adapt over is $W = \left\{10^{-2 + 0.5  i}: i=0,1,\dots,8\right\}$, while the set of values of the regularization parameter that we adapt over is given by $\Lambda = \left\{10^{- 6 + 0.75 i}: i=0,1,\dots,9\right\}$. For the RFF-based Kernel Adaptive Test, we consider $F=\left\{1,3,5,7,9\right\}$. The number of permutations for RFF-based Kernel Adaptive Test and the \say{exact} Adaptive Test are chosen to be $B=550$ and $B^{\prime}=250$, respectively.

\begin{figure}[!htbp]
	\centering
	\includegraphics[width=0.7\linewidth]{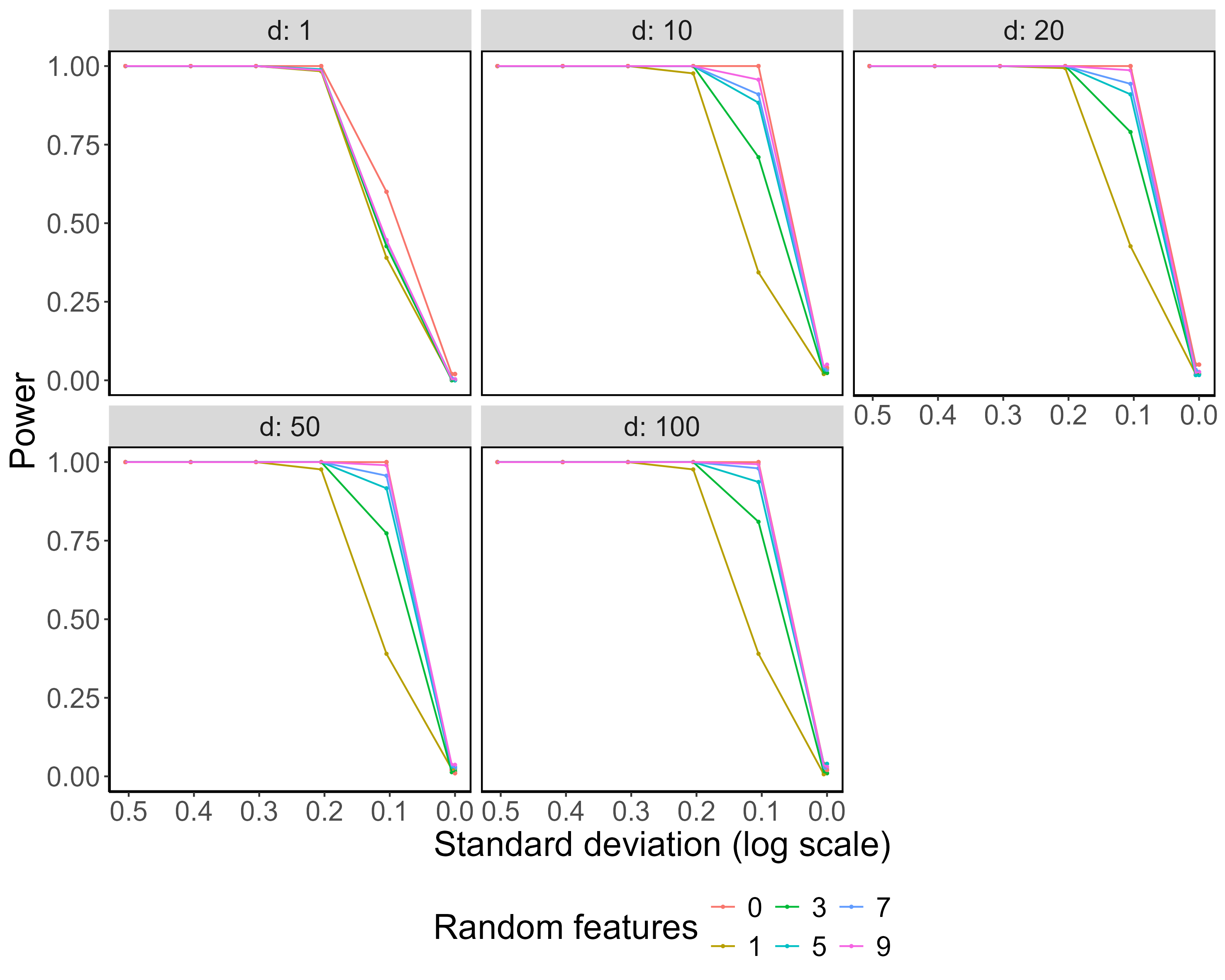}
	\caption{Empirical power for Gaussian scale shift experiments.}
	\label{fig: Empirical power for Gaussian scale shift experiments}
\end{figure}

From Figure \ref{fig: Empirical power for Gaussian scale shift experiments}, we can observe that a relatively small number of random Fourier features (more than or equal to 5) is sufficient to ensure that the power of the RFF-based Kernel Adaptive Test is nearly as high as the \say{exact} Adaptive Test. Most importantly, based on Figure \ref{fig: Time comparison for Gaussian scale shift experiments} and Table \ref{tab: Table for comparison of computation times for Gaussian scale shift experiments}, the RFF-based Kernel Adaptive Test compensates more than adequately for the slight loss in power by taking around 30-40$\%$ of the computation time required by the \say{exact} Adaptive test. Therefore, a very favorable trade-off between test power and computational efficiency is achieved by the RFF-based Kernel Adaptive Test, as demonstrated through these experiments.

\begin{figure}[!htbp]
	\centering
	\includegraphics[width=0.7\linewidth]{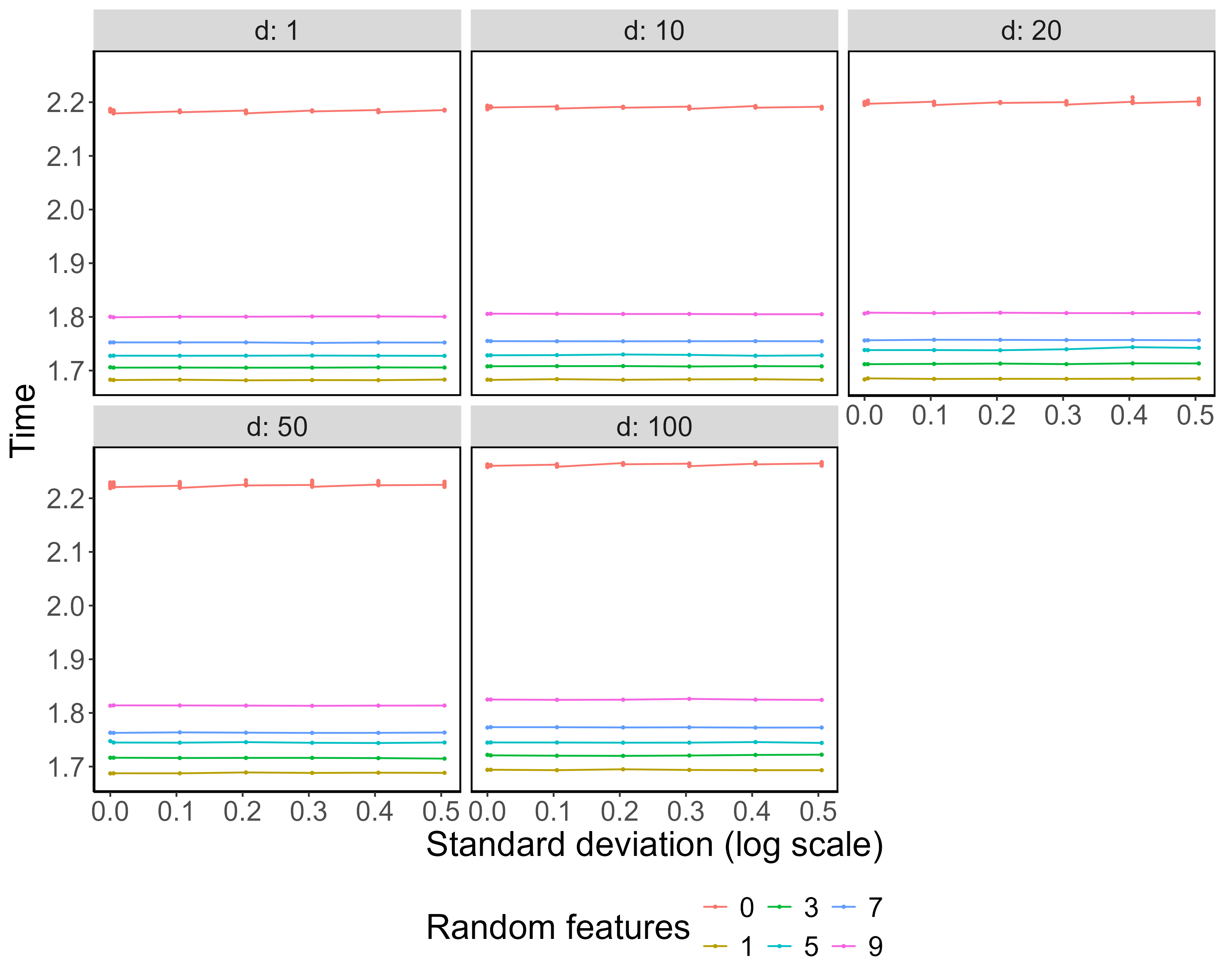}
	\caption{Comparison of computation time (in log seconds) for Gaussian scale shift experiments.}
	\label{fig: Time comparison for Gaussian scale shift experiments}
\end{figure}

\begin{table}[!htbp]
\centering
\begin{tabular}{|c|c|}
\hline
No. of random features ($l$) & Ratio of time taken by RFF-based test \\
\hline
\csvreader[head to column names, 
           late after line=\\\hline]
{"Results/Gaussscaleshift/time_comparisons.CSV"}{1=\one,2=\two}
{
           \one & \num[round-precision=2,round-mode=places]\two
}
\hline
\end{tabular}
\caption{Table for comparison of computation times for Gaussian scale shift experiments.}
\label{tab: Table for comparison of computation times for Gaussian scale shift experiments}
\end{table}


\subsection{Cauchy median shift}

In the third set of simulation-based experiments, we consider $P$ as a Cauchy distribution with median $0$ and identity scale, while $Q$ is considered to be a Cauchy distribution with median $\mu$ and identity scale. Here, we consider the class of median-shifted alternatives, and we use the choices $\mu\in\{ 0,0.05,0.1,0.3,0.5,0.7,1\}$ as the value of the median shift for our experiments.

We consider the sample size to be $N=M=500$ and data dimensions to be $d=1,10,20,50,100$. We choose $s=50$. All experiments are performed using the Gaussian kernel. Collection of bandwidths that we adapt over is $W = \left\{10^{-2 + 0.5 i }: i=0,1,\dots,8\right\}$, while the set of values of the regularization parameter that we adapt over is given by $\Lambda = \left\{10^{- 6 + 0.75 i}: i=0,1,\dots,9\right\}$. For the RFF-based Kernel Adaptive Test, we consider $F=\left\{1,3,5,7,9\right\}$. The number of permutations for the RFF-based Kernel Adaptive Test is chosen to be $B=800$, while that for the \say{exact} Adaptive Test is chosen to be $B^{\prime}=450$.

\begin{figure}[!tbp]
	\centering
	\includegraphics[width=0.7\linewidth]{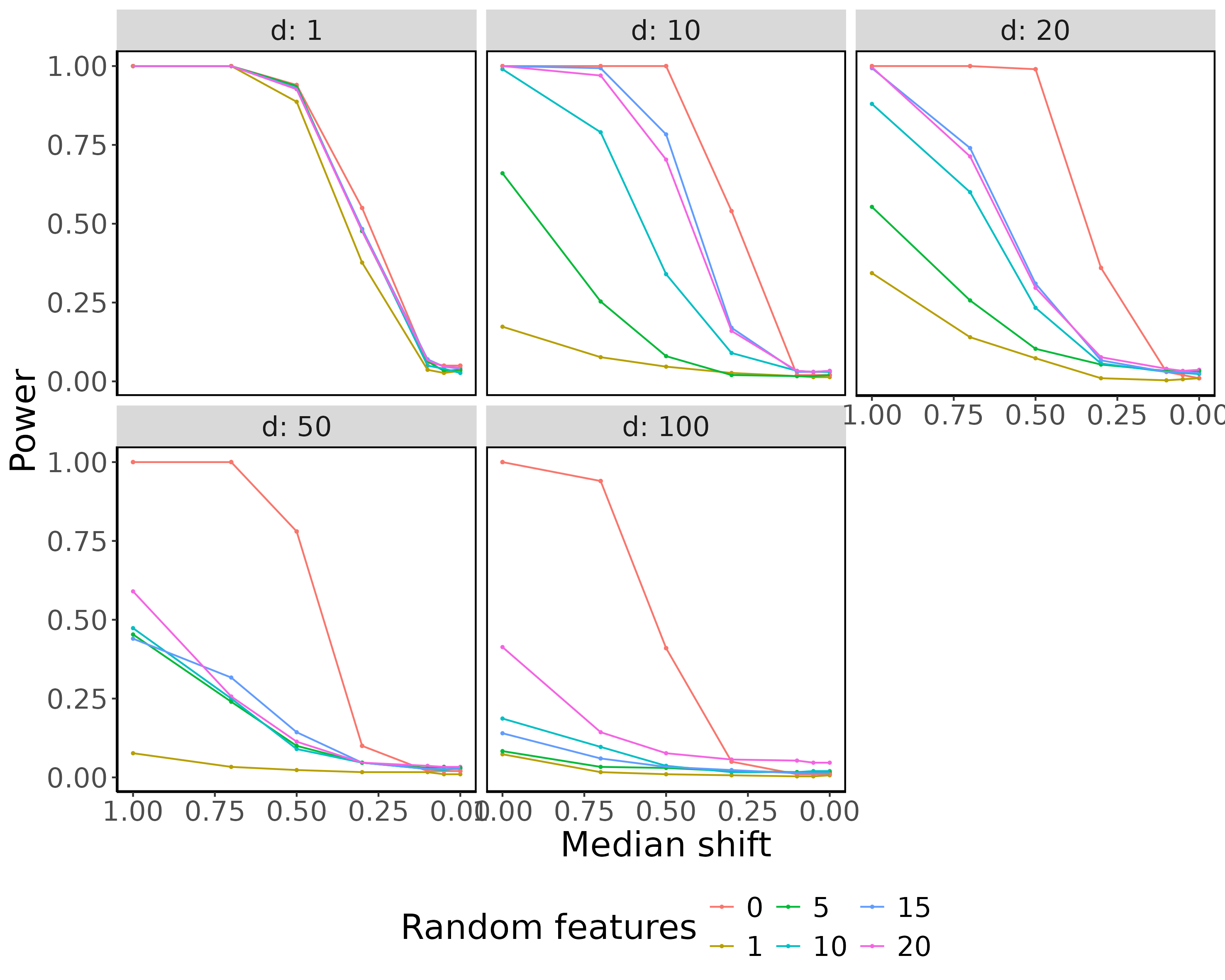}
	\caption{Empirical power for Cauchy median shift experiments.}
	\label{fig: Empirical power for Cauchy median shift experiments}
\end{figure}

From Figure \ref{fig: Empirical power for Cauchy median shift experiments}, we can observe that a relatively larger number of random Fourier features (more than or equal to 10) is required to ensure that the power of the RFF-based Kernel Adaptive Test is close to that of the \say{exact} Adaptive Test, especially when the data dimension is high. On the other hand, based on Figure \ref{fig: Time comparison for Cauchy median shift experiments} and Table \ref{tab: Table for comparison of computation times for Cauchy median shift experiments}, the RFF-based Kernel Adaptive Test extensively only takes 5-6$\%$ of the computation time required by the \say{exact} Adaptive Test. It is possible that a larger number of random Fourier features (more than 30 or so) may lead the RFF-based Kernel Adaptive Test to a more favorable trade-off between test power and computational efficiency for the current experimental setting.

\begin{figure}[!htbp]
	\centering
	\includegraphics[width=0.7\linewidth]{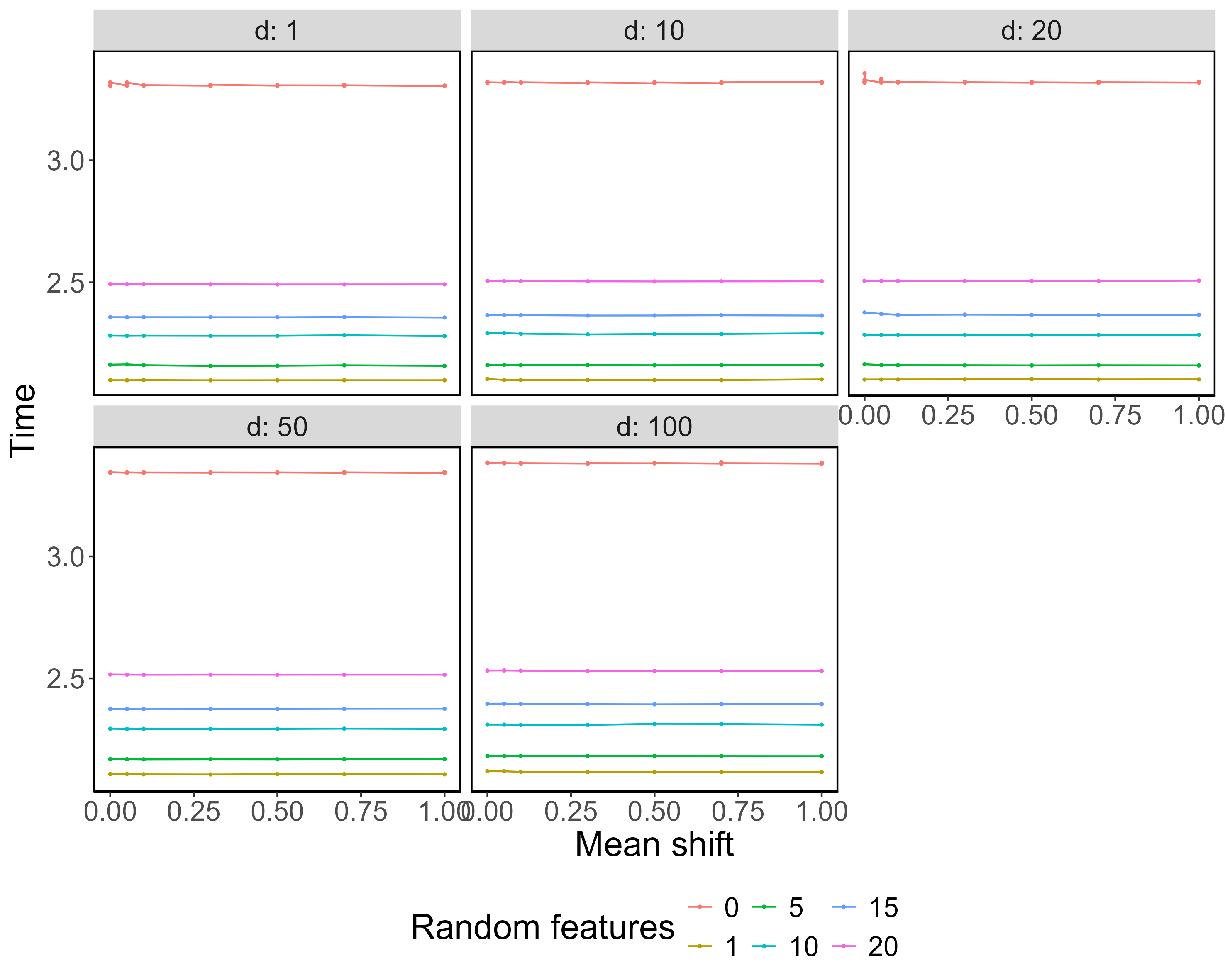}
	\caption{Comparison of computation time (in log seconds) for Cauchy median shift experiments.}
	\label{fig: Time comparison for Cauchy median shift experiments}
\end{figure}

\begin{table}[!htbp]
\centering
\begin{tabular}{|c|c|}
\hline
No. of random features ($l$) & Ratio of time taken by RFF-based test \\
\hline
\csvreader[head to column names, 
           late after line=\\\hline]
{"Results/Cauchymedianshift/time_comparisons.CSV"}{1=\one,2=\two}
{
           \one & \num[round-precision=2,round-mode=places]\two
}
\hline
\end{tabular}
\caption{Table for comparison of computation times for Cauchy median shift experiments.}
\label{tab: Table for comparison of computation times for Cauchy median shift experiments}
\end{table}

\FloatBarrier

\subsection{MNIST dataset}

The MNIST dataset (\cite{Mnist}) is a collection of black-and-white handwritten digits from $0$ to $9$, which is one of the most popular datasets in Machine Learning. Analogous to the experimental setup considered in \cite{SpectralTwoSampleTest} and \cite{schrab2023mmd}, the images were downsampled to $7 \times 7$ pixels, leading to each image being embedded in $\R^{d}$ for $d=49$. We define the set $P$ to be the distribution of images of the digits
$$
P: 0,1,2,3,4,5,6,7,8,9
$$
and $Q_i$ for $i=1,2,3,4,5$ are defined as distributions over different subsets of digits from $0$ to $9$, given as follows:
$$
\begin{gathered}
Q_1: 1,3,5,7,9, \quad Q_2: 0,1,3,5,7,9, \quad Q_3: 0,1,2,3,5,7,9, \\
Q_4: 0,1,2,3,4,5,7,9, \quad Q_5: 0,1,2,3,4,5,6,7,9.
\end{gathered}
$$
Clearly, $Q_{i}$ becomes harder to distinguish from $P$ as $i$ increases from $1$ to $5$. We consider $N=M=500$ samples drawn with replacement from $P$ while testing against $Q_i$ for $i=1,2,3,4,5$. We choose $s=50$. Collection of bandwidths that we adapt over is $W = \left\{10^{-2 + 0.5 i}: i=0,1,\dots,8\right\}$, while the set of $\lambda$ that we adapt over is given by \\$\Lambda = \left\{10^{- 6 + 0.75 i}: i=0,1,\dots,9\right\}$. For the RFF-based Kernel Adaptive Test, we consider $F=\left\{1,3,5,7,9\right\}$. The number of permutations for the RFF-based Kernel Adaptive Test is chosen to be $B=550$, while the number of permutations for the \say{exact} Adaptive Test is chosen to be $B^{\prime}=350$.

We consider two sets of experiments: one using the Gaussian kernel and the other using the Laplace kernel.

\subsubsection{Results using Gaussian kernel}

\begin{figure}[!htbp]
	\centering
	\includegraphics[width=0.5\linewidth]{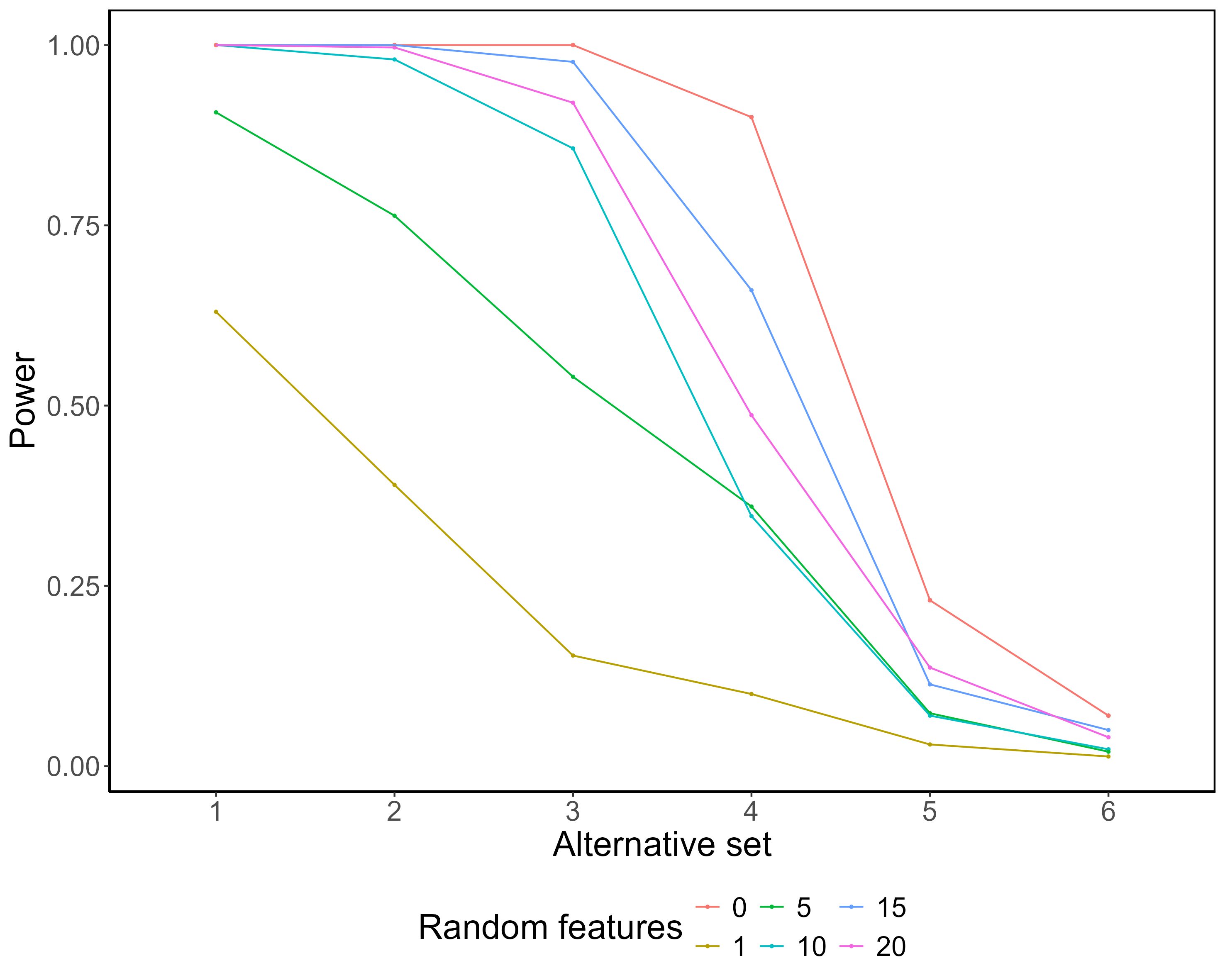}
	\caption{Empirical power for MNIST experiments using a Gaussian kernel.}
	\label{fig: Empirical power for MNIST experiments using Gaussian kernel}
\end{figure}

From Figure \ref{fig: Empirical power for MNIST experiments using Gaussian kernel}, we can observe that a moderately large number of random Fourier features (more than or equal to 15) is required to ensure that the power of the RFF-based Kernel Adaptive Test is close to that of the \say{exact} Adaptive test. Most importantly, based on Figure \ref{fig: Time comparison for MNIST experiments using Gaussian kernel} and Table \ref{tab: Table for comparison of computation times for MNIST experiments using Gaussian kernel}, the RFF-based Kernel Adaptive Test compensates more than adequately for the slight loss in power by taking around only 5-15$\%$ of the computation time required by the \say{exact} Adaptive Test. Therefore, a very favorable trade-off between test power and computational efficiency is achieved by the RFF-based Kernel Adaptive Test, as demonstrated through these experiments.

\begin{figure}[!htbp]
	\centering
	\includegraphics[width=0.5\linewidth]{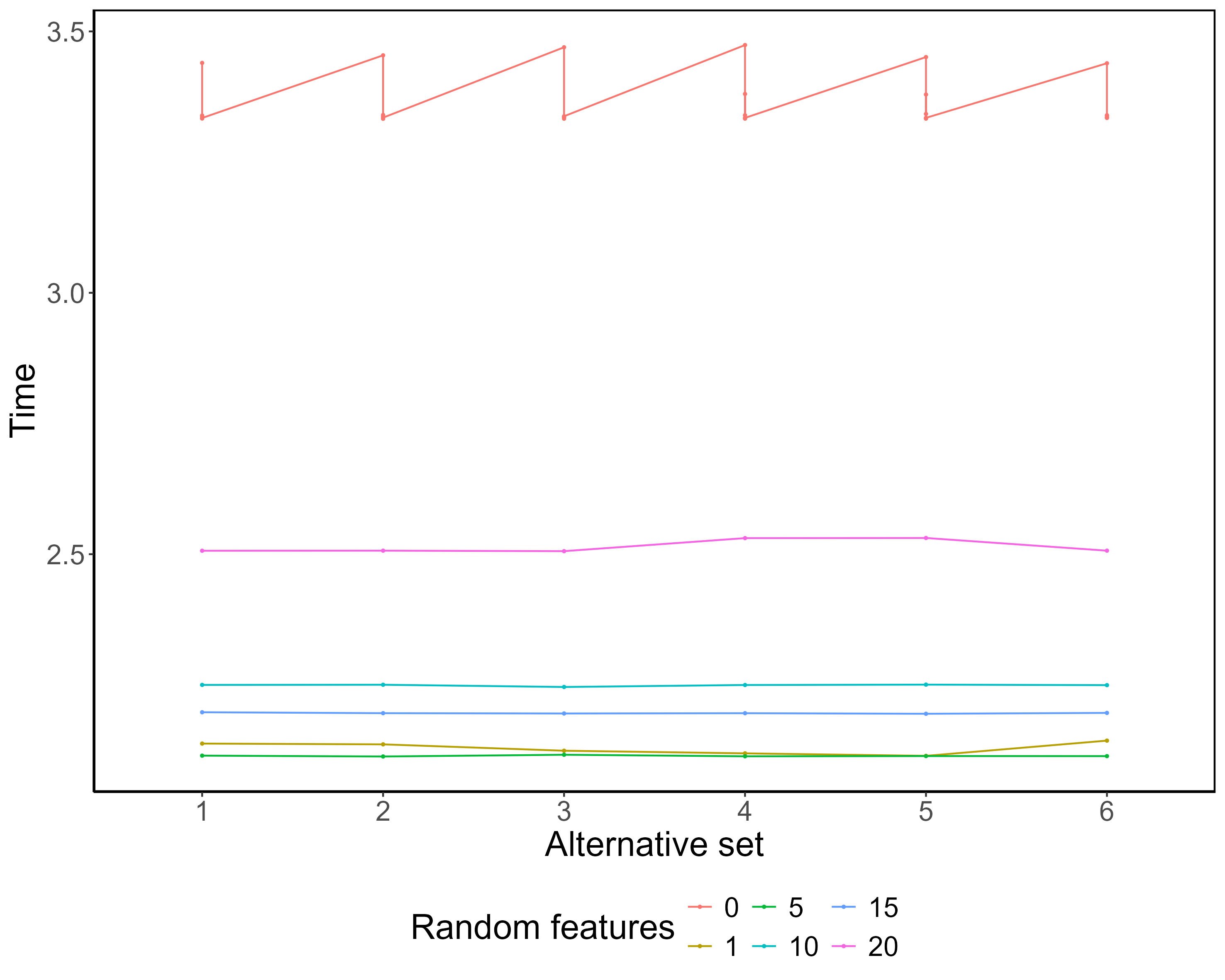}
	\caption{Comparison of computation time (in log seconds) for MNIST experiments using a Gaussian kernel.}
	\label{fig: Time comparison for MNIST experiments using Gaussian kernel}
\end{figure}

\begin{table}[!htbp]
\centering
\begin{tabular}{|c|c|}
\hline
No. of random features (l) & Ratio of time taken by RFF-based test \\
\hline
\csvreader[head to column names, 
           late after line=\\\hline]
{"Results/MNISTGausskernel/time_comparisons.CSV"}{1=\one,2=\two}
{
           \one & \num[round-precision=2,round-mode=places]\two
}
\hline
\end{tabular}
\caption{Table for comparison of computation times for MNIST experiments using Gaussian kernel}
\label{tab: Table for comparison of computation times for MNIST experiments using Gaussian kernel}
\end{table}

\subsubsection{Results using Laplace kernel}

\begin{figure}[!htbp]
	\centering
	\includegraphics[width=0.5\linewidth]{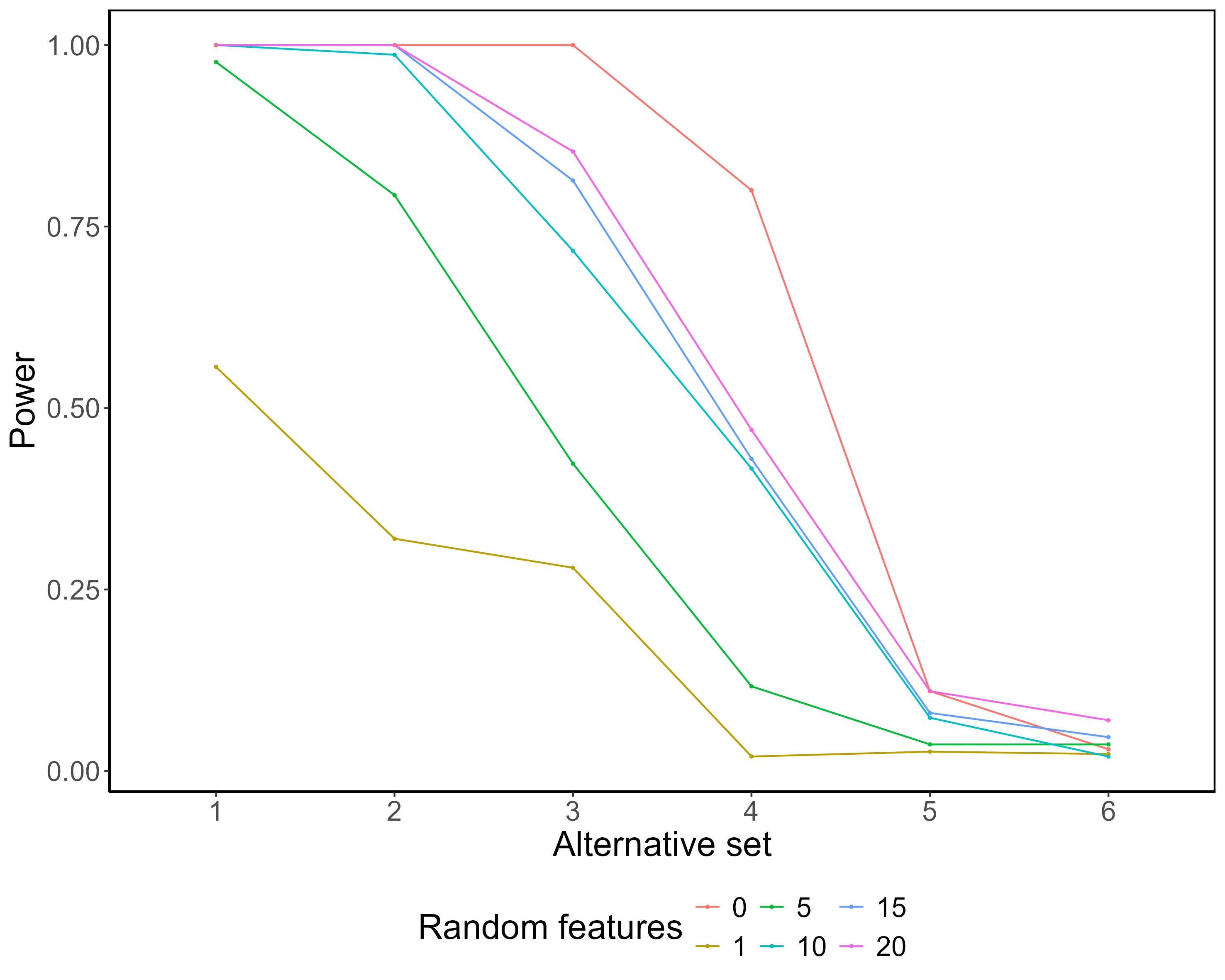}
	\caption{Empirical power for MNIST experiments using Laplace kernel.}
	\label{fig: Empirical power for MNIST experiments using Laplace kernel}
\end{figure}

From Figure \ref{fig: Empirical power for MNIST experiments using Laplace kernel}, we can observe that a moderately large number of random Fourier features (more than or equal to 15) is required to ensure that the power of the RFF-based Kernel Adaptive Test is close to that of the \say{exact} Adaptive Test. Most importantly, based on Figure \ref{fig: Time comparison for MNIST experiments using Laplace kernel} and Table \ref{tab: Table for comparison of computation times for MNIST experiments using Laplace kernel}, the RFF-based Kernel Adaptive Test compensates more than adequately for the slight loss in power by taking around 7-15$\%$ of the computation time required by the \say{exact} Adaptive Test. Therefore, a very favorable trade-off between test power and computational efficiency is achieved by the RFF-based Kernel Adaptive Test, as demonstrated through these experiments.

\begin{figure}[!htbp]
	\centering
	\includegraphics[width=0.5\linewidth]{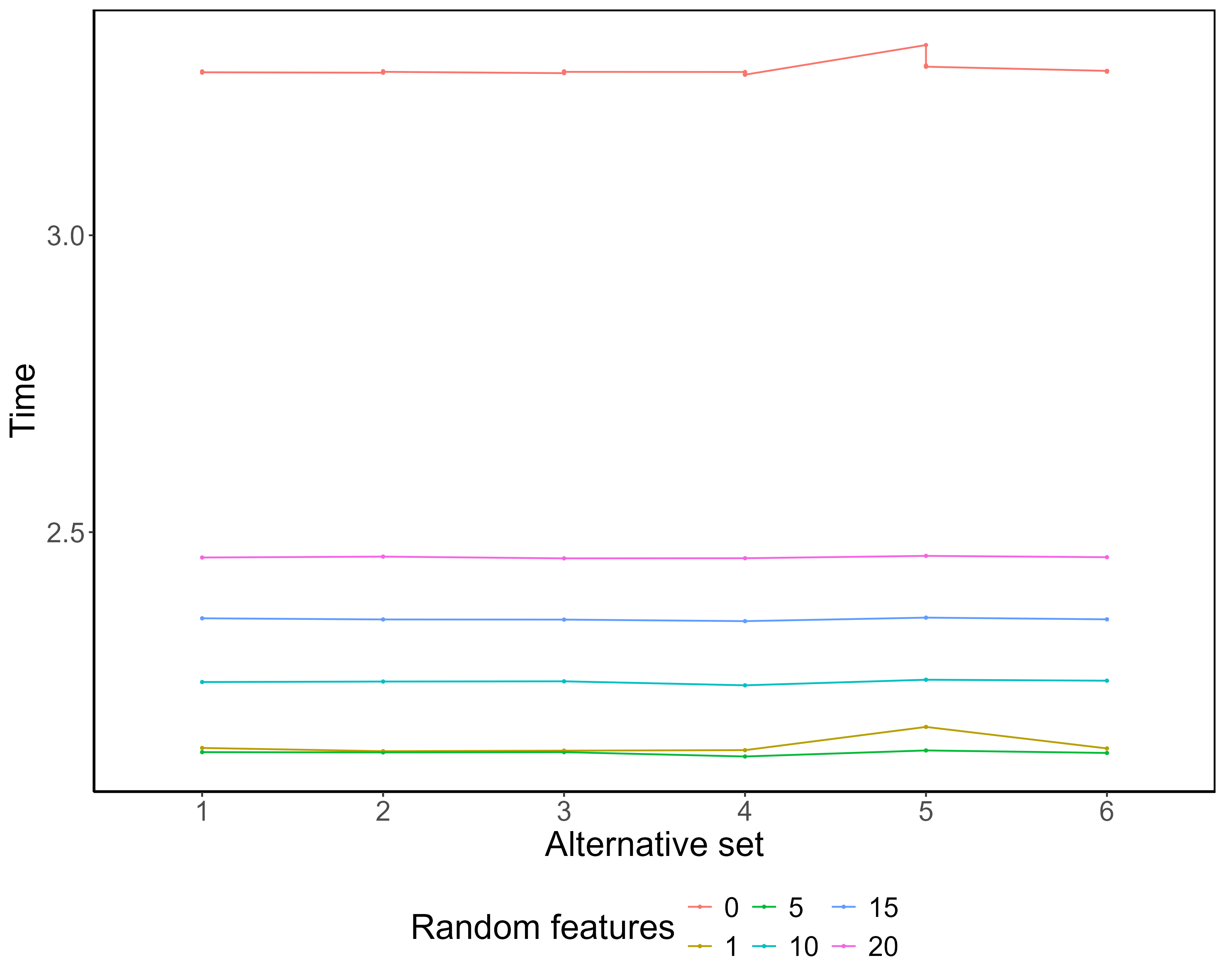}
	\caption{Comparison of computation time (in log seconds) for MNIST experiments using Laplace kernel.}
	\label{fig: Time comparison for MNIST experiments using Laplace kernel}
\end{figure}

\begin{table}[!htbp]
\centering
\begin{tabular}{|c|c|}
\hline
No. of random features ($l$) & Ratio of time taken by RFF-based test \\
\hline
\csvreader[head to column names, 
           late after line=\\\hline]
{"Results/MNISTLapkernel/time_comparisons.CSV"}{1=\one,2=\two}
{
           \one & \num[round-precision=2,round-mode=places]\two
}
\hline
\end{tabular}
\caption{Table for comparison of computation times for MNIST experiments using Laplace kernel.}
\label{tab: Table for comparison of computation times for MNIST experiments using Laplace kernel}
\end{table}


\section{Conclusion}

In this work, we introduced a two-sample test employing a spectral regularization framework with random Fourier feature (RFF) approximation. We analyzed the trade-offs between statistical optimality and computational cost. We showed that the test achieves minimax optimality provided the RFF approximation order - governed by the smoothness of the likelihood ratio deviation and the decay of the integral operator’s eigenvalues - is sufficiently large. We then proposed a practical permutation-based implementation that adaptively selects the regularization parameter. Finally, through experiments on both simulated and benchmark datasets, we illustrated that the RFF-based test is computationally efficient and achieves performance comparable to the exact test, with only a minor reduction in power in many scenarios. In addition to Random Fourier Features, alternative approximation techniques such as the Nystr\"{o}m method could be investigated for similar purposes. Exploring these methods and their computational versus statistical tradeoffs remains an intriguing avenue for future research. Another interesting direction for future research would be to incorporate the idea of cheap permutation testing proposed by \cite{Cheappermutationtesting} in place of the vanilla permutation test currently used in the paper. This allows for additional computational speedup of the test. The goal, then, is to investigate the computational-statistical trade-off behavior of the resultant test that is based on cheap permutation and random Fourier features/Nystr\"{o}m method.

\section{Proofs}\label{Proofs of main theorems, corollaries and propositions}
In this section, we provide the proofs of the main theorems and corollaries. 

\subsection{Proof of Theorem \ref{Type-I error bound of Oracle Test in terms of N2}}\label{subsec:thm1}
Let us define $\gamma_{1,l}:=\frac{2 \sqrt{6}(C_{1}+C_{2}) \mathcal{N}_{2,l}(\lambda)}{\sqrt{\delta}}\left(\frac{1}{n}+\frac{1}{m}\right)$ and set $\delta=\frac{\alpha}{2}$. Then, we have
\[
\begin{aligned}
P_{H_{0}}\left\{\hat{\eta}_{\lambda,l} \leq \gamma\right\} & \geq P_{H_{0}}\left\{\left\{\hat{\eta}_{\lambda,l} \leq \gamma_{1,l}\right\} \cap\left\{\gamma_{1,l} \leq \gamma\right\}\right\}  \geq 1-P_{H_{0}}\left\{\hat{\eta}_{\lambda,l} \geq \gamma_{1,l}\right\}-P_{H_{0}}\left\{\gamma_{1,l} \geq \gamma\right\}\\
& \stackrel{(a)}{\geq} 1-2\delta=1-\alpha,
\end{aligned}
\]
where $(a)$ follows using Proposition \ref{Proposition: Type-I error bound with random threshold} and Lemma \ref{Lemma 13}.

\subsection{Proof of Theorem \ref{Power analysis of Oracle test}}\label{subsec:thm2}
Let us define $\zeta_{l}=\mathbb{E}_{P^{n} \times Q^{m}}\left(\hat{\eta}_{\lambda,l} \mid\mathbb{Z}^{1:s},\theta^{1:l}\right)=\left\|g_{\lambda}^{1/2}(\hat{\Sigma}_{P Q,l})\left(\mu_{P,l}-\mu_{Q,l}\right)\right\|_{\mathcal{H}_{l}}^{2}$. Further, define $N_{2}^{*}(\kappa,\lambda,\alpha,l) = \frac{4\sqrt{2\kappa \mathcal{N}_{1}(\lambda)\log \frac{8}{\alpha}}}{\sqrt{\lambda l}} + \frac{16\kappa \log \frac{8}{\alpha}}{\lambda l}  + 2\sqrt{2}\mathcal{N}_{2}(\lambda)$, 
$T_{1}=\zeta_{l} - \sqrt{\frac{\operatorname{Var}\left(\hat{\eta}_{\lambda,l} \mid \mathbb{Z}^{1:s},\theta^{1:l}\right)}{\delta}}$ and $D^{\prime} = \frac{D - d_{2}}{1-d_{2}}$. Then, clearly we have, $\gamma=\frac{ 4\sqrt{3}(C_{1}+C_{2}) N_{2}^{*}(\kappa,\lambda,\alpha,l)}{\sqrt{\alpha}}\left(\frac{1}{n}+\frac{1}{m}\right)$. Provided
\begin{equation}
\label{First threshold for gamma}
P_{H_{1}}\left\{\gamma > T_{1}\right\} \leq 3\delta
\end{equation}
holds for any $(P, Q) \in \mathcal{P}$ (i.e., under the condition when $H_{1}$ is true and the pair of distribution $(P,Q)$ belongs to the collection of $\Delta_{N,M}$-separated alternatives as defined in \eqref{Class of alternatives}) under the conditions stated in Theorem \ref{Power analysis of Oracle test}, we obtain $P_{H_{1}}\left\{\hat{\eta}_{\lambda,l} \geq \gamma\right\} \geq 1-4 \delta$ through the application of Lemma \ref{Lemma 1}. Taking the infimum over $(P, Q) \in \mathcal{P}$, the result stated in Theorem \ref{Power analysis of Oracle test} is obtained. 
Therefore, to complete the proof, it remains to verify that \eqref{First threshold for gamma} holds under the conditions of this theorem, which we do below. 

Let us define the quantity $\mathcal{M}_{l}=\hat{\Sigma}_{P Q, \lambda, l}^{-1 / 2} \Sigma_{P Q, \lambda, l}^{1/2}$ and the events $E_{1}=\left\{\mathcal{N}_{2,l}(\lambda) \leq N_{2}^{*}(\kappa,\lambda,2\delta,l)\right\}$, $E_{2}=\left\{\zeta_{l} \geq c_2\left\|\mathcal{M}_{l}^{-1}\right\|_{\mathcal{L}^{\infty}(\mathcal{H}_{l})}^{-2}\|u\|_{L^2(R)}^2\right\}$ and $E_{3} = \left\{\sqrt{\frac{2}{3}}\leq\|\mathcal{M}_{l}\|_{\mathcal{L}^{\infty}(\mathcal{H}_{l})} \leq \sqrt{2}\right\}$, where $c_{2}:=\frac{C_{4}^{2}}{2(C_{1}+C_{2})}$. Under $\boldsymbol{(\RFFAssumptionone)}$  and $\boldsymbol{(\RFFAssumptionfour)}$ and using Lemma \ref{Lemma 13}, we have that, if $\frac{86 \kappa}{l}\log \frac{32 \kappa l}{\delta} \leq \lambda \leq \|\Sigma_{PQ}\|_{\mathcal{L}^{\infty}(\mathcal{H})}$, then 
\begin{equation}
\label{Probability of E1}
P_{H_{1}}(E_{1}^{c}) = P(E_{1}^{c}) \leq \delta.
\end{equation}
For $(P,Q) \in \mathcal{P}$, we have  $u = \frac{dP}{dR} - 1 \in \operatorname{Ran}(\mathcal{T}_{PQ}^{\theta})$. Further, under $\boldsymbol{(\SpectralAssumptionone)}$, $\boldsymbol{(\SpectralAssumptiontwo)}$ and $\boldsymbol{(\SpectralAssumptionthree)}$ along with the conditions $\|u\|_{L^{2}(R)}^{2} \geq 16 \lambda^{2 \theta}\|\mathcal{T}_{PQ}^{-\theta} u\|_{L^{2}(R)}^{2}$ and $l \geq \max{\left(160,3200 \mathcal{N}_{1}(\lambda)\right)}\frac{ \kappa \log \frac{2}{\delta}}{\lambda }$, by employing Proposition~\ref{Proposition: Upper and lower bound of eta} and Lemma~\ref{Lemma 10}, we obtain 
\begin{equation}
\label{Probability of E2}
P_{H_{1}}(E_{2}^{c}) \leq \delta.
\end{equation}
Following the proof of Proposition \ref{Proposition: Type-I error bound with random threshold}, specifically the proof of \eqref{Bound on probability of Ml given E}, we have that, if $n,m \geq 2$,
$\frac{140 \kappa}{s} \log \frac{32 \kappa s}{1-\sqrt{1-\delta}} \leq \lambda \leq \frac{1}{2}\left\|\Sigma_{P Q}\right\|_{\mathcal{L}^{\infty}(\mathcal{H})}
$
and $l\geq \max\left\{2\log\frac{2}{1-\sqrt{1-\delta}},\frac{128\kappa^{2}\log\frac{2}{1-\sqrt{1-\delta}}}{\left\|\Sigma_{P Q}\right\|_{\mathcal{L}^{\infty}(\mathcal{H})}^{2}}\right\}$, then,
\begin{equation}
\label{Probability of E3}
P_{H_{1}}(E_{3}^{c}) = P(E_{3}^{c}) \leq \delta.
\end{equation}

Let us define the event $E^{*} = \left\{\gamma \geq T_{1}\right\}$. Provided that the occurrence of the events $E_{1}$, $E_{2}$ and $E_{3}$ imply that event $E^{*}$ cannot occur under the conditions of Theorem \ref{Power analysis of Oracle test}, i.e., $E_1\cap E_2\cap E_3\subset (E^*)^c$, and using \eqref{Probability of E1}, \eqref{Probability of E2} and \eqref{Probability of E3}, we have that 
\begin{equation*}
\label{Probability of Estar}
P_{H_{1}}(E^{*}) \leq P_{H_{1}}(E_{1}^{c} \cup E_{2}^{c} \cup E_{3}^{c} ) \leq P(E_{1}^{c})+P_{H_{1}}(E_{2}^{c}) +P(E_{3}^{c}) \leq 3\delta.
\end{equation*}
Therefore, to complete the proof of this theorem, we only need to prove that the simultaneous occurrence of the events $E_{1}$, $E_{2}$ and $E_{3}$ precludes the occurrence of the event $E^{*}$ under the conditions specified in this theorem, i.e., the event $(E^{*})^{c}= \left\{\gamma < T_{1}\right\}$ occurs, or equivalently
\begin{equation}
\label{Events E1, E2 and E3 imply gamma less than T1}
E_{1} \cap E_{2} \cap E_{3}\subset (E^{*})^{c}.
\end{equation}
Note that, provided the event $E_{1}$ occurs, under $\boldsymbol{(\RFFAssumptionone)}$ and $\boldsymbol{(\RFFAssumptionfour)}$, $$N_{2}^{\prime}(\kappa,\lambda,\delta,l)\coloneq\frac{2N_{2}^{*}(\kappa,\lambda,\delta,l)\kappa}{\lambda}$$ is an upper bound on $C_{\lambda,l}=\frac{2\mathcal{N}_{2,l}(\lambda)}{\lambda} \sup _{x}\|K_{l}(\cdot, x)\|_{\mathcal{H}_{l}}^{2}$ as defined in Lemma \ref{Lemma 9}. Let us define $$\gamma_{l}:=\frac{1}{\sqrt{\delta}}\left(\frac{\sqrt{N_{2}^{\prime}(\kappa,\lambda,\delta,l)}\|u\|_{L^2(R)}+N_{2}^{*}(\kappa,\lambda,\delta,l)}{n+m}+\frac{N_{2}^{\prime}(\kappa,\lambda,\delta,l)^{1 / 4}\|u\|_{L^2(R)}^{3 / 2}+\|u\|_{L^2(R)}}{\sqrt{n+m}}\right)$$ and $T_{2}: = \zeta_{l} - \tilde{C}^{1/2}\|\mathcal{M}_{l}\|_{\mathcal{L}^{\infty}(\mathcal{H}_{l})}^2 \gamma_{l}$ where $\tilde{C}$ is a constant defined in Lemma \ref{Lemma 11} that depends only on $C_{1}, C_{2}$ and $D^{\prime}$. Further, let us define the event $E^{\prime} = \left\{\gamma > T_{2}\right\}$.

Now, under $\boldsymbol{(\Samplesizeassumption)}$ and the choice of the sample splitting size $s=d_{1}N=d_{2}M$ for estimating the covariance operator $\Sigma_{PQ,l}$ as stated in Theorem \ref{Power analysis of Oracle test}, we have that $m \leq n \leq D^{\prime} m$ where $D^{\prime} = \frac{D-d_{2}}{1-d_{2}}\geq 1$ is a constant. Therefore, using Lemma~\ref{Lemma 11} under  $\boldsymbol{(\SpectralAssumptionone)}$ and $\boldsymbol{(\SpectralAssumptiontwo)}$ and provided the events $E_{1}$ and $E_{2}$ occur simultaneously, we observe that $T_{2} \leq T_{1}$ and consequently, the occurrence of the event $(E^{\prime})^{c} = \left\{\gamma \leq T_{2}\right\}$ implies the occurrence of the event $(E^{*})^{c} = \left\{\gamma \leq T_{1}\right\}$. Therefore, it is sufficient to show that the simultaneous occurrence of the events $E_{1}$, $E_{2}$ and $E_{3}$ precludes the occurrence of the event $E^{\prime}$ under the conditions specified in this theorem, i.e. the event $(E^{\prime})^{c}= \left\{\gamma \leq T_{2}\right\}$ occurs, or equivalently
\begin{equation}
\label{Events E1, E2 and E3 imply gamma less than T2}
E_{1} \cap E_{2} \cap E_{3} \subset (E^{\prime})^{c}.
\end{equation}
When the event $E_{3}$ occurs, using the fact that $\norm{\mathcal{M}_{l}^{-1}}_{\mathcal{L}^{\infty}(\mathcal{H}_{l})}\geq \frac{1}{\norm{\mathcal{M}_{l}}_{\mathcal{L}^{\infty}(\mathcal{H}_{l})}}$, we obtain $\norm{\mathcal{M}_{l}}_{\mathcal{L}^{\infty}(\mathcal{H}_{l})}^{2} \leq 2$, $\norm{\mathcal{M}_{l}^{-1}}_{\mathcal{L}^{\infty}(\mathcal{H}_{l})}^{2} \leq \frac{3}{2}$, and consequently, we must have \begin{equation}
\label{Bound on sum of function of Ml and ML inverse under event E3}
\frac{\left\|\mathcal{M}_{l}^{-1}\right\|_{\mathcal{L}^{\infty}(\mathcal{H}_{l})}^2}{3}+\frac{\left\|\mathcal{M}_{l}^{-1}\right\|_{\mathcal{L}^{\infty}(\mathcal{H}_{l})}^2\|\mathcal{M}_{l}\|_{\mathcal{L}^{\infty}(\mathcal{H}_{l})}^2}{6} \leq 1.
\end{equation}
Suppose we assume \begin{equation}\label{Sufficient condition 1 on norm of u to get function of Ml1 and ML inverse bounded above}
\|u\|_{L^2(R)}^2 \geq \frac{3 \gamma}{c_2} = \frac{ 12\sqrt{3}(C_{1}+C_{2}) N_{2}^{*}(\kappa,\lambda,\alpha,l)}{c_{2}\sqrt{\alpha}}\left(\frac{1}{n}+\frac{1}{m}\right),
\end{equation} 
and 
\begin{equation}\label{Sufficient condition 2 on norm of u to get function of Ml1 and ML inverse bounded above}
\begin{aligned}
&\|u\|_{L^2(R)}^2 \geq \frac{6 \tilde{C}^{1/2}\gamma_{l}}{c_{2}}\\
&= \frac{6\tilde{C}^{1/2}}{c_{2}\sqrt{\delta}}\left(\frac{\sqrt{N_{2}^{\prime}(\kappa,\lambda,\delta,l)}\|u\|_{L^2(R)}+N_{2}^{*}(\kappa,\lambda,\delta,l)}{n+m}+\frac{N_{2}^{\prime}(\kappa,\lambda,\delta,l)^{1 / 4}\|u\|_{L^2(R)}^{3 / 2}+\|u\|_{L^2(R)}}{\sqrt{n+m}}\right),
\end{aligned}
\end{equation}
which imply $\frac{\left\|\mathcal{M}_{l}^{-1}\right\|_{\mathcal{L}^{\infty}(\mathcal{H}_{l})}^2 \gamma}{c_2\|u\|_{L^2(R)}^2} \leq \frac{\left\|\mathcal{M}_{l}^{-1}\right\|_{\mathcal{L}^{\infty}(\mathcal{H}_{l})}^2}{3}$ 
and
$$\frac{\tilde{C}^{1/2} \gamma_{l}\left\|\mathcal{M}_{l}^{-1}\right\|_{\mathcal{L}^{\infty}(\mathcal{H}_{l})}^2\|\mathcal{M}_{l}\|_{\mathcal{L}^{\infty}(\mathcal{H}_{l})}^2}{c_2\|u\|_{L^2(R)}^2} \leq \frac{\left\|\mathcal{M}_{l}^{-1}\right\|_{\mathcal{L}^{\infty}(\mathcal{H}_{l})}^2\|\mathcal{M}_{l}\|_{\mathcal{L}^{\infty}(\mathcal{H}_{l})}^2}{6},$$ respectively. Therefore, it follows from \eqref{Bound on sum of function of Ml and ML inverse under event E3}, \eqref{Sufficient condition 1 on norm of u to get function of Ml1 and ML inverse bounded above}, and \eqref{Sufficient condition 2 on norm of u to get function of Ml1 and ML inverse bounded above},
we have
\begin{equation*}
\label{gamma less than T2 type bound version 1}
\frac{\left\|\mathcal{M}_{l}^{-1}\right\|_{\mathcal{L}^{\infty}(\mathcal{H}_{l})}^2 \gamma+\tilde{C}^{1/2} \gamma_{l}\left\|\mathcal{M}_{l}^{-1}\right\|_{\mathcal{L}^{\infty}(\mathcal{H}_{l})}^2\|\mathcal{M}_{l}\|_{\mathcal{L}^{\infty}(\mathcal{H}_{l})}^2}{c_2\|u\|_{L^2(R)}^2} \leq 1,
\end{equation*} which is equivalent to 
\begin{equation}
\label{gamma less than T2 type bound version 2}
\gamma \leq c_2\left\|\mathcal{M}_{l}^{-1}\right\|_{\mathcal{L}^{\infty}(\mathcal{H}_{l})}^{-2}\|u\|_{L^2(R)}^2-\tilde{C}^{1/2}\|\mathcal{M}_{l}\|_{\mathcal{L}^{\infty}(\mathcal{H}_{l})}^2 \gamma_{l}.
\end{equation}
Provided the event $E_{2}$ occurs, it follows from \eqref{gamma less than T2 type bound version 2} that $\gamma \leq T_{2}$. Therefore, \eqref{Events E1, E2 and E3 imply gamma less than T2} and consequently \eqref{Events E1, E2 and E3 imply gamma less than T1} is proved, if \eqref{Sufficient condition 1 on norm of u to get function of Ml1 and ML inverse bounded above} and \eqref{Sufficient condition 2 on norm of u to get function of Ml1 and ML inverse bounded above} are true. In the following, we show that the sufficient conditions mentioned in the statement of
Theorem \ref{Power analysis of Oracle test} are sufficient for \eqref{Probability of E1}, \eqref{Probability of E2}, \eqref{Probability of E3}, \eqref{Sufficient condition 1 on norm of u to get function of Ml1 and ML inverse bounded above} and \eqref{Sufficient condition 2 on norm of u to get function of Ml1 and ML inverse bounded above} to hold.

Let us define $c_{1} = \underset{\theta > 0}{\sup} \underset{(P,Q) \in \mathcal{P}}{\sup} \norm{\mathcal{T}_{PQ}^{-\theta}u}_{L^{2}(R)}$ which is assumed to be finite and $d_{\theta} = \left(\frac{1}{16c_{1}^{2}}\right)^{\frac{1}{2\theta}}$. Since $(P,Q) \in \mathcal{P}$ under $H_{1}$, we have that $\norm{u}^2_{L^{2}(R)} \geq \Delta_{N,M}$. Consequently, the choice \begin{equation}
\label{Choice of lambda in general case of power analysis of oracle test}
\lambda=d_{\theta}\Delta_{N,M}^{\frac{1}{2\theta}}\end{equation} implies that $\norm{u}^2_{L^{2}(R)} \geq 16 \lambda^{2 \theta} \norm{\mathcal{T}_{PQ}^{-\theta}u}_{L^{2}(R)}^{2}$ holds. Under
this choice of $\lambda$ as given in \eqref{Choice of lambda in general case of power analysis of oracle test}, the conditions $$\Delta_{N,M}^{\frac{1}{2\theta}} \geq \frac{d_{\theta}^{-1}160\kappa \log \frac{2}{\delta}}{l}\quad\text{and}\quad\frac{\Delta_{N,M}^{\frac{1}{2\theta}} }{\mathcal{N}_{1}\left(d_{\theta}\Delta_{N,M}^{1/2\theta}\right)} \geq  \frac{d_{\theta}^{-1} 3200 \kappa \log (\frac{2}{\delta})}{l}$$ are sufficient to ensure that \eqref{Probability of E2} holds. The conditions $$\lambda = d_{\theta}\Delta_{N,M}^{\frac{1}{2\theta}} \leq \frac{1}{2}\|\Sigma_{PQ}\|_{\mathcal{L}^{\infty}(\mathcal{H})}\quad \text{and}\quad\Delta_{N,M} \geq \left(d_{\theta}^{-1}\frac{86 \kappa}{l}\log \frac{32 \kappa l}{\delta}\right)^{2\theta}$$ are sufficient to ensure that \eqref{Probability of E1} holds. 
Note that, since $\frac{d_{1}(N+M)}{2} \leq s \leq \frac{d_{2}(N+M)}{2} $ and $(N+M) \geq \frac{32\kappa d_{2}}{\delta}$, \eqref{Probability of E3} holds if $$l\geq \max\left\{2\log\frac{2}{1-\sqrt{1-\delta}},\frac{128\kappa^{2}\log\frac{2}{1-\sqrt{1-\delta}}}{\left\|\Sigma_{P Q}\right\|_{\mathcal{L}^{\infty}(\mathcal{H})}^{2}}\right\},\,d_{\theta}\Delta_{N,M}^{\frac{1}{2\theta}} \geq \frac{560\kappa\log(N+M)}{d_{1}(N+M)},$$ $\frac{1}{2}\|\Sigma_{PQ}\|_{\mathcal{L}^{\infty}(\mathcal{H})} \geq d_{\theta}\Delta_{N,M}^{\frac{1}{2\theta}}$ and $n,m \geq 2$. 

Note that, $(n+m)=(1-d_1)N+(1-d_2)M \geq (1-d_2)(N+M)$, where $1\geq d_2\geq d_1\geq 0$ and using Lemma A.13 in \citep{SpectralTwoSampleTest}, we have that $\frac{1}{n} + \frac{1}{m} \leq \frac{2(D^{\prime}+1)}{(1-d_{2})(N+M)}$. 
Therefore, \eqref{Sufficient condition 1 on norm of u to get function of Ml1 and ML inverse bounded above} and
\eqref{Sufficient condition 2 on norm of u to get function of Ml1 and ML inverse bounded above} hold if $\mathcal{N}_{2}(\lambda)=\mathcal{N}_{2}(d_{\theta}\Delta_{N,M}^{\frac{1}{2\theta}}) \geq 1$, and the conditions \emph{3.--8.} in the statement of Theorem~\ref{Power analysis of Oracle test} hold.

\subsection{Proof of Corollary \ref{Power Analysis of Oracle test polynomial decay}}\label{subsec:cor3}
Under the polynomial decay of the eigenvalues of $\Sigma_{PQ}$ i.e. $\lambda_{i} \asymp i^{-\beta}$ for $\beta>1$, using Lemma \ref{Lemma 12}(i) and Lemma C.9 from \cite{ApproximateKernelPCARandomFeaturesStergeSriperumbudur}, we have that,
\begin{equation}
\label{N2 simplified under polynomial decay}
    \mathcal{N}_{2}(\lambda) \asymp \lambda^{-\frac{1}{2\beta}}
\end{equation}
and
\begin{equation}
\label{N1 simplified under polynomial decay}
    \mathcal{N}_{1}(\lambda) \asymp \lambda^{-\frac{1}{\beta}}.
\end{equation}
Using \eqref{N2 simplified under polynomial decay} and \eqref{N1 simplified under polynomial decay},
the conditions 
in Theorem~\ref{Power analysis of Oracle test} reduce to 
\begin{equation}
\label{Oracle test Power analysis conditions simplified under polynomial decay 1}
    \lambda = d_{\theta}\Delta_{N,M}^{\frac{1}{2\theta}} \gtrsim \max\left\{\circled{\emph{\small{1}}},\circled{\emph{\small{2}}},\circled{\emph{\small{3}}},\circled{\emph{\small{4}}},\circled{\emph{\small{5}}},\circled{\emph{\small{6}}},\circled{\emph{\small{7}}},\circled{\emph{\small{8}}},\circled{\emph{\small{9}}},\circled{\emph{\small{10}}}\right\},
\end{equation}
where the constant depending on $d_\theta,\,\alpha,\,\delta,\,\beta,\,\kappa$ is absorbed in $\gtrsim$ and
$$\circled{\emph{\small{1}}} = \frac{\log(N+M)}{(N+M)},\,  \circled{\emph{\small{2}}} = \frac{1}{l},\, 
\circled{\emph{\small{3}}} =\left[\frac{1}{l}\right]^{\frac{\beta}{\beta+1}},\, 
\circled{\emph{\small{4}}} = \frac{\log l}{l},\,\circled{\emph{\small{5}}}=\left[ \frac{1}{\sqrt{l}(N+M)}\right]^{\frac{2}{1+4\theta}},
$$ 
$$\circled{\emph{\small{6}}} = \left[\frac{1}{l(N+M)}\right]^{\frac{1}{1+2\theta}},\,\circled{\emph{\small{7}}}= \left[\frac{1}{N+M}\right]^{\frac{2\beta}{1+4\beta\theta}},\,\circled{\emph{\small{8}}} = \left[\frac{1}{\sqrt{l}(N+M)^{2}}\right]^{\frac{2}{3+4\theta}},
$$
$$\circled{\emph{\small{9}}} = \left[\frac{1}{l(N+M)^{2}}\right]^{\frac{1}{2(1+\theta)}},\quad\text{and}\quad
\circled{\emph{\small{10}}} = \left[\frac{1}{(N+M)^2}\right]^{\frac{2\beta}{1+2\beta+4\beta\theta}}.
$$
Note that $\circled{\emph{\small{3}}} \gtrsim \circled{\emph{\small{4}}} \gtrsim \circled{\emph{\small{2}}}$. Moreover, it can be verified that
\begin{equation*}
\circled{\emph{\small{7}}}\gtrsim
\begin{cases}
\circled{\emph{\small{3}}}, & l\gtrsim (N+M)^{\frac{2(\beta+1)}{1+4\beta\theta}}\\
\circled{\emph{\small{5}}}, & l\gtrsim (N+M)^{\frac{2(\beta-1)}{1+4\beta\theta}}\\
\circled{\emph{\small{6}}}, & l\gtrsim (N+M)^{\frac{2\beta-1}{1+4\beta\theta}}\\
\circled{\emph{\small{8}}}, & l\gtrsim (N+M)^{\frac{2(3\beta-4\theta\beta-2)}{1+4\beta\theta}}\\
\circled{\emph{\small{9}}}, & l\gtrsim (N+M)^{\frac{2(2\beta-2\theta\beta-1)}{1+4\beta\theta}}
\end{cases},
\end{equation*}
$\circled{\emph{\small{7}}}\gtrsim \circled{\emph{\small{1}}}$, $\circled{\emph{\small{7}}}\gtrsim \circled{\emph{\small{10}}}$ if $\theta>\frac{1}{2}-\frac{1}{4\beta}$. Therefore, if $\theta>\frac{1}{2}-\frac{1}{4\beta}$ and $l\gtrsim(N+M)^{\frac{2(\beta+1)}{1+4\theta\beta}}$, then  $\circled{\emph{\small{7}}}$ dominates. 
Similarly, it can be verified that
\begin{equation*}
\circled{\emph{\small{1}}}\gtrsim
\begin{cases}
\circled{\emph{\small{3}}}, & l\gtrsim \left[\frac{N+M}{\log (N+M)}\right]^{\frac{\beta+1}{\beta}}\\
\circled{\emph{\small{5}}}, & l\gtrsim (N+M)^{4\theta-1}\left[\log(N+M)\right]^{-(1+4\theta)}\\
\circled{\emph{\small{6}}}, & l\gtrsim (N+M)^{2\theta}\left[\log(N+M)\right]^{-(1+2\theta)}\\
\circled{\emph{\small{8}}}, & l\gtrsim (N+M)^{4\theta-1}\left[\log(N+M)\right]^{-(3+4\theta)}\\
\circled{\emph{\small{9}}}, & l\gtrsim (N+M)^{2\theta}\left[\log(N+M)\right]^{-2(1+\theta)}
\end{cases},
\end{equation*}
$\circled{\emph{\small{1}}}\gtrsim \circled{\emph{\small{7}}}$, $\circled{\emph{\small{1}}}\gtrsim \circled{\emph{\small{10}}}$ if $\theta\le \frac{1}{2}-\frac{1}{4\beta}$ and $N+M$ is large enough. Therefore, for large enough $N+M$, if  $\theta\le\frac{1}{2}-\frac{1}{4\beta}$ and $l\gtrsim \left[\frac{N+M}{\log (N+M)}\right]^{\frac{\beta+1}{\beta}}$, then  $\circled{\emph{\small{1}}}$ dominates, and the result follows.

\subsection{Proof of Corollary \ref{Power Analysis of Oracle test exponential decay}}\label{subsec:cor4}
Under exponential decay of the eigenvalues of $\Sigma_{PQ}$ i.e. $\lambda_{i} \asymp e^{-\tau i}$ for $\tau>0$, using Lemma \ref{Lemma 12}(ii) and Lemma C.9 from \cite{ApproximateKernelPCARandomFeaturesStergeSriperumbudur}, we have that,
\begin{equation}
\label{N2 simplified under exponential decay}
    \mathcal{N}_{2}(\lambda) \asymp\sqrt{\log\frac{1}{\lambda}}
\end{equation}
and
\begin{equation}
\label{N1 simplified under exponential decay}
    \mathcal{N}_{1}(\lambda) \asymp \log\frac{1}{\lambda}.
\end{equation}
Note that, based on Remark \ref{Remark: Conditions for N2l to behave as N2}, if \begin{equation}e^{-1} \geq \lambda \gtrsim \frac{\left[\log\frac{4}{\delta}+\log\frac{8}{\alpha}\right]\log l}{l},\label{eq:cor-2-lambda}
\end{equation} and 
\begin{equation}l \geq 2\bar{C}e\left[\log\frac{4}{\delta}+\log\frac{8}{\alpha}\right] \left[\log\left(\log\frac{4}{\delta}+\log\frac{8}{\alpha}\right)+1\right] \label{eq:cor-2-l}
\end{equation}
for some universal constant $\bar{C} \geq 1$, conditions \ref{Theorem Power Analysis Oracle Test Condition 5} and \ref{Theorem Power Analysis Oracle Test Condition 6} of Theorem \ref{Power analysis of Oracle test} are automatically satisfied if condition \ref{Theorem Power Analysis Oracle Test Condition 7} is satisfied, while conditions \ref{Theorem Power Analysis Oracle Test Condition 8} and \ref{Theorem Power Analysis Oracle Test Condition 9} are automatically satisfied if condition \ref{Theorem Power Analysis Oracle Test Condition 10} is satisfied. 
Using \eqref{N2 simplified under exponential decay} and \eqref{N1 simplified under exponential decay} and, provided $N+M \geq k(\alpha,\delta,\theta)$ for some constant $k(\alpha,\delta,\theta) \in \mathbb{N}$ depending on  $\alpha$, $\delta$ and $\theta$, the conditions \ref{Theorem Power Analysis Oracle Test Condition 3}, \ref{Theorem Power Analysis Oracle Test Condition 7} and \ref{Theorem Power Analysis Oracle Test Condition 10} reduce to the following after taking into account the constant factors (independent of $N,\,M$ or $l$, but are functions of $\alpha$, $\delta$ and $\theta$):
\begin{equation}
\label{Power Analysis Oracle test Condition 3 simplified under exponential decay}
\frac{\Delta_{N,M}^{\frac{1}{2\theta}}}{\log(d_{\theta}^{-1}\Delta_{N,M}^{-\frac{1}{2\theta}})} \gtrsim 
\frac{\log (\frac{2}{\delta})}{l},
\end{equation}

\begin{equation}
\label{Power Analysis Oracle test Condition 7 simplified under exponential decay}
\frac{\Delta_{N,M}}{\sqrt{\log(d_{\theta}^{-1}\Delta_{N,M}^{-\frac{1}{2\theta}})}} \gtrsim 
\frac{1}{N+M},
\end{equation}
and
\begin{equation}
\label{Power Analysis Oracle test Condition 10 simplified under exponential decay}
\frac{\Delta_{N,M}^{\frac{1+2\theta}{2\theta}}}{\sqrt{\log(d_{\theta}^{-1}\Delta_{N,M}^{-\frac{1}{2\theta}})}} \gtrsim 
\frac{1}{(N+M)^2}.
\end{equation} 
\eqref{Power Analysis Oracle test Condition 3 simplified under exponential decay} holds if \eqref{eq:cor-2-lambda} and \eqref{eq:cor-2-l} hold. 
Furthermore, Condition \ref{Theorem Power Analysis Oracle Test Condition 1} reduces to 

\begin{equation}
\label{Oracle test Power analysis conditions 1, 2 and 4 simplified under exponential decay}
    \lambda = d_{\theta}\Delta_{N,M}^{\frac{1}{2\theta}} \gtrsim \max\left\{\circled{\emph{\small{1}}},\circled{\emph{\small{2}}},\circled{\emph{\small{4}}}\right\},
\end{equation}
where $\circled{\emph{\small{1}}} = \frac{\log(N+M)}{(N+M)}$,  $\circled{\emph{\small{2}}} = \frac{1}{l}$ and $\circled{\emph{\small{4}}} = \frac{\log l}{l}$. Clearly, $\circled{\emph{\small{4}}} \gtrsim \circled{\emph{\small{2}}}$. Further, if $\Delta_{N,M} \gtrsim \frac{\sqrt{\log(N+M)}}{N+M}$, then condition \ref{Theorem Power Analysis Oracle Test Condition 7} is satisfied, while condition \ref{Theorem Power Analysis Oracle Test Condition 10} is satisfied if $\Delta_{N,M} \gtrsim\frac{\left[\log(N+M)\right]^{\frac{\theta}{1+2\theta}}}{\left[N+M\right]^{\frac{4\theta}{1+2\theta}}}$. Therefore, \eqref{Power Analysis Oracle test Condition 3 simplified under exponential decay}, \eqref{Power Analysis Oracle test Condition 7 simplified under exponential decay}, \eqref{Power Analysis Oracle test Condition 10 simplified under exponential decay} and \eqref{Oracle test Power analysis conditions 1, 2 and 4 simplified under exponential decay} are satisfied if the following condition holds:
\begin{equation}
\begin{aligned}
\label{Oracle test Power analysis conditions simplified under exponential decay}
    \Delta_{N,M} \gtrsim& \max\left\{\left[\frac{\log(N+M)}{N+M}\right]^{2\theta},\left[\frac{\log l}{l}\right]^{2\theta},\frac{\sqrt{\log(N+M)}}{N+M},\frac{\left[\log(N+M)\right]^{\frac{\theta}{1+2\theta}}}{\left[N+M\right]^{\frac{4\theta}{1+2\theta}}}\right\}.
\end{aligned}
\end{equation}
Now, we consider two scenarios based on the values of the smoothness index $\theta$.\vspace{1mm}

\textbf{Case I: } Suppose $\theta > \frac{1}{2}$. 

Then, we have that $\frac{\sqrt{\log(N+M)}}{N+M} \gtrsim \max\left\{\left[\frac{\log(N+M)}{N+M}\right]^{2\theta},\frac{\left[\log(N+M)\right]^{\frac{\theta}{1+2\theta}}}{\left[N+M\right]^{\frac{4\theta}{1+2\theta}}}\right\}$. Further, $\frac{\sqrt{\log(N+M)}}{N+M}\gtrsim  \left[\frac{\log l}{l}\right]^{2\theta}$ if $l \gtrsim \left(N+M\right)^{\frac{1}{2\theta}}\log(N+M)^{1-\frac{1}{4\theta}}$. Therefore, provided $l \gtrsim \left(N+M\right)^{\frac{1}{2\theta}}\log(N+M)^{1-\frac{1}{4\theta}}$, \eqref{Oracle test Power analysis conditions simplified under exponential decay} reduces to 
\begin{equation}
\label{Oracle test Power analysis conditions simplified under exponential decay 1}
\Delta_{N,M} = c(\alpha,\delta,\theta)\frac{\sqrt{\log(N+M)}}{N+M},
\end{equation}
where $c(\alpha,\delta,\theta)$ is a positive constant that depends on $\alpha$, $\delta$ and $\theta$. 


\textbf{Case II: } Suppose $\theta \leq \frac{1}{2}$.

Then, we have that $ \left[\frac{\log(N+M)}{N+M}\right]^{2\theta} \gtrsim \max\left\{\frac{\sqrt{\log(N+M)}}{N+M},\frac{\left[\log(N+M)\right]^{\frac{\theta}{1+2\theta}}}{\left[N+M\right]^{\frac{4\theta}{1+2\theta}}}\right\}$. Further, \\$\left[\frac{\log(N+M)}{N+M}\right]^{2\theta} \gtrsim  \left[\frac{\log l}{l}\right]^{2\theta}$ if $l \gtrsim N+M$. Therefore, provided $l \gtrsim N+M$, \eqref{Oracle test Power analysis conditions simplified under exponential decay} reduces to 
\begin{equation*}
\label{Oracle test Power analysis conditions simplified under exponential decay 2}
\Delta_{N,M} = c(\alpha,\delta,\theta)\left[\frac{\log(N+M)}{N+M}\right]^{2\theta},
\end{equation*}
where $c(\alpha,\delta,\theta)$ is the positive constant that depends on $\alpha$, $\delta$ and $\theta$ as used in \eqref{Oracle test Power analysis conditions simplified under exponential decay 1}.





\subsection{Proof of Theorem \ref{Type I-error of RFF-based permutation test}}\label{subsec:thm5}
Conditional on $\theta^{1:l}$, the kernel $K_{l}$ and its corresponding RKHS $\mathcal{H}_{l}$ are fixed. From this point onwards, the proof is similar to that of Theorem 4.6 in \citep{SpectralTwoSampleTest} upon replacing all test statistics based on the kernel $K$ with test statistics based on the kernel $K_{l}$ and probabilities with conditional probabilities and using Lemma \ref{Lemma 15}. This leads us to obtain
\[
P_{H_{0}}\left\{\hat{\eta}_{\lambda,l} \geq q_{1-w\alpha}^{B,\lambda,l} \mid \theta^{1:l}\right\} \geq 1 - w\alpha - \alpha^{\prime} - (1-w-\tilde{w})\alpha .
\]
Using the monotonicity and the tower property of conditional expectations, we can remove the conditioning on the random features $\theta^{1:l}$ and obtain the desired result upon choosing $\alpha^{\prime}=\tilde{w}\alpha$.

\subsection{Proof of Theorem \ref{Power Analysis for RFF based test based on permutations}}\label{subsec:thm6}

Let us define $\zeta_{l}=\mathbb{E}_{P^{n} \times Q^{m}}\left(\hat{\eta}_{\lambda,l} \mid\mathbb{Z}^{1:s},\theta^{1:l}\right)=\left\|g_{\lambda}^{1/2}(\hat{\Sigma}_{P Q,l})\left(\mu_{P,l}-\mu_{Q,l}\right)\right\|_{\mathcal{H}_{l}}^{2}$ and $\tilde{\alpha} = (w-\tilde{w})\alpha$. Further, define $N_{2}^{*}(\kappa,\lambda,\tilde{\alpha},l) \coloneq \frac{4\sqrt{2\kappa \mathcal{N}_{1}(\lambda) \log \frac{8}{\tilde{\alpha}}}}{\sqrt{\lambda l}} + \frac{16\kappa \log \frac{8}{\tilde{\alpha}}}{\lambda l}  + 2\sqrt{2}\mathcal{N}_{2}(\lambda)$, $N_{2}^{*}(\kappa,\lambda,\delta,l) \coloneq \frac{4\sqrt{2\kappa \mathcal{N}_{1}(\lambda) \log \frac{4}{\delta}}}{\sqrt{\lambda l}} + \frac{16\kappa \log \frac{4}{\delta}}{\lambda l}  + 2\sqrt{2}N_{2}(\lambda)$, $N_{2}^{\prime}(\kappa,\lambda,\delta,l)\coloneq\frac{2N_{2}^{*}(\kappa,\lambda,\delta,l)\kappa}{\lambda}$, $T_{1}=\zeta_{l} - \sqrt{\frac{\operatorname{Var}\left(\hat{\eta}_{\lambda,l} \mid \mathbb{Z}^{1:s},\theta^{1:l}\right)}{\delta}}$ and $D^{\prime} = \frac{D - d_{2}}{1-d_{2}}$, $$\gamma_{1,l} =\frac{1}{\sqrt{\delta}}\left(\frac{\sqrt{N_{2}^{\prime}(\kappa,\lambda,\delta,l)}\|u\|_{L^2(R)}+N_{2}^{*}(\kappa,\lambda,\delta,l)}{n+m}+\frac{(N_{2}^{\prime}(\kappa,\lambda,\delta,l))^{1 / 4}\|u\|_{L^2(R)}^{3 / 2}+\|u\|_{L^2(R)}}{\sqrt{n+m}}\right),$$ $$\gamma_{2,l} = \frac{\log\frac{1}{\tilde{\alpha}}}{\sqrt{\delta}(n+m)}\left(\sqrt{N_{2}^{\prime}(\kappa,\lambda,\delta,l)}\norm{u}_{L^{2}(R)} + N_{2}^{*}(\kappa,\lambda,\delta,l) + (N_{2}^{\prime}(\kappa,\lambda,\delta,l))^{\frac{1}{4}}\norm{u}_{L^{2}(R)}^{\frac{3}{2}} + \norm{u}_{L^{2}(R)}\right),$$ and $$
\gamma_{3,l} =
\norm{\mathcal{M}_{l}}_{\mathcal{L}^{\infty}(\mathcal{H}_{l})}^{2}\gamma_{2,l} + \frac{\zeta_{l}\log\frac{1}{\tilde{\alpha}}}{\sqrt{\delta}(n+m)}.$$
Provided
\begin{equation}
\label{First threshold for permutation quantile}
P_{H_{1}}\left\{q_{1-\tilde{\alpha}}^{\lambda,l} \geq T_{1}\right\} \leq 5\delta
\end{equation}
holds true for any $(P, Q) \in \mathcal{P}$ (i.e., under the condition when $H_{1}$ is true and the pair of distribution $(P,Q)$ belongs to the collection of $\Delta_{N,M}$-separated alternatives as defined in \eqref{Class of alternatives}) under the conditions stated in Theorem \ref{Power Analysis for RFF based test based on permutations}, we obtain 
\begin{equation}
\label{Second threshold for permutation quantile}
P_{H_{1}}\left\{\hat{\eta}_{\lambda,l} \geq q_{1-\tilde{\alpha}}^{\lambda,l}\right\} \geq 1-6 \delta
\end{equation} through the application of Lemma \ref{Lemma 1}. Setting $\alpha^{\prime}= \tilde{w}\alpha$ and using Lemma \ref{Lemma 15} and \eqref{Second threshold for permutation quantile}, provided the number of randomly selected permutations $B \geq \frac{\log\frac{2}{\delta}}{2\tilde{\alpha}^{2}}$, we have that
\[
\begin{aligned}
P_{H_{1}}(\hat{\eta}_{\lambda,l} \geq q_{1-\tilde{\alpha}}^{B,\lambda,l}) &\geq P_{H_{1}}\left\{ \left\{\hat{\eta}_{\lambda,l} \geq q_{1-w\alpha+\alpha^{\prime}}^{\lambda,l}\right\} \cap \left\{ q_{1-\alpha}^{B,\lambda,l} \leq q_{1-w\alpha+\alpha^{\prime}}^{\lambda,l}\right\} \right\}\\
& \geq 1- P_{H_{1}}(\hat{\eta}_{\lambda,l} < q_{1-w\alpha+\alpha^{\prime}}^{\lambda,l}) - P_{H_{1}}(q_{1-\alpha}^{B,\lambda,l} > q_{1-w\alpha+\alpha^{\prime}}^{\lambda,l})\\
&\geq 1- 6\delta - \delta\\
&= 1-7\delta.
\end{aligned}
\]
Taking the infimum over $(P, Q) \in \mathcal{P}$, the result stated in Theorem \ref{Power Analysis for RFF based test based on permutations} is obtained. Therefore, to complete the proof, it remains to verify that \eqref{First threshold for permutation quantile} holds under the conditions of this theorem, which we do below. 

Let us define the quantity $\mathcal{M}_{l}=\hat{\Sigma}_{P Q, \lambda, l}^{-1 / 2} \Sigma_{P Q, \lambda, l}^{1/2}$ and the events $E_{1}=\left\{\mathcal{N}_{2,l}(\lambda) \leq N_{2}^{*}(\kappa,\lambda,\delta,l)\right\}$, $E_{2}=\left\{\zeta_{l} \geq c_2\left\|\mathcal{M}_{l}^{-1}\right\|_{\mathcal{L}^{\infty}(\mathcal{H}_{l})}^{-2}\|u\|_{L^2(R)}^2\right\}$, $E_{3} = \left\{\sqrt{\frac{2}{3}}\leq\|\mathcal{M}_{l}\|_{\mathcal{L}^{\infty}(\mathcal{H}_{l})} \leq \sqrt{2}\right\}$, where $c_{2}=\frac{C_{4}^{2}}{2(C_{1}+C_{2})}$. Also define $E_{4} = \left\{q_{1-\tilde{\alpha}}^{\lambda,l} < C^{*} \gamma_{3,l}\right\}$. Under $\boldsymbol{(\RFFAssumptionone)}$ and $\boldsymbol{(\RFFAssumptionfour)}$, and using Lemma \ref{Lemma 13}, we have that, if $\frac{86 \kappa}{l}\log \frac{32 \kappa l}{\delta} \leq \lambda \leq \|\Sigma_{PQ}\|_{\mathcal{L}^{\infty}(\mathcal{H})}$, then 
\begin{equation}
\label{Probability of E1 RFF based permutation}
P_{H_{1}}(E_{1}^{c}) = P(E_{1}^{c}) \leq \delta.
\end{equation}
For $(P,Q) \in \mathcal{P}$, we have that $u = \frac{dP}{dR} - 1 \in \operatorname{Ran}(\mathcal{T}_{PQ}^{\theta})$. Further, under $\boldsymbol{(\SpectralAssumptionone)}$,$\boldsymbol{(\SpectralAssumptiontwo)}$ and $\boldsymbol{(\SpectralAssumptionfour)}$ along with the conditions $\|u\|_{L^{2}(R)}^{2} \geq 16 \lambda^{2 \theta}\|\mathcal{T}_{PQ}^{-\theta} u\|_{L^{2}(R)}^{2}$ and $l \geq \max{\left(160,3200 N_{1}(\lambda)\right)}\times \frac{ \kappa \log (\frac{2}{\delta})}{\lambda }$, by employing Proposition~\ref{Proposition: Upper and lower bound of eta} and Lemma~\ref{Lemma 10}, we have that 
\begin{equation}
\label{Probability of E2 RFF based permutation}
P_{H_{1}}(E_{2}^{c}) \leq \delta.
\end{equation}


Following the proof of Proposition \ref{Proposition: Type-I error bound with random threshold}, specifically the proof of \eqref{Bound on probability of Ml given E}, we have that, if $n,m \geq 2$,
$\frac{140 \kappa}{s} \log \frac{32 \kappa s}{1-\sqrt{1-\delta}} \leq \lambda \leq \frac{1}{2}\left\|\Sigma_{P Q}\right\|_{\mathcal{L}^{\infty}(\mathcal{H})}
$
and $l\geq \max\left\{2\log\frac{2}{1-\sqrt{1-\delta}},\frac{128\kappa^{2}\log\frac{2}{1-\sqrt{1-\delta}}}{\left\|\Sigma_{P Q}\right\|_{\mathcal{L}^{\infty}(\mathcal{H})}^{2}}\right\}$, then,
\begin{equation}
\label{Probability of E3 RFF based permutation}
P_{H_{1}}(E_{3}^{c}) = P(E_{3}^{c}) \leq \delta.
\end{equation}

Now, under $\boldsymbol{(\Samplesizeassumption)}$ and the choice of the sample splitting size $s=d_{1}N=d_{2}M$ for estimating the covariance operator $\Sigma_{PQ,l}$ as stated in Theorem \ref{Power Analysis for RFF based test based on permutations}, we have that $m \leq n \leq D^{\prime} m$ for some constant $D^{\prime} \geq 1$. Therefore, using Lemma \ref{Lemma 16}, we have that
\begin{equation}
\label{Probability of E4 given E1 RFF based permutation}
P_{H_{1}}(E_{4}^{c} \mid E_{1})\leq \delta.
\end{equation}


Let us define the event $E^{*} = \left\{q_{1-\tilde{\alpha}}^{\lambda,l} \geq T_{1}\right\}$. Provided that the occurrence of the events $E_{1}$, $E_{2}$, $E_{3}$ and $E_{4}$ imply that event $E^{*}$ cannot occur under the conditions of Theorem \ref{Power Analysis for RFF based test based on permutations}, and using \eqref{Probability of E1 RFF based permutation}, \eqref{Probability of E2 RFF based permutation}, \eqref{Probability of E3 RFF based permutation} and \eqref{Probability of E4 given E1 RFF based permutation}, we have that 
\begin{equation*}
\label{Probability of Estar RFF based permutation}
\begin{aligned}
P_{H_{1}}(E^{*}) &\leq P_{H_{1}}(E_{1}^{c} \cup E_{2}^{c} \cup E_{3}^{c} \cup E_{4}^{c}) \\
&\leq P(E_{1}^{c})+P_{H_{1}}(E_{2}^{c}) +P(E_{3}^{c}) +P_{H_{1}}(E_{4}^{c})\\
&\leq 2P(E_{1}^{c})+P_{H_{1}}(E_{2}^{c}) +P(E_{3}^{c}) +P_{H_{1}}(E_{4}^{c} \mid E_{1})\\
&\leq 5\delta.
\end{aligned}
\end{equation*}
Therefore, to complete the proof of this theorem, we only need to prove that the simultaneous occurrence of the events $E_{1}$, $E_{2}$, $E_{3}$ and $E_{4}$ precludes the occurrence of the event $E^{*}$ under the conditions specified in this theorem, i.e., the event $(E^{*})^{c}= \left\{q_{1-\tilde{\alpha}}^{\lambda,l} < T_{1}\right\}$ occurs, or equivalently
\begin{equation}
\label{Events E1, E2 ,E3 and E4 imply permutation quantile than T1}
\left\{E_{1} \cap E_{2} \cap E_{3} \cap E_{4}\right\} \subset (E^{*})^{c}.
\end{equation}
Note that, provided the event $E_{1}$ occurs, under $\boldsymbol{(\RFFAssumptionone)}$ and $\boldsymbol{(\RFFAssumptionfour)}$, $$N_{2}^{\prime}(\kappa,\lambda,\delta,l)\coloneq\frac{2N_{2}^{*}(\kappa,\lambda,\delta,l)\kappa}{\lambda}$$ is an upper bound on $C_{\lambda,l}=\frac{2\mathcal{N}_{2,l}(\lambda)}{\lambda} \sup _{x}\|K_{l}(\cdot, x)\|_{\mathcal{H}_{l}}^{2}$ as defined in Lemma \ref{Lemma 9}. Let us define 
$$T_{2} = \zeta_{l} - \tilde{C}^{1/2}\|\mathcal{M}_{l}\|_{\mathcal{L}^{\infty}(\mathcal{H}_{l})}^2 \gamma_{1,l},$$ where $\tilde{C}$ is a constant defined in Lemma \ref{Lemma 11} that depends only on $C_{1}, C_{2}$ and $D^{\prime}$. Further, let us define the event $E^{\prime} = \left\{q_{1-\tilde{\alpha}}^{\lambda,l} \geq T_{2}\right\}$. 


Now, under $\boldsymbol{(\Samplesizeassumption)}$ and the choice of the sample splitting size $s=d_{1}N=d_{2}M$ for estimating the covariance operator $\Sigma_{PQ,l}$ as stated in Theorem \ref{Power analysis of Oracle test}, we have that $m \leq n \leq D^{\prime} m$ where $D^{\prime} = \frac{D-d_{2}}{1-d_{2}}\geq 1$ is a constant. Therefore, using Lemma~\ref{Lemma 11} under $\boldsymbol{(\SpectralAssumptionone)}$ and $\boldsymbol{(\SpectralAssumptiontwo)}$ and provided the events $E_{1}$ and $E_{2}$ occur simultaneously, we observe that $T_{2} \leq T_{1}$ and consequently, the occurrence of the event $(E^{\prime})^{c} = \left\{q_{1-\tilde{\alpha}}^{\lambda,l} < T_{2}\right\}$ implies the occurrence of the event $(E^{*})^{c} = \left\{q_{1-\tilde{\alpha}}^{\lambda,l} < T_{1}\right\}$. Therefore, it is sufficient to show that the simultaneous occurrence of the events $E_{1}$, $E_{2}$, $E_{3}$ and $E_{4}$ precludes the occurrence of the event $E^{\prime}$ under the conditions specified in this theorem, i.e., the event $(E^{\prime})^{c}= \{q_{1-\tilde{\alpha}}^{\lambda,l} < T_{2}\}$ occurs, or equivalently
\begin{equation}
\label{Events E1, E2, E3 and E4 imply permutation quantile less than T2}
E_{1} \cap E_{2} \cap E_{3} \cap E_{4} \subset (E^{\prime})^{c}.
\end{equation}
When the event $E_{3}$ occurs, using the fact that $\norm{\mathcal{M}_{l}^{-1}}_{\mathcal{L}^{\infty}(\mathcal{H}_{l})}\geq \frac{1}{\norm{\mathcal{M}_{l}}_{\mathcal{L}^{\infty}(\mathcal{H}_{l})}}$, we obtain  $\norm{\mathcal{M}_{l}}_{\mathcal{L}^{\infty}(\mathcal{H}_{l})}^{2} \leq 2$, $\norm{\mathcal{M}_{l}^{-1}}_{\mathcal{L}^{\infty}(\mathcal{H}_{l})}^{2} \leq \frac{3}{2}$, and consequently, we must have \begin{equation}
\label{Bound on sum of function of Ml and ML inverse under event E3 RFF based permutation}
\left\|\mathcal{M}_{l}^{-1}\right\|_{\mathcal{L}^{\infty}(\mathcal{H}_{l})}^2\|\mathcal{M}_{l}\|_{\mathcal{L}^{\infty}(\mathcal{H}_{l})}^2 \leq 3.
\end{equation}
Observe that, if
\begin{equation}\label{Sufficient condition 1 on norm of u to get function of Ml1 and ML inverse bounded above RFF based permutation}
\|u\|_{L^2(R)}^2 > \frac{6(\tilde{C}^{1/2}\gamma_{1,l} + C^{*}\gamma_{2,l})}{c_{2}}
\end{equation} holds, using \eqref{Bound on sum of function of Ml and ML inverse under event E3 RFF based permutation}, then
\begin{equation*}
\label{permutation quantile less than T2 type bound version 1 RFF based permutation}
\frac{\left\|\mathcal{M}_{l}^{-1}\right\|_{\mathcal{L}^{\infty}(\mathcal{H}_{l})}^2\|\mathcal{M}_{l}\|_{\mathcal{L}^{\infty}(\mathcal{H}_{l})}^2(\tilde{C}^{1/2}\gamma_{1,l} + C^{*}\gamma_{2,l})}{c_{2}\norm{u}_{L^{2}(R)}^{2}} < \frac{1}{2}.
\end{equation*}
Under the condition $(n+m) \geq \frac{2C^{*}\log(\frac{2}{\tilde{\alpha}})}{\sqrt{\delta}}$ and provided the event $E_{2}$ occurs, we have that 
\begin{equation}
\label{permutation quantile less than T2 type bound version 2 RFF based permutation}
\|\mathcal{M}_{l}\|_{\mathcal{L}^{\infty}(\mathcal{H}_{l})}^2(\tilde{C}^{1/2}\gamma_{1,l} + C^{*}\gamma_{2,l}) < \zeta_{l}\left(1-\frac{C^{*}\log(\frac{1}{\tilde{\alpha}})}{\sqrt{\delta}(n+m)}\right)
\end{equation}
which is equivalent to
\begin{equation}
\label{permutation quantile less than T2 type bound version 3 RFF based permutation}
C^{*}\gamma_{3,l} < T_{2}.
\end{equation}
Provided the event $E_{4}$ occurs, from \eqref{permutation quantile less than T2 type bound version 3 RFF based permutation} we have that $q_{1-\tilde{\alpha}}^{\lambda,l} < T_{2}$. Therefore, \eqref{Events E1, E2, E3 and E4 imply permutation quantile less than T2} and consequently \eqref{Events E1, E2 ,E3 and E4 imply permutation quantile than T1} is proved, which, barring the verification of the sufficiency of the conditions stated in this theorem, completes the proof of the theorem.

We now consolidate the assumptions and sufficient conditions under which the theorem is true. We proceed to show the conditions specified in the statement of Theorem \ref{Power Analysis for RFF based test based on permutations} are sufficient for \eqref{Probability of E1 RFF based permutation}, \eqref{Probability of E2 RFF based permutation}, \eqref{Probability of E3 RFF based permutation}, \eqref{Probability of E4 given E1 RFF based permutation}, \eqref{Sufficient condition 1 on norm of u to get function of Ml1 and ML inverse bounded above RFF based permutation} and \eqref{permutation quantile less than T2 type bound version 2 RFF based permutation} to hold.

Let us define $c_{1} = \underset{\theta>0}{\sup}\underset{(P,Q) \in \mathcal{P}}{\sup} \norm{\mathcal{T}_{PQ}^{-\theta}u}_{L^{2}(R)}$ which is assumed to be finite and $d_{\theta} = \left(\frac{1}{16c_{1}^{2}}\right)^{\frac{1}{2\theta}}$. Since $(P,Q) \in \mathcal{P}$ under $H_{1}$, we have that $\norm{u}^2_{L^{2}(R)} \geq \Delta_{N,M}$. Consequently, the choice \begin{equation}
\label{Choice of lambda in general case of power analysis of RFF based permutation test}
\lambda=d_{\theta}\Delta_{N,M}^{\frac{1}{2\theta}}\end{equation} implies that $\norm{u}^2_{L^{2}(R)} \geq 16 \lambda^{2 \theta} \norm{\mathcal{T}_{PQ}^{-\theta}u}_{L^{2}(R)}^{2}$ holds. Using the choice of $\lambda$ as given in \eqref{Choice of lambda in general case of power analysis of RFF based permutation test}, the conditions  $$\Delta_{N,M}^{\frac{1}{2\theta}} \geq \frac{d_{\theta}^{-1}160\kappa \log (\frac{2}{\delta})}{l},\quad\text{and}\quad \frac{\Delta_{N,M}^{\frac{1}{2\theta}} }{\mathcal{N}_{1}\left(\left(d_{\theta}\Delta_{N,M}\right)^{1/2\theta}\right)} \geq  \frac{d_{\theta}^{-1} 3200 \kappa \log (\frac{2}{\delta})}{l}$$ are sufficient to ensure that \eqref{Probability of E2 RFF based permutation} holds. The conditions $$\lambda = d_{\theta}\Delta_{N,M}^{\frac{1}{2\theta}} \leq \frac{1}{2}\|\Sigma_{PQ}\|_{\mathcal{L}^{\infty}(\mathcal{H})},\quad\text{and}\quad \Delta_{N,M} \geq \left(d_{\theta}^{-1}\frac{86 \kappa}{l}\log \frac{32 \kappa l}{\delta}\right)^{2\theta}$$ are sufficient to ensure that \eqref{Probability of E1 RFF based permutation} holds. Note that, since $s \geq \frac{d_{1}(N+M)}{2}$ and $(N+M) \geq \frac{32\kappa d_{1}}{\delta}$, \eqref{Probability of E3 RFF based permutation} holds if $$l\geq \max\left\{2\log\frac{2}{1-\sqrt{1-\delta}},\frac{128\kappa^{2}\log\frac{2}{1-\sqrt{1-\delta}}}{\left\|\Sigma_{P Q}\right\|_{\mathcal{L}^{\infty}(\mathcal{H})}^{2}}\right\},$$ $d_{\theta}\Delta_{N,M}^{\frac{1}{2\theta}} \geq \frac{560\kappa\log(N+M)}{d_{1}(N+M)}$, $\frac{1}{2}\|\Sigma_{PQ}\|_{\mathcal{L}^{\infty}(\mathcal{H})} \geq d_{\theta}\Delta_{N,M}^{\frac{1}{2\theta}}$ and $n,m \geq 2$. Under $\boldsymbol{(\Samplesizeassumption)}$ and the choice of the sample splitting size $s=d_{1}N=d_{2}M$ for estimating the covariance operator $\Sigma_{PQ,l}$ as stated in Theorem \ref{Power Analysis for RFF based test based on permutations}, \eqref{Probability of E4 given E1 RFF based permutation} holds true.

Note that, $(n+m)=(1-d_1)N+(1-d_2)M \geq (1-d_2)(N+M)$, where $1\geq d_2\geq d_1\geq 0$. Hence, \eqref{permutation quantile less than T2 type bound version 2 RFF based permutation} holds if $N+M \geq \frac{2C^{*}\log(\frac{2}{\tilde{\alpha}})}{(1-d_{2})\sqrt{\delta}}$. Further, \eqref{Sufficient condition 1 on norm of u to get function of Ml1 and ML inverse bounded above RFF based permutation} holds if $\mathcal{N}_{2}(\lambda)=\mathcal{N}_{2}(d_{\theta}\Delta_{N,M}^{\frac{1}{2\theta}}) \geq 1$ and the conditions \emph{3.--8.} in the statement of Theorem~\ref{Power Analysis for RFF based test based on permutations} hold.

\subsection{Proof of Corollary \ref{Power Analysis of RFF based permutation test polynomial decay}}\label{subsec:cor7}

The proof is almost similar to that of Corollary \ref{Power Analysis of Oracle test polynomial decay}. Under polynomial decay of the eigenvalues of $\Sigma_{PQ}$ i.e. $\lambda_{i} \asymp i^{-\beta}$ for $\beta>1$, using Lemma \ref{Lemma 12}(i) and Lemma C.9 from \cite{ApproximateKernelPCARandomFeaturesStergeSriperumbudur}, we have that \eqref{N2 simplified under polynomial decay} and \eqref{N1 simplified under polynomial decay} hold. 
Using \eqref{N2 simplified under polynomial decay} and \eqref{N1 simplified under polynomial decay} and provided $N+M \geq k(\tilde{\alpha},\delta,\theta,\beta)$ for some constant $k(\tilde{\alpha},\delta,\theta,\beta) \in \mathbb{N}$ depending on  $\tilde{\alpha}$, $\delta$, $\theta$ and $\beta$, and $B \geq \frac{\log(\frac{2}{\min\left\{{\delta},\alpha(1-w-\tilde{w})\right\}})}{2\tilde{w}^{2}\alpha^{2}}$, the conditions \ref{Theorem Power Analysis RFF based permutation test Condition 1} to \ref{Theorem Power Analysis RFF based permutation test Condition 10} as specified in Theorem \ref{Power Analysis for RFF based test based on permutations} reduce to \eqref{Oracle test Power analysis conditions simplified under polynomial decay 1}.
The rest of the proof matches that of Corollary~\ref{Power Analysis of Oracle test polynomial decay}.

\subsection{Proof of Corollary \ref{Power Analysis of RFF based permutation test exponential decay}}\label{subsec:cor8}
The proof is almost similar to that of Corollary \ref{Power Analysis of Oracle test exponential decay}. Under exponential decay of the eigenvalues of $\Sigma_{PQ}$ i.e. $\lambda_{i} \asymp e^{-\tau i}$ for $\tau>0$, using Lemma \ref{Lemma 12}(ii) and Lemma C.9 from \cite{ApproximateKernelPCARandomFeaturesStergeSriperumbudur}, we have that \eqref{N2 simplified under exponential decay} and \eqref{N1 simplified under exponential decay} hold.
Note that, based on Remark \ref{Remark: Conditions for N2l to behave as N2}, if 
\eqref{eq:cor-2-lambda} and \eqref{eq:cor-2-l} hold 
for some universal constant $\bar{C} \geq 1$, conditions \ref{Theorem Power Analysis RFF based permutation test Condition 5} and \ref{Theorem Power Analysis RFF based permutation test Condition 6} of Theorem \ref{Power Analysis for RFF based test based on permutations} are automatically satisfied if condition \ref{Theorem Power Analysis RFF based permutation test Condition 7} is satisfied, while conditions \ref{Theorem Power Analysis RFF based permutation test Condition 8} and \ref{Theorem Power Analysis RFF based permutation test Condition 9} are automatically satisfied if condition \ref{Theorem Power Analysis RFF based permutation test Condition 10} is satisfied. Using \eqref{N2 simplified under exponential decay} and \eqref{N1 simplified under exponential decay} and provided $N+M \geq k(\tilde{\alpha},\delta,\theta)$ for some constant $k(\tilde{\alpha},\delta,\theta) \in \mathbb{N}$ depending on  $\alpha$, $\delta$ and $\theta$ and $B \geq \frac{\log(\frac{2}{\min\left\{{\delta},\alpha(1-w-\tilde{w})\right\}})}{2\tilde{w}^{2}\alpha^{2}}$, the conditions \ref{Theorem Power Analysis RFF based permutation test Condition 3}, \ref{Theorem Power Analysis RFF based permutation test Condition 7} and \ref{Theorem Power Analysis RFF based permutation test Condition 10} reduce to \eqref{Power Analysis Oracle test Condition 3 simplified under exponential decay}, \eqref{Power Analysis Oracle test Condition 7 simplified under exponential decay}, and \eqref{Power Analysis Oracle test Condition 10 simplified under exponential decay}, respectively.
Therefore, the rest of the proof matches that of Corollary~\ref{Power Analysis of Oracle test exponential decay}.

\subsection{Proof of Theorem \ref{Type I-error of RFF-based permutation test with adaptation over lambda}}\label{subsec:thm9}


Choosing $\alpha$ as $\frac{\alpha}{|\Lambda|}$ and under the condition that the number of randomly selected permutations $B \geq \frac{|\Lambda|^{2}}{2\tilde{w}^{2}\alpha^{2}}\log(\frac{2|\Lambda|}{\alpha(1-w-\tilde{w})})$, using Theorem \ref{Type I-error of RFF-based permutation test} we have that, for any $\lambda \in \Lambda$,
\[
P_{H_{0}}\left\{\hat{\eta}_{\lambda,l} \geq q_{1-\frac{w\alpha}{|\Lambda|}}^{B,\lambda,l}\right\} \leq \frac{\alpha}{|\Lambda|}.
\]
Consequently, the proof is complete using Lemma \ref{Lemma 17}.

\subsection{Proof of Theorem \ref{Power Analysis for RFF based test based on permutations and adaptation}}\label{subsec:thm10}
The basic structure of the proof follows the same steps as in the proof of Theorem \ref{Power Analysis for RFF based test based on permutations}. The primary difference is the use of $\hat{q}_{1-\frac{w\alpha}{|\Lambda|}}^{B,\lambda,l}$, instead of $\hat{q}_{1-w\alpha}^{B,\lambda,l}$, as the threshold for each choice of $\lambda \in \Lambda$, which is equivalent to choosing the significance level as $\frac{\alpha}{|\Lambda|}$ for each choice of $\lambda \in \Lambda$. This results in the emergence of an extra factor of $\log|\Lambda|$ in the expression of $\gamma_{2,l}$ and $\gamma_{3,l}$, i.e., $\log\frac{1}{\tilde{\alpha}}$ is replaced with $\log\frac{|\Lambda|}{\tilde{\alpha}}$. This ultimately results in an extra factor of $\log|\Lambda|$ in the expression of the separation boundary. Further, note that, under the conditions of Theorem \ref{Power Analysis for RFF based test based on permutations and adaptation}, we can ensure that $|\Lambda| = 1 + \log_{2}\frac{\lambda_{U}}{\lambda_{L}} \lesssim \log(N+M)$ under both polynomial and exponential decay of eigenvalues.

Before proceeding to the proofs of parts (i) and (ii), let us define $c_{1} \coloneq \underset{\theta>0}{\sup}\underset{(P,Q) \in \mathcal{P}}{\sup} \norm{\mathcal{T}_{PQ}^{-\theta}u}_{L^{2}(R)}$ and $d_{\theta} \coloneq \left(16c_{1}^{2}\right)^{-\frac{1}{2\theta}}$.

\paragraph{Proof of (i):} 
Let us define  $\tilde{\alpha}\coloneqq(w-\tilde{w})\alpha$ and $A_{M,N}\coloneqq\log\left(\frac{\log(N+M)}{\tilde{\alpha}}\right)$. Using \eqref{N2 simplified under polynomial decay} and \eqref{N1 simplified under polynomial decay}, and provided $$N+M \geq \max\left\{\frac{32\kappa d_{1}}{\delta},\frac{2 C^{*}\log\left(\frac{2\log(N+M)}{(w-\tilde{w})\alpha}\right)}{(1-d_{2})\sqrt{\delta}},e^{e}\right\}$$ and $$B \gtrsim \frac{\left[\log(N+M)\right]^{2}}{2\tilde{w}^{2}\alpha^{2}}\max\left\{\log\frac{2\log(N+M)}{\alpha(1-w-\tilde{w})},\log\frac{2}{\delta}\right\},$$ the conditions \ref{Theorem Power Analysis RFF based permutation test Condition 1} to \ref{Theorem Power Analysis RFF based permutation test Condition 10} as specified in Theorem \ref{Power Analysis for RFF based test based on permutations} reduce to 
\begin{equation}
\label{RFF based permutation and adaptation test Power analysis conditions simplified under polynomial decay 1}
    d_{\theta}\Delta_{N,M}^{\frac{1}{2\theta}} \gtrsim \max\left\{\circled{\emph{\small{1}}},\circled{\emph{\small{2}}},\circled{\emph{\small{3}}},\circled{\emph{\small{4}}},\circled{\emph{\small{5}}},\circled{\emph{\small{6}}},\circled{\emph{\small{7}}},\circled{\emph{\small{8}}},\circled{\emph{\small{9}}},\circled{\emph{\small{10}}}\right\},
\end{equation}
where $$\circled{\emph{\small{1}}} = \frac{\log(N+M)}{(N+M)},\,\,  \circled{\emph{\small{2}}} = \frac{1}{l},\,\,  \circled{\emph{\small{3}}} = \left[\frac{1}{l}\right]^{\frac{\beta}{\beta+1}},\,\, \circled{\emph{\small{4}}} = \frac{\log (l)}{l},$$ $$\circled{\emph{\small{5}}} = \left[\frac{A^{4}_{M,N}}{\sqrt{l}(N+M)}\right]^{\frac{2}{1+4\theta}},\,\,
\circled{\emph{\small{6}}} = \left[\frac{A^{4}_{M,N}}{l(N+M)}\right]^{\frac{1}{1+2\theta}},\,\,
\circled{\emph{\small{7}}} = \left[\frac{A^{4}_{M,N}}{(N+M)}\right]^{\frac{2\beta}{1+4\beta\theta}},$$
$$\circled{\emph{\small{8}}} = \left[\frac{A^{2}_{M,N}}{\sqrt{l}(N+M)^{2}}\right]^{\frac{2}{3+4\theta}},\,\,
\circled{\emph{\small{9}}} = \left[\frac{A^{2}_{M,N}}{l(N+M)^{2}}\right]^{\frac{1}{2(1+\theta)}},\,\,\text{and}\,\,
\circled{\emph{\small{10}}} = \left[\frac{A^{2}_{M,N}}{(N+M)^{2}}\right]^{\frac{2\beta}{1+2\beta+4\beta\theta}}.$$

We now use the fact that $A_{M,N} \leq \log(\frac{1}{\tilde{\alpha}}) \log \log (N+M)$ along with the fact that $[\log \log (N+M)]^{-a} \leq 1$ for any $a>0$ if $N+M \geq k$ where $k \in \mathbb{N}$ is sufficiently large, to simplify the condition given in \eqref{RFF based permutation and adaptation test Power analysis conditions simplified under polynomial decay 1}. Note that, while the true value of the smoothness index $\theta$ is unknown, we assume $\theta \geq \theta^*$ for some known $\theta^* \in\left(0, \frac{1}{4}\right]$. It is straightforward to verify that $\circled{\emph{\small{3}}} \gtrsim \circled{\emph{\small{4}}} \gtrsim \circled{\emph{\small{2}}}$. Moreover, for large enough $N+M$, it can be verified that
\begin{equation*}
\circled{\emph{\small{7}}}\gtrsim
\begin{cases}
\circled{\emph{\small{3}}}, & l\gtrsim (N+M)^{\frac{2(\beta+1)}{1+4\beta\theta}}\\
\circled{\emph{\small{5}}}, & l\gtrsim (N+M)^{\frac{2(\beta-1)}{1+4\beta\theta}}\\
\circled{\emph{\small{6}}}, & l\gtrsim (N+M)^{\frac{2\beta-1}{1+4\beta\theta}}\\
\circled{\emph{\small{8}}}, & l\gtrsim (N+M)^{\frac{2(3\beta-4\theta\beta-2)}{1+4\beta\theta}}\\
\circled{\emph{\small{9}}}, & l\gtrsim (N+M)^{\frac{2(2\beta-2\theta\beta-1)}{1+4\beta\theta}}
\end{cases},
\end{equation*}
$\circled{\emph{\small{7}}}\gtrsim \circled{\emph{\small{1}}}$, $\circled{\emph{\small{7}}}\gtrsim \circled{\emph{\small{10}}}$ if $\theta>\frac{1}{2}-\frac{1}{4\beta}$. Since $\theta^*\le\frac{1}{4},$ we have that $(N+M)^{\frac{1}{2 \theta^*}} \geq (N+M)^{\frac{2(\beta+1)}{1+4 \beta \theta^*}}$. Therefore $l\gtrsim (N+M)^{\frac{1}{2 \theta^*}}$ implies $l\gtrsim (N+M)^{\frac{2(\beta+1)}{1+4 \beta \theta^*}}$, which further implies $l\gtrsim (N+M)^{\frac{2(\beta+1)}{1+4 \beta \theta}}$ if $\theta \geq \theta^*$. 
Therefore, if $\theta \geq \max \left\{\frac{1}{2}-\frac{1}{4 \beta}, \theta^*\right\}$ and $l \gtrsim (N+M)^{\frac{1}{2 \theta^*}}$, then  $\circled{\emph{\small{7}}}$ dominates. Similarly, for large enough $N+M$, it can be verified that
\begin{equation*}
\circled{\emph{\small{1}}}\gtrsim
\begin{cases}
\circled{\emph{\small{3}}}, & l\gtrsim (N+M)^{\frac{\beta+1}{\beta}}\\
\circled{\emph{\small{5}}}, & l\gtrsim (N+M)^{4\theta-1}\\
\circled{\emph{\small{6}}}, & l\gtrsim (N+M)^{2\theta}\\
\circled{\emph{\small{8}}}, & l\gtrsim (N+M)^{4\theta-1}\\
\circled{\emph{\small{9}}}, & l\gtrsim (N+M)^{2\theta}
\end{cases},
\end{equation*}
$\circled{\emph{\small{1}}}\gtrsim \circled{\emph{\small{7}}}$, $\circled{\emph{\small{1}}}\gtrsim \circled{\emph{\small{10}}}$ if $\theta\le \frac{1}{2}-\frac{1}{4\beta}$. Further, $l \gtrsim (N+M)^{2}$ is sufficient to ensure $ l \gtrsim (N+M)^{\frac{\beta + 1}{\beta}}$. Therefore, for large enough $N+M$, if $\theta^{*} \leq \theta\le\frac{1}{2}-\frac{1}{4\beta}$ and $l\gtrsim (N+M)^{2}$, then  $\circled{\emph{\small{1}}}$ dominates. 

Combining, we have that, for any $\theta \geq \theta^{*}$ with $\theta^{*} \in (0,\frac{1}{4}]$ and $l \gtrsim \max\left\{(N+M)^{\frac{1}{2\theta^{*}}},(N+M)^{2}\right\} = (N+M)^{\frac{1}{2\theta^{*}}}$, the separation boundary satisfies
\[
    \Delta_{N,M} = 
    c(\tilde{\alpha},\delta)\max\left\{\left[\frac{\log(N+M)}{N+M}\right]^{2\theta},\left[\frac{\log\log (N+M)}{N+M}\right]^{\frac{4\beta\theta}{1+4\beta\theta}} \right\}.
\]
We observe that 
\[
\lambda^{*} = d_{\theta}\Delta_{NM}^{\frac{1}{2\theta}} = \left[\frac{c(\tilde{\alpha},\delta)}{16c_{1}^{2}}\right]^{\frac{1}{2\theta}} \max\left\{\frac{\log(N+M)}{N+M},\left[\frac{\log\log (N+M)}{N+M}\right]^{\frac{2\beta}{1+4\beta\theta}} \right\}
\] is a rate-optimal choice of the regularization parameter $\lambda$, which depends on the unknown $\theta$. Define $r:=\frac{c(\tilde{\alpha},\delta)}{16c_{1}^{2}}$.
If $r \leq 1$,  then $r^{\frac{1}{2\theta^{*}}} \leq r^{\frac{1}{2\theta}} \leq 1$ for $\theta \geq \theta^{*}$. On the other hand, if $r>1$, then $1 \leq r^{\frac{1}{2\theta}} \leq r^{\frac{1}{2\theta^{*}}}$ for $\theta \geq \theta^{*}$. Also $\frac{\log(N+M)}{N+M} \leq \max\left\{\frac{\log(N+M)}{N+M},\left[\frac{\log\log (N+M)}{N+M}\right]^{\frac{2\beta}{1+4\beta\theta}} \right\} \leq 1$. Therefore, for any $\theta \geq \theta^{*}$ and $\beta > 1$, $\lambda^{*}$ can be bounded as $\lambda_{L} = r_{1}\frac{\log(N+M)}{N+M} \leq \lambda^{*} \leq \min\left\{r_{2},\frac{1}{2}\norm{\Sigma_{PQ}}_{\mathcal{L}^{\infty}(\mathcal{H})})\right\}=\lambda_{U}$ for some constants $r_{1},r_{2}>0$ that depend on $\tilde{\alpha}$, $\delta$ and $\theta^{*}$.

Let us define $s^{*} = \underset{\lambda \in \Lambda : \lambda \leq \lambda^{*}}{\sup} \lambda$. Then, one can easily deduce from the definition of $\Lambda$ that $\frac{\lambda^{*}}{2} \leq s^{*} \leq \lambda^{*}$ and consequently, $s^{*} \asymp \lambda^{*}$. Further, $\lambda_{L} \leq \lambda^{*} \leq \lambda_{U}$. Therefore, $s^{*}$ is also a rate-optimal choice of the regularization parameter $\lambda$ that will lead to the same conditions on the separation boundary $\Delta_{NM}$ up to constants. Hence, using Lemma \ref{Lemma 17}, for any $\theta \geq \theta^{*}$ and any $(P,Q) \in \mathcal{P}_{\theta,\Delta_{NM}}$, we have 
\[
P_{H_{1}}\left(\underset{\lambda \in \Lambda}{\bigcup}\left\{\hat{\eta}_{\lambda,l} \geq q_{1-\frac{w\alpha}{|\Lambda|}}^{B,\lambda,l}\right\}\right) \geq 1-7\delta.
\]
Taking infimum over $(P,Q) \in \mathcal{P}_{\theta,\Delta_{NM}}$ and  $\theta \geq \theta^{*}$, the proof is complete.

\paragraph{Proof of (ii):}  

 Under \eqref{eq:cor-2-lambda} and \eqref{eq:cor-2-l} and for $\lambda=d_{\theta}\Delta_{NM}^{\frac{1}{2\theta}}$, conditions \ref{Theorem Power Analysis RFF based permutation test Condition 5} and \ref{Theorem Power Analysis RFF based permutation test Condition 6} of Theorem \ref{Power Analysis for RFF based test based on permutations} are automatically satisfied for 
 if condition \ref{Theorem Power Analysis RFF based permutation test Condition 7} is satisfied, while conditions \ref{Theorem Power Analysis RFF based permutation test Condition 8} and \ref{Theorem Power Analysis RFF based permutation test Condition 9} are automatically satisfied 
 if condition \ref{Theorem Power Analysis RFF based permutation test Condition 10} is satisfied. Using \eqref{N2 simplified under exponential decay} and \eqref{N1 simplified under exponential decay} along with the conditions $N+M \geq \max\left\{\frac{32\kappa d_{1}}{\delta},\frac{2 C^{*}\log\left(\frac{2\log(N+M)}{(w-\tilde{w})\alpha}\right)}{(1-d_{2})\sqrt{\delta}},e^{e}\right\}$ and $B \gtrsim \frac{\left[\log(N+M)\right]^{2}}{2\tilde{w}^{2}\alpha^{2}}\max\left\{\log\frac{2\log(N+M)}{\alpha(1-w-\tilde{w})},\log\frac{2}{\delta}\right\}$, the conditions \ref{Theorem Power Analysis RFF based permutation test Condition 3}, \ref{Theorem Power Analysis RFF based permutation test Condition 7} and \ref{Theorem Power Analysis RFF based permutation test Condition 10} reduce to the following:
\begin{equation}
\label{Power Analysis RFF based permutation and adaptation test  Condition 3 simplified under exponential decay}
\frac{\Delta_{N,M}^{\frac{1}{2\theta}}}{\log(d_{\theta}^{-1}\Delta_{N,M}^{-\frac{1}{2\theta}})} \gtrsim d_{\theta}^{-1}\frac{\log (\frac{2}{\delta})}{l},
\end{equation}

\begin{equation}
\label{Power Analysis RFF based permutation and adaptation test  Condition 7 simplified under exponential decay}
\frac{\Delta_{N,M}}{\sqrt{\log( d_{\theta}^{-1}\Delta_{N,M}^{-\frac{1}{2\theta}})}} \gtrsim \frac{\left[\log\left(\frac{\log(N+M)}{\tilde{\alpha}}\right)\right]^{4}}{\delta^{2}(N+M)},
\end{equation}
and
\begin{equation}
\label{Power Analysis RFF based permutation and adaptation test  Condition 10 simplified under exponential decay}
\frac{\Delta_{N,M}^{\frac{1+2\theta}{2\theta}}}{\sqrt{\log( d_{\theta}^{-1}\Delta_{N,M}^{-\frac{1}{2\theta}})}} \gtrsim d_{\theta}^{-1}\frac{\left[\log\left(\frac{\log(N+M)}{\tilde{\alpha}}\right)\right]^{2}}{\delta(N+M)^2}.
\end{equation}
\eqref{Power Analysis RFF based permutation and adaptation test  Condition 3 simplified under exponential decay} holds if \eqref{eq:cor-2-lambda} and \eqref{eq:cor-2-l} hold. 
Further, Condition \ref{Theorem Power Analysis RFF based permutation test Condition 1} reduces to 
\begin{equation}
\label{RFF based permutation and adaptation test Power analysis conditions 1, 2 and 4 simplified under exponential decay}
    d_{\theta}\Delta_{N,M}^{\frac{1}{2\theta}} \gtrsim \max\left\{\circled{\emph{\small{1}}},\circled{\emph{\small{2}}},\circled{\emph{\small{4}}}\right\},
\end{equation}
where $$\circled{\emph{\small{1}}} = \frac{\log(N+M)}{(N+M)},\,\,  \circled{\emph{\small{2}}} = \frac{\log (\frac{2}{\delta})}{l},\,\, \text{and}\,\, \circled{\emph{\small{4}}} = \frac{\log (\frac{l}{\delta})}{l}.$$ Note that $\circled{\emph{\small{4}}} \gtrsim \circled{\emph{\small{2}}}$ and $\log l \gtrsim \log(\frac{l}{\delta})$ if $l \geq 2$. Further, if $$\Delta_{N,M} \gtrsim \max\left\{\frac{1}{\sqrt{2\theta}},1\right\}\frac{\left[\log(\frac{1}{\tilde{\alpha}})\right]^{4}}{\delta^{2}}\frac{\sqrt{\log(N+M)}\log \log(N+M)}{N+M},$$ then condition \ref{Theorem Power Analysis RFF based permutation test Condition 7} is satisfied, while condition \ref{Theorem Power Analysis RFF based permutation test Condition 10} is satisfied if $$\Delta_{N,M} \gtrsim  \max\left\{\frac{1}{(\frac{1}{2}+\theta)^{\frac{\theta}{(1+2\theta)}}},1\right\}d_{\theta}^{-\frac{2\theta}{1+2\theta}}  \frac{\left[\log(\frac{1}{\tilde{\alpha}})\right]^{\frac{4\theta}{1+2\theta}}}{\delta^{\frac{2\theta}{1+2\theta}}} \frac{\left[\log(N+M)\right]^{\frac{\theta}{1+2\theta}} \left[\log \log(N+M)\right]^{\frac{4\theta}{1+2\theta}}}{\left[N+M\right]^{\frac{4\theta}{1+2\theta}}}.$$ 
Therefore, \eqref{Power Analysis RFF based permutation and adaptation test Condition 3 simplified under exponential decay}, \eqref{Power Analysis RFF based permutation and adaptation test Condition 7 simplified under exponential decay}, \eqref{Power Analysis RFF based permutation and adaptation test Condition 10 simplified under exponential decay} and \eqref{RFF based permutation and adaptation test Power analysis conditions 1, 2 and 4 simplified under exponential decay} reduce to the following condition:
\begin{equation}
\label{RFF based permutation and adaptation test Power analysis conditions simplified under exponential decay}
\begin{aligned}
    \Delta_{N,M} \gtrsim &\max\left\{\frac{1}{\sqrt{2\theta}},\frac{1}{(\frac{1}{2}+\theta)^{\frac{\theta}{(1+2\theta)}}},1\right\} \times \frac{\left[\log(\frac{8}{\tilde{\alpha}})+\log(\frac{4}{\delta})\right]^{\max\left(2\theta,4\right)}}{\delta^{\frac{2\theta}{1+2\theta}}}\times\max\left\{d_{\theta}^{-2\theta},d_{\theta}^{-\frac{2\theta}{1+2\theta}}\right\}\\&\times \max\left\{\left[\frac{\log(N+M)}{N+M}\right]^{2\theta},\left[\frac{\log l}{l}\right]^{2\theta},\frac{\sqrt{\log(N+M)}\left[\log \log(N+M)\right]}{N+M},\right.\\
    &\qquad\qquad\qquad\left.\frac{\left[\log(N+M)\right]^{\frac{\theta}{1+2\theta}} \left[\log \log(N+M)\right]^{\frac{4\theta}{1+2\theta}}}{\left[N+M\right]^{\frac{4\theta}{1+2\theta}}}\right\}.
\end{aligned}
\end{equation}
Since the true value of $\theta$ is unknown, 
we assume $\theta \geq \theta^{*}$ for some $\theta^{*} \in (0,\frac{1}{4}]$. Now, we consider two scenarios based on the values of the smoothness index $\theta$.\vspace{1mm}

\textbf{Case I: } Suppose $\theta \geq \max\left\{\frac{1}{2},\theta^{*}\right\}=\frac{1}{2}$. Then, we have that 
\begin{align*}&\frac{\sqrt{\log(N+M)}\left[\log \log(N+M)\right]}{N+M}\\& \gtrsim \max\left\{\left[\frac{\log(N+M)}{N+M}\right]^{2\theta},\frac{\left[\log(N+M)\right]^{\frac{\theta}{1+2\theta}} \left[\log \log(N+M)\right]^{\frac{4\theta}{1+2\theta}}}{\left[N+M\right]^{\frac{4\theta}{1+2\theta}}}\right\}.\end{align*} Further, $\frac{\sqrt{\log(N+M)}\left[\log \log(N+M)\right]}{N+M}\gtrsim  \left[\frac{\log l}{l}\right]^{2\theta}$ if $l \gtrsim \left(N+M\right)^{\frac{1}{2\theta^{*}}}\log(N+M) \gtrsim \left(N+M\right)^{\frac{1}{2\theta}}\log(N+M)^{1-\frac{1}{4\theta}}$. Therefore, provided $l \gtrsim \left(N+M\right)^{\frac{1}{2\theta^{*}}}\log(N+M)$, \eqref{RFF based permutation and adaptation test Power analysis conditions simplified under exponential decay} reduces to 
\begin{equation*}
\label{RFF based permutation and adaptation test Power analysis conditions simplified under exponential decay 1}
\Delta_{N,M} = c^{*}(\tilde{\alpha},\delta,\theta)\frac{\sqrt{\log(N+M)}\left[\log \log(N+M)\right]}{N+M},
\end{equation*}
where $c^{*}(\tilde{\alpha},\delta,\theta)$ 
is a positive constant that depends on $\tilde{\alpha}$, $\delta$ and $\theta$.\vspace{1mm}


\textbf{Case II: } Suppose $\theta^{*} \leq \theta < \frac{1}{2}$. Then, we have that \begin{align*} &\left[\frac{\log(N+M)}{N+M}\right]^{2\theta}\\& \gtrsim \max\left\{\frac{\sqrt{\log(N+M)}\left[\log \log(N+M)\right]}{N+M},\frac{\left[\log(N+M)\right]^{\frac{\theta}{1+2\theta}} \left[\log \log(N+M)\right]^{\frac{4\theta}{1+2\theta}}}{\left[N+M\right]^{\frac{4\theta}{1+2\theta}}}\right\}.
\end{align*} Further, $\left[\frac{\log(N+M)}{N+M}\right]^{2\theta} \gtrsim \left[\frac{\log l}{l}\right]^{2\theta}$ if $l \gtrsim N+M$. Therefore, provided $l \gtrsim N+M$, \eqref{RFF based permutation and adaptation test Power analysis conditions simplified under exponential decay} reduces to 
\begin{equation*}
\label{RFF based permutation and adaptation test Power analysis conditions simplified under exponential decay 2}
\Delta_{N,M} = c^{*}(\tilde{\alpha},\delta,\theta)\left[\frac{\log(N+M)}{N+M}\right]^{2\theta},
\end{equation*}
where $c^{*}(\tilde{\alpha},\delta,\theta)$ is a positive constant that depends on $\tilde{\alpha}$, $\delta$ and $\theta$. 


Combining cases I and II, we have that, for any $\theta \geq \theta^{*}$ and  $$l \gtrsim \max\left\{\left(N+M\right)^{\frac{1}{2\theta^{*}}}\log(N+M),N+M\right\},$$ the separation boundary satisfies
\[
    \Delta_{N,M} = 
    c(\tilde{\alpha},\delta,\theta)\max\left\{\left[\frac{\log(N+M)}{N+M}\right]^{2\theta},\frac{\sqrt{\log(N+M)}\left[\log \log(N+M)\right]}{N+M} \right\},
\]
where $c(\tilde{\alpha},\delta,\theta) = \max\left\{(\frac{1}{16c_{1}^{2}})^{2\theta-1},16c_{1}^{2}\right\} \times \left[\max\left\{(\frac{1}{2\theta^{*}})^{\frac{1}{4\theta^{*}}},\sqrt{2}\right\}\right]^{2\theta}\times \frac{\left[\log(\frac{8}{\tilde{\alpha}})+\log(\frac{4}{\delta})\right]^{\frac{4\theta}{\theta^{*}}}}{\delta^{2\theta}}$ is a constant that depends on $\tilde{\alpha}$, $\delta$ and $\theta$, but the dependence on $\theta$ is only through the exponent.  Therefore, 
\[
\lambda^{*} = d_{\theta}\Delta_{NM}^{\frac{1}{2\theta}} = \left[\frac{c^{*}(\tilde{\alpha},\delta,\theta)}{16c_{1}^{2}}\right]^{\frac{1}{2\theta}} \max\left\{\frac{\log(N+M)}{N+M},\frac{\left[\log(N+M)\right]^{\frac{1}{4\theta}}\left[\log \log(N+M)\right]^{\frac{1}{2\theta}}}{\left(N+M\right)^{\frac{1}{2\theta}}} \right\}
\] is a rate-optimal choice of the regularization parameter $\lambda$, which depends on the unknown $\theta$.

Note that the constant $\left[\frac{c(\tilde{\alpha},\delta,\theta)}{16c_{1}^{2}}\right]^{\frac{1}{2\theta}}$ can be expressed as $$ \max\left\{\frac{1}{16c_{1}^{2}},1\right\} \times \max\left\{\left(\frac{1}{2\theta^{*}}\right)^{\frac{1}{4\theta^{*}}},\sqrt{2}\right\}\times \frac{\left[\log(\frac{8}{\tilde{\alpha}})+\log(\frac{4}{\delta})\right]^{\frac{2}{\theta^{*}}}}{\delta}$$ and therefore, it is a constant that depends only $\tilde{\alpha}$ ,$\delta$ and $\theta^{*}$, since $\theta \geq \theta^{*}$. Now, $\frac{\log(N+M)}{N+M} \leq \max\left\{\frac{\log(N+M)}{N+M},\frac{\left[\log(N+M)\right]^{\frac{1}{4\theta}}\left[\log \log(N+M)\right]^{\frac{1}{2\theta}}}{\left(N+M\right)^{\frac{1}{2\theta}}} \right\} \leq 1$. Therefore, for any $\theta \geq \theta^{*}$ and $\tau> 0$, $\lambda^{*}$ can be bounded as $\lambda_{L} = r_{3}\frac{\log(N+M)}{N+M} \leq \lambda^{*} \leq \min\left\{r_{4},e^{-1},\frac{1}{2}\norm{\Sigma_{PQ}}_{\mathcal{L}^{\infty}(\mathcal{H})})\right\}=\lambda_{U}$ for some constants $r_{3},r_{4}>0$ that depend on $\tilde{\alpha}$, $\delta$ and $\theta^{*}$.

Let us define $s^{*} = \underset{\lambda \in \Lambda : \lambda \leq \lambda^{*}}{\sup} \lambda$. Then, one can easily deduce from the definition of $\Lambda$ that $\frac{\lambda^{*}}{2} \leq s^{*} \leq \lambda^{*}$. Further, $\lambda_{L} \leq \lambda^{*} \leq \lambda_{U}$. Therefore, $s^{*}$ is also a rate-optimal choice of the regularization parameter $\lambda$ that will lead to the same conditions on the separation boundary $\Delta_{NM}$ up to constants. Hence, using Lemma \ref{Lemma 17}, for any $\theta \geq \theta^{*}$ and any $(P,Q) \in \mathcal{P}_{\theta,\Delta_{NM}}$, we have 
\[
P_{H_{1}}\left(\underset{\lambda \in \Lambda}{\bigcup}\left\{\hat{\eta}_{\lambda,l} \geq q_{1-\frac{w\alpha}{|\Lambda|}}^{B,\lambda,l}\right\}\right) \geq 1-7\delta.
\]
Taking infimum over $(P,Q) \in \mathcal{P}_{\theta,\Delta_{NM}}$ and  $\theta \geq \theta^{*}$, the proof is complete.


\subsection{Proof of Theorem \ref{Type I-error of RFF-based permutation test with adaptation over kernel and lambda}}\label{subsec:thm11}


Choosing $\alpha$ as $\frac{\alpha}{|\Lambda||\mathcal{K}|}$ and under the condition that the number of randomly selected permutations $B \geq \frac{|\Lambda|^{2}|\mathcal{K}|^{2}}{2\tilde{w}^{2}\alpha^{2}}\log(\frac{2|\Lambda||\mathcal{K}|}{\alpha(1-w-\tilde{w})})$, using Theorem \ref{Type I-error of RFF-based permutation test} we have that, for any $(\lambda,K) \in \Lambda \times \mathcal{K}$,
\[
P_{H_{0}}\left\{\hat{\eta}_{\lambda,l,K} \geq q_{1-\frac{w\alpha}{|\Lambda||\mathcal{K}|}}^{B,\lambda,l,K}\right\} \leq \frac{\alpha}{|\Lambda||\mathcal{K}|}.
\]
Consequently, the proof is complete using Lemma \ref{Lemma 17}.

\section*{Acknowledgments}
SM and BKS are partially supported by the National Science Foundation (NSF) CAREER award DMS-1945396.


\bibliographystyle{plainnat}
\bibliography{ref}

\begin{thebibliography}{16}
\providecommand{\natexlab}[1]{#1}
\providecommand{\url}[1]{\texttt{#1}}
\expandafter\ifx\csname urlstyle\endcsname\relax
  \providecommand{\doi}[1]{doi: #1}\else
  \providecommand{\doi}{doi: \begingroup \urlstyle{rm}\Url}\fi

\bibitem[Choi and Kim(2024)]{choi2024computational}
Ikjun Choi and Ilmun Kim.
\newblock {C}omputational-statistical trade-off in kernel two-sample testing with random {F}ourier features.
\newblock \emph{arXiv:2407.08976}, 2024.

\bibitem[Domingo-Enrich et~al.(2025)Domingo-Enrich, Dwivedi, and Mackey]{Cheappermutationtesting}
Carles Domingo-Enrich, Raaz Dwivedi, and Lester Mackey.
\newblock Cheap permutation testing.
\newblock \emph{arXiv:2502.07672}, 2025.

\bibitem[Gretton et~al.(2006)Gretton, Borgwardt, Rasch, Sch\"{o}lkopf, and Smola]{gretton2006kernel}
Arthur Gretton, Karsten Borgwardt, Malte Rasch, Bernhard Sch\"{o}lkopf, and Alex Smola.
\newblock A kernel method for the two-sample problem.
\newblock In B.~Sch\"{o}lkopf, J.~Platt, and T.~Hoffman, editors, \emph{Advances in Neural Information Processing Systems}, volume~19. MIT Press, 2006.

\bibitem[Gretton et~al.(2012)Gretton, Borgwardt, Rasch, Sch{{\"o}}lkopf, and Smola]{gretton2012kernel}
Arthur Gretton, Karsten~M. Borgwardt, Malte~J. Rasch, Bernhard Sch{{\"o}}lkopf, and Alexander Smola.
\newblock A kernel two-sample test.
\newblock \emph{Journal of Machine Learning Research}, 13\penalty0 (25):\penalty0 723--773, 2012.

\bibitem[Hagrass et~al.(2024)Hagrass, Sriperumbudur, and Li]{SpectralTwoSampleTest}
Omar Hagrass, Bharath Sriperumbudur, and Bing Li.
\newblock Spectral regularized kernel two-sample tests.
\newblock \emph{The Annals of Statistics}, 52\penalty0 (3):\penalty0 1076--1101, 2024.

\bibitem[Kim et~al.(2022)Kim, Balakrishnan, and Wasserman]{kim2022minimax}
Ilmun Kim, Sivaraman Balakrishnan, and Larry Wasserman.
\newblock Minimax optimality of permutation tests.
\newblock \emph{The Annals of Statistics}, 50\penalty0 (1):\penalty0 225--251, 2022.

\bibitem[LeCun et~al.(2010)LeCun, Cortes, and Burges]{Mnist}
Y.~LeCun, C.~Cortes, and C.~Burges.
\newblock {MNIST} handwritten digit database. {AT} \&{T} {L}abs, 2010.

\bibitem[Lehmann and Romano(2005)]{lehmann}
Erich~Leo Lehmann and Joseph~P Romano.
\newblock \emph{Testing {S}tatistical {H}ypotheses}.
\newblock Springer, 2005.

\bibitem[Li and Yuan(2024)]{li2019optimality}
Tong Li and Ming Yuan.
\newblock On the optimality of {G}aussian kernel based nonparametric tests against smooth alternatives.
\newblock \emph{Journal of Machine Learning Research}, 25\penalty0 (334):\penalty0 1--62, 2024.

\bibitem[Rahimi and Recht(2007)]{Rahimi-08a}
Ali Rahimi and Benjamin Recht.
\newblock Random features for large-scale kernel machines.
\newblock In J.~Platt, D.~Koller, Y.~Singer, and S.~Roweis, editors, \emph{Advances in Neural Information Processing Systems}, volume~20. Curran Associates, Inc., 2007.

\bibitem[Reed and Simon(1980)]{reed1980methods}
M.~Reed and B.~Simon.
\newblock \emph{Methods of Modern Mathematical Physics: Functional Analysis I}.
\newblock Academic Press, New York, 1980.

\bibitem[Schrab et~al.(2023)Schrab, Kim, Albert, Laurent, Guedj, and Gretton]{schrab2023mmd}
Antonin Schrab, Ilmun Kim, M{\'e}lisande Albert, B{\'e}atrice Laurent, Benjamin Guedj, and Arthur Gretton.
\newblock {MMD} aggregated two-sample test.
\newblock \emph{Journal of Machine Learning Research}, 24\penalty0 (194):\penalty0 1--81, 2023.

\bibitem[Sriperumbudur and Sterge(2022)]{ApproximateKernelPCARandomFeaturesStergeSriperumbudur}
Bharath~K. Sriperumbudur and Nicholas Sterge.
\newblock {Approximate kernel {PCA}: Computational versus statistical trade-off}.
\newblock \emph{The Annals of Statistics}, 50\penalty0 (5):\penalty0 2713 -- 2736, 2022.

\bibitem[Wendland(2004)]{wendland2004scattered}
Holger Wendland.
\newblock \emph{Scattered Data Approximation}, volume~17.
\newblock Cambridge University Press, 2004.

\bibitem[Yurinsky(1995)]{Yurinsky_1995}
Vadim Yurinsky.
\newblock \emph{Sums and Gaussian Vectors}.
\newblock Springer, 1995.

\bibitem[Zhao and Meng(2015)]{zhao2015fastmmd}
Ji~Zhao and Deyu Meng.
\newblock {FastMMD: Ensemble of circular discrepancy for efficient two-sample test}.
\newblock \emph{Neural Computation}, 27\penalty0 (6):\penalty0 1345--1372, 06 2015.

\end{thebibliography}






\setcounter{section}{0}
\renewcommand\thesection{\Alph{section}}
\numberwithin{equation}{section}
\newtheorem{thmm}{Theorem}[section]
\newtheorem{lem}[thmm]{Lemma}
\newtheorem{pro}[thmm]{Proposition}
\newtheorem{rem}{Remark}[section]
\section{Technical Results}
In this section, we provide some auxiliary results required to prove the main results of the paper. Unless otherwise stated, the notations introduced in the main paper carry over to this section as well.
\begin{pro}\label{Proposition: Upper and lower bound of eta}
Let $u=\frac{d P}{d R}-1 \in L^{2}(R)$ and $\eta_{\lambda,l}=\left\|g_{\lambda}^{1/2}(\Sigma_{P Q,l})\left(\mu_{Q,l}-\mu_{P,l}\right)\right\|_{\mathcal{H}_{l}}^{2}$, where the regularizer $g_{\lambda}$ satisfies $\boldsymbol{(\SpectralAssumptionone)}$, $\boldsymbol{(\SpectralAssumptiontwo)}$ and $\boldsymbol{(\SpectralAssumptionthree)}$. Then
\[
\eta_{\lambda,l} \leq 4 C_{1} \|u\|_{L^{2}(R)}^{2}.
\]
Furthermore, suppose $u \in \operatorname{Ran}(\mathcal{T}_{PQ}^{\theta})$, $\theta>0$, 
\[
\|u\|_{L^{2}(R)}^{2} \geq 16 \lambda^{2 \theta}\left\|\mathcal{T}_{PQ}^{-\theta} u\right\|_{L^{2}(R)}^{2},\,\,\text{and}\,\,
l > \max{\left(160,3200 \mathcal{N}_{1}(\lambda)\right)} \frac{ \kappa \log \frac{2}{\delta}}{\lambda }.
\] Then, for any $0 < \delta < 1$, with probability at least $1-\delta$, we have 
\[
\eta_{\lambda,l} \geq \frac{C_{4}}{2}\|u\|_{L^{2}(R)}^{2} .
\]
\end{pro}

\begin{proof}
Note that
    \begin{equation*}
\begin{split}
    \eta_{\lambda,l} & =\left\|g_{\lambda}^{1/2}(\Sigma_{P Q,l})\left(\mu_{Q,l}-\mu_{P,l}\right)\right\|_{\mathcal{H}_{l}}^{2}
    = \left\langle g_{\lambda}(\Sigma_{P Q,l})(\mu_{P,l}-\mu_{Q,l}),\mu_{P,l}-\mu_{Q,l}\right\rangle_{\mathcal{H}_{l}} \\
    & =4 \left\langle g_{\lambda}(\Sigma_{PQ,l})\mathfrak{A}_{l}^{*}u,\mathfrak{A}_{l}^{*}u\right\rangle_{\mathcal{H}_{l}} 
    =4 \left\langle \mathfrak{A}_{l}g_{\lambda}(\Sigma_{PQ,l})\mathfrak{A}_{l}^{*}u,u\right\rangle_{L^{2}(R)}\\
    &\stackrel{(a)}{=}4 \left\langle \mathcal{T}_{PQ,l} g_{\lambda}(\mathcal{T}_{PQ,l})u,u\right\rangle_{L^{2}(R)}
    \leq 4 \left\|\mathcal{T}_{PQ,l} g_{\lambda}(\mathcal{T}_{PQ,l})\right\|_{\mathcal{L}^{\infty}(L^{2}(R))} \left\|u\right\|_{L^{2}(R)}^{2}\\
    &\stackrel{(b)}{\leq} 4 C_{1}\left\|u\right\|_{L^{2}(R)}^{2},
\end{split}
\end{equation*}
where $(a)$ follows from Lemma~\ref{Lemma 8}\emph{(i)} and $(b)$ follows from  $\boldsymbol{(\SpectralAssumptionone)}$, which provides the required upper bound.

We now proceed to prove the lower bound. First, we observe that
\[
\begin{aligned}
\left\|\Sigma_{P Q, \lambda,l}^{-1 / 2}\left(\mu_{Q,l}-\mu_{P,l}\right)\right\|_{\mathcal{H}_{l}}^{2} &= \left\|\Sigma_{P Q, \lambda,l}^{-1 / 2}g_{\lambda}^{-1/2}(\Sigma_{PQ,l})g_{\lambda}^{1/2}(\Sigma_{PQ,l})\left(\mu_{Q,l}-\mu_{P,l}\right)\right\|_{\mathcal{H}_{l}}^{2}\\
&\leq \norm{\Sigma_{P Q, \lambda,l}^{-1 / 2}g_{\lambda}^{-1/2}(\Sigma_{PQ,l})}_{\mathcal{L}^{\infty}(\mathcal{H}_{l})}^{2} \norm{g_{\lambda}^{1/2}(\Sigma_{PQ,l})\left(\mu_{Q,l}-\mu_{P,l}\right)}_{\mathcal{H}_{l}}^{2}\\
&\stackrel{(c)}{\leq} C_{4}^{-1}  \norm{g_{\lambda}^{1/2}(\Sigma_{PQ,l})\left(\mu_{Q,l}-\mu_{P,l}\right)}_{\mathcal{H}_{l}}^{2},
\end{aligned}
\]
where $(c)$ follows from Lemma \ref{Lemma 8}(iv). Therefore, we have that,
\begin{equation}
\label{General lower bound in terms of Tikhonov lower bound in Proposition eta upper lower bound}
\norm{g_{\lambda}^{1/2}(\Sigma_{PQ,l})\left(\mu_{Q,l}-\mu_{P,l}\right)}_{\mathcal{H}_{l}}^{2} \geq C_{4} \left\|\Sigma_{P Q, \lambda,l}^{-1 / 2}\left(\mu_{Q,l}-\mu_{P,l}\right)\right\|_{\mathcal{H}_{l}}^{2}.
\end{equation}
Further, we note that, 
\begin{equation*}
\begin{aligned}
\label{Decomposition of lower bound in Proposition eta upper lower bound into 3 terms}
    &\left\|\Sigma_{P Q, \lambda,l}^{-1 / 2}\left(\mu_{Q,l}-\mu_{P,l}\right)\right\|_{\mathcal{H}_{l}}^{2}
    = \left\langle\Sigma_{P Q, \lambda,l}^{-1}(\mu_{P,l}-\mu_{Q,l}),\mu_{P,l}-\mu_{Q,l}\right\rangle_{\mathcal{H}_{l}} 
     =4 \left\langle \Sigma_{PQ,\lambda,l}^{-1}\mathfrak{A}_{l}^{*}u,\mathfrak{A}_{l}^{*}u\right\rangle_{\mathcal{H}_{l}}\\ 
    &=4 \left\langle \mathfrak{A}_{l}\Sigma_{PQ,\lambda,l}^{-1}\mathfrak{A}_{l}^{*}u,u\right\rangle_{L^{2}(R)}
    \stackrel{(*)}{=}4 \left\langle  (\mathcal{T}_{PQ,l}+\lambda I)^{-1}\mathcal{T}_{PQ,l}u,u\right\rangle_{L^{2}(R)}\\
&=2\underbrace{\norm{(\mathcal{T}_{PQ,l}+\lambda I)^{-1}\mathcal{T}_{PQ,l}u}_{L^{2}(R)}^{2}}_{T_{1}} + 2\norm{u}_{L^{2}(R)}^{2} - 2\underbrace{\norm{(\mathcal{T}_{PQ,l}+\lambda I)^{-1}\mathcal{T}_{PQ,l}u - u}_{L^{2}(R)}^{2}}_{T_{2}},
\end{aligned}
\end{equation*}
where we used Lemma~\ref{Lemma 8}\emph{(i)} in $(*)$. We now proceed to provide a lower bound on $T_{1}$ and an upper bound on $T_{2}$ which hold with high probability in order to provide a lower bound on $\left\|\Sigma_{P Q, \lambda,l}^{-1 / 2}\left(\mu_{Q,l}-\mu_{P,l}\right)\right\|_{\mathcal{H}_{l}}^{2}$ and consequently to $\eta_{\lambda,l}$ (using \eqref{General lower bound in terms of Tikhonov lower bound in Proposition eta upper lower bound}) which holds with high probability.

Define $M_1:=(\mathcal{T}_{PQ}+\lambda I)^{-1}\mathcal{T}_{PQ}$, $M_2:=(\mathcal{T}_{PQ}+\lambda I)^{-1}(\mathcal{T}_{PQ}-\mathcal{T}_{PQ,l})$, and $t=\norm{M_{2}}_{\mathcal{L}^{2}(L^{2}(R))}$. Observe that $\norm{M_{1}}_{\mathcal{L}^{\infty}(L^{2}(R))} \leq 1$ and $\norm{M_{2}}_{\mathcal{L}^{\infty}(L^{2}(R))} \leq \norm{M_{2}}_{\mathcal{L}^{2}(L^{2}(R))} = t$. Since $L^{2}(R)$ is a separable Hilbert space under  $\boldsymbol{(\RFFAssumptionone)}$, so is $\mathcal{L}^{2}(L^2(R))$. Under the conditions on $l$ and $\lambda$ as stated in the statement of Proposition~\ref{Proposition: Upper and lower bound of eta}, we have that $\frac{4 \kappa \log \frac{2}{\delta}}{\lambda l} \leq \frac{1}{40}$ and $\frac{\sqrt{2\kappa \mathcal{N}_{1}(\lambda)\log \frac{2}{\delta}}}{\sqrt{\lambda l}} \leq \frac{1}{40}$. Therefore, using Bernstein's inequality in $\mathcal{L}^{2}(L^2(R))$ (see Theorem~\ref{Bernstein ineq for Hilbert spaces}), we have that, for any $0<\delta<1$,
\[\label{Bernstein}
\begin{aligned}
P\left\{\theta^{1:l} : t = \norm{M_{2}}_{\mathcal{L}^{2}(L^{2}(R))}\leq \frac{4 \kappa \log \frac{2}{\delta}}{\lambda l} + \frac{\sqrt{2\kappa \mathcal{N}_{1}(\lambda)\log \frac{2}{\delta}}}{\sqrt{\lambda l}} \leq \frac{1}{20}\right\} \geq 1 - \delta,
\end{aligned}
\]
i.e., we have, with probability at least $1-\delta$, \begin{equation}\label{Bound on t less than 1}
\begin{aligned}
\norm{(\mathcal{T}_{PQ} + \lambda I)^{-1}(\mathcal{T}_{PQ,l} - \mathcal{T}_{PQ})}_{\mathcal{L}^{\infty}(L^{2}(R))} &= \norm{M_{2}}_{\mathcal{L}^{\infty}(L^{2}(R))} \\
&\leq \norm{M_{2}}_{\mathcal{L}^{2}(L^{2}(R))} =t\leq\frac{1}{20}<1.
\end{aligned}
\end{equation}
Note that,
\begin{equation}
\label{T2 upper bound auxiliary result}
\begin{aligned}
\norm{(\mathcal{T}_{PQ}+\lambda I)^{-1}\mathcal{T}_{PQ}u - u}_{L^{2}(R)}^{2} &= \norm{(\mathcal{T}_{PQ}+\lambda I)^{-1}\left[\mathcal{T}_{PQ} - (\mathcal{T}_{PQ} + \lambda)\right]u}_{L^{2}(R)}^{2} \\
&= \lambda^{2} \norm{(\mathcal{T}_{PQ}+\lambda I)^{-1}u }_{L^{2}(R)}^{2}.
\end{aligned}
\end{equation}
Using \eqref{Bound on t less than 1}, the following upper bound on $T_{2} = \norm{(\mathcal{T}_{PQ,l}+\lambda I)^{-1}\mathcal{T}_{PQ,l}u - u}_{L^{2}(R)}^{2}$ holds with probability at least $1-\delta$ :
\begin{equation}
\label{Upper bound on T1}
\begin{aligned}
&\norm{(\mathcal{T}_{PQ,l}+\lambda I)^{-1}\mathcal{T}_{PQ,l}u - u}_{L^{2}(R)}^{2}\\ &= \norm{(\mathcal{T}_{PQ,l}+\lambda I)^{-1}\left[\mathcal{T}_{PQ,l} - (\mathcal{T}_{PQ,l} + \lambda I)\right]u}_{L^{2}(R)}^{2} \\
&= \lambda^{2} \norm{(\mathcal{T}_{PQ,l}+\lambda I)^{-1}(\mathcal{T}_{PQ}+\lambda I)(\mathcal{T}_{PQ}+\lambda I)^{-1}u }_{L^{2}(R)}^{2} \\
&\leq \lambda^{2} \norm{(\mathcal{T}_{PQ,l}+\lambda I)^{-1}(\mathcal{T}_{PQ}+\lambda I)}_{\mathcal{L}^{\infty}(L^{2}(R))}^{2} 
\norm{(\mathcal{T}_{PQ}+\lambda I)^{-1}u }_{L^{2}(R)}^{2} \\
&\stackrel{(d)}{=}\norm{(\mathcal{T}_{PQ,l}+\lambda I)^{-1}(\mathcal{T}_{PQ}+\lambda I)}_{\mathcal{L}^{\infty}(L^{2}(R))}^{2} 
\norm{(\mathcal{T}_{PQ}+\lambda I)^{-1}\mathcal{T}_{PQ}u - u}_{L^{2}(R)}^{2} \\
&=\norm{(\mathcal{T}_{PQ,l} - \mathcal{T}_{PQ} + \mathcal{T}_{PQ} +\lambda I)^{-1}(\mathcal{T}_{PQ}+\lambda I)}_{\mathcal{L}^{\infty}(L^{2}(R))}^{2} \norm{(\mathcal{T}_{PQ}+\lambda I)^{-1}\mathcal{T}_{PQ}u - u}_{L^{2}(R)}^{2} \\
&= \norm{\left[(\mathcal{T}_{PQ} + \lambda I)^{-1}(\mathcal{T}_{PQ,l} - \mathcal{T}_{PQ}) + I\right]^{-1}}_{\mathcal{L}^{\infty}(L^{2}(R))}^{2} \norm{(\mathcal{T}_{PQ}+\lambda I)^{-1}\mathcal{T}_{PQ}u - u}_{L^{2}(R)}^{2} \\
&\leq \frac{1}{1- \norm{(\mathcal{T}_{PQ} + \lambda I)^{-1}(\mathcal{T}_{PQ,l} - \mathcal{T}_{PQ})}_{\mathcal{L}^{\infty}(L^{2}(R))}^{2}} \norm{(\mathcal{T}_{PQ}+\lambda I)^{-1}\mathcal{T}_{PQ}u - u}_{L^{2}(R)}^{2} \\
&\leq \frac{1}{1- \norm{(\mathcal{T}_{PQ} + \lambda I)^{-1}(\mathcal{T}_{PQ,l} - \mathcal{T}_{PQ})}_{\mathcal{L}^{2}(L^{2}(R))}^{2}} \norm{(\mathcal{T}_{PQ}+\lambda I)^{-1}\mathcal{T}_{PQ}u - u}_{L^{2}(R)}^{2}\\
& = \frac{1}{1-t^{2}}  \norm{M_{1}u - u}_{L^{2}(R)}^{2},
\end{aligned}
\end{equation}
where $(d)$ follows from \eqref{T2 upper bound auxiliary result}. 

Note that,
\begin{equation}
\label{T1 lower bound auxiliary result}
\begin{aligned}
\norm{(\mathcal{T}_{PQ}  +\lambda I)^{-1}\mathcal{T}_{PQ,l}u}_{L^{2}(R)}^{2} &= \norm{(\mathcal{T}_{PQ}  +\lambda I)^{-1}(\mathcal{T}_{PQ,l}  +\lambda I)(\mathcal{T}_{PQ,l}  +\lambda I)^{-1}\mathcal{T}_{PQ,l}u}_{L^{2}(R)}^{2}\\
&\leq \norm{(\mathcal{T}_{PQ} +\lambda I)^{-1}(\mathcal{T}_{PQ,l} +\lambda I)}_{\mathcal{L}^{\infty}(L^{2}(R))}\\
&\qquad\qquad \times\norm{(\mathcal{T}_{PQ,l}  +\lambda I)^{-1}\mathcal{T}_{PQ,l}u}_{L^{2}(R)}^{2}.
\end{aligned}
\end{equation}

Using \eqref{T1 lower bound auxiliary result} and under the same event for which \eqref{Upper bound on T1} holds, the following lower bound hold for $T_{1} = \norm{(\mathcal{T}_{PQ,l}  +\lambda I)^{-1}\mathcal{T}_{PQ,l}u}_{L^{2}(R)}^{2}$ with probability at least $1-\delta$:
\begin{equation*}
\begin{aligned}
& \norm{(\mathcal{T}_{PQ,l}  +\lambda I)^{-1}\mathcal{T}_{PQ,l}u}_{L^{2}(R)}^{2} \\
&\geq \frac{\norm{(\mathcal{T}_{PQ}  +\lambda I)^{-1}\mathcal{T}_{PQ,l}u}_{L^{2}(R)}^{2}}{\norm{(\mathcal{T}_{PQ} +\lambda I)^{-1}(\mathcal{T}_{PQ,l} +\lambda I)}_{\mathcal{L}^{\infty}(L^{2}(R))}^{2} }\\
 &= \frac{\norm{(\mathcal{T}_{PQ}  +\lambda I)^{-1}\mathcal{T}_{PQ,l}u}_{L^{2}(R)}^{2}}{\norm{(\mathcal{T}_{PQ} +\lambda I)^{-1}(\mathcal{T}_{PQ,l} -\mathcal{T}_{PQ}) + I}_{\mathcal{L}^{\infty}(L^{2}(R))}^{2} }\\
 &\geq \frac{\norm{(\mathcal{T}_{PQ}  +\lambda I)^{-1}\mathcal{T}_{PQ,l}u}_{L^{2}(R)}^{2}}{2\norm{(\mathcal{T}_{PQ} +\lambda I)^{-1}(\mathcal{T}_{PQ} -\mathcal{T}_{PQ,l})}_{\mathcal{L}^{\infty}(L^{2}(R))}^{2} + 2}\\
 &\geq \frac{\left(\norm{(\mathcal{T}_{PQ}  +\lambda I)^{-1}\mathcal{T}_{PQ}u}_{L^{2}(R)} - \norm{(\mathcal{T}_{PQ}  +\lambda I)^{-1}(\mathcal{T}_{PQ}-\mathcal{T}_{PQ,l})u}_{L^{2}(R)}\right)^{2}}{2\norm{(\mathcal{T}_{PQ} +\lambda I)^{-1}(\mathcal{T}_{PQ} -\mathcal{T}_{PQ,l})}_{\mathcal{L}^{\infty}(L^{2}(R))}^{2} + 2}\nonumber
 \end{aligned}
\end{equation*}
\begin{equation}
    \label{Lower bound on T2}
\begin{aligned}
 &\geq \frac{\left(\norm{(\mathcal{T}_{PQ}  +\lambda I)^{-1}\mathcal{T}_{PQ}u}_{L^{2}(R)} - \norm{(\mathcal{T}_{PQ}  +\lambda I)^{-1}(\mathcal{T}_{PQ}-\mathcal{T}_{PQ,l})u}_{L^{2}(R)}\right)^{2}}{2\norm{(\mathcal{T}_{PQ} +\lambda I)^{-1}(\mathcal{T}_{PQ} -\mathcal{T}_{PQ,l})}_{\mathcal{L}^{2}(L^{2}(R))}^{2} + 2}\\
 &= \frac{\left(\norm{M_{1}u}_{L^{2}(R)} - \norm{M_{2}u}_{L^{2}(R)}\right)^{2}}{2t^{2} + 2} \\
 &= \frac{\norm{M_{1}u}_{L^{2}(R)}^{2} + \norm{M_{2}u}_{L^{2}(R)}^{2} - 2\norm{M_{1}u}_{L^{2}(R)} \norm{M_{2}u}_{L^{2}(R)}  }{2t^{2} + 2} \\
 &\geq \frac{\norm{M_{1}u}_{L^{2}(R)}^{2} + \norm{M_{2}u}_{L^{2}(R)}^{2} - 2t\norm{u}_{L^{2}(R)}^{2}  }{2t^{2} + 2}\\
 &\geq \frac{\norm{M_{1}u}_{L^{2}(R)}^{2} - 2t\norm{u}_{L^{2}(R)}^{2}   }{2t^{2} + 2}.
\end{aligned}
\end{equation}

Hence, using \eqref{Bound on t less than 1}, \eqref{Upper bound on T1} and \eqref{Lower bound on T2}  we have that, with probability at least $1-\delta$,
\begin{equation*}
\begin{aligned}
    & \left\|\Sigma_{P Q, \lambda,l}^{-1 / 2}\left(\mu_{Q,l}-\mu_{P,l}\right)\right\|_{\mathcal{H}_{l}}^{2}\\
&=2\norm{(\mathcal{T}_{PQ,l}+\lambda I)^{-1}\mathcal{T}_{PQ,l}u}_{L^{2}(R)}^{2} + 2\norm{u}_{L^{2}(R)}^{2} - 2\norm{(\mathcal{T}_{PQ,l}+\lambda I)^{-1}\mathcal{T}_{PQ,l}u - u}_{L^{2}(R)}^{2} \\
    &\geq \frac{2\norm{M_{1}u}_{L^{2}(R)}^{2} - 4t\norm{u}_{L^{2}(R)}^{2}   }{2t^{2} + 2} + 2\norm{u}_{L^{2}(R)}^{2} - \frac{2}{1-t^{2}} \norm{M_{1}u - u}_{L^{2}(R)}^{2} \\
    &= \frac{1}{1+t^{2}} \norm{M_{1}u}_{L^{2}(R)}^{2} - \frac{2t}{1+t^{2}}\norm{u}_{L^{2}(R)}^{2}  + 2\norm{u}_{L^{2}(R)}^{2} - \frac{2}{1-t^{2}} \norm{M_{1}u - u}_{L^{2}(R)}^{2} \\
    &= \frac{1}{1+t^{2}} \left[\norm{M_{1}u}_{L^{2}(R)}^{2} - \norm{M_{1}u - u}_{L^{2}(R)}^{2} \right] + \left(2 - \frac{2t}{1+t^{2}}\right) \norm{u}_{L^{2}(R)}^{2}  \\
    &\qquad\qquad+\left( \frac{1}{1+t^{2}} - \frac{2}{1-t^{2}} \right)  \norm{M_{1}u - u}_{L^{2}(R)}^{2} \\
    &= \frac{1}{1+t^{2}} \left[\norm{M_{1}u}_{L^{2}(R)}^{2} - \norm{M_{1}u - u}_{L^{2}(R)}^{2} \right] + \left(2 - \frac{2t}{1+t^{2}}\right) \norm{u}_{L^{2}(R)}^{2} \\
    &\qquad\qquad- \frac{1 + 3t^{2}}{1-t^{4}}  \norm{M_{1}u - u}_{L^{2}(R)}^{2} \\
    &= \frac{1}{1+t^{2}} \left[\norm{M_{1}u}_{L^{2}(R)}^{2} - \norm{M_{1}u - u}_{L^{2}(R)}^{2} \right] + \left(2 - \frac{2t}{1+t^{2}}\right) \norm{u}_{L^{2}(R)}^{2} \\
    &\qquad\qquad+ \frac{1 + 3t^{2}}{1-t^{4}} \left[\norm{M_{1}u}_{L^{2}(R)}^{2}-\norm{M_{1}u - u}_{L^{2}(R)}^{2}\right]- \frac{1 + 3t^{2}}{1-t^{4}}  \norm{M_{1}u}_{L^{2}(R)}^{2} \\
    &\stackrel{(*)}{\geq} \left( \frac{1}{1+t^{2}} + \frac{1 + 3t^{2}}{1-t^{4}}\right)\left[\norm{M_{1}u}_{L^{2}(R)}^{2}-\norm{M_{1}u - u}_{L^{2}(R)}^{2}\right]\\
    &\qquad\qquad+ \left(2 - \frac{2t}{1+t^{2}} - \frac{1 + 3t^{2}}{1-t^{4}}\right)  \norm{u}_{L^{2}(R)}^{2} \\
    &= \frac{2(1+t^{2})}{1-t^{4}} \left[\norm{M_{1}u}_{L^{2}(R)}^{2}-\norm{M_{1}u - u}_{L^{2}(R)}^{2}\right] + \frac{1-2t^{4}+2t^{3}-3t^{2}-2t}{1-t^{4}}  \norm{u}_{L^{2}(R)}^{2} \\
    &\stackrel{(**)}{\geq} \frac{1+t^2}{1-t^{4}} \left\{2\left[\norm{M_{1}u}_{L^{2}(R)}^{2}-\norm{M_{1}u - u}_{L^{2}(R)}^{2}\right] +(1- 5t)\norm{u}_{L^{2}(R)}^{2} \right\}\\
        &\stackrel{(\dagger)}{\geq}2\left[\norm{M_{1}u}_{L^{2}(R)}^{2}-\norm{M_{1}u - u}_{L^{2}(R)}^{2}\right] +(1- 5t)\norm{u}_{L^{2}(R)}^{2}
    \end{aligned}
    \end{equation*}
    \begin{equation}
    \label{Lower bound on Tikhonov lower bound}
    \begin{aligned}
    &\stackrel{(e)}{\geq} -\frac{1}{4 } \norm{u}_{L^{2}(R)}^{2} + \frac{3}{4 }\norm{u}_{L^{2}(R)}^{2} 
    = \frac{1}{2 } \norm{u}_{L^{2}(R)}^{2},
\end{aligned}
\end{equation}
where $(e)$ follows from Lemma~\ref{Lemma 7} under the conditions $u=\frac{d P}{d R}-1 \in \operatorname{Ran}(\mathcal{T}_{PQ}^{\theta})$ and $\|u\|_{L^{2}(R)}^{2} \geq 16 \lambda^{2 \theta}\|\mathcal{T}_{PQ}^{-\theta} u\|_{L^{2}(R)}^{2}$. $(*)$ holds since $t<1$. $(**)$ holds since $1-2t^4+2t^3-3t^2-2t\ge (1+t^2)(1-5t)$ for all $0\le t\le \frac{1}{20},$ which indeed is true because simplifying the above inequality, we obtain $3+7t^2\ge 4t+2t^3$ and clearly $\inf_{0\le t\le \frac{1}{20}} 3+7t^2\ge \sup_{0\le t\le\frac{1}{20}} 4t+2t^3$. $(\dagger)$ holds since $1+t^2\ge 1-t^4$.

Finally, using \eqref{Lower bound on Tikhonov lower bound} and \eqref{General lower bound in terms of Tikhonov lower bound in Proposition eta upper lower bound}, we have that, if $u \in \operatorname{Ran}(\mathcal{T}_{PQ}^{\theta})$, $\|u\|_{L^{2}(R)}^{2} \geq 16 \lambda^{2 \theta}\|\mathcal{T}_{PQ}^{-\theta} u\|_{L^{2}(R)}^{2}$, and $l$ and $\lambda$ satisfy $l > \max{\left(160,3200 \mathcal{N}_{1}(\lambda)\right)} \frac{ \kappa \log (\frac{2}{\delta})}{\lambda }$, then, for any $0 < \delta < 1$, we have 
\[
\eta_{\lambda,l} \geq \frac{C_{4}}{2}\|u\|_{L^{2}(R)}^{2}
\]
with probability at least $1-\delta$.
\end{proof}

\begin{rem}
Under $\boldsymbol {(\RFFAssumptionone)}$, the covariance operator $\Sigma_{PQ}$ and the integral operator 
$\mathcal{T}_{PQ}$ corresponding to the kernel $K$ and distribution $R=\frac{P+Q}{2}$ are trace-class with the same eigenvalues $(\lambda_{i})_{i \in I}$ as defined in \eqref{Spectral representation of SigmaPQ}. All eigenvalues are non-negative, and if the number of eigenvalues is countable, we must have $\lambda_{i} \to 0$ as $i \to \infty$. Two common scenarios arise concerning the rate of decay of $\lambda_{i}$'s: polynomial rate of decay where $\lambda_{i} \asymp i^{-\beta}$ for $\beta > 1$ and exponential rate of decay $\lambda_{i} \asymp e^{-\tau i}$ for $\tau >0$. Under polynomial decay of eigenvalues, the condition on $\lambda$ and $l$ in Proposition~\ref{Proposition: Upper and lower bound of eta} reduces to $\lambda \gtrsim l^{-\frac{\beta}{\beta + 1}}$, while under exponential decay, it reduces to $\lambda \gtrsim \frac{\log l}{l}$. 
\end{rem}

\begin{pro}\label{Proposition: Type-I error bound with random threshold}
Suppose $n,m \geq 2$ and let $\hat{\eta}_{\lambda,l}$ be the test statistic as defined in~\eqref{Approximate Kernel Test statistic}. Further, given any level of significance $\alpha>0$ and any  $0<f<1$, define $$L(\alpha,f) \coloneq \max\left\{2\log\frac{2}{1-\sqrt{1-\frac{\alpha}{2}}},\frac{32\kappa^{2}\log\frac{2}{1-\sqrt{1-\frac{\alpha}{2}}}}{(1-f)^{2}\left\|\Sigma_{PQ}\right\|_{\mathcal{L}^{\infty}(\mathcal{H})}^{2}}\right\}.$$ If $l \geq L(\alpha,f)$ and $\frac{140 \kappa}{s} \log \frac{32 \kappa s}{1-\sqrt{1-\frac{\alpha}{2}}} \leq \lambda \leq f\left\|\Sigma_{P Q}\right\|_{\mathcal{L}^{\infty}(\mathcal{H})}$,  then, under the null hypothesis $H_{0}: P = Q$, we have that,
\[
P_{H_{0}}\left\{\hat{\eta}_{\lambda,l} \geq \gamma_{1,l}\right\} \leq \alpha,
\]
where $\gamma_{1,l}\coloneq\frac{4 \sqrt{3}(C_{1}+C_{2}) \mathcal{N}_{2,l}(\lambda)}{\sqrt{\alpha}}\left(\frac{1}{n}+\frac{1}{m}\right)$ is the (random) critical threshold.
\end{pro}


\begin{proof}
Let us set $\delta = \frac{\alpha}{2}$ and define the quantities,  $\gamma_{1,l}\coloneq\frac{2 \sqrt{6}(C_{1}+C_{2}) \mathcal{N}_{2,l}(\lambda)}{\sqrt{\delta}}\left(\frac{1}{n}+\frac{1}{m}\right)$ and $\gamma_{2,l}\coloneq\frac{\sqrt{6}(C_{1}+C_{2})\|\mathcal{M}_{l}\|_{\mathcal{L}^{\infty}(\mathcal{H}_{l})}^{2} \mathcal{N}_{2,l}(\lambda)}{\sqrt{\delta}}\left(\frac{1}{n}+\frac{1}{m}\right)$. It is easy to observe that, conditional on $\mathbb{Z}^{1:s}$ and $\theta^{1:l}$,  the conditional expectation of the random feature approximation of the test statistic is zero under the null hypothesis $H_{0}:P = Q$ i.e. $\mathbb{E}_{H_{0}}\left(\hat{\eta}_{\lambda,l} \mid\mathbb{Z}^{1:s},\theta^{1:l}\right)=0$. Therefore, using Lemma \ref{Lemma 11} and Chebychev's inequality, we have that,
\[
P_{H_{0}}\left\{\left|\hat{\eta}_{\lambda,l}\right| \geq \gamma_{2.l} \mid \mathbb{Z}^{1:s},\theta^{1:l}\right\} \leq \delta,
\]
and consequently, we obtain
\begin{equation}
\label{RFF test statistic less than gamm2l with high probability}
\begin{aligned}
P_{H_{0}}\left\{\hat{\eta}_{\lambda,l} \geq \gamma_{2,l}\right\} &\leq P_{H_{0}}\left\{\left|\hat{\eta}_{\lambda,l}\right| \geq \gamma_{2,l}\right\}
=\mathbb{E}_{R^{s} \times \Xi^{l}}\left[P_{H_{0}}\left\{\left|\hat{\eta}_{\lambda,l}\right| \geq \gamma_{2,l} \mid\mathbb{Z}^{1:s},\theta^{1:l}\right\}\right]
\leq \delta.
\end{aligned}
\end{equation}
Finally, using \eqref{RFF test statistic less than gamm2l with high probability} and provided $P_{H_{0}}\left\{\gamma_{2,l} \geq \gamma_{1,l}\right\} \leq \delta $, we have that
\[
\begin{aligned}
P_{H_{0}}\left\{\hat{\eta}_{\lambda,l} \leq \gamma_{1,l}\right\} & \geq P_{H_{0}}\left\{\left\{\hat{\eta}_{\lambda,l} \leq \gamma_{2,l}\right\} \cap\left\{\gamma_{2,l} \leq \gamma_{1,l}\right\}\right\} \\
& \geq 1-P_{H_{0}}\left\{\hat{\eta}_{\lambda,l} \geq \gamma_{2,l}\right\}-P\left\{\gamma_{2,l} \geq \gamma_{1,l}\right\}
 \geq 1-2\delta
 =1 -\alpha.
\end{aligned}
\]
To complete the proof, it only remains to verify that
\begin{equation}
\label{gamma2,l is less than gamma1,l with high probability}
P\left\{\gamma_{2,l} \geq \gamma_{1,l}\right\}=P\left\{\|\mathcal{M}_{l}\|_{\mathcal{L}^{\infty}(\mathcal{H}_l)}^{2} \geq 2\right\} \leq \delta,
\end{equation}
which we do below.

Let us define the event $E = \left\{\theta^{1:l}: \left\|\Sigma_{PQ,l}\right\|_{\mathcal{L}^{\infty}(\mathcal{H}_l)}\geq \lambda\right\}$. We will first prove that under the conditions stated in Proposition \ref{Proposition: Type-I error bound with random threshold}, $P(E) \geq \sqrt{1-\delta}$. Note that $\left\|\Sigma_{PQ,l}\right\|_{\mathcal{L}^{\infty}(\mathcal{H}_l)}=\left\|\mathfrak{A_{l}^{*}A_{l}}\right\|_{\mathcal{L}^{\infty}(\mathcal{H}_l)} = \left\|\mathfrak{A_{l}A_{l}}^{*}\right\|_{\mathcal{L}^{\infty}(L^{2}(R))}$ where $R=\frac{P+Q}{2}$. Using reverse triangle inequality, we have that
\begin{equation}\label{Reverse triangle inequality}
\left|\left\|\mathfrak{II}^{*}\right\|_{\mathcal{L}^{\infty}(L^{2}(R))} - \left\|\mathfrak{A_{l}A_{l}}^{*}-\mathfrak{II}^{*}\right\|_{\mathcal{L}^{\infty}(L^{2}(R))}\right| \leq \left\|\mathfrak{A_{l}A_{l}}^{*}\right\|_{\mathcal{L}^{\infty}(L^{2}(R))}.
\end{equation}
Further, the fact that the operator norm does not exceed the Hilbert-Schmidt norm, we obtain
\begin{equation}
\label{Operator/HS norm iinequality}
\left\|\mathfrak{A_{l}A_{l}}^{*}-\mathfrak{II}^{*}\right\|_{\mathcal{L}^{\infty}(L^{2}(R))} \leq \left\|\mathfrak{A_{l}A_{l}}^{*}-\mathfrak{II}^{*}\right\|_{\mathcal{L}^{2}(L^{2}(R))}.\end{equation}
Now, the lower bound on the number of random features, as assumed under the condition $$l\geq \max\left\{2\log\frac{2}{1-\sqrt{1-\frac{\alpha}{2}}},\frac{32\kappa^{2}\log\frac{2}{1-\sqrt{1-\frac{\alpha}{2}}}}{(1-f)^{2}\left\|\Sigma_{PQ}\right\|_{\mathcal{L}^{\infty}(\mathcal{H})}^{2}}\right\}= L(\alpha,f)$$ implies that $l \geq 2 \log \frac{2}{1-\sqrt{1-\delta}}$ and $4\kappa\sqrt{\frac{2\log\frac{2}{1-\sqrt{1-\delta}}}{l}} \leq (1-f)\left\|\Sigma_{PQ}\right\|_{\mathcal{L}^{\infty}(\mathcal{H})}$. Therefore, using Lemma C.4 from \cite{ApproximateKernelPCARandomFeaturesStergeSriperumbudur} and \eqref{Operator/HS norm iinequality}, we have that,
\[
\begin{aligned}
&P\left\{\theta^{1:l}:\left\|\mathfrak{A_{l}A_{l}}^{*}-\mathfrak{II}^{*}\right\|_{\mathcal{L}^{\infty}(L^{2}(R))}\leq \left\|\mathfrak{A_{l}A_{l}}^{*}-\mathfrak{II}^{*}\right\|_{\mathcal{L}^{2}(L^{2}(R))}\right.\\
&\qquad\qquad\left.\leq 4\kappa\sqrt{\frac{2\log\frac{2}{1-\sqrt{1-\delta}}}{l}} \leq (1-f)\left\|\Sigma_{PQ}\right\|_{\mathcal{L}^{\infty}(\mathcal{H})}\right\}\geq \sqrt{1 - \delta},
\end{aligned}
\]
which implies that
\begin{equation}\label{Application of Lemma B.4 to diff of AAstar and IIstar}
\begin{aligned}
P\left\{\theta^{1:l}: \left\|\mathfrak{II}^{*}\right\|_{\mathcal{L}^{\infty}(L^{2}(R))} - \left\|\mathfrak{A_{l}A_{l}}^{*}-\mathfrak{II}^{*}\right\|_{\mathcal{L}^{\infty}(L^{2}(R))} \geq f\left\|\Sigma_{PQ}\right\|_{\mathcal{L}^{\infty}(\mathcal{H})}\right\} \geq \sqrt{1 - \delta}.
\end{aligned}
\end{equation}
Thus, using \eqref{Application of Lemma B.4 to diff of AAstar and IIstar}, \eqref{Reverse triangle inequality} and \eqref{Operator/HS norm iinequality}, we have,
\begin{equation*}\label{Bound on probability of E part 1}
P\left\{\theta^{1:l}:\left\|\mathfrak{A_{l}A_{l}}^{*}\right\|_{\mathcal{L}^{\infty}(L^{2}(R))}\geq f\left\|\Sigma_{PQ}\right\|_{\mathcal{L}^{\infty}(\mathcal{H})}\right\} \geq \sqrt{1 - \delta},
\end{equation*}
which implies, under the condition $\frac{140 \kappa}{s} \log \frac{32 \kappa s}{1-\sqrt{1-\frac{\alpha}{2}}} \leq \lambda \leq f\left\|\Sigma_{P Q}\right\|_{\mathcal{L}^{\infty}(\mathcal{H})}$, 
\begin{equation}
\label{Bound on probability of E part 2}
P(E) \geq \sqrt{1 - \delta}.
\end{equation}
Let us define $\left\|\mathcal{M}_{l}\right\|_{\mathcal{L}^{\infty}(\mathcal{H}_{l})}\coloneq\left\|\hat{\Sigma}_{P Q, \lambda, l}^{-1 / 2} \Sigma_{P Q, \lambda, l}^{1 / 2}\right\|_{\mathcal{L}^{\infty}(\mathcal{H}_{l})} =\left\| \Sigma_{P Q, \lambda, l}^{1 / 2}\hat{\Sigma}_{P Q, \lambda, l}^{-1 / 2}\right\|_{\mathcal{L}^{\infty}(\mathcal{H}_{l})}$. Under event $E$, the condition $\frac{140 \kappa}{s} \log \frac{32 \kappa s}{1-\sqrt{1-\delta}} \leq \lambda \leq f\left\|\Sigma_{P Q}\right\|_{\mathcal{L}^{\infty}(\mathcal{H})}$ 
implies
\[
\frac{140 \kappa}{s} \log \frac{32 \kappa s}{1-\sqrt{1-\delta}} \leq \lambda \leq \left\|\mathfrak{A_{l}A_{l}}^{*}\right\|_{\mathcal{L}^{\infty}(L^{2}(R))}.
\]
Therefore, using Lemma C.2(ii) of \cite{ApproximateKernelPCARandomFeaturesStergeSriperumbudur}, we have that, conditional on the occurrence of the event $E$,
\begin{equation}\label{Bound on probability of Ml given E}
P\left\{\mathbb{Z}^{1:s}:\sqrt{\frac{2}{3}}\leq\|\mathcal{M}_{l}\|_{\mathcal{L}^{\infty}(\mathcal{H}_{l})} \leq \sqrt{2} \Big| E\right\} \geq \sqrt{1-\delta}.
\end{equation}
Using \eqref{Bound on probability of E part 2} and $\eqref{Bound on probability of Ml given E}$, we have, by the law of total probability,
\begin{equation}\label{Bound of probability of Ml}
\begin{aligned}
& P\left\{\sqrt{\frac{2}{3}}\leq\|\mathcal{M}_{l}\|_{\mathcal{L}^{\infty}(\mathcal{H}_{l})} \leq \sqrt{2}\right\}\\
& = P\left\{\sqrt{\frac{2}{3}}\leq\|\mathcal{M}_{l}\|_{\mathcal{L}^{\infty}(\mathcal{H}_{l})} \leq \sqrt{2}\Big| E\right\} P(E) + P\left\{\sqrt{\frac{2}{3}}\leq\|\mathcal{M}_{l}\|_{\mathcal{L}^{\infty}(\mathcal{H}_{l})} \leq \sqrt{2}\Big| E^{c}\right\} P(E^{c})\\
& \geq P\left\{\sqrt{\frac{2}{3}}\leq\|\mathcal{M}_{l}\|_{\mathcal{L}^{\infty}(\mathcal{H}_{l})} \leq \sqrt{2}\Big| E\right\} P(E)\\
& \geq 1-\delta.
\end{aligned}
\end{equation}
Therefore, using \eqref{Bound of probability of Ml}, we obtain
\[
\begin{aligned}
P\left\{\|\mathcal{M}_{l}\|_{\mathcal{L}^{\infty}(\mathcal{H}_{l})}^{2} \geq 2\right\}
& \leq P\left\{\|\mathcal{M}_{l}\|_{\mathcal{L}^{\infty}(\mathcal{H}_{l})}^{2} \geq 2 \cup \|\mathcal{M}_{l}\|_{\mathcal{L}^{\infty}(\mathcal{H}_{l})}^{2} \leq \frac{2}{3}\right\}\\
& = 1- P\left\{\sqrt{\frac{2}{3}}\leq\|\mathcal{M}_{l}\|_{\mathcal{L}^{\infty}(\mathcal{H}_{l})} \leq \sqrt{2}\right\}\\
& \leq \delta,
\end{aligned}
\]
and this completes the verification of \eqref{gamma2,l is less than gamma1,l with high probability}.
\end{proof}

\begin{lem}\label{Lemma 1}
Let $X$, $Y$, and $Z$ be random variables. Define $\zeta=\mathbb{E}[X \mid Y,Z]$ and let $\gamma$ be any function of  $Y$ and $Z$. Suppose, for any $\delta_{1},\delta_{2}>0$,
\[P\left\{\zeta > \gamma(Y,Z)+\sqrt{\frac{\operatorname{Var}(X \mid Y,Z)}{\delta_{1}}}\right\} \geq 1-\delta_{2}.\] Then, we have
\[
P\{X \geq \gamma(Y,Z)\} \geq 1-\delta_{1} - \delta_{2} .
\]
\end{lem}

\begin{proof}
The proof is similar to that of Lemma A.1 in \citep{SpectralTwoSampleTest}.
\end{proof}


\begin{lem}\label{Lemma 2}
Let $\mu \in \mathcal{H}_{l}$ be any function and define $a(x)=g_{\lambda}^{1/2}(\hat{\Sigma}_{P Q,l})(K_{l}(\cdot, x)-\mu)$. Then $\hat{\eta}_{\lambda,l}$, as defined in \eqref{Approximate Kernel Test statistic}, can be expressed as
\[
\begin{aligned}
\hat{\eta}_{\lambda,l}=\frac{1}{n(n-1)} & \sum_{i \neq j}\left\langle a\left(X_{i}\right), a\left(X_{j}\right)\right\rangle_{\mathcal{H}_{l}}+\frac{1}{m(m-1)} \sum_{i \neq j}\left\langle a\left(Y_{i}\right), a\left(Y_{j}\right)\right\rangle_{\mathcal{H}_{l}} \\
&-\frac{2}{n m} \sum_{i, j}\left\langle a\left(X_{i}\right), a\left(Y_{j}\right)\right\rangle_{\mathcal{H}_{l}} .
\end{aligned}
\]
\end{lem}

\begin{proof}
The proof is similar to that of   \citep[Lemma A.2]{SpectralTwoSampleTest} and is obtained by replacing $\hat{\Sigma}_{P Q}$, $K$ and $\mathcal{H}$ by $\hat{\Sigma}_{P Q,l}$, $K_{l}$ and $\mathcal{H}_{l}$ respectively.
\end{proof}


For the remaining lemmas in this section, let $\theta^{1:l} \coloneq (\theta_{i})_{i=1}^{l}$ be an i.i.d sample from the spectral distribution $\Xi$ corresponding to the kernel $K$. Let the approximate kernel $K_{l}$ be defined as in \eqref{Random feature approximation of kernel} and $\mathcal{H}_{l}$ be the corresponding RKHS.

\begin{lem}\label{Lemma 3} Conditioned on $\theta^{1:l}$, let
$\left(G_{i}\right)_{i=1}^{n}$ and $\left(F_{i}\right)_{i=1}^{m}$ be conditionally independent sequences taking values in $\mathcal{H}_{l}$ such that $\mathbb{E}(G_{i} | \theta^{1:l})=\mathbb{E}(F_{j} | \theta^{1:l})=0$ for all $i=1,\dots,n$ and $j=1,\dots,m$. Let $f \in \mathcal{H}_{l}$ be an arbitrary function. Then, we have the following statements:\\

(i) $\mathbb{E}\left[\left(\sum_{i, j}\left\langle G_{i}, F_{j}\right\rangle_{\mathcal{H}_{l}}\right)^{2}\mid \theta^{1:l}\right]=\sum_{i, j} \mathbb{E}\left[\left\langle G_{i}, F_{j}\right\rangle_{\mathcal{H}_{l}}^{2}\mid \theta^{1:l}\right],\,\,\Xi-a.s. ;$\vspace{1mm}\\

(ii) $\mathbb{E}\left[\left(\sum_{i \neq j}\left\langle G_{i}, G_{j}\right\rangle_{\mathcal{H}_{l}}\right)^{2}\mid \theta^{1:l}\right]=2 \sum_{i \neq j} \mathbb{E}\left[\left\langle G_{i}, G_{j}\right\rangle_{\mathcal{H}_{l}}^{2}\mid \theta^{1:l}\right],\,\,\Xi-a.s. ;$
\vspace{1mm}\\

(iii) $\mathbb{E}\left[\left(\sum_{i}\left\langle G_{i}, f\right\rangle_{\mathcal{H}_{l}}\right)^{2}\mid \theta^{1:l}\right]=\sum_{i} \mathbb{E}\left[\left\langle G_{i}, f\right\rangle_{\mathcal{H}_{l}}^{2}\mid \theta^{1:l}\right],\,\,\Xi-a.s$.
\end{lem}
\begin{proof}
Conditioned on $\theta^{1:l}$, the kernel $K_{l}$ and its corresponding RKHS $\mathcal{H}_{l}$ are non-random. Therefore, following the same steps as in the proof of Lemma A.3 in \citep{SpectralTwoSampleTest} by replacing $\mathcal{H}$ by $\mathcal{H}_{l}$ and expectations with conditional expectations, the above result is proved.
\end{proof}

\begin{lem}\label{Lemma 4}
Let $\mu_{Q,l}=\int_{\mathcal{X}} K_{l}(\cdot, x) d Q(x)$ be the mean element of $Q$ with respect to the kernel $K_{l}$, $\mathcal{B}: \mathcal{H}_{l} \rightarrow \mathcal{H}_{l}$ be a bounded operator and $\left(X_{i}\right)_{i=1}^{n} \stackrel{i . i . d}{\sim} Q$ with $n \geq 2$. Define
\[
I=\frac{1}{n(n-1)} \sum_{i \neq j}\left\langle a\left(X_{i}\right), a\left(X_{j}\right)\right\rangle_{\mathcal{H}_{l}},
\]
where $a(x)=\mathcal{B} \Sigma_{P Q, \lambda, l}^{-1 / 2}(K_{l}(\cdot, x)-\mu_{Q,l})$. Then, we have the following statements:\\

(i) $\mathbb{E}\left[\left\langle a\left(X_{i}\right), a\left(X_{j}\right)\right\rangle_{\mathcal{H}_{l}}^{2}\mid \theta^{1:l}\right] \leq\|\mathcal{B}\|_{\mathcal{L}^{\infty}(\mathcal{H}_{l})}^{4}\left\|\Sigma_{P Q, \lambda,l}^{-1 / 2} \Sigma_{Q,l} \Sigma_{P Q, \lambda,l}^{-1 / 2}\right\|_{\mathcal{L}^{2}(\mathcal{H}_{l})}^{2},\,\,\Xi-a.s. ;$\vspace{1mm}\\

(ii) $\mathbb{E}\left[I^{2}\mid \theta^{1:l}\right] \leq \frac{4}{n^{2}}\|\mathcal{B}\|_{\mathcal{L}^{\infty}(\mathcal{H}_{l})}^{4}\left\|\Sigma_{P Q, \lambda,l}^{-1 / 2} \Sigma_{Q,l} \Sigma_{P Q, \lambda,l}^{-1 / 2}\right\|_{\mathcal{L}^{2}(\mathcal{H}_{l})}^{2},\,\,\Xi-a.s.$
\end{lem}

\begin{proof}
Conditioned on $\theta^{1:l}$, the kernel $K_{l}$ and its corresponding RKHS $\mathcal{H}_{l}$ are non-random, and so, the proof is similar to that of Lemma A.4 in \citep{SpectralTwoSampleTest} upon replacing $\mathcal{H}$ by $\mathcal{H}_{l}$, expectations by conditional expectations and covariance operators corresponding to the kernel $K$ by covariance operators corresponding to the kernel $K_{l}$. Lemma~\ref{Lemma 3}(ii) is also required for the derivation.
\end{proof}

\begin{lem}\label{Lemma 5}
Let $\mathcal{B}: \mathcal{H}_{l} \rightarrow \mathcal{H}_{l}$ be a bounded operator, $G \in \mathcal{H}_{l}$ be an arbitrary function and $\left(X_{i}\right)_{i=1}^{n} \stackrel{i . i . d}{\sim} Q$. Define
\[
I=\frac{2}{n} \sum_{i=1}^{n}\left\langle a\left(X_{i}\right), \mathcal{B} \Sigma_{P Q, \lambda,l}^{-1 / 2}(G-\mu_{Q,l})\right\rangle_{\mathcal{H}_{l}}
\]
where $a(x)=\mathcal{B} \Sigma_{P Q, \lambda,l}^{-1 / 2}(K_{l}(\cdot, x)-\mu_{Q,l})$ and $\mu_{Q,l}=\int_{\mathcal{X}} K_{l}(\cdot, x) d Q(x)$ is the mean element of $Q$ with respect to the kernel $K_{l}$. Then, we have the following statements:\\

(i) $\begin{aligned} &\mathbb{E}\left[\left\langle a\left(X_{i}\right), \mathcal{B} \Sigma_{P Q, \lambda,l}^{-1 / 2}(G-\mu_{Q,l})\right\rangle_{\mathcal{H}_{l}}^{2}\mid \theta^{1:l}\right]\\ & \leq \|\mathcal{B}\|_{\mathcal{L} \infty(\mathcal{H}_{l})}^{4}\left\|\Sigma_{P Q, \lambda,l}^{-1 / 2} \Sigma_{Q,l} \Sigma_{P Q, \lambda,l}^{-1 / 2}\right\|_{\mathcal{L}^{\infty}(\mathcal{H}_{l})}
\left\|\Sigma_{P Q, \lambda,l}^{-1 / 2}(G-\mu_{Q,l})\right\|_{\mathcal{H}_{l}}^{2},\,\,\Xi-a.s. ;
\end{aligned}$\vspace{1mm}\\

(ii) $\mathbb{E}\left[I^{2}\mid \theta^{1:l}\right] \leq \frac{4}{n}\|\mathcal{B}\|_{\mathcal{L}^{\infty}(\mathcal{H}_{l})}^{4}\left\|\Sigma_{P Q, \lambda}^{-1 / 2} \Sigma_{Q,l} \Sigma_{P Q, \lambda,l}^{-1 / 2}\right\|_{\mathcal{L}^{\infty}(\mathcal{H}_{l})}\left\|\Sigma_{P Q, \lambda,l}^{-1 / 2}(G-\mu_{Q,l})\right\|_{\mathcal{H}_{l}}^{2},\,\,\Xi-a.s.$
\end{lem}

\begin{proof}
Conditioned on $\theta^{1:l}$, the kernel $K_{l}$ and its corresponding RKHS $\mathcal{H}_{l}$ are non-random. Therefore, the proof is similar to that of Lemma A.5 in \citep{SpectralTwoSampleTest} by replacing $\mathcal{H}$ by $\mathcal{H}_{l}$, expectations by conditional expectations, and mean elements and covariance operators corresponding to the kernel $K$ by mean elements and covariance operators corresponding to the kernel $K_{l}$. Lemma~\ref{Lemma 3}(iii) is also required for the derivation.
\end{proof}

\begin{lem}\label{Lemma 6}
Let $\mathcal{B}: \mathcal{H}_{l} \rightarrow \mathcal{H}_{l}$ be a bounded operator, $\left(X_{i}\right)_{i=1}^{n} \stackrel{i . i . d}{\sim} Q$ and $\left(Y_{i}\right)_{i=1}^{m} \stackrel{i . i . d}{\sim} P$. Define
\[
I=\frac{2}{n m} \sum_{i, j}\left\langle a\left(X_{i}\right), b\left(Y_{j}\right)\right\rangle_{\mathcal{H}_{l}},
\]
where $a(x)=\mathcal{B} \Sigma_{P Q, \lambda,l}^{-1 / 2}\left(K_{l}(\cdot, x)-\mu_{Q,l}\right)$, and $b(x)=\mathcal{B} \Sigma_{P Q, \lambda,l}^{-1 / 2}\left(K_{l}(\cdot, x)-\mu_{P,l}\right)$ with \\$\mu_{P,l}=\int_{\mathcal{X}} K_{l}(\cdot, y) d P(y)$ and $\mu_{Q,l}=\int_{\mathcal{X}} K_{l}(\cdot, x) d Q(x)$ being the mean elements of $P$ and $Q$ with respect to the kernel $K_{l}$. Then, we have the following statements:\\

(i) $\mathbb{E}\left[\left\langle a\left(X_{i}\right), b\left(Y_{j}\right)\right\rangle_{\mathcal{H}_{l}}^{2}\mid \theta^{1:l}\right] \leq\|\mathcal{B}\|_{\mathcal{L}^{\infty}(\mathcal{H}_{l})}^{4}\left\|\Sigma_{P Q, \lambda,l}^{-1 / 2} \Sigma_{P Q,l} \Sigma_{P Q, \lambda,l}^{-1 / 2}\right\|_{\mathcal{L}^{2}(\mathcal{H}_{l})}^{2},\,\,\Xi-a.s.;$\vspace{1mm}\\

(ii) $\mathbb{E}\left[I^{2}\mid \theta^{1:l}\right] \leq \frac{4}{n m}\|\mathcal{B}\|_{\mathcal{L}^{\infty}(\mathcal{H}_{l})}^{4}\left\|\Sigma_{P Q, \lambda,l}^{-1 / 2} \Sigma_{P Q,l} \Sigma_{P Q, \lambda,l}^{-1 / 2}\right\|_{\mathcal{L}^{2}(\mathcal{H}_{l})}^{2},\,\,\Xi-a.s.$
\end{lem}

\begin{proof}
Conditioned on $\theta^{1:l}$, the kernel $K_{l}$ and its corresponding RKHS $\mathcal{H}_{l}$ are non-random, and the proof is similar to that of Lemma A.6 in \citep{SpectralTwoSampleTest} upon replacing $\mathcal{H}$ by $\mathcal{H}_{l}$, expectations by conditional expectations, and mean elements and covariance operators corresponding to the kernel $K$ by mean elements and covariance operators corresponding to the kernel $K_{l}$. Lemma~\ref{Lemma 3}(i) is also required for the derivation.
\end{proof}

\begin{lem}\label{Lemma 7}
Let $u=\frac{d P}{d R}-1 \in \operatorname{Ran}(\mathcal{T}_{PQ}^{\theta}) \subset L^{2}(R)$ with $\mathcal{T}_{PQ}$ being the integral operator defined over $L^{2}(R)$ corresponding to the kernel $K$. Further, let $\lambda>0$ be such that $\|u\|_{L^{2}(R)}^{2} \geq 16 \lambda^{2 \theta}\left\|\mathcal{T}_{PQ}^{-\theta} u\right\|_{L^{2}(R)}^{2}$. Define $M_{1} \coloneq (\mathcal{T}_{PQ}+\lambda I)^{-1}\mathcal{T}_{PQ}$. Then, we have  
\[
\norm{M_{1}u}_{L^{2}(R)}^{2} -\norm{M_{1}u-u}_{L^{2}(R)}^{2} \geq -\frac{1}{8 }\norm{u}_{L^{2}(R)}^{2}.
\]
\end{lem}
\begin{proof}
Since $u \in \operatorname{Ran}(\mathcal{T}_{PQ}^{\theta})$, there exists $f\in L^{2}(R)$ such that $u= \mathcal{T}_{PQ}^{\theta}f$. Therefore, we have 
\[
\begin{aligned}
\norm{\mathcal{T}_{PQ}(\mathcal{T}_{PQ}+\lambda I)^{-1} u}_{L^{2}(R)}^2 &\stackrel{(*)}{=} \norm{(\mathcal{T}_{PQ}+\lambda I)^{-1}\mathcal{T}_{PQ} u}_{L^{2}(R)}^2 
= \norm{M_{1}u}_{L^{2}(R)}^{2}\\
&= \sum_{i} \lambda_i^{2\theta+2}\left(\frac{1}{\lambda_{i}+\lambda}\right)^{2}\langle f,\Tilde{\phi}_i\rangle_{L^{2}(R)}^2,\end{aligned}
\]
where $(*)$ follows from Lemma A.8(i) in \cite{SpectralTwoSampleTest} (similar to Lemma~\ref{Lemma 8}(i)). Similarly,
\[\begin{aligned}
\norm{\mathcal{T}_{PQ}(\mathcal{T}_{PQ} + \lambda I)^{-1} u-u}_{L^{2}(R)}^2 &= \norm{(\mathcal{T}_{PQ} + \lambda I)^{-1}\mathcal{T}_{PQ} u-u}_{L^{2}(R)}^2
= \norm{M_{1}u-u}_{L^{2}(R)}^{2} \\
&= \sum_{i} \lambda_i^{2\theta}\left(\frac{\lambda_i}{\lambda_{i}+\lambda}-1\right)^2 \langle f,\Tilde{\phi}_i\rangle_{L^{2}(R)}^2,
\end{aligned}
\]
where $(\lambda_i,\tilde{\phi}_i)_i$ are the eigenvalues and eigenfunctions of $\mathcal{T}_{PQ}$. Using these expressions, we have 
\[
\begin{aligned}
\norm{M_{1}u}_{L^{2}(R)}^{2} -\norm{M_{1}u-u}_{L^{2}(R)}^{2} &= 2 \sum_{i} \lambda_{i}^{2\theta} \left(\frac{\lambda_{i}}{\lambda_{i} + \lambda} - \frac{1}{2}\right)\langle f,\Tilde{\phi}_i\rangle_{L^{2}(R)}^2 \\
&\geq 2 \sum_{\{i:\frac{\lambda_{i}}{\lambda_{i} + \lambda}< \frac{1}{2}\}} \lambda_{i}^{2\theta} \left(\frac{\lambda_{i}}{\lambda_{i} + \lambda} - \frac{1}{2}\right)\langle f,\Tilde{\phi}_i\rangle_{L^{2}(R)}^2 .
\end{aligned}
\]
It is easy to verify that
\begin{equation*}
    \sup_{\{i:\frac{\lambda_{i}}{\lambda{i}+\lambda}< \frac{1}{2}\}} \lambda_i^{2\theta}\left(\frac{1}{2}-\frac{\lambda_{i}}{\lambda_{i}+\lambda}\right) \leq \lambda^{2\theta}. 
\end{equation*}
Therefore, we have that,
\[
\norm{M_{1}u}_{L^{2}(R)}^{2} -\norm{M_{1}u-u}_{L^{2}(R)}^{2} \geq -2\lambda^{2\theta} \norm{\mathcal{T}_{PQ}^{-\theta}u}_{L^{2}(R)}^{2} \geq -\frac{1}{8 }\norm{u}_{L^{2}(R)}^{2}
\]
since $\norm{u}_{L^{2}(R)}^{2} \geq 16 \lambda^{2\theta} \norm{\mathcal{T}_{PQ}^{-\theta}u}_{L^{2}(R)}^{2}$.
\end{proof}

\begin{lem}\label{Lemma 8}
Let $g_{\lambda}$ satisfy $\boldsymbol{(\SpectralAssumptionone)}$, $\boldsymbol{(\SpectralAssumptiontwo)}$ and $\boldsymbol{(\SpectralAssumptionfour)}$. Then, we have the following statements:\\

(i) $\mathfrak{A}_{l}g_{\lambda}(\Sigma_{P Q,l}) \mathfrak{A}_{l}^{*}=\mathcal{T}_{PQ,l} g_{\lambda}(\mathcal{T}_{PQ,l})=g_{\lambda}(\mathcal{T}_{PQ,l}) \mathcal{T}_{PQ,l},\,\,\Xi-a.s. ;$\vspace{1mm}\\

(ii) $\left\|g_{\lambda}^{1/2}(\Sigma_{P Q,l}) \Sigma_{P Q, \lambda,l}^{1 / 2}\right\|_{\mathcal{L}^ \infty(\mathcal{H}_{l})} \leq\left(C_{1}+C_{2}\right)^{1 / 2},\,\,\Xi-a.s. ;$
\vspace{1mm}\\

(iii) $\left\|g_{\lambda}^{1/2}(\hat{\Sigma}_{P Q,l}) \hat{\Sigma}_{P Q, \lambda,l}^{1 / 2}\right\|_{\mathcal{L}^ \infty(\mathcal{H}_{l})} \leq \left(C_{1}+C_{2}\right)^{1 / 2},\,\,\Xi-a.s. ;$
\vspace{1mm}\\

(iv) $\left\|\Sigma_{P Q, \lambda,l}^{-1 / 2} g_{\lambda}^{-1/2}(\Sigma_{P Q,l}) \right\|_{\mathcal{L}^ \infty(\mathcal{H}_{l})} \leq \left(C_{4}\right)^{-1 / 2} ,\,\,\Xi-a.s.;$\vspace{1mm}\\

(v) $\left\|\hat{\Sigma}_{P Q, \lambda,l}^{-1 / 2} g_{\lambda}^{-1/2}(\hat{\Sigma}_{P Q,l}) \right\|_{\mathcal{L} ^\infty(\mathcal{H}_{l})} \leq \left(C_{4}\right)^{-1 / 2},\,\,\Xi-a.s$.
\end{lem}
\begin{proof}
Conditioned on $\theta^{1:l}$, the kernel $K_{l}$ and its corresponding RKHS $\mathcal{H}_{l}$ are non-random, and the proof therefore is similar to that of Lemma A.8 in \citep{SpectralTwoSampleTest} upon replacing $\mathcal{H}$ by $\mathcal{H}_{l}$, inclusion operator corresponding to the kernel $K$ by the approximation operator corresponding to the kernel $K_{l}$, and covariance and integral operators corresponding to the kernel $K$ by covariance and integral operators corresponding to the kernel $K_{l}$.
\end{proof}

\begin{lem}\label{Lemma 9}
Let $u = \frac{d P}{d R}-1 \in L^{2}(R)$ where $R=\frac{P+Q}{2}$. Further, define \\
$\mathcal{N}_{1,l}(\lambda):=\operatorname{Tr}\left(\Sigma_{P Q, \lambda,l}^{-1 / 2} \Sigma_{P Q,l} \Sigma_{P Q, \lambda,l}^{-1 / 2}\right)$ and $\mathcal{N}_{2,l}(\lambda):=\left\|\Sigma_{P Q, \lambda,l}^{-1 / 2} \Sigma_{P Q,l} \Sigma_{P Q, \lambda,l}^{-1 / 2}\right\|_{\mathcal{L}^{2}(\mathcal{H}_{l})}$. Then, we have the following statements:\\

 (i) $\left\|\Sigma_{P Q, \lambda, l}^{-1 / 2} \Sigma_{A, l} \Sigma_{P Q, \lambda, l}^{-1 / 2}\right\|_{\mathcal{L}^{2}(\mathcal{H}_{l})}^{2} \leq 4 C_{\lambda,l}\|u\|_{L^{2}(R)}^{2}+2 \mathcal{N}_{2,l}^{2}(\lambda),\,\,\Xi-a.s.$\vspace{1mm}\\

(ii) $\left\|\Sigma_{P Q, \lambda, l}^{-1 / 2} \Sigma_{A,l} \Sigma_{P Q, \lambda,l}^{-1 / 2}\right\|_{\mathcal{L}^{\infty}(\mathcal{H}_{l})} \leq 2 \sqrt{C_{\lambda,l}}\|u\|_{L^{2}(R)}+1,\,\,\Xi-a.s,$\vspace{2mm}\\
where $A$ can be either $P$ or $Q$ with $\Sigma_{P,l}$ and $\Sigma_{Q,l}$ being the covariance operators corresponding to the kernel $K_{l}$ and distributions $P$ and $Q$ respectively, and $C_{\lambda,l} = \frac{2\mathcal{N}_{2,l}(\lambda)}{\lambda} \sup _{x}\|K_{l}(\cdot, x)\|_{\mathcal{H}_{l}}^{2}$. 
\end{lem}

\begin{proof}
Conditioned on $\theta^{1:l}$, the kernel $K_{l}$ and its corresponding RKHS $\mathcal{H}_{l}$ are non-random, and the proof therefore follows from Lemma A.9 of \citep{SpectralTwoSampleTest} upon replacing $\mathcal{H}$ by $\mathcal{H}_{l}$, inclusion operator corresponding to the kernel $K$ by the approximation operator corresponding to kernel $K_{l}$, and covariance and integral operators corresponding to the kernel $K$ by covariance and integral operators corresponding to the kernel $K_{l}$.
\end{proof}

\begin{lem}\label{Lemma 10}
Let $\zeta_{l} = \left\|g_{\lambda}^{1/2}(\hat{\Sigma}_{PQ,l})(\mu_{Q,l}-\mu_{P,l})\right\|_{\mathcal{H}_{l}}^{2}$, $\eta_{\lambda,l}= \left\|g_{\lambda}^{1/2}(\Sigma_{PQ,l})(\mu_{Q,l}-\mu_{P,l})\right\|_{\mathcal{H}_{l}}^{2}$ and $\mathcal{M}_{l} = \hat{\Sigma}_{PQ,\lambda,l}^{-1/2}\Sigma_{PQ,\lambda,l}^{1/2}$. Then, we have
\[\zeta_{l} \geq C_4 (C_1+C_2)^{-1}\norm{\mathcal{M}_{l}^{-1}}^{-2}_{\mathcal{L}^{\infty}(\mathcal{H}_{l})} \eta_{\lambda,l},\,\,\Xi-a.s.\]
\end{lem}

\begin{proof}
Conditioned on $\theta^{1:l}$, the kernel $K_{l}$ and its corresponding RKHS $\mathcal{H}_{l}$ are non-random, and the proof is therefore similar to that of Lemma A.11 in \citep{SpectralTwoSampleTest} upon replacing $\mathcal{H}$ by $\mathcal{H}_{l}$ and covariance operators corresponding to the kernel $K$ by covariance operators corresponding to the kernel $K_{l}$.
\end{proof}

\begin{lem}\label{Lemma 11}
Define $\zeta_{l}=\left\|g_{\lambda}^{1/2}(\hat{\Sigma}_{P Q,l})\left(\mu_{P,l}-\mu_{Q,l}\right)\right\|_{\mathcal{H}_{l}}^{2}$, and $ \mathcal{M}_{l}=\hat{\Sigma}_{P Q, \lambda, l}^{-1 / 2} \Sigma_{P Q, \lambda, l}^{1 / 2}$. Further, let $m \leq n \leq D^{\prime} m$ for some constant $D^{\prime} \geq 1$. Then, the following statement holds:
\[
\begin{aligned}
&\mathbb{E}\left[\left(\hat{\eta}_{\lambda,l}-\zeta_{l}\right)^{2} \mid \mathbb{Z}^{1:s},\theta^{1:l}\right] \\
&\quad \leq \tilde{C}\|\mathcal{M}_{l}\|_{\mathcal{L}^{\infty}(\mathcal{H}_{l})}^{4}\left(\frac{C_{\lambda,l}\|u\|_{L^{2}(R)}^{2}+\mathcal{N}_{2,l}^{2}(\lambda)}{(n+m)^{2}}+\frac{\sqrt{C_{\lambda,l}}\|u\|_{L^{2}(R)}^{3}+\|u\|_{L^{2}(R)}^{2}}{n+m}\right),\,\,R\times\Xi-a.s.
\end{aligned}
\]
where $C_{\lambda,l}$ is defined in Lemma \ref{Lemma 9} and $\tilde{C}$ is a constant that depends only on $C_{1}, C_{2}$ and $D^{\prime}$. In addition, if $P=Q$, then the following statement holds:
\[
\mathbb{E}\left[\hat{\eta}_{\lambda,l}^{2} \mid \mathbb{Z}^{1:s},\theta^{1:l}\right] \leq 6(C_{1}+C_{2})^{2}\|\mathcal{M}_{l}\|_{\mathcal{L}^{\infty}(\mathcal{H}_{l})}^{4} \mathcal{N}_{2,l}^{2}(\lambda)\left(\frac{1}{n^{2}}+\frac{1}{m^{2}}\right),\,\,R\times\Xi-a.s.
\]
\end{lem}

\begin{proof}
Conditioned on $\theta^{1:l}$, the kernel $K_{l}$ and its corresponding RKHS $\mathcal{H}_{l}$ are non-random. Therefore the proof is similar to that of Lemma A.11 in \citep{SpectralTwoSampleTest} upon replacing $\mathcal{H}$ by $\mathcal{H}_{l}$, conditional expectations given only $\mathbb{Z}^{1:s}$ by conditional expectations given both $\mathbb{Z}^{1:s}$ and $\theta^{1:l}$, and mean elements and covariance operators corresponding to the kernel $K$ by mean elements and covariance operators corresponding to the kernel $K_{l}$.
\end{proof}

\begin{lem}
\label{Lemma 12}
Let $H$ be a Hilbert space and $\Sigma:H\rightarrow H$ be trace class self-adjoint operator with eigenvalues $(\lambda_i(\Sigma))_i$. Then, the following hold:\vspace{1mm}\\

(i) Suppose $ai^{-\alpha}\le \lambda_i(\Sigma)\le Ai^{-\alpha}$ for $\alpha>1$ and $a,A\in (0,\infty)$. Then 
$$N_{2}(\lambda) = \norm{(\Sigma+\lambda I)^{-1/2}\Sigma (\Sigma+\lambda I)^{-1/2}}_{\mathcal{L}^{2}(H)}\asymp \lambda^{-\frac{1}{2\alpha}},\,\,\text{i.e.},\,\,\lambda^{-\frac{1}{2\alpha}} \lesssim N_{2}(\lambda) \lesssim \lambda^{-\frac{1}{2\alpha}}.$$

(ii) Suppose $be^{-\tau i} \leq \lambda_{i}(\Sigma) \leq Be^{-\tau i}$ for $\tau>0$, and $b,B\in(0,\infty)$. Then $$\log\frac{1}{\lambda} \lesssim N_{2}(\lambda) \lesssim \log\frac{1}{\lambda},\,\,\text{i.e.},\,\, N_{2}(\lambda) \asymp \log\frac{1}{\lambda}.$$
\end{lem}

\begin{proof}
Note that 
\[
\begin{aligned}
N_{2}^{2}(\lambda) &=  \norm{(\Sigma+\lambda I)^{-1/2}\Sigma (\Sigma+\lambda I)^{-1/2}}_{\mathcal{L}^{2}(H)} =\operatorname{Tr}((\Sigma+\lambda I)^{-1/2}\Sigma (\Sigma+\lambda I)^{-1}\Sigma (\Sigma+\lambda I)^{-1/2})\\
&=\operatorname{Tr}\left((\Sigma+\lambda I)^{-2}\Sigma^{2}\right)
=\sum_{i \geq 1} \left(\frac{\lambda_{i}}{\lambda_{i}+t}\right)^{2}.
\end{aligned}
\]
(i) Under the polynomial decay of eigenvalues of $\Sigma$, we have
\[
\begin{aligned}
N_{2}^{2}(\lambda) &= \sum_{i \geq 1} \left(\frac{\lambda_{i}}{\lambda_{i}+\lambda}\right)^{2}
\leq \sum_{i \geq 1} \frac{A^{2}i^{-2\alpha}}{\left(ai^{-\alpha}+\lambda\right)^{2}}=\frac{A^{2}}{a^{2}}\sum_{i \geq 1} \left(\frac{i^{-\alpha}}{i^{-\alpha}+\frac{\lambda}{a}}\right)^{2}\\
&\leq \frac{A^{2}}{a^{2}}\int_{0}^{\infty} \left(\frac{x^{-\alpha}}{x^{-\alpha}+\frac{\lambda}{a}}\right)^{2} dx
= \frac{A^{2}}{a^{2}} \left(\frac{a}{\lambda}\right)^{\frac{1}{\alpha}} \int_{0}^{\infty} \left(\frac{1}{1+x^{\alpha}}\right)^{2} dx.
\end{aligned}
\]
Due to the finiteness of the integral when $\alpha>1$, we thus obtain that $N_{2}^{2}(\lambda) \lesssim \lambda^{-\frac{1}{\alpha}}$ or equivalently, $N_{2}(\lambda) \lesssim \lambda^{-\frac{1}{2\alpha}}$. The proof of the lower bound is similar.\vspace{1mm}\\
(ii) Under the exponential decay of eigenvalues of $\Sigma$, we have
\[
\begin{aligned}
N_{2}^{2}(\lambda) &= \sum_{i \geq 1} \left(\frac{\lambda_{i}}{\lambda_{i}+t}\right)^{2}
\leq \sum_{i \geq 1} \frac{B^{2}e^{-2\tau i}}{\left(be^{-\tau i}+\lambda\right)^{2}}=\frac{B^{2}}{b^{2}}\sum_{i \geq 1} \left(\frac{e^{-\tau i}}{e^{-\tau i}+\frac{\lambda}{b}}\right)^{2}\\
&\leq \frac{B^{2}}{b^{2}}\int_{0}^{\infty} \left(\frac{1}{1+\frac{\lambda}{b}e^{\tau x}}\right)^{2} dx\leq \frac{B^{2}}{b^{2}} \frac{1}{\tau}\log\left(1+\frac{b}{\lambda}\right)
\lesssim \log\frac{1}{\lambda}.
\end{aligned}
\]
The proof of the lower bound is similar.
\end{proof}

\begin{lem}\label{Lemma 13}
Let $\boldsymbol{(\RFFAssumptionone)}$  and $\boldsymbol{(\RFFAssumptionfour)}$ hold. Then, for any $0<\delta<1$, and $\frac{86 \kappa}{l}\log \frac{32 \kappa l}{\delta} \leq \lambda \leq \|\Sigma_{PQ}\|_{\mathcal{L}^{\infty}(\mathcal{H})}$, we have
\[
P\left\{\mathcal{N}_{2,l}^{2}(\lambda) \leq \frac{32\kappa  \mathcal{N}_{1}(\lambda) \log \frac{4}{\delta}}{\lambda l} + \frac{256\kappa^{2}\left(\log \frac{4}{\delta}\right)^{2}}{\lambda^{2}l^2} + 8 \mathcal{N}_{2}^{2}(\lambda)\right\} \geq 1-\delta.
\]
As a corollary, we have that, for any $0<\delta<1$, and $\frac{86 \kappa}{l}\log \frac{32 \kappa l}{\delta} \leq \lambda \leq \|\Sigma_{PQ}\|_{\mathcal{L}^{\infty}(\mathcal{H})}$, 
\[
P\left\{\mathcal{N}_{2,l}(\lambda) \leq  \frac{4\sqrt{2\kappa  \mathcal{N}_{1}(\lambda)\log \frac{4}{\delta}}}{\sqrt{\lambda l}} + \frac{16\kappa \log \frac{4}{\delta}}{\lambda l}  + 2\sqrt{2}\mathcal{N}_{2}(\lambda)\right\} \geq 1-\delta.
\]
\end{lem}

\begin{proof}
Define $A=\mathcal{T}_{PQ,l}$ , $B=\mathcal{T}_{PQ}$ , $A_{\lambda} = \mathcal{T}_{PQ,\lambda,l} = \mathcal{T}_{PQ,l}+ \lambda I_l = A + \lambda I_l$ and $B_{\lambda} = \mathcal{T}_{PQ,\lambda} =\mathcal{T}_{PQ}+ \lambda I = B + \lambda I$. Note that, since $(\mathfrak{I}\mathfrak{I}^{*} + \lambda I)\mathfrak{I} = \mathfrak{I}(\mathfrak{I}^{*}\mathfrak{I} + \lambda I)$, we have that
\begin{equation}
\label{Lemma 13 identity 1}
\mathfrak{I} (\mathfrak{I}^{*}\mathfrak{I} + \lambda I)^{-1} = (\mathfrak{I}\mathfrak{I}^{*} + \lambda I)^{-1}\mathfrak{I}
\end{equation}
and therefore,
\begin{equation*}
\label{N2^2lambda identity}
\begin{aligned}
\mathcal{N}_{2}^{2}(\lambda) &=\left\|\Sigma_{P Q, \lambda}^{-1 / 2} \Sigma_{P Q} \Sigma_{P Q, \lambda}^{-1 / 2}\right\| _{\mathcal{L}^{2}(\mathcal{H})}^{2} 
= \operatorname{Tr}\left((\Sigma_{P Q, \lambda}^{-1 / 2} \Sigma_{P Q} \Sigma_{P Q, \lambda}^{-1 / 2})^{*}\Sigma_{P Q, \lambda}^{-1 / 2} \Sigma_{P Q} \Sigma_{P Q, \lambda}^{-1 / 2}\right)\\
&= \operatorname{Tr}\left(\Sigma_{P Q, \lambda}^{-1 / 2} \Sigma_{P Q} \Sigma_{P Q, \lambda}^{-1 } \Sigma_{P Q} \Sigma_{P Q, \lambda}^{-1 / 2}\right)
= \operatorname{Tr}\left(\Sigma_{P Q}\Sigma_{P Q, \lambda}^{-1 } \Sigma_{P Q} \Sigma_{P Q, \lambda}^{-1 }\right)\\
& = \operatorname{Tr}\left[\mathfrak{I}^{*}\mathfrak{I} (\mathfrak{I}^{*}\mathfrak{I} + \lambda I)^{-1}\mathfrak{I}^{*}\mathfrak{I} (\mathfrak{I}^{*}\mathfrak{I} + \lambda I)^{-1}\right]\\
&\stackrel{(a)}{=}\operatorname{Tr}\left[\mathfrak{I}^{*}(\mathfrak{I}\mathfrak{I}^{*} + \lambda I)^{-1}\mathfrak{I}\mathfrak{I}^{*}(\mathfrak{I}\mathfrak{I}^{*} + \lambda I)^{-1}\mathfrak{I}\right]\\
& = \operatorname{Tr}\left[(\mathfrak{I}\mathfrak{I}^{*} + \lambda I)^{-1}\mathfrak{I}\mathfrak{I}^{*}(\mathfrak{I}\mathfrak{I}^{*} + \lambda I)^{-1}\mathfrak{I}\mathfrak{I}^{*}\right]\\
& = \operatorname{Tr}\left[B_{\lambda}^{-1}BB_{\lambda}^{-1}B\right],
\end{aligned}
\end{equation*}
where $(a)$ follows from \eqref{Lemma 13 identity 1}. Similarly, we have
\begin{equation}
\label{N2l^2lambda identity}
\begin{aligned}
\mathcal{N}_{2,l}^{2}(\lambda) &=\left\|\Sigma_{P Q, \lambda,l}^{-1 / 2} \Sigma_{P Q,l} \Sigma_{P Q, \lambda,l}^{-1 / 2}\right\| _{\mathcal{L}^{2}(\mathcal{H}_{l})}^{2}
= \operatorname{Tr}\left(\Sigma_{P Q, \lambda,l}^{-1 / 2} \Sigma_{P Q,l} \Sigma_{P Q, \lambda,l}^{-1 } \Sigma_{P Q,l} \Sigma_{P Q, \lambda,l}^{-1 / 2}\right)\\
&= \operatorname{Tr}\left(\Sigma_{P Q,l}\Sigma_{P Q, \lambda,l}^{-1 } \Sigma_{P Q,l} \Sigma_{P Q, \lambda,l}^{-1 }\right)
\\
& = \operatorname{Tr}\left[(\mathfrak{A}_{l}\mathfrak{A}_{l}^{*} + \lambda I)^{-1}\mathfrak{A}_{l}\mathfrak{A}_{l}^{*}(\mathfrak{A}_{l}\mathfrak{A}_{l}^{*} + \lambda I)^{-1}\mathfrak{A}_{l}\mathfrak{A}_{l}^{*}\right]
\\
& = \operatorname{Tr}\left[A_{\lambda}^{-1}AA_{\lambda}^{-1}A\right].
\end{aligned}
\end{equation}
Note that,
\[
\begin{aligned}
A_{\lambda} &= A + \lambda I 
= B + \lambda I + A - B\\
& = \left(B+ \lambda I\right)^{1/2} \left[I + \left(B+ \lambda I\right)^{-1/2} (A-B) \left(B+ \lambda I\right)^{-1/2}\right] \left(B+ \lambda I\right)^{1/2}\\
&=B_{\lambda}^{1/2}\left[I + B_{\lambda}^{-1/2}(A -B)B_{\lambda}^{-1/2}\right] B_{\lambda}^{1/2}
\end{aligned}
\]
and hence,
\begin{equation}
\label{Alambda inverse identity}
A_{\lambda}^{-1} = B_{\lambda}^{-1/2}\left[I + B_{\lambda}^{-1/2}(A -B)B_{\lambda}^{-1/2}\right]^{-1} B_{\lambda}^{-1/2}. 
\end{equation}
Let us define $E_{l} = B_{\lambda}^{-1/2}(B-A)B_{\lambda}^{-1/2} = \left(\mathfrak{I}\mathfrak{I}^{*}+ \lambda I\right)^{-1/2}\left(\mathfrak{I}\mathfrak{I}^{*} - \mathfrak{A}_{l}\mathfrak{A}_{l}^{*}\right)\left(\mathfrak{I}\mathfrak{I}^{*}+ \lambda I\right)^{-1/2}$. Therefore, we have, using \eqref{N2l^2lambda identity} and \eqref{Alambda inverse identity},
\begin{equation*}
\begin{aligned}
&\mathcal{N}_{2,l}^{2}(\lambda)\\
=& \operatorname{Tr}\left[B_{\lambda}^{-1/2}\left[I + B_{\lambda}^{-1/2}(A -B)B_{\lambda}^{-1/2}\right]^{-1} B_{\lambda}^{-1/2} A B_{\lambda}^{-1/2}\left[I + B_{\lambda}^{-1/2}(A -B)B_{\lambda}^{-1/2}\right]^{-1} B_{\lambda}^{-1/2} A\right]\\
=&\operatorname{Tr}\left[\left[I + B_{\lambda}^{-1/2}(A -B)B_{\lambda}^{-1/2}\right]^{-1} B_{\lambda}^{-1/2} A B_{\lambda}^{-1/2}\left[I + B_{\lambda}^{-1/2}(A -B)B_{\lambda}^{-1/2}\right]^{-1} B_{\lambda}^{-1/2} AB_{\lambda}^{-1/2}\right]\\
\leq & \left\|\left(I + B_{\lambda}^{-1/2}(A -B)B_{\lambda}^{-1/2}\right)^{-1}\right\|_{\mathcal{L}^{\infty}(\mathcal{H})}\\
&\times\operatorname{Tr}\left[B_{\lambda}^{-1/2} A B_{\lambda}^{-1/2}\left[I + B_{\lambda}^{-1/2}(A -B)B_{\lambda}^{-1/2}\right]^{-1} B_{\lambda}^{-1/2} AB_{\lambda}^{-1/2}\right]\\
=& \left\|\left(I + B_{\lambda}^{-1/2}(A -B)B_{\lambda}^{-1/2}\right)^{-1}\right\|_{\mathcal{L}^{\infty}(\mathcal{H})}\operatorname{Tr}\left[\left[I + B_{\lambda}^{-1/2}(A -B)B_{\lambda}^{-1/2}\right]^{-1} B_{\lambda}^{-1/2} AB_{\lambda}^{-1} A B_{\lambda}^{-1/2}\right]
\end{aligned}
\end{equation*}
\begin{equation}
\label{N2l^2lambda identity advanced}
\begin{aligned}
\leq & \left\|\left(I + B_{\lambda}^{-1/2}(A -B)B_{\lambda}^{-1/2}\right)^{-1}\right\|_{\mathcal{L}^{\infty}(\mathcal{H})}^{2} \operatorname{Tr}\left[ B_{\lambda}^{-1/2} AB_{\lambda}^{-1} A B_{\lambda}^{-1/2}\right]\\
=& \left\|\left(I + B_{\lambda}^{-1/2}(A -B)B_{\lambda}^{-1/2}\right)^{-1}\right\|_{\mathcal{L}^{\infty}(\mathcal{H})}^{2} \operatorname{Tr}\left[ B_{\lambda}^{-1} AB_{\lambda}^{-1} A\right]\\
=& \left\|\left(I - E_{l}\right)^{-1}\right\|_{\mathcal{L}^{\infty}(\mathcal{H})}^{2} \operatorname{Tr}\left[ B_{\lambda}^{-1} AB_{\lambda}^{-1} A\right]\\
\leq & \frac{1}{(1 -\|E_{l}\|_{\mathcal{L}^{\infty}(\mathcal{H})} )^{2}}\operatorname{Tr}\left[ B_{\lambda}^{-1} AB_{\lambda}^{-1} A\right].
\end{aligned}
\end{equation}
As part of the proof of Lemma C.3(i) in \cite{ApproximateKernelPCARandomFeaturesStergeSriperumbudur}, it is proved that, if $\delta>0$ and $\frac{86 \kappa}{l}\log \frac{32 \kappa l}{\delta} \leq \lambda \leq \|\Sigma_{PQ}\|_{\mathcal{L}^{\infty}(\mathcal{H})}$, then 
\begin{equation}
\label{Upper bound on El with high probability}
P\left(\theta^{1:l}:\|E_{l}\|_{\mathcal{L}^{\infty}(\mathcal{H})} \leq \frac{1}{2}\right) \geq 1- \frac{\delta}{2}.
\end{equation}
Hence, using \eqref{Upper bound on El with high probability} and \eqref{N2l^2lambda identity advanced}, we have, with probability at least $1-\frac{\delta}{2}$, 
\begin{equation}
\label{N2l^2lambda upper bound by constant times trace}
\mathcal{N}_{2,l}^{2}(\lambda) \leq 4 \operatorname{Tr}\left[ B_{\lambda}^{-1} AB_{\lambda}^{-1} A\right],\end{equation}
where
\begin{equation*}
\label{Upper bound on trace for upper bound on N2l^2lambda}
\begin{aligned}
\operatorname{Tr}\left[ B_{\lambda}^{-1} AB_{\lambda}^{-1} A\right]
&= \operatorname{Tr}\left[ B_{\lambda}^{-1/2} A B_{\lambda}^{-1/2} B_{\lambda}^{-1/2} A B_{\lambda}^{-1/2}\right]\\
&= \operatorname{Tr}\left[\left( B_{\lambda}^{-1/2} A B_{\lambda}^{-1/2}\right)^{*} B_{\lambda}^{-1/2} A B_{\lambda}^{-1/2}\right]
= \norm{B_{\lambda}^{-1/2} A B_{\lambda}^{-1/2}}_{\mathcal{L}^{2}(L^{2}(R))}^{2}\\
&\leq 2\norm{B_{\lambda}^{-1/2} (A-B) B_{\lambda}^{-1/2}}_{\mathcal{L}^{2}(L^{2}(R))}^{2} + 2\norm{B_{\lambda}^{-1/2} B B_{\lambda}^{-1/2}}_{\mathcal{L}^{2}(L^{2}(R))}^{2}\\\
&= 2\underbrace{\norm{B_{\lambda}^{-1/2} (A-B) B_{\lambda}^{-1/2}}_{\mathcal{L}^{2}(L^{2}(R))}^{2}}_{M} + 2\mathcal{N}_{2}^{2}(\lambda).
\end{aligned}
\end{equation*}
Provided that $P\left(M=\norm{B_{\lambda}^{-1/2} (A-B) B_{\lambda}^{-1/2}}_{\mathcal{L}^{2}(L^{2}(R))}^{2} \geq \frac{4\kappa  \mathcal{N}_{1}(\lambda)}{\lambda l} \log \frac{4}{\delta} + \frac{32\kappa^{2}\left(\log \frac{4}{\delta}\right)^{2}}{\lambda^{2}l^2}\right) \leq \frac{\delta}{2}$, we have, with probability at least $1-\frac{\delta}{2}$,
\begin{equation}
\label{Upper bound on trace for upper bound on N2l^2lambda advanced}
\operatorname{Tr}\left[ B_{\lambda}^{-1} AB_{\lambda}^{-1} A\right] \leq \frac{8\kappa  \mathcal{N}_{1}(\lambda)}{\lambda l} \log \frac{4}{\delta} + \frac{64\kappa^{2}\left(\log \frac{4}{\delta}\right)^{2}}{\lambda^{2}l^2} + 2\mathcal{N}_{2}^{2}(\lambda).
\end{equation}
Hence, using \eqref{N2l^2lambda upper bound by constant times trace}  and \eqref{Upper bound on trace for upper bound on N2l^2lambda advanced}, if $\delta>0$ and $\frac{86 \kappa}{l}\log \frac{32 \kappa l}{\delta} \leq \lambda \leq \|\Sigma_{PQ}\|_{\mathcal{L}^{\infty}(\mathcal{H})}$
, then with probability at least $1-\delta$,
\[
\begin{aligned}
\mathcal{N}_{2,l}^{2}(\lambda)\leq \frac{32\kappa  \mathcal{N}_{1}(\lambda)}{\lambda l} \log \frac{4}{\delta} + \frac{256\kappa^{2}\left(\log \frac{4}{\delta}\right)^{2}}{\lambda^{2}l^2} + 8 \mathcal{N}_{2}^{2}(\lambda).
\end{aligned}
\]
Using the fact that $\sqrt{\sum_{k} a_{k}} \leq \sum_{k} \sqrt{a_{k}}$, we obtain the corollary: if $\delta>0$ and $\frac{86 \kappa}{l}\log \frac{32 \kappa l}{\delta} \leq \lambda \leq \|\Sigma_{PQ}\|_{\mathcal{L}^{\infty}(\mathcal{H})}$
, we have, with probability at least $1-\delta$,
\[
\mathcal{N}_{2,l}(\lambda) \leq  \frac{4\sqrt{2\kappa  \mathcal{N}_{1}(\lambda)} \log \frac{4}{\delta}}{\sqrt{\lambda l}} + \frac{16\kappa \log \frac{4}{\delta}}{\lambda l}  + 2\sqrt{2}\mathcal{N}_{2}(\lambda).
\]

To complete the proof, it only remains to verify that $P\left(M \geq \frac{4\kappa  \mathcal{N}_{1}(\lambda) \log \frac{4}{\delta}}{\lambda l} + \frac{32\kappa^{2}\left(\log \frac{4}{\delta}\right)^{2}}{\lambda^{2}l^2}\right) \leq \frac{\delta}{2}$, which we do below. 
Let us define
\[
\zeta_{i}=\left(\varphi\left(\cdot, \theta_{i}\right)-\left(1 \otimes_{L^{2}(R)} 1\right) \varphi\left(\cdot, \theta_{i}\right)\right) \otimes_{L^{2}(R)}\left(\varphi\left(\cdot, \theta_{i}\right)-\left(1 \otimes_{L^{2}(R)} 1\right) \varphi\left(\cdot, \theta_{i}\right)\right) = \tau_{i} \otimes_{L^{2}(R)} \tau_{i}
\]
where $\tau_{i}\coloneq\varphi\left(\cdot, \theta_{i}\right)-$ $\left(1 \otimes_{L^{2}(R)} 1\right) \varphi\left(\cdot, \theta_{i}\right)$. Then, it can be shown that $\mathbb{E}_{\Xi}\left[\zeta_{1}\right]=\mathfrak{I}\mathfrak{I}^{*} = \mathcal{T}_{PQ}$ and $\frac{1}{l} \sum_{i=1}^{l} \zeta_{i}=\mathfrak{A}_{l} \mathfrak{A}_{l}^{*} = \mathcal{T}_{PQ,l}$. 
Therefore, we have
\begin{equation*}
\label{Expressing M as empirical average norm squared}
\begin{aligned}
M&=\norm{B_{\lambda}^{-1/2} (A-B) B_{\lambda}^{-1/2}}_{\mathcal{L}^{2}(L^{2}(R))}^{2}\\
&= \left\|(\mathfrak{I}\mathfrak{I}^{*}+\lambda I)^{-1/2} (\mathfrak{A}_{l}\mathfrak{A}_{l}^{*}-\mathfrak{I}\mathfrak{I}^{*})(\mathfrak{I}\mathfrak{I}^{*}+\lambda I)^{-1/2}\right\|_{\mathcal{L}^{2}(L^{2}(R))}^{2}\\
&= \left\|\frac{1}{l}\sum_{i=1}^{l} (\mathfrak{I}\mathfrak{I}^{*}+\lambda I)^{-1/2}\left[\zeta_{i}-\mathbb{E}_{\Xi}(\zeta_{1})\right] (\mathfrak{I}\mathfrak{I}^{*}+\lambda I)^{-1/2}\right\|_{\mathcal{L}^{2}(L^{2}(R))}^{2}\\
&=\left\|\frac{1}{l}\sum_{i=1}^{l} \mu_{i}\right\|_{\mathcal{L}^{2}(L^{2}(R))}^{2},
\end{aligned}
\end{equation*}
where $\mu_{i} = (\mathfrak{I}\mathfrak{I}^{*}+\lambda I)^{-1/2}\left[\zeta_{i}-\mathbb{E}_{\Xi}(\zeta_{1})\right] (\mathfrak{I}\mathfrak{I}^{*}+\lambda I)^{-1/2}$. Next, we proceed to find bounds on the norm of $\mu_{i}$ and the second moment of the norm of $\mu_{i}$ to apply Bernstein's inequality. 
To this end, note that $$
\begin{aligned}
\mu_{i} &= (\mathfrak{I}\mathfrak{I}^{*}+\lambda I)^{-1/2}\left[\zeta_{i}-\mathbb{E}_{\Xi}(\zeta_{1})\right] (\mathfrak{I}\mathfrak{I}^{*}+\lambda I)^{-1/2}\\
&= Z_{i} \otimes_{L^{2}(R)} Z_{i} - (\mathfrak{I}\mathfrak{I}^{*}+\lambda I)^{-1/2}\mathfrak{II^{*}} (\mathfrak{I}\mathfrak{I}^{*}+\lambda I)^{-1/2}\\
&=U_{i} - (\mathfrak{I}\mathfrak{I}^{*}+\lambda I)^{-1/2}\mathfrak{II^{*}} (\mathfrak{I}\mathfrak{I}^{*}+\lambda I)^{-1/2},
\end{aligned}
$$
where $Z_{i} = (\mathfrak{I}\mathfrak{I}^{*}+\lambda I)^{-1/2} \tau_{i} = B_{\lambda}^{-1/2} \tau_{i}$ and $U_{i}=Z_{i} \otimes_{L^{2}(R)} Z_{i}$.
Further, we have that
\[\mathbb{E}_{\Xi}\left[U_{i}\right] = (\mathfrak{I}\mathfrak{I}^{*}+\lambda I)^{-1/2}\mathfrak{II^{*}} (\mathfrak{I}\mathfrak{I}^{*}+\lambda I)^{-1/2},\]$\mathbb{E}_{\Xi} \left[\mu_{i}\right] =0$ and $\mathcal{L}^{2}(L^{2}(R))$ is a separable Hilbert space. Moreover, 
\[
\begin{aligned}
\left\|U_{i}\right\|_{\mathcal{L}^{2}(L^{2}(R))} = \left\|Z_{i}\right\|_{L^{2}(R)}^{2}
\leq \left\|(\mathfrak{I}\mathfrak{I}^{*}+\lambda I)^{-1/2}\right\|_{\mathcal{L}^{\infty}(L^{2}(R))}^{2}  \left\| \tau_{i}\right\|_{L^{2}(R)}^{2}
\leq \frac{\kappa}{\lambda}. 
\end{aligned}
\]
Define $B_{\mu} \coloneq \frac{2\kappa}{\lambda}$ and ${\sigma_{\mu}}^{2} \coloneq \frac{\kappa \mathcal{N}_{1}(\lambda)}{\lambda} $. Clearly, $\left\|\mu_{i}\right\|_{\mathcal{L}^{2}(L^{2}(R))} \leq B_{\mu}$. Further, observe that, 
\[\begin{aligned}
\E_{\Xi}\norm{Z_i}_{L^{2}(R)}^{2} &= \mathbb{E}_{\Xi}\langle Z_{i}, Z_{i}\rangle_{L^{2}(R)} 
= \mathbb{E}_{\Xi}\langle \tau_i, B_{\lambda}^{-1}\tau_i\rangle_{\mathcal{L}^{2}(L^{2}(R))} \\
&= \mathbb{E}_{\Xi} \operatorname{Tr}\left[B_{\lambda}^{-1}(\tau_{i}\otimes_{L^{2}(R)}\tau_{i})\right]=\operatorname{Tr}\left[B_{\lambda}^{-1}B\right]=\mathcal{N}_{1}(\lambda).
\end{aligned}
\]
Consequently, we have that
\[
\begin{aligned}
\mathbb{E}_{\Xi} \left\|\mu_{i}\right\|_{\mathcal{L}^{2}(L^{2}(R))}^{2}  &= \mathbb{E}_{\Xi} \left\|U_{i}\right\|_{\mathcal{L}^{2}(L^{2}(R))}^{2} - \left\|\mathbb{E}_{\Xi}U_{i}\right\|_{\mathcal{L}^{2}(L^{2}(R))}^{2} 
\leq \mathbb{E}_{\Xi} \left\|U_{i}\right\|_{\mathcal{L}^{2}(L^{2}(R))}^{2} 
= \mathbb{E}_{\Xi} \langle U_{i}, U_{i}\rangle_{\mathcal{L}^{2}(L^{2}(R))} \\
&= \mathbb{E}_{\Xi} \operatorname{Tr}\left[(Z_{i}\otimes_{L^{2}(R)}Z_{i})^{*}(Z_{i}\otimes_{L^{2}(R)}Z_{i})\right]
= \mathbb{E}_{\Xi}\langle Z_{i}, Z_{i}\rangle_{L^{2}(R)}^{2}= \mathbb{E}_{\Xi}\norm{Z_{i}}_{L^{2}(R)}^{4}\\
&\leq \frac{\kappa}{\lambda}. \E_{\Xi}\norm{Z_i}_{L^{2}(R)}^{2}
\leq \frac{\kappa N_{1}(\lambda)}{\lambda}=\sigma^2_\mu.
\end{aligned}
\]

Using Bernstein's inequality in separable Hilbert spaces (see Theorem~\ref{Bernstein ineq for Hilbert spaces}), we have that, for any $0<\delta<1$,
\[
P\left(\left\|\frac{1}{l}\sum_{i=1}^{l}\mu_{i}\right\|_{\mathcal{L}^{2}(L^{2}(R))} \geq \frac{\sqrt{2\sigma^2_{\mu}\log \frac{4}{\delta}}}{\sqrt{l}} + \frac{2B_{\mu}\log \frac{4}{\delta}}{l}\right) \leq \frac{\delta}{2}.
\]
Using the fact $(a+b)^{2}\leq 2(a^2+b^2)$, we have that, for any $0<\delta<1$,
\[
\begin{aligned}
&P\left(\left\|\frac{1}{l}\sum_{i=1}^{l}\mu_{i}\right\|_{\mathcal{L}^{2}(L^{2}(R))}^{2} \geq \frac{4\sigma_{\mu}^{2} \log \frac{4}{\delta}}{l} + \frac{8B_{\mu}^{2}\left(\log \frac{4}{\delta}\right)^{2}}{l^2}\right) \leq \frac{\delta}{2}.
\end{aligned}
\]
This completes the verification of $P\left(\norm{M}_{\mathcal{L}^{2}(L^{2}(R))}^{2} \geq \frac{4\kappa \mathcal{N}_{1}(\lambda)} \log \frac{4}{\delta}{\lambda l} + \frac{32\kappa^{2}\left(\log \frac{4}{\delta}\right)^{2}}{\lambda^{2}l^2}\right) \leq \frac{\delta}{2}$ and therefore completes the proof.
\end{proof}

\begin{rem}
\label{Remark: Conditions for N2l to behave as N2}
Suppose the eigenvalues of $\Sigma_{PQ}$ has a polynomial decay rate, i.e., $i^{-\beta}\lesssim\lambda_i(\Sigma_{PQ})\lesssim i^{-\beta}$, $\beta>1$. Then, it is easy to verify that $\mathcal{N}_{2}^{2}(\lambda) \gtrsim \frac{ \mathcal{N}_{1}(\lambda)}{\lambda l}$ if $\lambda \gtrsim l^{-1}$ and $\mathcal{N}_{2}^{2}(\lambda) \gtrsim \frac{1}{\lambda^{2}l^{2}}$ if $\lambda \gtrsim l ^{-\frac{\beta}{\beta - \frac{1}{2}}}$.
$\lambda \gtrsim l^{-1}$ is a sufficient condition for both the above bounds to hold. However, the conditions imposed on $\lambda$ and $l$ in the statement of Lemma \ref{Lemma 13} imply that $\lambda \gtrsim \frac{\log l}{l}$, which is an even stronger condition. Therefore, $\mathcal{N}_{2,l}^{2}(\lambda) \lesssim \mathcal{N}_{2}^{2}(\lambda)$ if $\lambda \gtrsim \frac{\log l}{l}$.

On the other hand, if the eigenvalues of $\Sigma_{PQ}$ have an exponential decay rate, i.e., $e^{-i}\lesssim\lambda_i(\Sigma_{PQ})\lesssim e^{-i}$. Then $\mathcal{N}_{2}^{2}(\lambda) \gtrsim \frac{\mathcal{N}_{1}(\lambda)}{\lambda l}$ if $\lambda \gtrsim l^{-1}$ and $\mathcal{N}_{2}^{2}(\lambda) \gtrsim \frac{1}{\lambda^{2}l^{2}}$ if $e^{-1} \geq \lambda \gtrsim \frac{\log l}{l}$.
Thus, $\mathcal{N}_{2,l}^{2}(\lambda) \lesssim \mathcal{N}_{2}^{2}(\lambda)$ if $\lambda \gtrsim \frac{\log l}{l}$. 
\end{rem}


\begin{lem}
\label{Lemma 14}
Let us define $\mathcal{N}_{1}(\lambda)\coloneq\operatorname{Tr}\left(\Sigma_{P Q, \lambda}^{-1 / 2} \Sigma_{P Q} \Sigma_{P Q, \lambda}^{-1 / 2}\right)$,
$\mathcal{N}_{2}(\lambda)\coloneq\left\|\Sigma_{P Q, \lambda}^{-1 / 2} \Sigma_{P Q} \Sigma_{P Q, \lambda}^{-1 / 2}\right\| _{\mathcal{L}^{2}(\mathcal{H})}$, $\mathcal{N}_{1,l}(\lambda)\coloneq\operatorname{Tr}\left(\Sigma_{P Q, \lambda,l}^{-1 / 2} \Sigma_{P Q,l} \Sigma_{P Q, \lambda,l}^{-1 / 2}\right)$ and $\mathcal{N}_{2,l}(\lambda)\coloneq\left\|\Sigma_{P Q, \lambda,l}^{-1 / 2} \Sigma_{P Q,l} \Sigma_{P Q, \lambda,l}^{-1 / 2}\right\| _{\mathcal{L}^{2}(\mathcal{H}_{l})}$.
Then, we have \[
\mathcal{N}_{2}(\lambda) \leq \sqrt{\mathcal{N}_{1}(\lambda)},\,\,\text{and}\,\,\mathcal{N}_{2,l}(\lambda) \leq \sqrt{\mathcal{N}_{1,l}(\lambda)}.
\]
\end{lem}

\begin{proof}
Note that $\mathcal{V}=\Sigma_{P Q, \lambda}^{-1 / 2} \Sigma_{P Q} \Sigma_{P Q, \lambda}^{-1 / 2}$ is a positive self-adjoint trace-class operator with operator norm $\norm{\mathcal{V}}_{L^{\infty}(\mathcal{H})}=\norm{\Sigma_{P Q, \lambda}^{-1 / 2} \Sigma_{P Q} \Sigma_{P Q, \lambda}^{-1/2}}_{\mathcal{L}^{\infty}(\mathcal{H})} \leq 1$. By definition, we have,
$$\mathcal{N}_{2}^{2}(\lambda) = \operatorname{Tr}(\Sigma_{P Q, \lambda}^{-1 / 2} \Sigma_{P Q} \Sigma_{P Q, \lambda}^{-1} \Sigma_{P Q} \Sigma_{P Q, \lambda}^{-1 / 2}) = \operatorname{Tr}(\mathcal{V}^{*}\mathcal{V}) =\operatorname{Tr}(\mathcal{V}^{2})$$ and $\mathcal{N}_{1}(\lambda)=\operatorname{Tr}(\mathcal{V})$. Hence, using H\"{o}lder's inequality, $\mathcal{N}_{2}^{2}(\lambda) \leq \norm{\mathcal{V}}_{\mathcal{L}^{\infty}(\mathcal{H})} \operatorname{Tr}(\mathcal{V}) \leq \mathcal{N}_{1}(\lambda)$. Thus, $\mathcal{N}_{2}(\lambda) \leq \mathcal{N}_{1}^{1/2}(\lambda)$. The proof of the other result is exactly similar upon replacing $\Sigma_{PQ}$ and $\Sigma_{PQ,\lambda}$ by $\Sigma_{PQ,l}$ and $\Sigma_{PQ,\lambda,l}$, respectively.
\end{proof}






\begin{lem}
\label{Lemma 15}
Let $F_{\lambda,l}$ and $\hat{F}^{B}_{\lambda,l}$ be the permutation distribution function and the empirical permutation distribution function based on $B$ randomly selected permutations $(\pi^i)_{i=1}^{B}$ from $\Pi_{n+m}$ as defined in \eqref{All Permutation CDF for RFF based test}  and \eqref{B Permutation CDF for RFF based test}, respectively, with $q_{1-\alpha}^{\lambda,l}$ and $\hat{q}_{1-\alpha}^{B,\lambda,l}$ being the corresponding $(1-\alpha)$-th quantiles for any $0<\alpha\leq 1$, as defined in \eqref{All Permutation quantile for RFF based test} and \eqref{B Permutation quantile for RFF based test}. Then, for any $\alpha,\alpha^{\prime},\delta >0$, we have the following statements:\\

(i) $P_{\pi}(\hat{q}_{1-\alpha}^{B,\lambda,l} \geq q_{1-\alpha -\alpha^{\prime}}^{\lambda,l}) \geq 1 - \delta$;\vspace{1mm}\\

(ii) $P_{\pi}(\hat{q}_{1-\alpha}^{B,\lambda,l} \leq q_{1-\alpha +\alpha^{\prime}}^{\lambda,l} ) \geq 1 - \delta$,
provided $B \geq \frac{\log(\frac{2}{\delta})}{2\left(\alpha^{\prime}\right)^{2}}$.
\end{lem}
\begin{proof}
Conditioned on $\theta^{1:l}$, the kernel $K_{l}$ and its corresponding RKHS $\mathcal{H}_{l}$ are non-random, and the proof therefore follows from Lemma A.14 in \citep{SpectralTwoSampleTest} upon replacing all test statistics based on the kernel $K$ with test statistics based on the kernel $K_{l}$ and probabilities with conditional probabilities, yielding
\[
P_{\pi}(\hat{q}_{1-\alpha}^{B,\lambda,l} \geq q_{1-\alpha -\alpha^{\prime}}^{\lambda,l} \mid \theta^{1:l}) \geq 1 - \delta\] 
and
\[
P_{\pi}(\hat{q}_{1-\alpha}^{B,\lambda,l} \leq q_{1-\alpha +\alpha^{\prime}}^{\lambda,l}  \mid \theta^{1:l}) \geq 1 - \delta.
\]
Finally, taking expectations over the random features $\theta^{1:l}$, the required results are obtained.
\end{proof}





\begin{lem}
\label{Lemma 16}
Define $\zeta_{l}=\left\|g_{\lambda}^{1/2}(\hat{\Sigma}_{P Q,l})\left(\mu_{P,l}-\mu_{Q,l}\right)\right\|_{\mathcal{H}_{l}}^{2}$
and\\
\begin{equation*}\begin{aligned}\gamma_{3,l} =& \frac{\norm{\mathcal{M}_{l}}_{\mathcal{L}^{\infty}(\mathcal{H}_{l})}^{2}\log(\frac{1}{\alpha})}{\sqrt{\delta}(n+m)}\left(\sqrt{N_{2}^{\prime}(\kappa,\lambda,\delta,l)}\norm{u}_{L^{2}(R)} + N_{2}^{*}(\kappa,\lambda,\delta,l)\right.\\
&\left.+ (N_{2}^{\prime}(\kappa,\lambda,\delta,l))^{\frac{1}{4}}\norm{u}_{L^{2}(R)}^{\frac{3}{2}} + \norm{u}_{L^{2}(R)}\right)+ \frac{\zeta_{l}\log(\frac{1}{\alpha})}{\sqrt{\delta}(n+m)},\end{aligned}\end{equation*} where $\mathcal{M}_{l} = \hat{\Sigma}_{PQ,\lambda,l}^{-1/2}\Sigma_{PQ,\lambda,l}^{1/2}$, $N_{2}^{*}(\kappa,\lambda,\delta,l) \coloneq \frac{4\sqrt{2\mathcal{N}_1(\lambda)\kappa \log \frac{4}{\delta}}}{\sqrt{\lambda l}} + \frac{16\kappa \log \frac{4}{\delta}}{\lambda l}  + 2\sqrt{2}\mathcal{N}_{2}(\lambda)$ and $N_{2}^{\prime}(\kappa,\lambda,\delta,l)\coloneq\frac{2N_{2}^{*}(\kappa,\lambda,\delta,l)\kappa}{\lambda}$. 
 Further, let $m \leq n \leq D^{\prime} m$ for some constant $D^{\prime} \geq 1$.
Then, for any $0<\alpha\leq e^{-1}$ and $\delta>0$, we have that
\[
P_{H_{1}}(q_{1-\alpha}^{\lambda,l} >C^{*}\gamma_{3,l}\mid E) \leq \delta,
\]
where $E=\left\{\mathcal{N}_{2,l}(\lambda) \leq N_{2}^{*}(\kappa,\lambda,\delta,l)\right\}$ and $C^{*}$
is an absolute positive constant.
\end{lem}
\begin{proof}
Let us define $$\gamma_{4,l} = \frac{\norm{\mathcal{M}_{l}}_{\mathcal{L}^{\infty}(\mathcal{H}_{l})}^{2}\log(\frac{1}{\alpha})}{\sqrt{\delta}(n+m)}\left(\sqrt{C_{\lambda,l}}\norm{u}_{L^{2}(R)} + \mathcal{N}_{2,l}(\lambda) + (C_{\lambda,l})^{\frac{1}{4}}\norm{u}_{L^{2}(R)}^{\frac{3}{2}} + \norm{u}_{L^{2}(R)}\right)+ \frac{\zeta_{l}\log(\frac{1}{\alpha})}{\sqrt{\delta}(n+m)},$$ where $C_{\lambda,l}$ is as defined in Lemma \ref{Lemma 9}. Conditioned on $\theta^{1:l}$, the kernel $K_{l}$ and its corresponding RKHS $\mathcal{H}_{l}$ are non-random. Therefore, the proof follows from Lemma A.15 in \citep{SpectralTwoSampleTest} by replacing the kernel $K$ with the RFF-based kernel $K_{l}$ and probabilities with conditional probabilities, yielding
\[
P_{H_{1}}(q_{1-\alpha}^{\lambda,l} > C^{*}\gamma_{4,l} \mid \theta^{1:l}) \leq \delta.
\]
If the event $E$ occurs, we must have that $\gamma_{4,l} \leq \gamma_{3,l}$ and therefore, we obtain
$
P_{H_{1}}(q_{1-\alpha}^{\lambda,l} >C^{*}\gamma_{3,l}\mid E) \leq \delta.$
\end{proof}

\begin{lem}
\label{Lemma 17}
Let $X$ be a random variable, $\lambda$ be a deterministic parameter (taking values in a finite set $\Lambda$), $\nu$ be a random parameter, and $0\leq \alpha \leq 1$ be the level of significance. Further, suppose that f is a function of $X$, $\lambda$ and $\nu$, while $\gamma$ is a function of $\alpha$, $\lambda$ and $\nu$. Further, assume that $P\left\{f(X,\lambda,\nu) \geq \gamma(\alpha,\lambda,\nu)\right\} \leq \alpha$ for any $0\leq \alpha \leq 1$ and $\lambda \in \Lambda$, with the probability being computed with respect to the distributions of $X$ and $\nu$. Then, we have that,
\[
P\left\{\underset{\lambda \in \Lambda}{\bigcup}f(X,\lambda,\nu) \geq \gamma\left(\frac{\alpha}{|\Lambda|},\lambda,\nu\right)\right\} \leq \alpha. 
\]
In addition, if $P\left\{f(X,\lambda^{*},\nu) \geq \gamma(\alpha,\lambda^{*},\nu)\right\} \geq \delta$ for some $0\leq \delta \leq 1$ and $\lambda^{*} \in  \Lambda$, we have that,
\[
P\left\{\underset{\lambda \in \Lambda}{\bigcup}f(X,\lambda,\nu) \geq \gamma\left(\alpha,\lambda,\nu\right)\right\} \geq \delta. 
\]
\end{lem}

\begin{proof}
Observe that
\[
P\left\{\underset{\lambda \in \Lambda}{\bigcup}f(X,\lambda,\nu) \geq \gamma\left(\frac{\alpha}{|\Lambda|},\lambda,\nu\right)\right\} \leq \underset{\lambda \in \Lambda}{\sum} P\left\{f(X,\lambda,\nu) \geq \gamma\left(\frac{\alpha}{|\Lambda|},\lambda,\nu\right)\right\} \leq |\Lambda| \times \frac{\alpha}{|\Lambda|} = \alpha
\]
and 
\[
P\left\{\underset{\lambda \in \Lambda}{\bigcup}f(X,\lambda,\nu) \geq \gamma\left(\alpha,\lambda,\nu\right)\right\} \geq P\left\{f(X,\lambda^{*},\nu) \geq \gamma\left(\alpha,\lambda^{*}\nu\right)\right\},
\]
and the results follow. 
\end{proof}
The following result is adapted from \citep[Theorem 3.3.4]{Yurinsky_1995}.
\begin{thmm}[Bernstein's inequality in separable Hilbert spaces]\label{Bernstein ineq for Hilbert spaces}
Let $(\Omega, \mathcal{A}, P)$ be a probability space, $H$ be a separable Hilbert space, $B>0$ and $v>0$. Furthermore, let $\xi_1, \ldots, \xi_n: \Omega \rightarrow H$ be zero mean i.i.d. random variables satisfying
\[
\mathbb{E}\left\|\xi_1\right\|_H^r \leq \frac{r!}{2} v^2 B^{r-2}, \forall r>2 .
\]
Then for any $0<\delta<1$,
\[
P^n\left\{\left(\xi_i\right)_{i=1}^n:\left\|\frac{1}{n} \sum_{i=1}^n \xi_i\right\|_H \geq \frac{2 B \log \frac{2}{\delta}}{n}+\sqrt{\frac{2 v^2 \log \frac{2}{\delta}}{n}}\right\} \leq \delta .
\]
\end{thmm}
\section{Calculation of computational complexity}
In this appendix, we provide computational complexity calculations for the exact and approximate test statistics.
\subsection{Computational complexity calculation of exact test statistic}\label{Appendix: Computational complexity details of exact test statistic}

The algorithm for computing the exact test statistic defined in \eqref{Spectral Regularized Kernel Test statistic}, along with the computational complexity of each step involved, is as follows:

\begin{enumerate}
    \item Constructing $Z_{i}$ : $\bm{O(sd)}$.
    \item Computation of pairwise $\ell_{1}$ or $\ell_{2}$ distance matrices required for computation of $K_{n}$, $K_{m}$, $K_{s}$, $K_{mn}$, $K_{ns}$ and $K_{ms}$ : $\bm{O(s^2d + n^2d + m^2d + nsd + msd + mnd)}$.

    \item Computation of  $K_{n}$, $K_{m}$, $K_{s}$, $K_{mn}$, $K_{ns}$ and $K_{ms}$ : $\bm{O(s^2 + n^2 + m^2 + ns + ms + mn)}$.

    \item Constructing $H_{s}$ and $\tilde{H}_{s}$ : $\bm{O(s^2)}$

    \item Constructing $H_{s}^{1/2}$ : $\bm{O(s^3)}$.

    \item Constructing $\frac{1}{s}H_{s}^{1/2}K_{s}H_{s}^{1/2}$ :  $\bm{O(s^3)}$.

    \item Computing eigen decomposition of $\frac{1}{s}H_{s}^{1/2}K_{s}H_{s}^{1/2}$ : $\bm{O(s^3)}$.

    \item Computing $\frac{g_{\lambda}(\hat{\lambda}_{i}) - g_{\lambda}(0)}{\hat{\lambda}_{i}}$ for $i=1\ldots,s$ : $\bm{O(s)}$.

    \item Computing $G$ : $\bm{O(s^3)}$.

    \item Computing $\circled{\emph{\small{1}}}$ : $\bm{O(n^2 + ns + s^2)}$.

    \item Computing $\circled{\emph{\small{2}}}$ : $\bm{O(ns^2 + s^3)}$.

    \item Computing $\circled{\emph{\small{3}}}$ : $\bm{O(m^2 + ms + s^2)}$.

    \item Computing $\circled{\emph{\small{4}}}$ : $\bm{O(ms^2 + s^3)}$.

    \item Computing $\circled{\emph{\small{5}}}$ : $\bm{O(mn + ms + ns + s^2)}$
\end{enumerate}

Based on this calculation, the total computational complexity of the \say{exact} spectral regularized MMD test statistic $\hat{\eta}_{\lambda}$ in terms of number of mathematical operations is \[O(s^3+ns^2+ms^2+s^2d+n^2d+m^2d+nsd+msd+mnd).\]

\subsection{Computational complexity calculation of RFF-based approximate test statistic}\label{Appendix: Computational complexity details of approximate test statistic}

The procedure for computing the RFF-based test statistic defined in \eqref{Approximate Kernel Test statistic}, along with the computational complexity of each step involved, is as follows:

\begin{enumerate}
    \item Construct the $d \times n$ matrix $X = \left[X_{1}  \dots X_{n}\right]$, the $d \times m$ matrix $Y = \left[Y_{1}  \dots Y_{m}\right]$ :  $\bm{O(nd+md)}$
    \item Sample $\mathbf{\theta_{i}} \in \mathbb{R}^d,i=1,2,\ldots,l$ in an i.i.d manner from the spectral distribution (inverse Fourier transform) $\Xi$ corresponding to the kernel $K$ and store it in an $l \times d$ matrix $\Theta$ : $\bm{O(ld)}$
    \item Compute $Z_i=\alpha_i X_i^1+\left(1-\alpha_i\right) Y_i^1$, for $1 \leq i \leq s$, and $\left(\alpha_i\right)_{i=1}^s \stackrel{i . i . d}{\sim} \operatorname{Bernoulli}(1 / 2)$. Construct the $d \times s$ matrix $Z = \left[Z_{1}  \dots Z_{s}\right]$ : $\bm{O(sd)}$
    \item Compute the $n \times l$ matrix $M_X = X^{T} \Theta ^{T} = (\Theta X)^T$, the $m \times l$ matrix $M_Y = Y^{T} \Theta ^{T} = (\Theta Y)^T$ and the $s \times l$ matrix $M_Z = Z^{T} \Theta ^{T} = (\Theta Z)^T$ : $\bm{O(nd+md+sd+ld+nld+mld+sld)}$
    \item Compute the $2l \times n$ matrix of random features corresponding to $X_{i}$'s ($i=1,\dots,n$) as $\Phi(X) = \frac{K(0,0)}{\sqrt{l}}P_{l}^{T}\begin{bmatrix} \cos(M_X) \,|\,\sin(M_X) \end{bmatrix}^{T}$, the $2l \times m$ matrix of random features corresponding to $Y_{j}$'s ($j=1,\dots,m$) as $\Phi(Y) = \frac{K(0,0)}{\sqrt{l}}P_{l}^{T}\begin{bmatrix} \cos(M_Y) \,|\, \sin(M_Y) \end{bmatrix}^{T}$ and the $2l \times s$ matrix of random features corresponding to $Z_{i}$'s as $\Phi(Z) = \frac{K(0,0)}{\sqrt{l}}P_{l}^{T}\begin{bmatrix} \cos(M_Z) \,|\, \sin(M_Z) \end{bmatrix}^{T}$. Here, the matrix $P_{l}$ is defined as the column interleaving permutation matrix 
    $$P_{l} = 
    \begin{bmatrix}
    e_{1,2l} \quad e_{l+1,2l} \quad e_{2,2l} \quad e_{l+2,2l}\;\cdots \;e_{l,2l}\quad e_{2l,2l}
    \end{bmatrix},$$
    where $e_{i,2l}$ is a unit column vector of length $2l$ with $1$ at the $i$-th position and $0$ elsewhere. To compute $\Phi(X)$, the cosine and sine functions are first applied elementwise to the \(n \times l\) matrix \(M_X\), yielding the matrices $\cos(M_X)$ and $\sin(M_X)$. These two matrices are then concatenated horizontally to form the $n \times 2l$ matrix $\begin{bmatrix} \cos(M_X) \,|\,\sin(M_X) \end{bmatrix}$. Next, the columns of this matrix are permuted so that the first column of $\cos(M_X)$ appears first, followed by the first column of $\sin(M_X)$, then the second column of $\cos(M_X)$, followed by the second column of $\sin(M_X)$, and so on. This interleaving of columns is achieved by post-multiplying \(\begin{bmatrix} \cos(M_X) \,|\, \sin(M_X) \end{bmatrix}\) with $P_l$. Finally, we compute the transpose of the permuted matrix $\begin{bmatrix} \cos(M_X) \,|\,\sin(M_X) \end{bmatrix}P_{l}$ and scale it by $\frac{K(0,0)}{\sqrt{l}}$to obtain $\Phi(X)$. The matrices $\Phi(Y)$ and $\Phi(Z)$ are computed in the same manner : 
    $\bm{O(nl+ml+sl)}$
    \item Compute the $2l \times 2l$ matrix $K_s = \Phi(Z)\Phi(Z)^{T} $ and $v_Z = \Phi(Z) \mathbf{1}_{s}$ : 
    $\bm{O(sl^2 + sl)}$
    \item Compute the $2l \times 2l$ matrix $\hat{\Sigma}_{PQ,l} = \frac{1}{s(s-1)} (s K_{s} - v_{Z}v_{Z}^{T})$ : 
    $\bm{O(l^2)}$
    \item Compute the eigenvalue-eigenvector pairs $(\hat{\lambda}_{i},\hat{\alpha}_{i})$ corresponding to $\hat{\Sigma}_{PQ,l}$. Construct the diagonal $2l \times 2l$ matrix $D = \begin{bmatrix}
    \hat{\lambda}_{1} & & \\
    & \ddots & \\
    & & \hat{\lambda}_{2l}
  \end{bmatrix}$ and the $2l \times 2l$ matrix $V = \left[ \hat{\alpha}_{1} \dots \hat{\alpha}_{2l} \right]$ : 
$\bm{O(l^3)}$
  \item Construct the $2l \times 2l$ matrix $G = V L^{1/2} V^{T}$, where $L^{1/2} = \begin{bmatrix}
    \sqrt{g_{\lambda}(\hat{\lambda}_{1})} & & \\
    & \ddots & \\
    & & \sqrt{g_{\lambda}(\hat{\lambda}_{2l})}
  \end{bmatrix}$ : 
  $\bm{O(l^3 + l^2)}$
  \item Compute the $2l \times n$ matrix $\Psi(X) = G \Phi(X)$ and the $2l \times m$ matrix $\Psi(Y) = G \Phi(Y)$ : 
  $\bm{O(nl^2+ml^2)}$
  \item Compute the vectors $v_{X,i} = \Psi(X) e_{i,n}$ for $i=1,\dots,n$ and $v_{Y,j} = \Psi(Y) e_{j.m}$ for $j=1,\dots,m$, where $e_{i,n}$ and $e_{j,m}$ are unit column vectors of lengths $n$ and $m$, respectively, each having a $1$ in its $i$-th or $j$-th position and 0 elsewhere : 
  $\bm{O(nl+ml)}$
  \item Compute $v_{X} = \sum_{i=1}^{n} v_{X,i}$ and $v_{Y} = \sum_{j=1}^{m} v_{Y,j}$ : 
  $\bm{O(nl+ml)}$
  \item Compute $A = v_{X}^{T} v_{X} - \sum_{i=1}^{n}v_{X,i}^{T} v_{X,i}$ : 
  $\bm{O(nl)}$
  \item Compute $B = v_{Y}^{T} v_{Y} - \sum_{j=1}^{m}v_{X,j}^{T} v_{X,j}$ : 
  $\bm{O(ml)}$
  \item Compute $C = v_{X}^{T} v_{Y}$ : 
  $\bm{O(l)}$
  \item Compute the test statistic $\hat{\eta}_{\lambda,l} = \frac{A}{n(n-1)} + \frac{B}{m(m-1)} - \frac{2C}{nm}$ : 
  $\bm{O(1)}$
\end{enumerate}
Based on this calculation, the total computational complexity of the RFF-based approximate spectral regularized MMD test statistic $\hat{\eta}_{\lambda,l}$ is
\[
O(l^{3} + (s+m+n) l^{2} + (s+m+n)ld).
\]


\end{document}